\newcommand{\ninsepsc}[3]{
\begin{figure}[h]
\begin{center}
 \scalebox{#3}{\includegraphics{#1}}
\end{center}

\vspace{-0.65cm}
\caption{\hspace{0.25cm}#2\label{f:#1}}
\end{figure}
}
\newtheorem{lemma}{Lemma}
\newtheorem{definition}{Definition}[section]
\newtheorem{remark}{Remark}
\newtheorem{proposition}{Proposition}[section]
\begin{document}
\title{Exit Probabilities and Balayage of Constrained Random Walks}
\author{Ali Devin Sezer\footnote{Middle East Technical University, Institute of Applied Mathematics, Ankara, Turkey}}
\maketitle
\begin{abstract}
Let $X$ be the constrained random walk on ${\mathbb Z}_+^d$, $d \in \{2,3,4,...\}$,
representing the queue lengths of a stable Jackson network
and let $x \in {\mathbb Z}_+^d $ be its initial position ($X$ is a random walk
with independent and identically distributed increments except that its dynamics are 
constrained on the boundaries of ${\mathbb Z}_+^d$  so that $X$ remains 
in ${\mathbb Z}_+^d$;
stability means that $X$ has a nonzero drift pushing it to the origin).
Let $\tau_n$ be the first time when the sum of the components of $X$ equals 
$n$.  The probability $p_n \doteq P_x(\tau_n < \tau_0)$ is one of the key performance
measures for the queueing system represented by $X$ and its analysis/computation
received considerable attention over the last several decades.
The stability of $X$ implies that $p_n$ decays exponentially in $n$.
Currently the only analytic method available to approximate $p_n$ is large
deviations analysis, which gives the exponential decay rate of $p_n$. Finer approximations
are available via rare event simulation.  The present article develops a new
method to approximate $p_n$ and related expectations.
The method has two steps: 1) with an affine transformation,
move the origin to a point on the exit boundary associated with $\tau_n$;
let $n\rightarrow \infty$ to remove some of the constraints on the dynamics
of the walk; the first step gives a limit {\em unstable} /{\em transient} 
constrained random walk $Y$
2) construct a basis of harmonic functions of $Y$ and
use this basis to apply the classical superposition principle of linear analysis (the basis
functions can be seen as perturbations of the classical Fourier basis).
The basis functions
are linear combinations of $\log$-linear functions and come from solutions of
{\em harmonic systems}; these are graphs with labeled edges whose vertices represent
points on the interior {\em characteristic surface} ${\mathcal H}$ of $Y$;
the edges between the vertices represent conjugacy relations between the points
on the characteristic surface, the loops (edges from a vertex to itself) represent
membership in the boundary characteristic surfaces.
Characteristic surfaces are algebraic varieties determined by the distribution of
the unconstrained increments of $X$ and the boundaries of ${\mathbb Z}_+^d$.
Each point on ${\mathcal H}$ defines a harmonic function of the unconsrained version
of $Y$.
Using our method we derive explicit, simple and almost exact formulas 
for $P_x(\tau_n < \tau_0)$ for $X$ representing $d$-tandem queues, 
similar to the product form formulas for
the stationary distribution of $X$.  The same method allows us to approximate the
Balayage operator mapping $f$ to
$x \rightarrow {\mathbb E}_x
\left[ f(X_{\tau_n}) 1_{\{ \tau_n < \tau_0\}} 
\right]$ for a range of stable constrained random walks representing the queue lengths
of a queueing system with two nodes (i.e., $d=2$).
We provide two convergence theorems; one using the coordinates of the limit
process and one using the scaled coordinates of the original process.
The latter is given for two tandem queues (i.e., when the set of possible 
increments of $X$ is $\{ (0,1), (-1,1)(0,-1)\}$)
and uses a sequence of subsolutions of a related Hamilton Jacobi Bellman 
equation {\em on a manifold}; the manifold consists
of three copies of ${\mathbb R}_+^2$, the zeroth
glued to the first along $\{x:x(1)=0\}$ and the first to the second
along $\{x:x(2) =0\}.$
We indicate how the ideas of the paper relate to more general processes
and exit boundaries.
\end{abstract}

\newpage
\tableofcontents
\newpage

\section{Introduction}
Constrained random walks arise naturally 
as models of queueing networks and this paper treats only
walks associated with Jackson 
networks. But the approach of the paper applies
more generally, see subsection \ref{ss:possiblegen}.

Let $X$ denote the number of customers in 
the queues of a $d$-node Jackson network
at arrival and service completion times ($X_k(i)$ is the number of 
customers waiting
in queue $i$ of the network right after the $k^{th}$ arrival/service completion); 
mathematically,
$X$ is a constrained random walk
on ${\mathbb Z}^d_+$, i.e., it has independent increments except that on 
the boundaries of ${\mathbb Z}^d_+$ the process is constrained to 
remain on ${\mathbb Z}^d_+$ (see \eqref{d:constX}
for the precise definition of the constrained random walk $X$).
Define
\begin{equation}\label{e:defA}
A_n = \left\{ x \in {\mathbb Z}_+^d: \sum_{i=1}^d x(i) \le n \right\}
\end{equation}
\index{$A_n$}
and its boundary
\begin{equation}\label{e:defBoundaryA}
\partial A_n =
  \left\{ x \in {\mathbb Z}_+^d: \sum_{i=1}^d x(i) = n \right\}.
\end{equation}
Let $\tau_n$ be the first time $X$ hits $\partial A_n$.
One of the ``exit probabilities'' that the title refers to is
$p_n \doteq P_x( \tau_n < \tau_0)$,
the probability that starting from an initial state $x \in A_n$
the number of
customers in the system reaches $n$ before the system empties. One of our
primary aims in this paper  will be the approximation of this probability.
The set $A_n$
models a systemwide shared buffer of size $n$ 
(for example, if the queueing system models a set of computer 
programs running
on a computer, the shared buffer may be the computer's memory)
and
 $\tau_n$ represents the first time this buffer overflows. 
If we measure
time in  the number of independent cycles that restart each time $X$ hits $0$, $p_n$ is 
the probability that the current cycle finishes successfully (i.e., without a buffer
overflow). 

One can change the domain $A_n$ to model other buffer structures, e.g.,
$ \{ x \in {\mathbb Z}_+^d: x(i) \le n\}$
models separate buffers of size $n$ for each queue in the system. 
The present work focuses on 
the domain $A_n$. The basic ideas of the paper
apply to other domains, and we comment on this in the
conclusion.

For a set $a$ and $\tau_a \doteq \{k : X_k \in a\}$, 
the distribution ${\mathcal T}_a$ of 
$X_{\tau_a}$ on $a$ is called the Balayage operator.  ${\mathcal T}_a$
maps bounded measurable functions on $a$ to
harmonic functions on $a^c$:
\[
{\mathcal T}_a: f \rightarrow g,  
g(x) = {\mathbb E}_x\left[ f\left(X_{\tau_a}\right) 1_{\{ \tau_a < \infty \}} \right].
\]
The computation of $p_n$ is a special case of the computation of 
(the image of a given function
under) the Balayage operator:
for $a = A_n^c \cup \partial A_n \cup \{0\}$,
$\tau_a$ becomes $\tau_n \wedge \tau_0$
and if we set
\[
f = 1_{ \{ \partial A_n\} },
\]
$({\mathcal T}_a f)(x) 
= {\mathbb E}_x\left[ f(X_{\tau_a}) 1_{\{ \tau_a < \infty \}} \right]$,
$x \in A_n$, equals $P_x( \tau_n < \tau_0)$.

We assume that $X$ is stable, i.e., 
the total arrival rate $\nu_i$ is less than the total service rate
$\mu_i$ for all nodes $i$ of the queueing system that $X$ represents 
($\mu$ and $\nu$ are linear functions of the distribution 
of the increments of $X$, see
\eqref{d:lambdamu} and \eqref{d:defnu} for their definitions).
For a stable $X$, the event $\{ \tau_n <\tau_0\}$ rarely happens
and its probability $p_n$ decays 
exponentially with buffer size $n$. 
The problem of approximating $p_n$ has a long history and an extensive literature; 
let us mention two of the main approaches here. The first is
large deviations (LD) analysis  
\cite{dupell2, MR1619036, MR1335456}
which gives the exponential decay rate of
$p_n$ as the value function of a limit deterministic optimal control problem (see below).
If one would like to obtain more precise
estimates than what LD analysis gives, the popular method has so far been 
simulation with variance reduction such as importance sampling, see, 
\cite[Chapter VI]{asmussen2007stochastic}
and \cite{ GlassKou, DSW}; 
the use of IS for similar problems in a single
dimension goes back to \cite{Siegmund}.
The goal of this paper is to offer a new alternative,  which, in particular,
allows to approximate the Balayage operator
for a wide class of two dimensional systems
and gives an almost exact formula for the probability
$P_x( \tau_n < \tau_0)$ for tandem networks in any dimension.
We explain its elements in the following paragraphs.

One way to think of the LD analysis is as follows. 
$p_n$ itself
decays to $0$, which is trivial. To get a nontrivial limit 
transform $p_n$ to $V_n \doteq -\frac{1}{n}\log p_n$;
using convex duality, one can write
the $-\log$ of an expectation as an optimization
problem involving the relative entropy \cite{dupell2} and thus
$V_n$ can be interpreted 
as the value function of a discrete time stochastic optimal control 
problem. 
The LD analysis
consists of the law of large numbers limit analysis of this control problem;
the limit problem is a deterministic optimal control problem whose value function
satisfies a first order Hamilton Jacobi Bellman equation (see
 \eqref{e:HJB0} of Section \ref{s:convergence2}).
Thus, LD analysis amounts to the computation of the
limit of a {\em convex  transformation} of the problem.

We will use another, an {\em affine}, transformation of $X$ for the limit analysis.
The proposed transformation is extremely simple:  
{\em observe $X$ from the exit boundary.}
The most natural vantage points
on the exit boundary $\partial A_n$
are the corners $\{ ne_i, i=1,2,3,...,d\}$, where $e_i$ are the standard basis
elements of ${\mathbb Z}^n$:
\begin{equation}\label{e:defTn}
Y^n \doteq T_{n}(X), ~~~ T_{n}: {\mathbb R}^d\rightarrow {\mathbb R}^d, 
T_{n}(x) \doteq y, ~~~ y(j) = \begin{cases} n - x(j), &\text{ if } j  = i, \\
				x(j) &\text{ otherwise},
\end{cases}
\end{equation}
$j =1,2,\cdots, d.$
\index{T@$T_n$ the affine transformation mapping $X$ to $Y^n$}
$T_{n}$ is affine and its inverse equals itself.
$Y^n$, i.e., the process $X$ as observed from the corner $n e_i$,
 is a constrained process on the domain $
\Omega_Y^n\doteq {\mathbb Z}_+^d \times (n - {\mathbb Z}_+)
\times {\mathbb Z}_+^d$; it is the same process
as $X$, except that $Y^n$ represents
the state of the $i^{th}$ queue not by the number of customers waiting in queue $i$
but by the number of spots in the buffer not occupied by the customers
in queue $i$. 
\index{Y@$Y^n$}
$T_{n}$ maps
the set $A_n$ to
$B_n \subset \Omega_Y^n$, $B_n \doteq T_{n} (A_n)$;
the corner $n e_i$
to the origin of $\Omega_Y^n$; 
the exit boundary  $\partial A_n$
to 
$\partial B_n \doteq \{ y \in \Omega_Y^n, y(i) = \sum_{j=1, j \neq i }^d y(j) \}$; 
finally the constraining
boundary $\{z \in {\mathbb Z}_+^d, z(i)=0\}$ to
\[
\{ y \in {\mathbb Z}_+^{i-1}\times {\mathbb Z} \times {\mathbb Z}_+^{d-i}:
y(i) = n \}.
\]
As $n\rightarrow \infty$ 
the last boundary vanishes and $Y^n$ converges to
the 
limit process $Y$ on the domain
$\Omega_Y \doteq {\mathbb Z}_+^{i-1} \times {\mathbb Z} \times {\mathbb Z}_+^{d-i}$
and
the set $B_n$ to 
\begin{equation}\label{e:defB}
B \doteq \left\{ y \in \Omega_Y, y(i) \ge \sum_{j=1, j \neq i }^d y(j) \right\}.
\end{equation}
Figure \ref{f:Tn2tex} sketches these transformations 
for the case of $X$ representing lengths of two tandem queues and for $i=1$
(the random walk $X$ represents tandem queues if its set of possible jumps
are $e_1$, $-e_i + e_{i+1}$ , $i=1,2,3,...,d-1$ and $-e_d$, if $X$ is of this
form we will call it 
a ``tandem walk;'' for the exact definition, see \eqref{e:poftandem}).

\begin{figure}[h]
\begin{center}
\scalebox{0.7}{
\centerline{\input{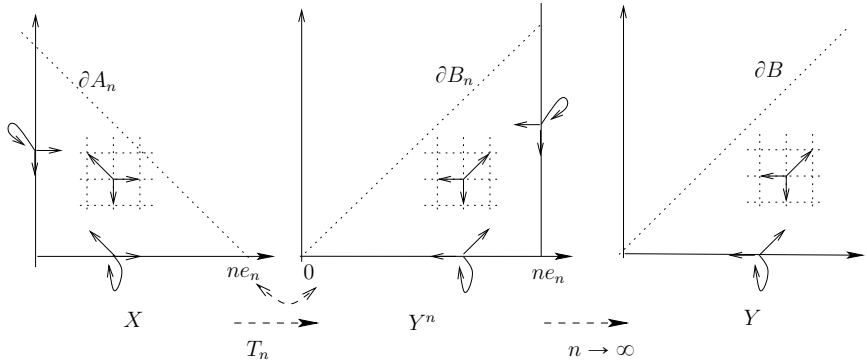}}}
\end{center}

\vspace{-0.65cm}
\caption{\hspace{0.25cm}The transformation $T_n$\label{f:Tn2tex}}
\end{figure}

The boundary of $B$ is 
\begin{equation}\label{e:defBb}
\partial B = \left\{ y \in \Omega_Y, y(i) = \sum_{j=1, j \neq i }^d y(j) \right\};
\end{equation}
the limit stopping time
\begin{equation}\label{e:deftau}
\tau \doteq \inf \{k: Y_k \in \partial B\}
\end{equation}
is the first time $Y$ hits $\partial B$.
{\em 
The stability of $X$ and the vanishing of the boundary constraint on $i$ implies
that $Y$ is {\em unstable} / {\em transient}, i.e., with probability $1$ it wanders off to $\infty$}.
Therefore, in our formulation, the limit process is an {\em unstable} constrained random walk
in the same space and time scale as the original process
but with less number of constraints.

Fix an initial point $y \in B$ in the new coordinates; our first
convergence result is Proposition \ref{t:c1} which says
\begin{equation}\label{e:conv1}
p_n = P_{x_n}( \tau_n < \tau_0 )
\rightarrow  P_y( \tau < \infty),
\end{equation}
where $x_n = T_n(y)$.
The proof uses the
law of large numbers and LD lowerbounds to show that the difference between the two
sides of \eqref{e:conv1} vanishes with $n$.
With \eqref{e:conv1} we see that 
the limit problem in our formulation is to compute the hitting probability of
the unstable $Y$ to 
the boundary $\partial B$.

The convergence statement \eqref{e:conv1} involves a fixed initial condition for the
process $Y$. In classical LD analysis, one specifies the initial point
in scaled coordinates as follows: 
$x_n = \lfloor n x \rfloor \in A_n $ for $x \in {\mathbb R}_+^d$.
Then the initial
condition for the $Y^n$ process will be
$y_n = T_n(x_n)$
(thus we fix not the $y$ coordinate but the
scaled $x$ coordinate). 
When $x_n$ is defined in this way, \eqref{e:conv1}
becomes a trivial statement because its both sides decay to $0$.
For this reason, Section \ref{s:convergence2} studies
the relative error
\begin{equation}\label{e:conv2}
\frac{| P_{x_n}( \tau_n < \tau_0 ) - P_{y_n}( \tau < \infty)|}{P_{x_n}(\tau_n < \tau_0)};
\end{equation}
Proposition \ref{t:guzel} says that this error converges exponentially to $0$
for the case of two dimensional tandem walk (i.e., the process $X$ shown in
Figure \ref{f:Tn2tex}).
The proof rests on
showing that the probability of the intersection of the events 
$\{\tau_n < \tau_0\}$ and $\{ \tau < \infty\}$ 
dominate the probabilities of both as $n\rightarrow \infty$. 
For this we calculate bounds in Proposition \ref{t:thm1}
on the LD decay rates of the probability
of the differences between these events using
a sequence of subsolutions of a Hamilton Jacobi Bellman equation {\em on a manifold};
the manifold consists of three copies of ${\mathbb R}_+^2$,
zeroth copy glued to the first along $\partial_1$, and the first to the second
along $\partial_2$, where $\partial_i = \{x\in {\mathbb R}_+^2: x(i) = 0 \}.$
Extension of this argument to more complex processes and
domains remains for future work.

For a process $X$ in $d$ dimensions, each affine transformation $T_n^i$, $i=1,2,3,...,d$,
gives a possible approximation of $P_x(\tau_n < \tau_0)$. A key question is:
which of these best approximates $P_x(\tau_n < \tau_0)$ for a given $x$?
Proposition \ref{t:guzel} says that, for the two dimensional tandem walk, $i=1$ works well 
for all points $x = \lfloor nx \rfloor$ as long as $x(1) > 0$.
In general this
will not be the case (i.e., depending on $x$, removing one constraint of the
process may give better approximations than removing another); subsection 
\ref{ss:combineapp} comments on this problem.

The convergence results
\eqref{e:conv1} and \eqref{e:conv2} reduce the problem of calculation
of $P_x(\tau_n < \tau_0)$ to that of $P_y(\tau < \infty)$. 
This constitutes the first
step of our analysis and we expect it to apply more generally;
see subsection \ref{ss:possiblegen}. 
Computation of $P_y(\tau < \infty)$ 
is a static linear problem and can be attacked
with a range of ideas and methods.

Sections \ref{s:twodim},
\ref{s:dg2} and \ref{s:hstq} apply the {\em principle of superposition}
of classical linear analysis to the computation of $P_y(\tau < \infty)$ and
related expectations.
The key for its application is to construct
the right class of efficiently computable basis functions to be superposed.
The construction of our basis functions goes as follows:
the distribution
of the increments of $Y$ is used to define the {\em characteristic polynomial}
${\mathbf p}:{\mathbb C}^n \rightarrow {\mathbb C}$.
${\mathbf p}$ can be represented both as a rational
function and as a polynomial; to simplify our analysis we use the polynomial representation
in two dimensions and the rational one in $d$ dimensions. 
In the rest of this paragraph we will
only refer to the higher dimensional definitions; the definitions for the
case of two dimensions are given in subsection \ref{ss:charpolsurf}.
We call the $1$ level set of ${\mathbf p}$, the {\em characteristic surface} of $Y$
and denote it with ${\mathcal H}$, see \eqref{d:DefHcal}.
${\mathcal H}$ is, more precisely, a $d-1$ dimensional complex affine algebraic variety of degree $d+1$.
Each point on the characteristic surface ${\mathcal H}$
defines a $\log$-linear function (see \eqref{d:basicloglinear})
that satisfies the interior harmonicity condition of $Y$ (i.e., defines a harmonic
function of the completely unconstrained version of $Y$); 
similarly, each boundary of the state
space of $Y$ has an associated characteristic polynomial and surface.
${\mathbf p}$ can be written as a second order polynomial in each of its arguments;
this implies that most points on ${\mathcal H}$ come in conjugate pairs, there
are $d-1$ different conjugacy relations, one for each constraining boundary of $Y$.
The keystone of the approach developed in these sections is the following
observation:
{\em $\log$-linear functions defined by two points 
on ${\mathcal H}$ satisfying a given type of conjugacy relation
can be linearly combined to get nontrivial functions which satisfy
the corresponding boundary harmonicity condition
(as well as the interior one)}; see Proposition \ref{p:harmonicYtwoterms}. 
Based on this
observation
we introduce the concept of a {\em harmonic system} (Definition \ref{d:Yharmonic})
which is an
{\em edge-complete} graph with labeled edges representing a system of variables
and equations:
the vertices represent the variables constrained to be on ${\mathcal H}$,
the edges between distinct vertices represent the conjugacy relations between the
variables that the edges connect (the label of the edge determines the type of 
the conjugacy relation)
and its loops (an edge from a vertex to itself) represent membership on a boundary
characteristic surface (the label of the loop determines which boundary characteristic
surface).
We show that any solution to a
harmonic system gives a harmonic function for $Y$ in the form of linear combinations
of $\log$-linear functions (each vertex defines a $\log$-linear function). 
The computational
complexity of the evaluation of the resulting harmonic function is
essentially determined by the size of the graph.

In two dimensions (Section \ref{s:twodim}) edge-complete graphs have $1$ or $2$
vertices and the 
above construction gives a rich enough basis of harmonic functions of $Y$
to approximate the image of any function on $\partial B$ under the Balayage operator;
with the use of these basis functions
the approximation of ${\mathbb E}_y[f(Y_{\tau}) 1_{\{\tau < \infty\}}]$
for any given bounded $f$
reduces to the solution of a linear equation in $K$ dimensions, where $K$ is the 
number of basis functions used in the approximation (Section \ref{ss:gtdw} gives
an example with $K= 12$).
Once the approximation is computed,
the error made in the approximation is simple to bound when
$f$ is constant outside of a bounded support and satisfies $f>0$ or $f<0$.
The restrictions of the basis functions  on ${\partial B}$
are perturbed versions of the restriction of the
 ordinary Fourier basis on ${\mathbb Z}$ to ${\mathbb Z}_+$; 
for this reason we call the constructed basis a ``perturbed'' Fourier basis. 

Section \ref{s:dg2} gives the definition of a harmonic system for $d$-dimensional
constrained walks and prove that any solution to a harmonic system defines
a $Y$-harmonic function  (Proposition \ref{p:simpleharmonicfunctions}).
In Section \ref{s:hstq} we compute
explicit solutions to a particular class of harmonic systems for the $d$ dimensional
tandem walk;
the span of the class
contains
$P_y( \tau < \infty)$ exactly. Hence we obtain explicit formulas, similar to
the product form formulas for the stationary distribution of Jackson networks
\cite{kelly2011reversibility},
for $P_y(\tau < \infty)$ in the case of constrained walks representing
tandem queues in arbitrary dimension (see 
Proposition \ref{p:exactformulaDtandem}; the two dimensional version
of the same formula is written out more explicitly in display \eqref{e:nicerep}).
If we take exponentiation and algebraic operations
to be atomic, the complexity of evaluating the
formula is independent of $y$ (and hence of the buffer size 
$n$ if $P_y(\tau < \infty)$ is used
to approximate $P_x(\tau_n < \tau_0)$ ) and depends only on the dimension $d$
of $X$.

Section \ref{s:examples} gives example computations using the approach developed in
the paper. Three examples are considered: the tandem walk in two dimensions, 
a non tandem walk in two dimensions, tandem walk in $4$ and $14$ dimensions.
The conclusion (Section \ref{s:conclusion}) discusses several directions for future
research. Among these are the application of the approach of the present paper
to constrained diffusion processes (subsection \ref{ss:diffusionswithdrift}),
 the study of nonlinear perturbed second order 
HJB equations which arise when one would like to sharpen large deviations estimates
(subsection \ref{ss:perturbedPDE}) and the sizes of boundary layers which arise
in subsolution based IS algorithms applied to constrained random walks (subsection
\ref{ss:boundarylayers}).

\section{Definitions}
This section sets the notation of the paper, defines the domains, the processes
and the stopping times we will study and states some elementary facts about them.

We will denote  components of a vector using
parentheses, e.g., for $x\in {\mathbb Z}_+^d$, $x(i)$, $i=1,2,3,...,d$
denotes the $i^{th}$ component of $x$.

For two sets $a$ and $b$, $a^b$ denotes the set of functions from
$b$ to $a$. For a function $f$ on a set $a$, we will write
$f|_c$ to mean $f$'s restriction to a subset $c \subset a$.
For a finite set $a$, $|a|$ denotes the number of its elements. We will
assume that elements of sets are written in a certain order and we will
index sets as we index vectors, e.g., $a(1)$ denotes the first element of $a$
and $a(|a|)$ \index{a@$a(\cdot) a("|a"|)$} the last.

Our analysis will involve several types of boundaries:
the coordinate hyperplanes of ${\mathbb Z}^d$, the constraining 
boundaries of constrained 
processes and the boundaries of exit sets.
To keep our notation short and manageable, we will make use of
the symbol $\partial$ to indicate that a set is a boundary of some type.

Define 
${\mathcal N}_0 \doteq \{0,1,2,...,d\} $ and ${\mathcal N}_+ \doteq {\mathcal N}_0-\{0\}$; ${\mathcal N}_+$
is the set of nodes of $X$.
\index{N@${\mathcal N}_0, {\mathcal N}_0"_+$}
For $a \subset {\mathcal N}_+$ the coordinate hyperplanes of ${\mathbb Z}^d$ are
\[
\partial_a \doteq \left\{ z \in {\mathbb Z}^d,~ z(j) =0~ \forall j \in a \right\}.
\]
\index{p@$\partial_a$ The coordinate hyperplanes}
 We will use the letter $\sigma$ to denote hitting times to these sets;
for any process ${\mathcal P}$ on ${\mathbb Z}^d$ define
\begin{equation}\label{e:defsigma}
\sigma^{\mathcal P}_a \doteq \inf \{k: {\mathcal P}_k \in \partial_a\};
\end{equation}
\index{s@$\sigma_a$ first hitting times to coordinate hyperplanes}
in what follows the process ${\mathcal P}$ will always be clear from context
and we will omit the superscript of $\sigma$.
If $a = \{j\}$ for some $j \in {\mathcal N}_+$,
we will write $\sigma_j$ rather than $\sigma_{\{j\}}$; the same convention applies
to $\partial_{\{j\}}.$

We will denote the domain of a process ${\mathcal P}$ 
by $\Omega_{\mathcal P}$; $\partial \Omega_{\mathcal P}$ will denote its
constraining boundary if it has one; $\Omega_{\mathcal P}^o$ will denote
$\Omega_{\mathcal P} - \partial \Omega_{\mathcal P}$.
\index{p@$\partial \Omega_{\mathcal P}$ constraining boundary of a process ${\mathcal P}$}

We will often express constraints of constrained random walks using the 
constraining maps $\pi_a$, $a \subset {\mathcal N}_+$,
defined as follows:
\[
\pi_a(x,v) = \begin{cases}
		x + v, &\text{ if $x(j) + v(j) \ge 0~~ \forall j \in a^c$},\\
		x,     &\text{ otherwise,}
		\end{cases}
\]
where $x \in {\mathbb Z}^d$ and $v \in {\mathbb Z}^d$.
If $a = \emptyset$, we will write $\pi$ instead
of $\pi_\emptyset$; $\pi$ constrains to ${\mathbb Z}_+^d$ 
any process to which it is applied (see the definition of $X$ and $Y$ below to see
how $\pi_a$ is used to define constrained random walks).
Other than $a=\emptyset$ this paper will only use $\pi_{\{i\}}$, 
$i \in {\mathcal N}_+$
and for most of the paper we will assume $i=1$. To ease notation we will write
$\pi_i$ rather than $\pi_{\{i\}}$; $\pi_i$ constrains a given process to be positive
in its ${\mathcal N}_+ - \{i\}$ coordinates.

If ${\mathcal P}$ is a random walk with increments ${\mathcal V}({\mathcal P})$,
\index{${\mathcal V}({\mathcal P})$ the set of increments of a random walk ${\mathcal P}$}
constrained to
stay in $\Omega_\mathcal P \subset{\mathbb Z}^d$,
and $S \subset \Omega_{\mathcal P}$, define
\begin{equation} \label{e:defintbound}
S^o \doteq \{ s \in S: 
\Omega_\mathcal P \cap ( s + {\mathcal V}({\mathcal P}) ) \subset S \},
\partial S \doteq S- S^o.
\end{equation}
\index{p@$\partial S$ exit boundary of a given set $S$}

The notation doesn't state explicitly the ${\mathcal P}$-dependence
of these terms; but
whenever we use them below,
the underlying process ${\mathcal P}$ will always be clear from context.
In what follows ${\mathcal P}$ will be either $X$, $Y^n$,
$Y$ or $Z$, all of which are defined below.

We want to compute certain probabilities/ expectations
associated with a constrained random walk.
This will involve three transformations of the original process:
an affine change of variables, taking a limit (this will drop one
of the boundary constraints of the process) and removing all constraints
which makes the process an ordinary random walk with independent
and identically distributed (iid)  increments.
We will show the original process with $X$, 
the result of the affine transformation with $Y^n$, 
the limit process with $Y$
and the completely unconstrained process with $Z$.

$X$ will denote a constrained random walk on
\index{$X$}
$\Omega_X \doteq {\mathbb Z}_+^d$ with independent increments
\index{O@$\Omega_X$}
$I_k \in \{e_i - e_j, i \neq j \in {\mathcal N}_0\}$
\index{I@$I_k$ increments of $X$}
where $e_0$ is the zero
vector in ${\mathbb Z}^d$ and $e_i \in {\mathbb Z}^d$, $ i \neq 0$ is the
\index{$e_i$ the unit vectors and the zero vector in ${\mathbb Z}^d$}
unit vector in the direction $i$. To keep $X$ in its domain, the increments are
constrained on the boundaries of ${\mathbb Z}_+^d$:
\begin{align}\label{d:constX}
X_0 &=x \in {\mathbb Z}_+^d,\notag \\
X_{k+1} &\doteq X_k + \pi ( X_k,I_k )\\
\pi(x,e_i-e_j) &\doteq \begin{cases} e_i - e_j, &\text{ if } x(j) > 0 \\
				  0      , &\text{otherwise.}\notag
\end{cases}, 
\end{align}
where, by convention, ``$x(0)$'' means ``$1$'' (or some other positive quantity).
The constraining boundary of $X$ is 
$\partial \Omega_X \doteq \Omega_X \cap 
\left( \cup_{j \in {\mathcal N}_+} \partial_j\right).$
\index{p@$\partial \Omega_X$}

We denote the common distribution of the increments $I_k$ by the matrix $p$,
\index{p@$p$ the distribution of the iid increments of $X$}
i.e., $p(i,j)$ is the probability that $I_k$ equals $e_i -e_j$, $i\neq j \in {\mathcal N}_0$;
$p(i,i)=0$ for $i\in{\mathcal N}_0$.
${\mathcal V}(X)$ will denote 
the set of increments of $I_k$ with nonzero probability:
\index{${\mathcal V(X)}$ the increments of $X$}
\[
{\mathcal V}(X) \doteq \{e_i-e_j: p(i,j) > 0 \}.
\]
\begin{remark}
\label{r:conventionpv}
{\em
For $v =e_i -e_j$, $i,j \in {\mathcal N}_0 \times {\mathcal N}_0$, 
``$p(v)$'' will denote $p(i,j)$.
}
\end{remark}

For our probability space we will take ${\mathcal V}(X)^{\mathbb N}$.
$\{I_k\}$ are the coordinate maps on 
${\mathcal V}(X)^{\mathbb N}$, ${\mathscr F}$ is the
 $\sigma$-algebra
generated by $\{I_k\}$ and $P$ is the product measure on 
${\mathcal V}(X)^{\mathbb N}$
under which $I_k$ are an
iid sequence with common distribution $p$.

$X$ describes the dynamics of 
the number of customers in the queues of a Jackson network, i.e., a queueing system
with exponentially and identically distributed and independent 
interarrival and service times.
In this interpretation, $0 \in {\mathcal N}_0$ represents the outside of the queueing
system and for $ i \in {\mathcal N}_+$,
$X_k(i)$ is
the number of customers in queue $i$ at the $k^{th}$ jump
of the system (an arrival or a service completion).
$X$ is on the boundary $\partial_i$
 when the $i^{th}$
queue is empty and the constrained dynamics
on the boundary
means that the server at node $i$ cannot serve when its queue is empty.
The increment $e_i -e_j$, $i,j \neq 0$, $i \neq j$, represents a customer leaving
queue $i$ after service completion at node $i$ and joining queue $j$; 
$e_i$, $i \neq 0$, represents an arrival from outside to queue $i$ and
$-e_j$ a customer leaving the system after a server completion at queue $j$.
We will assume that the Markov chain defined by the matrix $p$ on ${\mathcal N}_0$
is irreducible. 
One can also represent $p$ as arrival, service and routing probabilities:
\begin{equation}\label{d:lambdamu}
\lambda_j \doteq p(0,j),~~ \mu_j \doteq \sum_{j \in {\mathcal N}_0 } p(j,k),
 j \in {\mathcal N}_+, ~~
r(i,j) \doteq  p(i,j)/\mu_i.
\end{equation}
\index{l@$\lambda$ arrival rates}
\index{m@$\mu$ service rates}

The irreducibility of $p$ implies that %the system 
\begin{equation}\label{d:defnu}
\nu = \lambda + r \nu
\end{equation}
\index{n@$\nu$ total arrival rates}
has a unique solution. $\nu_i$ is the total arrival rate to node $i$ when
system is in equilibrium. We assume that the network corresponding to $X$ is stable,
i.e,
\begin{equation}\label{e:stable}
\rho_i \doteq \frac{\nu_i}{\mu_i} < 1,~~ i \in {\mathcal N}_+.
\end{equation}
\index{r@$\rho$ utilization rates}

We will pay particular attention to tandem networks, i.e., a number of queues in tandem;
these are Jackson networks whose
$p$ matrix is of the form
\begin{equation}\label{e:poftandem}
p(0,1)  > 0, p(0,j) = 0, j \neq 1,
p(d,0) = \mu_d, p(j,j+1) = \mu_j, j \in {\mathcal N}_0 - \{0,d\};
\end{equation}
for tandem queues $\lambda$ will denote the only nonzero arrival rate $p(0,1)$;
then
$\nu_i = p(0,1)= \lambda$  for all $ i \in {\mathcal N}_+$.
We will call the random walk $X$ a {\em tandem walk} if it represents
the queue lengths of a tandem network.
\index{tandem networks and tandem walks}

The domain $A_n$ is defined as in \eqref{e:defA}.
We will assume 
\begin{equation}\label{e:someonecomes}
 \lambda_j > 0 \text{ at least for some } j \in {\mathcal N}_+.
\end{equation}
With this and \eqref{e:defintbound}, $\partial A_n$ indeed equals the right side
of \eqref{e:defBoundaryA}. Define the
stopping times
\[
\tau_n \doteq \{k: X_k \in \partial A_n \}
\]
\index{t@$\tau_n$}
and 
\[
p_n \doteq P( \tau_n < \tau_0).
\]
\index{pn@$p_n$ the overflow probability of interest}
If \eqref{e:someonecomes} fails, $p_n$ becomes trivial.

Define the input/output ratio of the system as
\begin{equation}\label{e:defr}
r \doteq
\frac{
\sum_{j \in {\mathcal N}_+ } p(0,j)
}{
\sum_{j \in {\mathcal N}_+} p(j,0)}.
\end{equation}
\index{r@$r$ the input/output ratio}

Stability of $X$ implies 
\begin{proposition}\label{p:unstableZ}
\begin{equation}\label{e:unstableZ}
r < 1.
\end{equation}
\end{proposition}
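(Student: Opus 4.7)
The plan is to sum the traffic equations \eqref{d:defnu} over all nodes and rearrange to express the numerator of $r$ in terms of the utilization rates $\rho_i$; then use $\rho_i<1$ to conclude.

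First, I would rewrite \eqref{d:defnu} componentwise as $\nu_j=\lambda_j+\sum_{i\in\mathcal{N}_+}\nu_i r(i,j)$ for $j\in\mathcal{N}_+$, and sum both sides over $j\in\mathcal{N}_+$. The double sum on the right becomes $\sum_i\nu_i\sum_j r(i,j)$, and since $\sum_{j\in\mathcal{N}_+}p(i,j)=\mu_i-p(i,0)$ by the definition of $\mu_i$ in \eqref{d:lambdamu}, one gets $\sum_{j\in\mathcal{N}_+}r(i,j)=1-p(i,0)/\mu_i$. Plugging this in and cancelling $\sum_i\nu_i$ from both sides yields the key identity
\[
\sum_{j\in\mathcal{N}_+}\lambda_j \;=\; \sum_{i\in\mathcal{N}_+}\frac{\nu_i}{\mu_i}\,p(i,0)\;=\;\sum_{i\in\mathcal{N}_+}\rho_i\,p(i,0),
\]
so the numerator of $r$ is a nonnegative linear combination of the $p(i,0)$'s with coefficients $\rho_i$.

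Second, I would observe that the denominator $\sum_{i\in\mathcal{N}_+}p(i,0)$ is strictly positive: by assumption \eqref{e:someonecomes} some $\lambda_j=p(0,j)>0$, and the irreducibility of the Markov chain with transition matrix $p$ on $\mathcal{N}_0$ implies that the state $0$ can be reached from every state, which in turn forces at least one direct transition $p(i,0)>0$ for some $i\in\mathcal{N}_+$ (otherwise $0$ would be unreachable from $\mathcal{N}_+$). Therefore, dividing the identity above by $\sum_i p(i,0)$,
\[
r \;=\; \frac{\sum_{i\in\mathcal{N}_+}\rho_i\,p(i,0)}{\sum_{i\in\mathcal{N}_+}p(i,0)}
\]
is a convex combination of the numbers $\rho_i$ indexed by those $i$ with $p(i,0)>0$.

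Finally, stability \eqref{e:stable} gives $\rho_i<1$ for every $i\in\mathcal{N}_+$, so this convex combination is strictly less than $1$, which is the claim. I do not foresee a hard step; the only mildly subtle point is the positivity of the denominator, which rests on irreducibility plus \eqref{e:someonecomes} rather than on stability itself.
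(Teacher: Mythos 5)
Your proof is correct and takes essentially the same route as the paper: sum the traffic equations over $j\in\mathcal{N}_+$, use $\sum_{j}r(i,j)=1-p(i,0)/\mu_i$ to obtain the identity $\sum_j p(0,j)=\sum_i\rho_i\,p(i,0)$, and conclude that $r$ is a convex combination of the $\rho_i<1$. The only addition is your explicit justification that the denominator $\sum_i p(i,0)$ is strictly positive, a point the paper leaves implicit.
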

\begin{proof}
By definition
\[
p(0,j) = 
\nu_j - 
\sum_{k \in \{ j\}^c } 
r(k,j) \nu_k,
\]
for $j \in {\mathcal N}_+$.
Sum both sides  over $j$:
\begin{align*}
\sum_{j \in {\mathcal N}_+ } p(0,j) 
&= \sum_{j \in {\mathcal N}_+ } \nu_j( 1 - \sum_{k \in \{j\}^c} r(j,k))\\
&= \sum_{j \in {\mathcal N}_+ } \nu_j\frac{ p(j,0)}{\mu_j}
= \sum_{j \in {\mathcal N}_+ } \rho_j p(j,0).
\end{align*}
Then
\[
r= \frac{
\sum_{j \in {\mathcal N}_+ } p(0,j)
}{
\sum_{j \in {\mathcal N}_+} p(j,0)}  = \sum_{j \in {\mathcal N}_+} \rho_j 
\frac{p(j,0)}{ \sum_{j \in {\mathcal N}_+} p(j,0) }
\]
which is an average of the utilization rates $\rho_i$ which are 
by assumption all less than $1$; \eqref{e:unstableZ} follows.
\end{proof}

$T_n$ and $Y^n$ are defined as in \eqref{e:defTn}. $T_n$ depends on $i$;
when we need to make this dependence explicit
we will write $T^i_n$; for most of the analysis of the paper $i$ will be fixed
and therefore can be assumed constant, and, unless otherwise noted, in the following
sections we will take $i=1$.
\index{T@$T_n^i$ the affine transformation mapping $X$ to $Y^n$}

Define
\begin{equation}\label{e:DefIi}
{\mathcal I}_i \in {\mathbb R}^{d \times d},~~
{\mathcal I}_i(j,k) = 0, j \neq k,~~ {\mathcal I}_i(j,j) = 1, i \neq j,~~
{\mathcal I}_i(i,i) = -1.
\end{equation}
${\mathcal I}_i$ is the identity operator except that its $i^{th}$ diagonal term
is $-1$ rather than $1$. Then
\begin{equation}\label{e:explicitTn}
T_n = n e_i + {\mathcal I}_i.
\end{equation}
Define the sequence of transformed increments
\begin{equation}\label{e:defJ}
J_k \doteq {\mathcal I}_i (I_k);
\end{equation}
\index{J@$J_k$ increments of $Y$}
$J_k$ and $I_k$ take the same values except that $J_k = e_j + e_i$ whenever
$I_k = e_j -e_i$ and $J_k = -e_i$ whenever $I_k = e_i$.
Define 
\begin{align*}
{\mathcal V}(Y) \doteq \{ {\mathcal I}_i v, v \in {\mathcal V}(X) \}.
\end{align*}
\index{V@${\mathcal V}(Y)$ set of possible increments of $Y$}
\begin{remark}\label{r:simplenotationforp}{\em
For $v \in {\mathcal V}(Y)$ we will shorten $p({\mathcal I}_i v)$  to $p(v)$
(remember, per Remark \ref{r:conventionpv}, for $e_i-e_j$, $i,j \in {\mathcal N}_0$,
$p(v)$ denotes $p(i,j)$).
}
\end{remark}
The limit unstable constrained process $Y$ is
\begin{equation}\label{e:defY}
Y_0 \doteq y \in{\mathbb Z}_+^d,~~ Y_{k+1} \doteq Y_k + \pi_i(Y_k, J_k).
\end{equation}
\index{Y@$Y$ definition of the limit process $Y$}
$Y$ has the same dynamics as $Y^n$ except that $Y$ has no constraining
boundary on its $i^{th}$ coordinate; therefore its state space is
$\Omega_Y \doteq {\mathbb Z}_+^{i-1} \times {\mathbb Z} \times {\mathbb Z}_+^{d-i}.$
The domain $B$ for $Y$ is defined as in \eqref{e:defB} and its boundary
$\partial B$ is defined from $B$ using \eqref{e:defintbound} and coincides
with the right side of \eqref{e:defBb}. Let $\tau$ be as in \eqref{e:deftau}.
Define
\begin{equation}\label{e:defofSigman}
\zeta_n \doteq \inf\left\{k: Y_k(i) = \sum_{j\neq i } Y_k(j) + n\right\};
\end{equation}
note $\tau = \zeta_0.$

$X$, $Y^n$ and $Y$ are all defined on the same probability space 
$\left({\mathcal V}(X)^{\mathbb N}, {\mathscr F}, P\right)$; the measure
$P$ and their initial positions determine their distributions.
We will use a subscript on $P$
to denote the initial positions, e.g.,
$P_x( \tau_n < \tau_0)$ is the same as $P(\tau_n < \tau_0)$ 
with $X_0 = x$ and $P_y( \tau < \infty)$ means $P( \tau < \infty)$ with
$Y_0= y$.

We note a basic fact about $Y$ here:
\begin{proposition}\label{p:cantwanderforever}
For $y \in {\mathbb Z}_+^d$, $\sum_{i=1}^d y(i) < n$
\begin{equation}\label{e:toprove0}
P_y( \zeta_n \wedge \zeta_0 = \infty) = 0.
\end{equation}
\end{proposition}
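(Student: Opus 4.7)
The plan is to project $Y$ onto the scalar gap
\[
T_k \doteq Y_k(i) - \sum_{j \neq i} Y_k(j) = n - \sum_{j=1}^d \tilde X_k(j), \qquad \tilde X \doteq T_n(Y),
\]
and to show that $T$ hits $\{0,n\}$ in finite time almost surely. A direct case check on the attempted increment $J_k = \mathcal{I}_i I_k$ shows that $T$ jumps by at most one per step: external arrivals to any queue and external departures from any queue shift $T$ by $\pm 1$, while internal transitions and rejected steps leave $T$ unchanged. The hypothesis $y \in \mathbb{Z}_+^d$, $\sum_{i=1}^d y(i) < n$ places $T_0 = y(i) - \sum_{j \neq i} y(j)$ in $\{-n+1,\dots,n-1\}$, and since nearest-neighbor jumps cannot leap over $\{0,n\}$, the event $\{\zeta_n \wedge \zeta_0 < \infty\}$ coincides with $\{T_k \in \{0,n\} \text{ for some finite } k\}$.

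Next I would show $T_k \to +\infty$ almost surely. Decompose
\[
T_k - T_0 = D^{\mathrm{acc}}_k - A_k,
\]
where $A_k$ counts external arrivals and $D^{\mathrm{acc}}_k$ counts accepted external departures in the first $k$ steps. The standard SLLN gives $A_k/k \to \sum_{j \in \mathcal N_+} p(0,j)$ almost surely. Departures from queue $i$ are always accepted (there is no $\pi_i$-constraint on the $i$-th coordinate), and departures from $j \neq i$ contribute $p(j,0)$ weighted by the long-run fraction of time that $Y(j) > 0$. The quantitative input is that these contributions sum to something strictly larger than $\sum_{j \in \mathcal N_+} p(0,j)$, which is where Proposition \ref{p:unstableZ} enters: $r < 1$ is exactly the inequality of total external outflow over total external inflow, and this slack survives the reduction caused by boundary rejections. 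Hence $\liminf_k T_k/k > 0$, so $T_k \to +\infty$; the nearest-neighbor property then forces $T$ to pass through every integer above $T_0$, in particular through $n$ (and through $0$ first if $T_0 < 0$), giving $\zeta_n \wedge \zeta_0 < \infty$ a.s.

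The main obstacle is the state dependence of the instantaneous drift $E[\Delta T_k \mid Y_k]$: it can be negative when several non-$i$ coordinates of $Y_k$ are simultaneously at zero, so a pure Lyapunov argument on $T$ alone does not work and one must control the long-run occupation measure of $Y$ on the constraining boundaries of $\Omega_Y$. In dimension $d = 2$ (the setting developed in the rest of the paper) this is clean because the marginal $Y(2)$ is itself a one-dimensional reflected walk whose equilibrium behaviour (positive recurrent, or transient toward $+\infty$) is explicit, and the long-run fraction of accepted departures can be computed directly and shown to dominate the arrival rate by the required margin. In higher dimensions the analogous bound comes from coupling the $(j \neq i)$-marginals of $Y$ with the sub-Jackson network they define and reading off its stability from the original stability condition; I expect working out this bookkeeping in general to be the delicate step.
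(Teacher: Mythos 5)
Your route is genuinely different from the paper's, and you have correctly identified where it is incomplete. The paper does not prove (or need) any transience statement for the gap process $T_k = Y_k(i)-\sum_{j\neq i}Y_k(j)$; the claim is only that the stopped process exits in finite time almost surely, and the paper obtains this from a uniform escape-probability bound that bypasses the occupation-measure problem altogether. Let $c = \sum_{j}p(0,j) > 0$ be the total external arrival rate. In $Y$-coordinates every external arrival decreases the gap by exactly one and is never blocked by $\pi_i$ (it either increases a constrained coordinate $j\neq i$ or decreases the unconstrained coordinate $i$). So from any state whose gap lies between the two stopping surfaces, a run of at most $n$ consecutive arrivals drives the gap to $0$, i.e., into $\partial B$; such a run has probability at least $c^n$, uniformly over the state. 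Sampling the stopped process every $n$ steps then gives $P_y(\zeta_0\wedge\zeta_n = \infty)\le(1-c^n)^k$ for every $k$, which proves the proposition.

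Your proposal instead reduces the proposition to $\liminf_k T_k/k>0$ almost surely, a strictly stronger statement, and this requires controlling the long-run fraction of time $Y$ spends with some $Y(j)=0$, $j\neq i$ --- exactly the bookkeeping you acknowledge you have not carried out. That obstruction is real: Proposition \ref{p:unstableZ} gives $r<1$, an outflow/inflow inequality at the unconstrained equilibrium routing, but it does not by itself imply that the constrained process keeps its accepted-departure rate above the arrival rate; the acceptance fraction depends on the occupation measure of the reflected marginal $(Y(j))_{j\neq i}$, which would need its own stability analysis. Even if carried through, you would be proving transience of $T$ rather than the much weaker finiteness of $\zeta_0\wedge\zeta_n$. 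For this proposition the uniform-probability argument is the right tool: it is elementary, it is dimension-independent, and it finishes in a few lines. The law-of-large-numbers machinery you invoke is deployed in the paper only in the convergence sections, where decay rates actually matter.
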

\begin{proof}
Set
\[
c = \sum_{j \in {\mathcal N}_0} p(0,j) > 0.
\]
For $y \in {\mathbb Z}_+^d$, $y:\sum_{i=1}^d y(i) < n$, 
\begin{equation}\label{e:bound0}
P_y ( \zeta_n \wedge \zeta_0 \le n) > c' \doteq c^n > 0
\end{equation}
because, at least the sample paths whose increments consist only
of $\{-e_i, i \in {\mathcal N}_+, p(0,i)  > 0 \}$ push $Y$ to $\partial B$
in $n$ steps and the probability of this event is $c^n$.

$\hat{Y}_k \doteq Y_{nk \wedge \zeta_n \wedge \zeta_0}$
is Markov on 
$\hat{B}_n \doteq \left\{ y \in {\mathbb Z}_+^d, 0 \le \sum_{i=1}^d y(i)  \le n 
\right\}$
(because $Y$ is Markov). 
The boundary of the last set is
\[
\partial \hat{B}_n =  \left\{ y \in {\mathbb Z}_+^d, 0 =  \sum_{i=1}^d y(i)  \text{ or }
  \sum_{i=1}^d y(i) = n \right\}.
\]
By definition
\begin{equation}\label{e:bound1s2}
P_y( \zeta_0 \wedge \zeta_n = \infty ) \le P_y( \hat{Y}_k \in \hat{B}_n - \partial \hat{B}_n).
\end{equation}
The bound \eqref{e:bound0} implies
$P_y\left( \hat{Y}_1  \in \hat{B}_n - \partial \hat{B}_n \right) \le 1-c'.$
This and that $\hat{Y}$ is Markov give\\
$P_y\left( \hat{Y}_k  \in \hat{B}_n - \partial \hat{B}_n \right) \le (1-c')^k.$
This and \eqref{e:bound1s2} imply
\begin{equation}\label{e:bound1}
P_y( \zeta_0 \wedge \zeta_n = \infty) \le (1-c')^k.
\end{equation}
Letting $k\rightarrow \infty$ gives \eqref{e:toprove0}.
\end{proof}

\section{Convergence - initial condition set for $Y$ }\label{s:convergence1}
This section shows that the affine transformation of observing
the process from the exit boundary really 
gives approximations of the exit probabilities
we seek to compute. 
The present convergence 
result specifies the initial point for the $Y$ process.
This allows a simple argument that works for general
stable $X$ and uses LD results only roughly to prove
that certain probabilities decay to $0$. 
In Section \ref{s:convergence2} we will prove a second convergence result (for the case of
two dimensional tandem walk) where the initial
point is given for the $X$ process; this will require a
finer use of large deviations decay rates.

Denote by ${\mathcal X}$ the law of large numbers limit of $X$ , i.e., the deterministic
function which satisfies
\begin{equation}\label{e:lln}
\lim_n P_{x_n}\left ( \sup_{k \le t_0 n} |X_k/n -  {\mathcal X}_{k/n} | > \delta \right) = 0
\end{equation}
for any $\delta > 0$ and $t_0 > 0$
where  $x_n \in {\mathbb Z}_+^d$ is a sequence of initial positions
satisfying $\frac{x_n}{n} \rightarrow \chi \in {\mathbb R}_+^d$ 
(see, e.g., \cite[Proposition 9.5]{robert2003stochastic} or 
\cite[Theorem 7.23]{chen2013fundamentals}).
The limit process starts from
${\mathcal X}_0 = \chi$,  is piecewise affine and takes values in ${\mathbb R}_+^d$;
then
 $s_t \doteq \sum_{i=1}^d {\mathcal X}_t(i)$ starts from $\sum_i \chi(i)$
is also piecewise linear and continuous (and therefore differentiable except
for a finite number of points) with 
values in ${\mathbb R}_+$. The stability and bounded iid increments of
$X$ imply that $s$ is strictly decreasing 
and 
\begin{equation}\label{e:sdecrease}
 c_1 > -\dot{s} > c_0 > 0
\end{equation}
for two constants $c_1$ and $c_0$.
These imply that ${\mathcal X}$ goes in finite time  $t_1$ to
$ 0 \in {\mathbb R}_+^d$ and remains there afterward.

Fix an initial point $y \in \Omega_Y$ for the process $Y$ and set
$x_n = T_n(y)$; \eqref{e:explicitTn} implies
\begin{equation}\label{e:llninit}
\frac{x_n}{n} \rightarrow e_i.
\end{equation}

\begin{proposition}\label{t:c1}
Let $y$ and $x_n$ be as above. Then
\[
\lim_{n\rightarrow \infty} P_{x_n}(\tau_n < \tau_0) =  P_y(\tau < \infty ).
\]
\end{proposition}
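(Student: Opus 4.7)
The plan is to use the involution $T_n$ to recast $P_{x_n}(\tau_n<\tau_0)$ as a probability involving $Y^n$, couple $Y^n$ with the limit process $Y$ on a common probability space, and pass to the limit by bounded convergence plus a large deviations tail estimate on the coupling discrepancy. Setting $Y^n_k\doteq T_n(X_k)$, so that $Y^n_0=y$, and noting that $T_n$ sends $\partial A_n$ to $\partial B$ and $0$ to $ne_i$, we get
\[
P_{x_n}(\tau_n<\tau_0)=P_y(\tau^{Y^n}<\kappa_n),
\]
where $\tau^{Y^n}$ is the first time $Y^n$ visits $\partial B$ and $\kappa_n\doteq\inf\{k:Y^n_k=ne_i\}$. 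I would then realize $Y^n$ and $Y$ on the same probability space using the shared increment sequence $J_k$ but their respective constraining maps, and introduce $\theta_n\doteq\inf\{k:Y_k(i)=n\}$. Since $|J_k(i)|\leq 1$, one has $\theta_n\geq n-y(i)\to\infty$ along every sample path, and $Y_k=Y^n_k$ for all $k\leq\theta_n$.

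On $\{\tau<\theta_n\}$ the coupling gives $\tau^{Y^n}=\tau<\theta_n\leq\kappa_n$, so $\{\tau<\theta_n\}\subseteq\{\tau^{Y^n}<\kappa_n\}$ and
\[
P_{x_n}(\tau_n<\tau_0)=P_y(\tau<\theta_n)+R_n,\qquad R_n\doteq P_y(\tau\geq\theta_n,\ \tau^{Y^n}<\kappa_n).
\]
Because $\theta_n\to\infty$ pointwise, bounded convergence yields $P_y(\tau<\theta_n)\to P_y(\tau<\infty)$, so the proposition reduces to showing $R_n\to 0$. Pulling the event defining $R_n$ back through $T_n$, this is the $X$-event $\{\sigma_i\leq\tau_n<\tau_0\}$: $X$ reaches $\partial_i$ no later than $\partial A_n$ and then reaches $\partial A_n$ before $0$.

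To control $R_n$ I would apply the strong Markov property at $\sigma_i$, obtaining a bound by $P_{x_n}(\sigma_i=\tau_n)+E_{x_n}[1_{\{\sigma_i<\tau_n\wedge\tau_0\}}\,P_{X_{\sigma_i}}(\tau_n<\tau_0)]$. Since $x_n/n\to e_i$, the LLN \eqref{e:lln} and the strict decrease \eqref{e:sdecrease} of $s_t=\sum_j\mathcal{X}_t(j)$ concentrate $\sum_jX_{\sigma_i}(j)/n$ on $s_{t_*}<1$, where $t_*\doteq\inf\{t:\mathcal{X}_t(i)=0\}$; in particular $P_{x_n}(\sigma_i=\tau_n)\to 0$ and $P_{x_n}(\sum_jX_{\sigma_i}(j)>(1-\eta)n)\to 0$ for some $\eta>0$. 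On the complementary good event, the classical LD upperbound for stable Jackson queue-length walks (the references cited in the introduction) bounds $P_{X_{\sigma_i}}(\tau_n<\tau_0)$ by $Ce^{-cn}$ uniformly, with some $c=c(\eta)>0$, so $R_n\to 0$.

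The only nonroutine step is securing the LD upperbound uniformly over the class $\{x:\sum_jx(j)\leq(1-\eta)n\}$; this uniformity follows from stability \eqref{e:stable} and the LD machinery the paper already cites. Everything else reduces to coupling and bounded convergence.
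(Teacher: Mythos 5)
Your proof is correct and follows essentially the same strategy as the paper's: decompose $P_{x_n}(\tau_n<\tau_0)$ into the piece where $X$ avoids $\partial_i$ (which, by coupling with $Y$ and monotone/bounded convergence, tends to $P_y(\tau<\infty)$) and the remainder where $X$ hits $\partial_i$ first, then kill the remainder with a law-of-large-numbers plus large-deviations argument. The paper's version of this decomposition uses $M_\tau=\max_{l\le\tau}Y_l(i)$ and $M^X_{\tau_n}=\min_{l\le\tau_n}X_l(i)$ instead of your $\theta_n$, but these are the same coupling observation in different notation.

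The one real difference is where you invoke the Markov property when killing the remainder. The paper freezes the process at the \emph{deterministic} time $nt^0$ (by which the LLN trajectory has returned to $0$), so it only needs the LD upper bound from a small macroscopic neighborhood of the origin, $\{|x|\le n\delta\}$, which is exactly what the cited result \cite[Theorem 2.3]{GlassKou} provides. You instead apply the strong Markov property at the \emph{random} time $\sigma_i$, where the LLN localizes $X_{\sigma_i}/n$ near $\mathcal{X}_{t^i}$, a point that need not be close to $0$ but only has $s_{t^i}<1$. Consequently you need a uniform exponential upper bound on $P_x(\tau_n<\tau_0)$ over the whole band $\{x:\sum_j x(j)\le(1-\eta)n\}$, which is a stronger statement than the one the paper explicitly cites (though it is a standard consequence of the LD machinery, as you note). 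Your route is slightly shorter and avoids introducing the auxiliary LLN event $\mathcal{C}_n$ and the detour to time $nt^0$; the paper's route is slightly more economical in that it uses only the weaker, directly cited LD estimate. Both are legitimate; if you keep your version, you should state the uniform LD bound over $\{\sum_j x(j)\le(1-\eta)n\}$ as a lemma and sketch how it follows from the uniform LDP for stable Jackson networks rather than treating it as immediate.
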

\begin{proof}
Note that for $n > y(i)$, $x_n \in A_n$.
Define
\[
M_k = \max_{l \le k } Y_l(i), ~~M^X_k = \min_{l \le k } X_l(i).
\]
$M$ is an increasing process and $M_\tau$ is the greatest that the $i^{th}$
component of $Y$ gets before hitting $\partial B$ (if this happens in finite time).
The monotone convergence theorem implies
\[
P_y( \tau < \infty) = \lim_{n\nearrow \infty} 
P_y( \tau < \infty, M_\tau <  n ).
\]
Thus 
\begin{equation}\label{e:decompose1}
P_y( \tau < \infty) = 
P_y( \tau < \infty, M_\tau < n )
+
P_y( \tau < \infty, M_\tau \ge n )
\end{equation}
and the second term goes to $0$ with $n$.
Decompose $P_{x_n}( \tau_n < \tau_0)$ similarly using $M^X$:
\begin{align*}
P_{x_n}( \tau_n < \tau_0 ) &= 
P_{x_n}\left( \tau_n < \tau_0, M^X_{\tau_n}  > 0  \right)
+
P_{x_n}\left( \tau_n < \tau_0, M^X_{\tau_n}  = 0 \right).\\
\intertext{ 
On the set $\{M^X_{\tau_n} >0 \}$, the process $X$ cannot reach the boundary
$\partial_i$ before $\tau_n$, therefore over this set 
1) the events $\{\tau_n < \tau_0\}$ and
$\{\tau < \infty\}$ coincide  (remember that $X$ and $Y$ are defined
on the same probability space)
2) the distribution of $(T_n(X),n-M^X)$ is the same as that of
$(Y,M)$ upto time $\tau_n.$
Therefore,}
&= P_y( \tau < \infty, M_\tau < n) + 
P_{x_n}\left( \tau_n < \tau_0, M^X_{\tau_n} =  0 \right).
\end{align*}
The first term on the right equals the first term on the right side of 
\eqref{e:decompose1}. We know that the second term in \eqref{e:decompose1}
goes to $0$ with $n$. Then  to finish our proof, it suffices to show
\begin{equation}\label{e:toproveconv1}
\lim_n 
P_{x_n}\left( \tau_n < \tau_0, M^X_{\tau_n} = 0\right)  = 0.
\end{equation}
$M^X_{\tau_n} = 0$ means that $X$ has hit $\partial_i$ before $\tau_n$.
Then the last probability equals
\begin{equation}\label{e:lastbit}
 P_{x_n}\left (  \sigma_i < \tau_n < \tau_0\right ),
\end{equation}
which, we will now argue, goes to $0$ ($\sigma_i$ is the first time $X$
hits $\partial_i$; see \eqref{e:defsigma}).
\eqref{e:llninit} implies ${\mathcal X}_0 = e_i.$ 
Define $t^i \doteq \inf\{t: {\mathcal X}_t(i)  = 0 \}$
and $t^0 \doteq \inf\{t: {\mathcal X}_t = 0 \in {\mathbb R}^d\}.$
By definition
$t^i \le t^0 < \infty$
Now choose $t_0$ in \eqref{e:lln} to be equal to $t^0$, define
${\mathcal C}_n \doteq
\left\{ \sup_{k \le t^0 n} \in |X_k/n -  {\mathcal X}_{k/n} | > \delta \right \}$
and partition \eqref{e:lastbit} with ${\mathcal C}_n$:
\begin{align}\label{e:decomposewithCn}
P_{x_n}\left (  \sigma_i < \tau_n < \tau_0\right) = 
P_{x_n}\left ( \{  \sigma_i < \tau_n < \tau_0 \} \cap  {\mathcal C}_n\right)
+P_{x_n}\left( \{  \sigma_i < \tau_n < \tau_0 \} \cap  {\mathcal C}_n^c\right).
\end{align}
The first of these goes to $0$ by \eqref{e:lln}.
The event in the second term is the following: $X$ remains at most $n\delta$ 
distance away $n {\mathcal X}$ until its $nt^0$ step, hits $\partial_i$
then  $\partial A_n$ and then $0$.
These and \eqref{e:sdecrease} imply that,
for $n$ large enough,
any sample path lying in this event can hit
$\partial A_n$ only after time $nt^0$. Thus, the  second probability on the
right side of \eqref{e:decomposewithCn} is bounded above by
\[
P_{x_n}(\{nt^0 < \tau_n < \tau_0\} \cap {\mathcal C}_n^c).
\]
The Markov property of $X$,
$ \{  \sigma_i < \tau_n < \tau_0 \} \subset
 \{   \tau_n < \tau_0 \}$
and \eqref{e:lln}
imply that the last probability is less than
\[
\sum_{x: |x| \le n\delta} P_x(   \tau_n < \tau_0 ) P_{x_n} (X_{nt^0}= x).
\]
For $|x| \le n \delta$, 
the probability $P_x( \tau_n < \tau_0)$ decays exponentially in $n$
\cite[Theorem 2.3]{GlassKou};
then, the above sum goes to $0$. This establishes \eqref{e:toproveconv1} 
and finishes the proof of the proposition.

\end{proof}

\section{Analysis of $Y$, $d=2$}\label{s:twodim}
Let us begin with several definitions for the general dimension $d$;
because we will almost exclusively work with the $Y$ process from here on,
we will shorten ${\mathcal V}(Y)$ to ${\mathcal V}$.
A function $V: {\mathbb Z}^d \rightarrow {\mathbb C}$ is said to be
a harmonic function of the process $Y$ (or {\em $Y$-harmonic}) 
on a set $O \subset \Omega_Y$ if
\begin{equation}\label{e:linear}
V(y) = {\mathbb E}_y \left[ V(Y_1) \right] = 
\sum_{v \in {\mathcal V} } V(y + \pi_i(y, v)) p(v), y \in O,
\end{equation}
where we use the convention set in Remark \ref{r:simplenotationforp}.
Throughout the paper $O$ will be either $B^o$ or $\Omega_Y$, and the choice
will always be clear from context; for this reason we
will often write ``...is $Y$-harmonic'' 
without specifying the set $O$.
$V$ is said to be $Z$-harmonic on $O \subset {\mathbb Z}^d$ if
\begin{equation*}%\label{e:linear}
V(z) = {\mathbb E}_z \left[ V(Z_1) \right] = 
\sum_{v \in {\mathcal V} } V(z + v)) p(v), z \in O.
\end{equation*}
$Z$-harmonicity and $Y$-harmonicity coincide on $\Omega_Y^o$.

Above we have assumed the domain of $V$ to be ${\mathbb Z}^d.$
If $V$ is defined only on a subset $a \subset {\mathbb Z}^d$, it can be
trivially extended to all ${\mathbb Z}^d$ by setting it to $0$ on ${\mathbb Z}^d -a$.
Thus, the above definitions can be applied to any function
defined on any subset of ${\mathbb Z}^d$; we will use a similar convention for most
of the definitions below.

The Markov property of $Y$ implies that
\begin{equation}\label{e:defPartialCdet}
h:y \rightarrow {\mathbb E}_y \left[ 
f\left(Y_\tau\right) 1_{\{\tau < \infty\}}\right], y \in B,
\end{equation}
is a harmonic function of $Y$
whenever the right side is well defined for all $y \in B^o$. Note that
$h$ is the image of the function $f$ 
under the Balayage operator ${\mathcal T}_{(B^o)^c}$.
The dynamics of $Y$ and the definition of $B$ 
imply that ${\mathcal T}_{(B^o)^c}$ is a 
distribution on $\partial B$; for this reason 
we will call harmonic functions of the
form \eqref{e:defPartialCdet}
$\partial B$-determined. If a function $f$ is defined over a
domain larger than $B$, we will call $f$ $\partial B$-determined, if its
restriction to $B$ is so.

The analysis of the previous section suggests that we approximate
\[
P_x( \tau_n < \tau_0)
\]
with $W(T_n(x))$ where
\begin{equation}\label{e:simpler}
W(y) \doteq P_y ( \tau < \infty) = {\mathbb E}_y\left[ 1_{\{ \tau < \infty\}}\right]
\end{equation}
for any stable Jackson network $X$. 
$W$ is a $\partial B$ determined harmonic function of $Y$, in particular it
solves \eqref{e:linear} with $O= B^o$.
That
$W(y) = P_y( \tau < \infty) = 1$ for $ y \in \partial B$
implies that $W$ also satisfies the boundary condition
\begin{equation}\label{e:boundary}
V|_{\partial B} = 1.
\end{equation}

Then $W$ is a solution of (\ref{e:linear},\ref{e:boundary}) with $O=B^o.$
Large deviations analysis of $W$
is an asymptotic analysis
of the system (\ref{e:linear},\ref{e:boundary})
that scales $V$ to $-\frac{1}{n}\log V$ 
and uses a law of large numbers scaling for space and time.
With the $y$ coordinates, we no longer need to scale $V$, time or space
and can
directly attempt to solve  (\ref{e:linear},\ref{e:boundary})- perhaps approximately.
We have assumed that $X$ is stable; 
this implies that $Y_{\tau \wedge k}$, $k=1,2,3,...$, is unstable and therefore,
the Martin boundary of this process has points at infinity. Then
one cannot expect all harmonic functions of $Y$ to be
$\partial B$-determined and
in particular the system (\ref{e:linear},\ref{e:boundary}) will not
have a unique solution; hence, once we find a 
solution of (\ref{e:linear}, \ref{e:boundary}) that we believe
(approximately) equal to $P_y(\tau < \infty)$, 
we will have to prove that it is $\partial B$-determined.

Define 
\[
B_Z \doteq 
\left\{ z \in {\mathbb Z}^d: z(1) \ge  \sum_{j=2}^d z(j)
\right\}.\]
The unconstrained version of \eqref{e:linear} is
\begin{equation}\label{e:linear1}
V(z) = {\mathbb E}_z \left[ V(Z_1) \right] = 
\sum_{v \in {\mathcal V} } V(z +v ) p(v), 
\end{equation}
$z \in O \subset {\mathbb Z}^d$
and that of \eqref{e:boundary} is
\begin{equation}\label{e:boundary1}
V|_{\partial B_Z} = 1.
\end{equation}
A function is said to be a harmonic function of the unconstrained random
walk $Z$ on $O$
if it satisfies \eqref{e:linear1}.

Introduce also the boundary condition
\begin{equation}\label{e:boundary2constrained}
V|_{\partial B} = f,~~~ f: \partial B \rightarrow {\mathbb C},
\end{equation}
for $Y$ 
which generalizes \eqref{e:boundary}.

\noindent
Our idea to [approximately] solve
(\ref{e:linear},\ref{e:boundary}) is this:
\noindent
\begin{enumerate}
\item Construct a class ${\mathcal F}_Y$ 
of ``simple'' harmonic functions for the process $Y$
(i.e., a class of solutions to \eqref{e:linear})
For this
	\begin{enumerate}
	\item
	Construct a class ${\mathcal F}_Z$ of harmonic functions for the
	unconstrained process $Z$ 
(i.e., a class of solutions to \eqref{e:linear1} with $O= {\mathbb Z}^d$),
	\item Use linear combinations of elements of ${\mathcal F}_Z$ to find
		solutions to \eqref{e:linear}.
	\end{enumerate}
	\item 
Represent [or approximate] the boundary condition \eqref{e:boundary}
		by linear combinations of the boundary values of the 
		$\partial B$-determined members of the class ${\mathcal F}_Y$.
\end{enumerate}
The definition of the class ${\mathcal F}_Z$ is given in
\eqref{d:classFZ} and that of ${\mathcal F}_Y$ is given in \eqref{d:classFY}.

This section treats the case of two dimensions,
where this program yields, for a wide class of processes,
approximate solutions of \eqref{e:linear} not
just with $f=1$, i.e, the boundary condition \eqref{e:boundary},
but with any bounded $f$, i.e., the boundary condition 
 \eqref{e:boundary2constrained}.

Stability of $X$ implies
$p(2,0) \wedge p(1,0) > 0$ and we will assume 
\begin{equation}\label{e:assumptionp20}
p(2,0) > 0;
\end{equation}
otherwise one can switch the labels of the nodes to call $2$
the node for which $p(i,0) > 0$.

\subsection{The characteristic polynomial and surface}\label{ss:charpolsurf}
Let us call
\begin{equation}\label{e:hamiltonian}
{\mathbf p}(\beta,\alpha)
\doteq \beta\alpha \sum_{ v\in {\mathcal V} } p(v) \beta^{v(1)-v(2)}
\alpha^{v(2)}, (\beta,\alpha) \in {\mathbb C}^2,
\end{equation}
the {\em characteristic polynomial} of the process $Z$ for
$B_Z$,
\begin{equation}\label{e:chareqZ}
{\mathbf p}(\beta,\alpha) -\beta\alpha = 0
\end{equation}
the {\em characteristic equation} of $Z$ for $B_Z$ and 
\[
{\mathcal H} \doteq \{ (\beta,\alpha): 
{\mathbf p}(\beta,\alpha) -\beta\alpha =0  \}
\]
the {\em characteristic surface} of $Z$ for $B_Z$.  
We borrow the adjective ``characteristic''
from the classical theory of linear ordinary differential equations; the
development below parallels that theory. 
Figure \ref{f:charsurfexample} depicts the real section of the characteristic
surface of the walk whose $p$ matrix equals
\begin{equation}\label{e:pexample}
p = \left( \begin{matrix}
0  &  0.05 & 0.1 \\
0.35 &  0 &  0.12\\
 0.3 &  0.08 & 0
\end{matrix}\right).
\end{equation}

${\mathcal H}$ is an affine algebraic curve
of degree $3$ \cite[Definition 8.1, page 32]{griffiths}; its $d$ dimensional version
in Section \ref{s:dg2} will be an affine algebraic variety of degree $d+1$.
We will need, for the purposes of the present paper, only that points on these varieties
come in conjugate pairs (see below). 
A thorough study/ description of the geometry of
these varieties (and their projective counterparts) and
its implications for constrained random walks will have to be taken up in future work.

${\mathbf p}$ 
is a second order polynomial  in $\alpha$ [$\beta]$ with second and first order
coefficients in $\beta$ $[\alpha]$:
\begin{align}
{\mathbf p}(\beta,\alpha) &= ( p(1,0) \alpha + p(2,0)) \beta^2
+ ( p(1,2)\alpha^2 + p(2,1) - \alpha) \beta \label{e:polinbeta}
+ (p(0,2) \alpha^2 + p(0,1) \alpha) \\
~\notag\\
{\mathbf p}(\beta,\alpha) &= ( p(0,2) + \beta p(1,2) ) \alpha^2
+ ( p(0,1) + p(1,0)\beta^2 - \beta) \alpha
+ ( p(2,0) \beta^2 + p(2,1)\beta). \label{e:polinalpha}
\end{align}

\paragraph{A singularity}
For $ \beta p(1,2) + p(0,2) = 0$
\eqref{e:polinalpha} becomes  affine. If $p(1,2) = p(0,2) = 0$, \eqref{e:polinalpha}
is affine for all values of $\beta$ and the method developed below is not applicable.
But such walks are essentially one dimensional
(only their
first component can freely move and their second component decreases
to $0$
and stay there upon hitting it)
and yield to simpler methods.
In what follows the $\beta$ we will work with will always satisfy
\[
\beta p(1,2) + p(0,2) \neq 0.
\]

\ninsepsc{charsurfexample}{The real section of the characteristic surface ${\mathcal H}$ 
of the walk defined by $p$
of \eqref{e:pexample}; the end points of the dashed line are two conjugate
points, see \eqref{e:conjugacy}}{0.4}

\subsection{$\log$-linear harmonic functions of $Z$}
The $Z$-version of the random times $\zeta_n$ of \eqref{e:defofSigman} and $\tau$ of
\eqref{e:deftau} are
\begin{align*}
\tau^Z &\doteq  \inf \left\{ k: Z_k \in \partial B_Z \right\}\\
\zeta^Z_n &\doteq \inf\left\{k: Z_k(i) = \sum_{j\neq i } Z_k(j) + n\right\}.
\end{align*}
We will omit the $Z$ superscript below
because the underlying process will always be clear from context.

For $\tau < \infty$,
$Z_\tau$
takes values in $\partial B_Z$ and $Z_\tau(1) = Z_\tau(2)$. Therefore, the
distribution of $Z_\tau$ on $\partial B_Z$ 
is equivalent to the distribution of $Z_\tau(1)$ on ${\mathbb Z}$
whose characteristic function is
\[
\theta \rightarrow {\mathbb E}_z 
\left[ e^{i \theta Z_\tau(1)} 1_{\{\tau < \infty \}}
\right], \theta \in {\mathbb R}.
\]

That $Z_\tau(1)$ is integer valued 
makes the above characteristic function
periodic with period $2\pi$ therefore we can
restrict $\theta \in [0,2\pi)$;
setting $\alpha = e^{i\theta}$ we rewrite the last display as
\begin{equation}\label{e:charfunZ}
\alpha \rightarrow {\mathbb E}_z 
\left[ \alpha^{ Z_\tau(1)} 1_{\{ \tau < \infty\}}
\right], 
\alpha \in S^1,
\end{equation}
where $S^1 \doteq \{ u \in {\mathbb C}: |u| = 1\}$
is the unit circle in ${\mathbb C}$.
For each fixed $\alpha \in  S^1$
the right side of \eqref{e:charfunZ} defines a harmonic function of  the process
$Z$ on $B_Z^o$ as $z$ varies in this set. Our collection of harmonic functions
${\mathcal F}_Z$
for the process $Z$
will consist of these and its generalizations when we allow $\alpha$ to vary
in ${\mathbb C}$.
For $\alpha \in {\mathbb C}$
the function $z \rightarrow \alpha^{z(1)}$
 is an eigenfunction
of the translation operator on ${\mathbb Z}^2$ and $Z$ is a random walk on the
same group. These imply
\begin{proposition}\label{p:structureofeiZ}
Suppose
\[
 {\mathbb E}_{(1,0)}
\left[ |\alpha|^{Z_\tau(1)} 1_{\{\tau < \infty\}} \right]  < \infty
\]
for $\alpha \in {\mathbb C}$. Then
\begin{equation}\label{e:explicitZ}
{\mathbb E}_z \left[ \alpha^{Z_1(\tau)} 1_{\{\tau < \infty\}} \right] 
= U^{z(1)-z(2)} \alpha^{z(2)}
\end{equation}
for $z \in C$
where
\[
U \doteq {\mathbb E}_{(1,0)}
\left[ \alpha^{Z_\tau(1)} 1_{\{\tau < \infty\}}\right].
\]
Furthermore,
$(U,\alpha)$ is on the characteristic
surface ${\mathcal H}$. 
\end{proposition}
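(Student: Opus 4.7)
The plan rests on two symmetries: spatial homogeneity of the i.i.d.\ walk $Z$, and the invariance of both $B_Z = \{z: z(1) \ge z(2)\}$ and $\partial B_Z$ under the diagonal shifts $(c,c)$. Because the increments of $D_k \doteq Z_k(1) - Z_k(2)$ all lie in $\{-1,0,+1\}$ (as one checks directly from ${\mathcal V}$), the walk cannot cross the diagonal, so $\partial B_Z = \{z: z(1) = z(2)\}$ and $Z_\tau(1) = Z_\tau(2)$ on $\{\tau < \infty\}$. Setting $f(z) \doteq {\mathbb E}_z[\alpha^{Z_\tau(1)} 1_{\{\tau<\infty\}}]$, coupling the walks started at $z$ and $z+(1,1)$ gives $f(z+(1,1)) = \alpha f(z)$, and iteration in the diagonal direction forces the separated form $f(z) = \alpha^{z(2)} g(z(1) - z(2))$ with $g(k) \doteq f(k,0)$ and $g(0) = 1$ trivially from $(0,0) \in \partial B_Z$.

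To evaluate $g(k)$ for $k \ge 1$ I would apply the strong Markov property at the first passage time $\tau_{k-1} \doteq \inf\{m: D_m = k-1\}$ of the walk started at $(k,0)$. Unit jumps of $D$ force $\tau_{k-1} \le \tau$ on $\{\tau < \infty\}$, and at $\tau_{k-1}$ the walk sits at $(k-1+W, W)$ with $W \doteq Z_{\tau_{k-1}}(2)$. The separated form gives $f(Z_{\tau_{k-1}}) = \alpha^W g(k-1)$, whence $g(k) = g(k-1) \cdot {\mathbb E}_{(k,0)}[1_{\{\tau_{k-1} < \infty\}} \alpha^W]$. The prefactor is then computed by the horizontal shift $(-(k-1),0)$, which fixes the $z(2)$ coordinate, maps the starting point to $(1,0)$, and translates the line $\{D = k-1\}$ onto $\partial B_Z$, so this prefactor equals ${\mathbb E}_{(1,0)}[1_{\{\tau<\infty\}} \alpha^{Z_\tau(2)}] = U$ (using once more $Z_\tau(1) = Z_\tau(2)$ on $\partial B_Z$). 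Iterating $g(k) = U \cdot g(k-1)$ from $g(0) = 1$ yields $g(k) = U^k$ and establishes \eqref{e:explicitZ}.

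For the claim $(U,\alpha) \in {\mathcal H}$ I would substitute the closed form $f(z) = U^{z(1)-z(2)} \alpha^{z(2)}$ into the interior $Z$-harmonic identity $f(z) = \sum_{v \in {\mathcal V}} p(v) f(z+v)$, which holds at every $z \in B_Z^o$ by one-step conditioning on $Z_1$. Cancelling the common factor $U^{z(1)-z(2)} \alpha^{z(2)}$ leaves $1 = \sum_v p(v) U^{v(1)-v(2)} \alpha^{v(2)}$; multiplying by $U\alpha$ and comparing with \eqref{e:hamiltonian} gives $\mathbf{p}(U,\alpha) = U\alpha$, as desired. The main technical hurdle is justifying, for complex $\alpha$, the strong-Markov decomposition and the two shift identities in the presence of possibly unbounded sums; the hypothesis furnishes absolute summability at $(1,0)$, and repeating the separation-of-variables argument for the non-negative integrand $|\alpha|^{Z_\tau(1)}$ extends absolute summability (with bound $|\alpha|^{z(2)} |U|^{z(1)-z(2)}$) to every $z \in B_Z$, which legitimises all interchanges of expectation and summation in the steps above.
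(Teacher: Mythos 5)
Your proof is correct and follows essentially the same route as the paper's: both rest on the strong Markov property at the first passage time of $D = Z(1)-Z(2)$ to the level $k-1$ (the paper's $\zeta_{k-1}$, your $\tau_{k-1}$), together with translation invariance of $Z$ to identify the one-step reduction with $U$, and then identify $(U,\alpha)$ on ${\mathcal H}$ by substituting into the interior harmonicity identity. Your preliminary reduction to the separated form $f(z)=\alpha^{z(2)}g(z(1)-z(2))$ via the diagonal-shift coupling is a mild repackaging of the translation argument the paper applies in one go inside its induction, and your explicit remark that $D$ has unit increments (so $\tau_{k-1}<\tau$) makes rigorous a step the paper leaves implicit.
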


\begin{proof}
The proof will be by induction on $z(1)-z(2)$. 
\eqref{e:explicitZ} is true by definition for $z(1) -z(2) = 0$. 
Assume now that \eqref{e:explicitZ} holds for $z(1) - z(2) = k-1 \ge 0$
and fix  $z$ with $z(1) - z(2) = k$. 
The invariance of $Z$ under translations implies
\begin{equation}\label{e:distZsigma}
P_{z} ( Z_{\zeta_{k-1}} = z+(j-1,j), \zeta_{k-1} < \infty) = 
P_{(1,0)}( Z_\tau = (j,j) ,\tau < \infty)
\end{equation}
for $j \in {\mathbb Z}$.
The strong Markov property of $Z$ and
$\zeta_{k-1} < \tau$ imply
\begin{align*}
{\mathbb E}_z\left[ \alpha^{Z_\tau(2)} 1_{\{\tau < \infty\}}\right]
 &= 
{\mathbb E}_z\left[ 1_{ \{ \zeta_{k-1} < \infty \} } 
{\mathbb E}_z\left[\alpha^{Z_\tau(2)} 1_{\{\tau < \infty\}} | {\mathscr F}_{\zeta_{k-1}}
\right]\right]
\\
&= 
{\mathbb E}_z\left[ 1_{ \{ \zeta_{k-1} < \infty \} } 
{\mathbb E}_z\left[\alpha^{Z_\tau(2)} 1_{\{\tau < \infty\}} | {Z}_{\zeta_{k-1}}\right]\right]
\intertext{The random variable $Z_{\zeta_{k-1}}$ is discrete;
then, one can write the last expectation explicitly as the sum}
&=
\sum_{j=-\infty}^\infty
{\mathbb E}_{z+(j-1,j)} \left[1_{\{\tau< \infty\} }\alpha^{Z_\tau(2)}\right] 
P_z\left(Z_{\zeta_{k-1}} = z + (j-1,j) \right).
\intertext{ $z' = z + (j-1,j)$ satisfies $z'(1)-z'(2) = k-1$; this,
the induction hypothesis and \eqref{e:distZsigma}
give}
&= \sum_{j=-\infty}^\infty
U^{z(1) + j -1 - z(2)} \alpha^{j+z(2)} 
P_{(1,0)}\left( Z_\tau = (j,j), 1_{\{\tau< \infty\}}\right)\\
&=
U^{z(1) -1 - z(2)} \alpha^{z(2)} 
\sum_{j=-\infty}^\infty \alpha^{j}
P_{(1,0)}\left( Z_\tau = (j,j), 1_{\{\tau < \infty\}} \right)
\intertext{By definition, the last sum equals ${\mathbb E}_{(1,0)}
\left[\alpha^{Z_\tau(1)}1_{\{\tau< \infty\}}\right] = U$ and therefore}
&= 
U^{z(1) -1 - z(2)} \alpha^{z(2)} U = U^{z(1)-z(2)} \alpha^{z(2)},
\end{align*}
i.e., \eqref{e:explicitZ} holds also for $z$ with $z(1) - z(2) = k$.
This finishes the induction and the proof of the first part of the proposition.

The Markov property of $Z$ and the first part of the proposition
imply that $g:
z \rightarrow U^{z(1) -z(2)} \alpha^{z(2)}$ is a harmonic function of $Z$
on $B_Z^o$, i.e., it satisfies \eqref{e:linear1}. 
Substituting $g$ in 
\eqref{e:linear1} implies that 
$(U,\alpha)$ is on the characteristic
surface ${\mathcal H}$. 
\end{proof}

Conversely, any point on ${\mathcal H}$ defines
a harmonic function of $Z$:
\begin{proposition}
For any $(\beta,\alpha) \in {\mathcal H}$,
$z \rightarrow \beta^{z(1) - z(2)}\alpha^{z(2)}$, $z \in B_Z$,
is a harmonic function of $Z$.
\end{proposition}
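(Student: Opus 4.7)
The plan is a direct verification: substitute the candidate log-linear function into the harmonicity equation \eqref{e:linear1} and use the characteristic equation to close the identity. Concretely, set $V(z) \doteq \beta^{z(1)-z(2)}\alpha^{z(2)}$ and compute, for any increment $v \in \mathcal{V}$,
\[
V(z+v) = \beta^{(z(1)+v(1)) - (z(2)+v(2))}\alpha^{z(2)+v(2)} = V(z)\cdot \beta^{v(1)-v(2)}\alpha^{v(2)}.
\]
This factorization is the reason log-linear functions are the natural candidates: translation in $z$ acts by multiplication by a factor depending only on $v$.

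Next, summing against $p(v)$ and using the convention of Remark \ref{r:simplenotationforp},
\[
\sum_{v \in \mathcal{V}} V(z+v)\,p(v) = V(z)\sum_{v \in \mathcal{V}} p(v)\,\beta^{v(1)-v(2)}\alpha^{v(2)}.
\]
By the definition \eqref{e:hamiltonian} of the characteristic polynomial, the sum on the right equals $\mathbf{p}(\beta,\alpha)/(\beta\alpha)$ whenever $\beta\alpha \neq 0$. The assumption $(\beta,\alpha) \in \mathcal{H}$ means $\mathbf{p}(\beta,\alpha) - \beta\alpha = 0$, so for $\beta\alpha \neq 0$ this ratio is $1$ and we obtain $\sum_v V(z+v)p(v) = V(z)$, i.e., $Z$-harmonicity on all of $\mathbb{Z}^2$ (and in particular on $B_Z$).

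The only step that requires minor care is the degenerate case $\beta\alpha = 0$, where the expression $\beta^{z(1)-z(2)}\alpha^{z(2)}$ must be interpreted with the standard convention $0^0 = 1$ and the function is no longer well defined on all of $\mathbb{Z}^2$; in that situation one restricts to the half-space where the exponents are nonnegative, or one simply notes that these points on $\mathcal{H}$ correspond to trivial solutions and are excluded from the analysis to follow (recall the singularity discussion after \eqref{e:polinalpha}). Apart from this bookkeeping, there is no real obstacle: the proposition is essentially the converse of the computation used at the end of Proposition \ref{p:structureofeiZ} to conclude that $(U,\alpha) \in \mathcal{H}$, and the same substitution establishes it.
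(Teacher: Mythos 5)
Your proof is correct and is the same argument the paper sketches in its one-line proof ("Condition $Z$ on its first step and use ${\mathbf p}(\beta,\alpha)=\beta\alpha$"); you simply write out the substitution explicitly and flag the degenerate $\beta\alpha=0$ case, which the paper silently ignores.
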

\begin{proof}
Condition $Z$ on its first step and use ${\mathbf p}(\beta,\alpha) = \beta\alpha$.
\end{proof}

For $(\beta,\alpha) \in {\mathbb C}^2$, define 
\[
[(\beta,\alpha),\cdot]: {\mathbb Z}^2 \rightarrow {\mathbb C},~~
[(\beta,\alpha),z ]\doteq \beta^{z(1)-z(2)} \alpha^{z(2)} 
\]
The last proposition gives us the class of harmonic functions 
\begin{equation}\label{d:classFZ}
{\mathcal F}_Z \doteq 
\left\{ [(\beta,\alpha),\cdot],~~
(\beta,\alpha) \in {\mathcal H}
\right\}
\end{equation}
for $Z$.

\subsection{$\partial B_Z$-determined harmonic functions of $Z$}
A harmonic function $h$ of $Z$ on $B_Z^o$
is said to be $\partial B_Z$-determined if 
\[
h(z) = {\mathbb E}_z \left[ f\left(Z_\tau\right) 1_{\{\tau < \infty\}}\right],
z \in B_Z^o,
\]
for some $f:\partial B_Z\rightarrow {\mathbb C}$ 
for which the right side is well defined for all $z \in B_Z^o$. 
The above display defines  the Balayage operator ${\mathcal T}_{(B_Z^o)^c}$
of $Z$ on $\partial B_Z$, mapping $f$ to $h$; thus, $h$ is $\partial B_Z$-determined
if and only if it is the image of a function $f$ under the Balayage operator
${\mathcal T}_{(B_Z^o)^c}$.
In our analysis of $Y$ and its harmonic functions
we will find it useful to be able to differentiate between harmonic functions
of $Z$ which are $\partial B_Z$-determined, and those which are not.
The reader can skip this subsection for now and can return to it when we
refer to its results in subsection \ref{ss:partialBdet}.

For each
$\alpha \in {\mathbb C}$ satisfying
\begin{equation}\label{e:condalpha0}
\alpha p(1,0) + p(2,0) \neq 0
\end{equation}
\eqref{e:chareqZ} is 
a second order polynomial 
equation in $\beta$ (see \eqref{e:polinbeta})
with roots
\begin{align}
\beta_1 \doteq 
\frac{ \alpha - p(1,2) \alpha^2 -p(2,1) - \sqrt{\Delta}}{2 (p(2,0) + p(1,0)\alpha)},~~
\beta_2 
\doteq \frac{ \alpha - p(1,2) \alpha^2 -p(2,1) + \sqrt{\Delta}}{2 (p(2,0) + p(1,0)\alpha)},
\label{e:formulaforbeta}
\end{align}
where
\[
\Delta(\alpha) \doteq \left(p(1,2)\alpha^2 + p(2,1) - 1\right)^2 - 
4 (p(2,0) + p(1,0) \alpha)(p(0,1)\alpha + p(0,2) \alpha^2),
\]
 and $\sqrt{{\mathrm z}}$ denotes the complex number with nonnegative real part
whose square equals ${\mathrm z}$.

\begin{remark}
{\em
Unless otherwise noted, we will assume 
\eqref{e:condalpha0}. The stability condition 
\eqref{e:stable} rules out $p(1,0) = p(2,0) = 0$;
if $p(1,0) = 0$, \eqref{e:condalpha0} always holds.
If $p(1,0) \neq 0$ and $p(2,0) = 0$,  \eqref{e:condalpha0} fails
exactly when $\alpha$ equals $0$, a value which represents a trivial
situation (Balayage of the zero function on $\partial B_Z$). 
When $p(1,0), p(2,0) \neq 0$, \eqref{e:condalpha0} fails
only for $\alpha = \frac{-p(2,0)}{p(1,0)} < 0.$ This value may be of interest
to us in the next subsection and we comment on it there in
Remark \ref{r:annoyingremark}.
}
\end{remark}

Our next step is to identify a set of $\alpha$'s for which one of the roots above gives a
$\partial B_Z$-determined harmonic function.
In this, we will use
\cite[Exercise 2.12, Chapter 2, page 54]{revuz1984markov} (rewritten
for the present setup):
\begin{proposition}\label{p:niceresult}
For a function $f:\partial B_Z \rightarrow {\mathbb R}_+$
$z \rightarrow {\mathbb E}_z
\left[f(Z_\tau) 1_{\{\tau < \infty\}}\right]$ , $z \in B_Z$,
is the smallest function equal to $f$ on $\partial B_Z$ and harmonic on $B_Z^o$.
\end{proposition}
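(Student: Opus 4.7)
The plan is to show that $h(z) \doteq {\mathbb E}_z[f(Z_\tau) 1_{\{\tau < \infty\}}]$ is both (a) a function which agrees with $f$ on $\partial B_Z$ and is harmonic on $B_Z^o$, and (b) dominated pointwise by any other such function. Part (a) is essentially definitional: for $z \in \partial B_Z$ we have $\tau = 0$ $P_z$-a.s., so $h(z) = f(z)$; and harmonicity on $B_Z^o$ follows from conditioning on $Z_1$ and invoking the strong Markov property together with the observation that, starting from an interior point, $\tau \ge 1$ so $Z_\tau$ is measurable with respect to the shifted $\sigma$-algebra.

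For part (b), let $g: B_Z \to {\mathbb R}_+$ be any nonnegative function satisfying $g|_{\partial B_Z} = f$ and $g$ harmonic on $B_Z^o$. The key step is to observe that the stopped process $M_k \doteq g(Z_{k \wedge \tau})$ is a nonnegative martingale under $P_z$ for $z \in B_Z$: harmonicity of $g$ on $B_Z^o$ gives ${\mathbb E}[g(Z_{k+1}) \mid {\mathscr F}_k] = g(Z_k)$ on $\{\tau > k\}$, while on $\{\tau \le k\}$ the process is constant, yielding the martingale identity. By the optional stopping theorem applied at the bounded stopping time $k \wedge \tau$,
\begin{equation*}
g(z) = {\mathbb E}_z[g(Z_{k \wedge \tau})] \ge {\mathbb E}_z\bigl[g(Z_\tau) 1_{\{\tau \le k\}}\bigr] = {\mathbb E}_z\bigl[f(Z_\tau) 1_{\{\tau \le k\}}\bigr],
\end{equation*}
where we used $g \ge 0$ to drop the term on $\{\tau > k\}$ and $g = f$ on $\partial B_Z$ for the equality.

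Finally, letting $k \to \infty$ and invoking monotone convergence on the right-hand side (the integrand is nonnegative and the indicator $1_{\{\tau \le k\}}$ increases to $1_{\{\tau < \infty\}}$), we conclude $g(z) \ge {\mathbb E}_z[f(Z_\tau) 1_{\{\tau < \infty\}}] = h(z)$, which is the desired minimality. The one point that requires care is the applicability of optional stopping: since we stop at the bounded time $k \wedge \tau$ and $g \ge 0$, no integrability issue arises, and neither recurrence of $Z$ nor boundedness of $f$ or $g$ is needed. The main obstacle is conceptual rather than technical, namely identifying the correct martingale and using nonnegativity to convert harmonicity on $B_Z^o$ into a one-sided inequality that survives passage to the limit $k \to \infty$.
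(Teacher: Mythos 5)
The paper does not actually prove this proposition; it simply cites it as \cite[Exercise 2.12, Chapter 2, page 54]{revuz1984markov}, so there is no authorial argument to compare yours against. Your proof is correct and is the standard one: nonnegativity of the competitor $g$ lets you drop the term on $\{\tau > k\}$, and monotone convergence finishes the limit passage. Two small remarks, neither of which affects correctness. First, when $g$ is not assumed integrable (and the statement does not assume it), the process $M_k = g(Z_{k\wedge\tau})$ is not literally a martingale, since integrability is part of the definition; the identity $g(z) = {\mathbb E}_z\left[g(Z_{k\wedge\tau})\right]$ is nonetheless obtained directly by a finite induction on $k$ using harmonicity and nonnegativity (Tonelli takes care of the iterated expectations), or by noting that $g(Z_{k\wedge\tau})$ is a nonnegative supermartingale. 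Second, ``smallest'' in the statement must be read as ``smallest among nonnegative functions with those two properties''; without the nonnegativity restriction the claim would be false, and your proof correctly and explicitly imposes $g \ge 0$ on the competitor.
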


We begin with $\alpha=1$.
\begin{proposition}\label{p:balayageZalpha1}
\begin{equation}\label{e:casealpha1}
{\mathbb E}_z \left[ 1^{Z_\tau(2)} 1_{\{ \tau < \infty\}} \right]=
P_z ( \tau < \infty) = \beta_1(1)^{z(1)} = r^{z(1)},
\end{equation}
where $r$ is the input/output ratio \eqref{e:defr} of $X$ .
\end{proposition}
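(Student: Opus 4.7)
The plan is to reduce to starting points with $z(2)=0$ by translation invariance, identify $\beta_1(1)=r$ via Vieta's formulas on the characteristic quadratic at $\alpha=1$, and then employ the log-linear $Z$-harmonic function $r^{z(1)-z(2)}$ as a bounded martingale to pin down the hitting probability. Since the unconstrained walk $Z$ and the set $\partial B_Z=\{z(1)=z(2)\}$ are invariant under diagonal translation, $P_z(\tau<\infty)$ depends only on the gap $z(1)-z(2)$; the stated $r^{z(1)}$ is the specialization to $z(2)=0$, consistent with \eqref{e:explicitZ} at $\alpha=1$.

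First I would set $\alpha=1$ in \eqref{e:polinbeta}, yielding the quadratic
\[
(p(1,0)+p(2,0))\beta^2 + (p(1,2)+p(2,1)-1)\beta + (p(0,2)+p(0,1)) = 0.
\]
Since the jump probabilities sum to $1$, substituting $\beta=1$ produces $1-1=0$, so $\beta=1$ is a root. Vieta's relation then identifies the product of the two roots as $(p(0,1)+p(0,2))/(p(1,0)+p(2,0))=r$, so the remaining root equals $r$. Proposition \ref{p:unstableZ} gives $r<1$, and since $\beta_1$ denotes the smaller root (per the branch choice in \eqref{e:formulaforbeta}), we obtain $\beta_1(1)=r$.

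Next, since $(r,1)\in{\mathcal H}$, the log-linear function $W(z):=[(r,1),z]=r^{z(1)-z(2)}$ is $Z$-harmonic on $B_Z^o$ by the preceding proposition. $W$ equals $1$ on $\partial B_Z$ and lies in $(0,1]$ throughout $B_Z$, so the stopped process $\widetilde M_k:=W(Z_{k\wedge\tau})$ is a nonnegative martingale bounded by $1$ and converges almost surely and in $L^1$ to a limit $\widetilde M_\infty$. On $\{\tau<\infty\}$ one has $\widetilde M_\infty=W(Z_\tau)=1$. On $\{\tau=\infty\}$, the one-dimensional random walk $D_k:=Z_k(1)-Z_k(2)$ has strictly positive drift $(p(1,0)+p(2,0))(1-r)>0$, so the strong law of large numbers forces $D_k\to\infty$ and hence $\widetilde M_k=r^{D_k}\to 0$. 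Taking expectations then gives
\[
r^{z(1)-z(2)} = W(z) = {\mathbb E}_z\bigl[\widetilde M_\infty\bigr] = P_z(\tau<\infty).
\]

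The principal obstacle is the limit identification $\widetilde M_\infty=0$ on $\{\tau=\infty\}$; this is where the stability hypothesis enters (through Proposition \ref{p:unstableZ}), and it is what singles out the nontrivial root $\beta=r<1$ from the trivial root $\beta=1$ (the latter produces only the constant harmonic function and therefore a vacuous upper bound). Everything else reduces to the universal principle that each point of the characteristic surface defines a log-linear $Z$-harmonic function, combined with bounded convergence.
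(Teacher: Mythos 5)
Your identification of $\beta_1(1)=r$ (via Vieta's relations on the quadratic \eqref{e:polinbeta} at $\alpha=1$, plus the branch observation that $\beta_1$ is the smaller root when $\Delta(1)>0$) reaches the same conclusion as the paper, which computes $\sqrt{\Delta(1)}$ explicitly from \eqref{e:Delta1} and invokes Proposition \ref{p:unstableZ}. Where you genuinely diverge is in the verification step. The paper's argument is structural: it calls on Proposition \ref{p:structureofeiZ} to conclude that $P_{(1,0)}(\tau<\infty)$ must equal either $\beta_1(1)$ or $\beta_2(1)$, and then invokes the abstract minimality characterization of the Balayage solution (Proposition \ref{p:niceresult}, the Revuz exercise) to rule out $\beta_2=1$. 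You instead verify directly that $W(z)=r^{z(1)-z(2)}$ is the hitting probability: $W$ is a bounded $Z$-harmonic function on $B_Z$ taking the value $1$ on $\partial B_Z$; the stopped process $\widetilde M_k=W(Z_{k\wedge\tau})$ is therefore a bounded martingale converging a.s.\ and in $L^1$; and on $\{\tau=\infty\}$ the difference walk $D_k=Z_k(1)-Z_k(2)$ has positive drift $(p(1,0)+p(2,0))(1-r)>0$ (which again uses Proposition \ref{p:unstableZ}), so by the SLLN $D_k\to\infty$ and $\widetilde M_\infty=0$ there. Taking expectations gives $W(z)=P_z(\tau<\infty)$. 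Your route is more self-contained and elementary (it replaces the minimality principle by the SLLN and direct limit identification), while the paper's route is terser but requires the imported potential-theoretic fact. Both are correct. You also noted correctly that the stated exponent $z(1)$ in the proposition should be $z(1)-z(2)$ to be consistent with \eqref{e:explicitZ}; the paper's own proof yields the same exponent you obtained.
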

\begin{proof}
\begin{equation}\label{e:Delta1}
\Delta(1) = ( (p(1,0)  + p(2,0) ) - (p(0,1) + p(0,2)) ) ^2
\end{equation}
Proposition \ref{p:unstableZ} implies
\[
\sqrt{\Delta(1)} = (p(1,0)  + p(2,0) ) - (p(0,1) + p(0,2)) > 0.
\]
Then $\beta_1= r < \beta_2 = 1$ and
$(1,r)$ and $(1,1)$ are points on the characteristic curve ${\mathcal H}$.
Proposition \ref{p:structureofeiZ} implies $P_{(1,0)} (\tau < \infty) = \beta_1 = r$
or $P_{(1,0)}(\tau  <  \infty ) = \beta_2 = 1$. Proposition \ref{p:niceresult} 
implies that the former
must hold. Proposition \ref{p:structureofeiZ} now implies \eqref{e:casealpha1}.
\end{proof}

For a complex number ${\mathrm z}$ let $\Re({\mathrm z})$ [$\Im({\mathrm z})$] 
denote its real (imaginary) part.
If we write 
$\alpha = e^{i \theta}$,$ \theta \in (0,2\pi)$ and set ${\mathrm x} = \cos(\theta)$
then
\begin{align*}
\Re(\Delta) &= 2 {\mathrm x}^2 ( p(2,1)^2 + p(1,2)^2)
-2{\mathrm x} ( p(2,1) + p(1,2) + 2p(1,0)p(0,2)  + 2p(2,0)p(0,1) )\\
&~~~~~~~+ 1  - 4 ( p (0,1) p(1,0) + p(2,0) p(0,2) )
- (p(2,1) - p(1,2))^2\\
~\\
\Im(\Delta) &= \sin(\theta)(
  2{\mathrm x}(p(1,2)^2 -p(2,1)^{2}) +4 p(0,1) p(2,0)
 -4p(0,2) p(1,0) +2  p(2,1)- 2 p(1,2) ).
\end{align*}
$\Im(\Delta)/\sin(\theta)$ is affine in ${\mathrm x}$; to simplify exposition,
we will assume that this function has a unique root lying
outside of the interval $(-1,1)$:
\begin{equation}\label{e:simplifying0}
\frac{2p(0,2) p(1,0) - 2 p(0,1) p(2,0) + p(1,2)-  p(2,1) }{
p(1,2)^2 -p(2,1)^{2}} \notin (-1,1).
\end{equation}
See the end of this subsection for comments on \eqref{e:simplifying0}.
This 
and $\sin(\theta) \neq 0$ imply
\begin{equation}\label{e:nonzeroImDelta}
\Im(\Delta(\alpha)) \neq 0,
\theta \in (0,\pi).
\end{equation}

\begin{proposition}\label{p:betaunequal}
\begin{equation}\label{e:betaunequal}
\beta_1(\alpha)\neq  \beta_2(\alpha)
\end{equation}
for $\alpha = e^{i\theta}$.
\end{proposition}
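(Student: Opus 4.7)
\textbf{Proof plan for Proposition \ref{p:betaunequal}.} From the quadratic formula \eqref{e:formulaforbeta}, the two roots $\beta_1(\alpha)$ and $\beta_2(\alpha)$ differ by $\sqrt{\Delta(\alpha)}/(p(2,0)+p(1,0)\alpha)$, where the denominator is nonzero by \eqref{e:condalpha0}. Hence $\beta_1(\alpha)=\beta_2(\alpha)$ if and only if $\Delta(\alpha)=0$, which, viewing $\Delta$ as a complex number, is equivalent to the simultaneous vanishing of $\Re(\Delta)$ and $\Im(\Delta)$. The proof would thus reduce to showing $\Delta(\alpha)\neq 0$ on the unit circle.

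The main case is $\alpha=e^{i\theta}$ with $\theta\in(0,\pi)\cup(\pi,2\pi)$. Here I would invoke \eqref{e:nonzeroImDelta} directly: the explicit formula displayed just above the proposition shows that $\Im(\Delta)$ factors as $\sin(\theta)$ times a function affine in ${\mathrm x}=\cos\theta$, and assumption \eqref{e:simplifying0} places the unique root of this affine function outside $(-1,1)$. Thus for $\theta\in(0,\pi)$ both factors are nonzero, giving $\Im(\Delta(\alpha))\neq 0$; for $\theta\in(\pi,2\pi)$, use that $\Delta$ is a polynomial with real coefficients in $\alpha$, so $\Delta(\overline{\alpha})=\overline{\Delta(\alpha)}$, and the result for $(\pi,2\pi)$ follows from the one for $(0,\pi)$ by complex conjugation.

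The two boundary values $\theta=0$ and $\theta=\pi$ require a separate argument, because there $\sin\theta=0$ forces $\Im(\Delta)=0$ automatically, so only $\Re(\Delta)$ carries information. For $\theta=0$ (that is, $\alpha=1$), \eqref{e:Delta1} combined with Proposition \ref{p:unstableZ} already yields $\sqrt{\Delta(1)}=(p(1,0)+p(2,0))-(p(0,1)+p(0,2))>0$, so $\beta_1(1)\neq\beta_2(1)$; this was used inside the proof of Proposition \ref{p:balayageZalpha1}. For $\theta=\pi$ (that is, $\alpha=-1$), I would substitute ${\mathrm x}=-1$ into the expression for $\Re(\Delta)$ and reorganize the quadratic terms via the identity $2(a^2+b^2)-(a-b)^2=(a+b)^2$ to obtain
\[
\Re(\Delta(-1))=\bigl(1+p(1,2)+p(2,1)\bigr)^{2}+4\bigl(p(2,0)-p(1,0)\bigr)\bigl(p(0,1)-p(0,2)\bigr),
\]
and argue that this is nonzero.

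The step I expect to be the genuine obstacle is precisely this last one: the $\alpha=-1$ case is not forced to be positive by general considerations, and nonvanishing of $\Re(\Delta(-1))$ seems to need either an additional nondegeneracy hypothesis analogous to \eqref{e:simplifying0} or a specific invocation of stability through $r<1$ to rule out a cancellation. If the statement of the proposition is in fact restricted to $\theta\neq\pi$ (as the surrounding discussion of the characteristic function on $[0,2\pi)$ and the simplifying assumption \eqref{e:simplifying0} seems to suggest), then only the first two paragraphs above are needed; otherwise, the $\alpha=-1$ check would have to be adjoined as the genuinely delicate part of the argument.
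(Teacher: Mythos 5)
Your structure matches the paper's: reduce to $\Delta\neq 0$ via \eqref{e:formulaforbeta}, handle $\theta\in(0,\pi)$ with \eqref{e:nonzeroImDelta}, get $(\pi,2\pi)$ by conjugate symmetry of $\Delta$, and treat $\theta=0$ with \eqref{e:Delta1} and Proposition~\ref{p:unstableZ}. Your formula for $\Delta(-1)$ is also correct. The gap is at the very end: you stop short of concluding $\Delta(-1)\neq 0$, and you speculate that the claim might need an extra hypothesis or might simply exclude $\theta=\pi$. Neither is the case, and the paper's route closes this cleanly by comparing with $\alpha=1$ rather than examining the sign of your expression directly.

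The paper's step at $\theta=\pi$ is to assert $\Delta(-1)>\Delta(1)$ (labeled \eqref{e:DeltaEnd}), which combined with $\Delta(1)>0$ gives $\Delta(-1)>0$. This inequality is in fact elementary from the explicit $\Re(\Delta)$ displayed just before the proposition: since $\Delta(\pm 1)$ are real, evaluating that quadratic in ${\mathrm x}=\cos\theta$ at ${\mathrm x}=-1$ and ${\mathrm x}=1$ and subtracting, the ${\mathrm x}^2$ and constant terms cancel and only the linear term survives, giving
\[
\Delta(-1)-\Delta(1)=4\bigl(p(2,1)+p(1,2)\bigr)+8\bigl(p(1,0)p(0,2)+p(2,0)p(0,1)\bigr),
\]
which is strictly positive by the paper's standing irreducibility assumption on the matrix $p$ (if $p(1,2)=p(2,1)=0$, irreducibility on ${\mathcal N}_0$ forces $p(1,0),p(0,1),p(2,0),p(0,2)>0$). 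So no additional nondegeneracy hypothesis is needed, the result does hold at $\theta=\pi$, and the missing ingredient in your plan is simply this one-line comparison $\Delta(-1)>\Delta(1)>0$ rather than an independent sign analysis of your formula for $\Re(\Delta(-1))$.
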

\begin{proof}
$\beta_1 - \beta_2 = -\frac{\sqrt{\Delta}}{p(2,0) + p(1,0) \alpha}$
and \eqref{e:betaunequal} will follow from
\begin{equation}\label{e:deltanotzero}
\Delta(e^{i\theta}) \neq 0, 
\theta \in [0,2\pi).
\end{equation}
$\Delta$ is a polynomial with real coefficients. Then
$\Delta(\exp(-i\theta)) = \widebar{\Delta}(\exp(i\theta))$ and
hence, it suffices to prove \eqref{e:deltanotzero}
for $\theta \in [0,\pi]$.
\eqref{e:nonzeroImDelta}
implies \eqref{e:deltanotzero} for
$\theta \in [0,\pi)$. For $\theta = 0$: $\alpha=1$,
\eqref{e:Delta1} and Proposition \ref{p:unstableZ} imply $\Delta(1) > 0$.
For $\theta = \pi$: $\alpha=-1$ and 
\begin{equation}\label{e:DeltaEnd}
\Delta(-1) > \Delta(1).
\end{equation}
 These
prove \eqref{e:deltanotzero} for $\theta \in [0,\pi]$ and complete the proof.
\end{proof}
\begin{proposition}
\begin{equation}\label{e:b1funoftheta}
\theta \rightarrow \beta_1\left(e^{i\theta}\right),~~\theta \in (0,2\pi]~~
\theta \rightarrow \beta_2\left(e^{i\theta}\right),~~\theta \in (0,2\pi]
\end{equation}
are continuous.
\end{proposition}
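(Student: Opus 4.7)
The plan is to reduce continuity of $\beta_1$ and $\beta_2$ to continuity of $\sqrt{\Delta(e^{i\theta})}$, and then to verify that the argument $\Delta(e^{i\theta})$ never crosses the branch cut of the square root (which for the present convention is the non-positive real axis). Once that obstacle is dealt with, the rest is routine, since the remaining ingredients in the formulas \eqref{e:formulaforbeta} are polynomial in $\alpha = e^{i\theta}$ and the denominator $2(p(2,0) + p(1,0) e^{i\theta})$ is continuous and, under \eqref{e:condalpha0}, nonvanishing on the unit circle.

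First I would recall that the square root $\sqrt{\mathrm{z}}$ selected by the convention ``nonnegative real part'' is continuous on $\mathbb{C} \setminus (-\infty,0]$ and discontinuous on the cut $(-\infty,0]$. Hence, writing $\Delta(\theta) \doteq \Delta(e^{i\theta})$, the entire claim reduces to verifying
\[
\Delta(\theta) \notin (-\infty,0] \qquad \text{for every } \theta \in (0,2\pi].
\]
Proposition \ref{p:betaunequal} already gives $\Delta(\theta) \neq 0$, so I only need to rule out strictly negative real values.

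The core step uses the explicit split of $\Delta$ into real and imaginary parts computed just above the proposition statement. For $\theta \in (0,\pi)$, $\sin\theta \neq 0$, and the simplifying assumption \eqref{e:simplifying0} together with $|\cos\theta| < 1$ forces the affine-in-$\cos\theta$ expression to be nonzero; this is exactly \eqref{e:nonzeroImDelta}, giving $\Im(\Delta(\theta)) \neq 0$. For $\theta \in (\pi,2\pi)$ I would invoke the symmetry $\Delta(\overline{\alpha}) = \overline{\Delta(\alpha)}$ (since $\Delta$ is a polynomial with real coefficients) to transfer the previous case via $\theta \mapsto 2\pi - \theta$, concluding $\Im(\Delta(\theta)) \neq 0$ on $(\pi,2\pi)$ as well. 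On the remaining real values $\theta \in \{\pi, 2\pi\}$, the proof of Proposition \ref{p:betaunequal} has already shown $\Delta(1) > 0$ and $\Delta(-1) > \Delta(1) > 0$. Assembling these three cases, $\Delta(\theta)$ is either non-real or strictly positive real, so it avoids the entire cut $(-\infty,0]$.

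Since $\theta \mapsto \Delta(\theta)$ is clearly continuous (polynomial in $e^{i\theta}$) and lands in $\mathbb{C} \setminus (-\infty,0]$, the composition $\theta \mapsto \sqrt{\Delta(\theta)}$ is continuous on $(0,2\pi]$. Substituting back into the formulas \eqref{e:formulaforbeta} and using continuity (and nonvanishing) of the numerator's polynomial part $e^{i\theta} - p(1,2) e^{2i\theta} - p(2,1) \pm \sqrt{\Delta(\theta)}$ and of the denominator $2(p(2,0) + p(1,0) e^{i\theta})$, the continuity of both $\beta_1(e^{i\theta})$ and $\beta_2(e^{i\theta})$ follows. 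The main obstacle is the verification in the second paragraph: everything else is bookkeeping, but the avoidance of the branch cut genuinely requires both the sign information from Proposition \ref{p:unstableZ} (used at $\theta \in \{0,\pi\}$) and the structural assumption \eqref{e:simplifying0} (used on the open arcs).
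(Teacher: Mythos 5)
Your proposal is correct and follows essentially the same route as the paper's proof: both reduce continuity of $\beta_1$ and $\beta_2$ to showing that $\theta \mapsto \Delta(e^{i\theta})$ never meets the branch cut $(-\infty,0]$ of the chosen square root, and both verify this by combining $\Delta(1)>0$ (via \eqref{e:Delta1} and Proposition \ref{p:unstableZ}), $\Delta(-1)>0$ (via \eqref{e:DeltaEnd}), the nonvanishing of $\Im\Delta$ on the open arc from \eqref{e:nonzeroImDelta}, and the conjugation symmetry coming from real coefficients. The only cosmetic difference is that the paper phrases this as a geometric statement about the closed curve traced by $\Delta$ avoiding the negative real axis, whereas you state the pointwise dichotomy (either $\Im\Delta\neq 0$ or $\Delta$ is strictly positive real) directly; the substance is identical.
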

\begin{proof}
We have defined $\sqrt{{\mathrm z}}$ to mean the complex number with positive real part
whose square equals ${\mathrm z}$; this definition leads to a discontinuity 
only when 
${\mathrm z}$
passes the negative side of the real axis on the complex plane. Then
the only possibility of discontinuity for the functions
$\beta_1$ and $\beta_2$ (as functions of $\theta$, as in \eqref{e:b1funoftheta})
is
if $\Delta(\exp(i\theta))$ crosses this half line as $\theta$ varies in $[0,2\pi)$.
But \eqref{e:Delta1}, \eqref{e:nonzeroImDelta} and
\eqref{e:DeltaEnd} imply that as $\theta$ varies in $[0,\pi)$,
$\Delta$ defines a curve ${\mathcal C}$ starting from and ending at the positive real line
and lying on either on the positive or the negative complex half plane.
That $\Delta$ is a polynomial with real coefficients implies that $\Delta(\exp(i\theta))$,
$\theta \in [\pi,2\pi)$ is the mirror image $\bar{\mathcal C}$ of ${\mathcal C}$ with respect to the
real line; ${\mathcal C}$ and $\bar{\mathcal C}$ together define a closed loop
that crosses the real line twice on its positive side. These imply that 
$\sqrt{\Delta}$ defines a continuous closed loop in $\{{\mathrm z} \in {\mathbb C}: \Re({\mathrm z}) > 0 \}$, from
which  the statement of the theorem follows.
\end{proof}

Figure \ref{f:deltab1b2} depicts the curves traced
on the ${\mathbb C}$-plane
by $\beta_1$, $\sqrt{\Delta}$
and $\beta_2$ as $\theta$ varies in $[0,2\pi)$ for the $p$ matrix of \eqref{e:pexample}.

\ninsepsc{deltab1b2}{$\beta_1(e^{i\theta})$, $\sqrt{\Delta(e^{i\theta})}$ and a section
of $\beta_2(e^{i\theta})$ (intersecting $\sqrt{\Delta}$), $\theta \in [0,2\pi)$,
 on the ${\mathbb C}$-plane for the $p$ listed in \eqref{e:pexample}
}
{0.4}

Proposition \ref{p:balayageZalpha1} extends to $|\alpha|=1$ as follows:
\begin{proposition}
\begin{equation}\label{e:casealphaabs1}
{\mathbb E}_z \left[ \exp(i\theta {Z_\tau(2) )} 1_{\{ \tau < \infty\}} \right]=
\exp(i\theta z(2))\left(\beta_1(e^{i\theta})\right)^{z(1) - z(2)}.
\end{equation}
and
\begin{equation}\label{e:boundonbeta1}
\left|\beta_1(e^{i\theta})\right|
=
\left|{\mathbb E}_{(1,0)} 
\left[ \exp(i\theta {Z_\tau(2) )} 1_{\{ \tau < \infty\}} \right]\right| \le r.
\end{equation}
\end{proposition}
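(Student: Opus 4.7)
The plan is to derive the formula \eqref{e:casealphaabs1} by applying Proposition \ref{p:structureofeiZ} with $\alpha=e^{i\theta}$, and then identify which of the two roots $\beta_1,\beta_2$ of the characteristic equation equals the expectation on the left, using a continuity argument that reduces to the already-established case $\theta=0$.

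First I would verify the integrability hypothesis of Proposition \ref{p:structureofeiZ}: since $|\alpha|=|e^{i\theta}|=1$, we have $|\alpha|^{Z_\tau(1)}1_{\{\tau<\infty\}}\le 1$, so the required moment is finite. Proposition \ref{p:structureofeiZ} then immediately yields
\[
{\mathbb E}_z\left[e^{i\theta Z_\tau(1)}1_{\{\tau<\infty\}}\right]
= U(\theta)^{z(1)-z(2)}\,e^{i\theta z(2)},\qquad
U(\theta)\doteq {\mathbb E}_{(1,0)}\left[e^{i\theta Z_\tau(1)}1_{\{\tau<\infty\}}\right],
\]
with $(U(\theta),e^{i\theta})\in\mathcal{H}$. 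Since on $\partial B_Z$ one has $Z_\tau(1)=Z_\tau(2)$, the left side is exactly the quantity in \eqref{e:casealphaabs1}. Because $(U(\theta),e^{i\theta})$ lies on ${\mathcal H}$, $U(\theta)$ must be one of the two roots $\beta_1(e^{i\theta}),\beta_2(e^{i\theta})$ of \eqref{e:polinbeta} at $\alpha=e^{i\theta}$.

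The main step is to show the selected root is always $\beta_1$. I would argue this by continuity on the circle. By the dominated convergence theorem (the integrand is bounded by $1_{\{\tau<\infty\}}$), $\theta\mapsto U(\theta)$ is continuous on $[0,2\pi]$. The previous proposition gives continuity of $\theta\mapsto\beta_j(e^{i\theta})$, $j=1,2$, and Proposition \ref{p:betaunequal} guarantees $\beta_1(e^{i\theta})\ne \beta_2(e^{i\theta})$ for every $\theta$. Thus the set $\{\theta:U(\theta)=\beta_1(e^{i\theta})\}$ is both open and closed in $[0,2\pi]$ and hence either empty or all of $[0,2\pi]$. Proposition \ref{p:balayageZalpha1} handles the case $\theta=0$: there $U(0)=P_{(1,0)}(\tau<\infty)=r=\beta_1(1)$. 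Consequently $U(\theta)=\beta_1(e^{i\theta})$ for all $\theta$, which proves \eqref{e:casealphaabs1}.

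For \eqref{e:boundonbeta1}, specialize \eqref{e:casealphaabs1} to $z=(1,0)$: this gives the claimed equality $|\beta_1(e^{i\theta})|=|{\mathbb E}_{(1,0)}[e^{i\theta Z_\tau(2)}1_{\{\tau<\infty\}}]|$. The bound by $r$ follows from the triangle inequality, $|e^{i\theta Z_\tau(2)}|=1$, and Proposition \ref{p:balayageZalpha1}:
\[
\left|\beta_1(e^{i\theta})\right|
\le {\mathbb E}_{(1,0)}\!\left[1_{\{\tau<\infty\}}\right]
= P_{(1,0)}(\tau<\infty)=r.
\]
The delicate point, and the step I expect to require the most care, is the root selection $U=\beta_1$. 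Propositions \ref{p:betaunequal} and the continuity statement \eqref{e:b1funoftheta} were proved precisely to allow this connectedness argument; absent those, one would have to track the branch of $\sqrt{\Delta}$ or perform a direct probabilistic comparison (for instance via Proposition \ref{p:niceresult}), which is substantially less clean.
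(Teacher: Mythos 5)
Your proof is correct and follows essentially the same route as the paper: apply Proposition \ref{p:structureofeiZ} with $\alpha=e^{i\theta}$, note that the expectation must equal one of the two roots $\beta_1,\beta_2$, invoke continuity (dominated convergence for the expectation; the preceding proposition for the $\beta_j$) together with Proposition \ref{p:betaunequal} to rule out jumps between branches, and anchor to the base case via Proposition \ref{p:balayageZalpha1}; the bound \eqref{e:boundonbeta1} then follows from the triangle inequality (the paper phrases this as Jensen). Your explicit open-and-closed connectedness phrasing and the explicit check of the integrability hypothesis are minor refinements of the same argument, and you correctly anchor at $\theta=0$ where the paper's text misspeaks as $\theta=1$.
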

\begin{proof}
The dominated convergence theorem implies that
\begin{equation}\label{e:funcoftheta}
\theta \rightarrow 
{\mathbb E}_z \left[ \exp(i\theta {Z_\tau(2) } 1_{\{ \tau < \infty\}} \right]
\end{equation}
is continuous in $\theta$. 
Proposition \ref{p:structureofeiZ} implies that for each fixed $\theta$
the value of this function equals either
\begin{equation}\label{e:alternative1}
\exp(i\theta z(2))\left(\beta_1\left(e^{i\theta}\right)\right)^{z(1)- z(2)}
\end{equation}
or
\[
\exp(i\theta z(2))\left(\beta_2\left(e^{i\theta}\right)\right)^{z(1)- z(2)}.
\]
\eqref{e:betaunequal} and \eqref{e:b1funoftheta} imply that these two expressions
are continuous as functions of $\theta$ and they are never equal. Then
\eqref{e:funcoftheta} must equal one or the other for all $\theta$ and therefore if one
can verify that \eqref{e:casealphaabs1} equals \eqref{e:alternative1} for a single $\theta$
then the equality must hold for all $\theta$; \eqref{e:casealpha1} asserts the
desired equality at $\theta=1$; \eqref{e:casealphaabs1} follows.
Set $z= (1,0)$ in \eqref{e:casealphaabs1} and take absolute values of both sides:
\begin{align*}
\left|\beta_1(e^{i\theta})\right|
&=
\left|{\mathbb E}_{(1,0)} 
\left[ \exp(i\theta {Z_\tau(2) )} 1_{\{ \tau < \infty\}} \right]\right|
\intertext{$|\cdot|$ is convex; then Jensen's inequality gives}
&\le 
{\mathbb E}_{(1,0)} 
\left[ 1_{\{ \tau < \infty\}} \right]= r,
\end{align*}
where the last equality is \eqref{e:casealpha1} with $z(1) = 1$.
\end{proof}
For two tandem queues, the narrow shaded region
of Figure \ref{f:Mu1Mu2DeltaIntersects} (let's call it ${\mathcal R}$)
shows the set of parameter values that violate \eqref{e:simplifying0} 
(the shaded triangle containing ${\mathcal R}$ 
shows the parameter values of stable tandem walks, i.e.,
the region $\lambda < \mu_1,\mu_2$ where $\lambda = 1-\mu_1-\mu_2$).
\ninsepsc{Mu1Mu2DeltaIntersects}{The set of parameter values violating \eqref{e:simplifying0}
for the two dimensional tandem walk (the narrow shaded region on the left)}{0.4} 
The image of 
$\theta \rightarrow \Delta(e^{i\theta})$, $\theta \in [0,2\pi)$,
becomes a self intersecting curve on ${\mathbb C}$
when \eqref{e:simplifying0} fails and, e.g., the proof of Proposition \ref{p:betaunequal}
(or that of its adaptation to the parameter values in ${\mathcal R}$)
will require a study of $\Re(\Delta)$ over ${\mathcal R}$; we leave this
analysis to future work.

\subsection{$\log$-linear harmonic functions of $Y$}
Let us rewrite \eqref{e:linear} separately for the boundary $\partial_2$
and the interior $B^o -\partial_2$:
\begin{align}
V(y) &= \sum_{v \in {\mathcal V} } V(y + v) p(v), y \in B^o -\partial_2,
\label{e:linearint}\\
V(y) &= 
V(y)\mu_2 + 
\sum_{v \in {\mathcal V}, v(2)\neq -1 } V(y + v) p(v), y \in \partial_2 \cap B^o.
\label{e:linearboundary}
\end{align}
Any $ g\in {\mathcal F}_Z$ satisfies \eqref{e:linearint} (because \eqref{e:linearint}
is the restriction of \eqref{e:linear1} to $B^o - \partial_2$);
\eqref{e:linearint} is linear
and
so any finite linear combination of members of ${\mathcal F}_Z$ continues to satisfy
\eqref{e:linearint}. In the next two subsections we will show that
appropriate linear combinations of members of ${\mathcal F}_Z$ 
will also satisfy the
boundary condition \eqref{e:linearboundary} and define harmonic functions
of $Y$.

Parallel to the definitions in the previous section, we will define characteristic
polynomials and surfaces for $Y$. The constrained process will have a pair of these,
one set for the interior and one set for the boundary $\partial_2$. The interior
characteristic polynomial for $Y$ is by definition that of $Z$, i.e., ${\mathbf p}$
and its interior characteristic surface is ${\mathcal H}$.
The characteristic polynomial of $Y$ and its characteristic surface on the boundary
$\partial_2$ are defined below.

\subsubsection{A single term}\label{ss:asingleterm}
Remember that members of ${\mathcal F}_Z$
are of the form $[(\beta,\alpha),\cdot]:z\rightarrow\beta^{z(1)-z(2)} 
\alpha^{z(2)}$ 
and $(\beta,\alpha) \in {\mathcal H}$; these define harmonic functions for $Z$
and they therefore satisfy \eqref{e:linearint}.
The simplest approach of constructing a $Y$-harmonic function
is to look for $[(\beta,\alpha),\cdot]$ 
which satisfies \eqref{e:linear}, i.e., which satisfies
\eqref{e:linearint} {\em and} \eqref{e:linearboundary} at the same time. 
Substituting $[(\beta,\alpha),\cdot]$ in \eqref{e:linearboundary}
 we see that it solves \eqref{e:linearboundary}
if and only if $(\beta,\alpha) \in {\mathcal H}$ also satisfies
\begin{equation}\label{e:chareqYb2}
{\mathbf p}_{2}(\beta,\alpha) -\beta\alpha = 0
\end{equation}
where
\begin{align}\label{e:hamiltonianb2}
{\mathbf p}_{2}(\beta,\alpha)
&\doteq \beta\alpha 
\left( \sum_{ v\in {\mathcal V}, v(2)\neq -1 } p(v) \beta^{v(1)-v(2)}
\alpha^{v(2)} + \mu_2\right).
\end{align}

We will call
\eqref{e:chareqYb2} ``the characteristic equation of $Y$ on $\partial_2$''
and ${\mathbf p}_{2}$ its characteristic polynomial on the same boundary.
${\mathbf p}_2$ can be expressed in terms of ${\mathbf p}$ as follows:
\begin{align}
{\mathbf p}_2 &= {\mathbf p}(\beta,\alpha) -
 \beta\alpha\left( \sum_{ v\in {\mathcal V}, v(2)= -1 } p(v) 
\beta^{v(1)-v(2)}\alpha^{v(2)} - \mu_2\right)\notag\\
&= {\mathbf p}(\beta,\alpha) -
 \beta\alpha\left( p(2,0)\frac{\beta}{\alpha} + p(2,1)\frac{1}{\alpha}
- \mu_2\right).
 \label{e:intermsofPY}
\end{align}

Define the boundary characteristic surface of $Y$ for $\partial_2$ as
${\mathcal H}_{2} \doteq \{ (\beta,\alpha) \in {\mathbb C}^2: 
{\mathbf p}_{2}(\beta,\alpha) = 0
\}$.

Then,
$(\beta,\alpha)$
must lie on ${\mathcal H} \cap{\mathcal H}_{2}$
for $[(\beta,\alpha),\cdot]$ to be a harmonic function of $Y$.
Suppose $(\beta,\alpha) \in {\mathcal H} \cap{\mathcal H}_{2}$, i.e.,
${\mathbf p}_{2}(\beta,\alpha) = {\mathbf p}(\beta,\alpha) = \beta\alpha$;
and $\beta,\alpha \neq 0$. 
Then, by \eqref{e:intermsofPY} 
\begin{align}
0 &= p(2,1)\frac{1}{\alpha} + p(2,0) \frac{\beta}{\alpha} - \mu_2, \notag \\
\beta &= \frac{1}{p(2,0)}\left( \mu_2\alpha - p(2,1)\right).\label{e:simpleb2}
\end{align}
Substituting this back in \eqref{e:chareqZ} implies that $\alpha$ 
must solve
\[
{\mathbf p}^{r}_{2}(\alpha)
=0,
\]
where
\begin{align*}
&{\mathbf p}^{r}_{2}(\alpha)
\doteq \alpha \Bigg(
\frac{\mu_2}{p(2,0)}
\left( \mu_2 \frac{p(1,0)}{p(2,0)} + p(1,2)\right)\alpha^2\\
&~~+
\left(
p(0,2) + \frac{\mu_2}{p(2,0)}
\left( \mu_2 - 2\frac{p(1,0)p(2,1)}{p(2,0)} -1\right)
- \frac{p(1,2)p(2,1)}{p(2,0)}\right) \alpha\\
&~~~+
p(0,1) +\frac{p(2,1)}{p(2,0)}\left(
1 + \frac{p(1,0) p(2,1)}{p(2,0)} - \mu_2\right)\Bigg);
\end{align*}
(the superscript $r$ stands for ``reduced.'')
Then $[(\beta,\alpha),\cdot]$ is a harmonic function of $Y$ if and only if $\alpha$ is a root
of ${\mathbf p}^r_{2}$ and $\beta$ is defined by \eqref{e:simpleb2}.
The functions $z \rightarrow 1$  and $z\rightarrow 0$ 
are harmonic functions of $Y$  of
the form $z \rightarrow \beta^{z(1)-z(2)} \alpha^{z(1)}$; then
two of the roots of ${\mathbf p}^r_{2}$ are $0$ and $1$ (that $0$ is a root
also directly follows from the form of ${\mathbf p}^r_{2}$).
It follows that the third root is
\[
r_1 \doteq
\frac{
p(0,1) +\frac{p(2,1)}{p(2,0)}\left(
1 + \frac{p(1,0) p(2,1)}{p(2,0)} - \mu_2\right)}
{
\frac{\mu_2}{p(2,0)}
\left( \mu_2 \frac{p(1,0)}{p(2,0)} + p(1,2)\right)
}.
\]
This quantity is always less than $1$ if the first queue is stable:
\begin{lemma}
$\nu_1 < \mu_1$ if and only if $r_1 < 1$.
\end{lemma}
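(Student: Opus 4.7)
The plan is a straightforward algebraic reduction: I aim to show that $r_1<1$ and $\nu_1<\mu_1$ each collapse, after clearing positive denominators, to the same linear inequality in the transition probabilities. To lighten notation I abbreviate $a\doteq p(2,0)$ and $b\doteq p(2,1)$, so that $\mu_2=a+b$.

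First I would simplify $r_1<1$. The denominator of $r_1$ is $(\mu_2/a)\bigl(\mu_2 p(1,0)/a + p(1,2)\bigr)$, which is strictly positive since $a>0$ by \eqref{e:assumptionp20} and $\mu_1>0$ under the irreducibility of $p$. Multiplying $r_1<1$ through by this denominator and by $a^2$, substituting $\mu_2=a+b$ and $\mu_1=p(1,0)+p(1,2)$, and using $1-a-b=\lambda_1+\lambda_2+\mu_1$ (which holds because the increment probabilities in ${\mathcal V}(X)$ sum to $1$), the inequality reduces after cancellation of a $p(1,0)b^2$ term and division by $a$ to
\[
a(\lambda_1-\mu_1) \;+\; b\bigl(\lambda_1+\lambda_2-p(1,0)\bigr) \;<\; 0.
\]

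Second, I would solve the two-dimensional traffic equations \eqref{d:defnu} explicitly. Substituting $\nu_2=\lambda_2+(p(1,2)/\mu_1)\nu_1$ into $\nu_1=\lambda_1+(b/\mu_2)\nu_2$ and isolating $\nu_1$ gives
\[
\nu_1 \;=\; \frac{\mu_1\bigl(\lambda_1(a+b)+b\lambda_2\bigr)}{(a+b)\mu_1 - b\,p(1,2)} \;=\; \frac{\mu_1\bigl(\lambda_1(a+b)+b\lambda_2\bigr)}{a\mu_1 + b\,p(1,0)},
\]
where the last equality uses $\mu_1-p(1,2)=p(1,0)$. Since $a\mu_1+b\,p(1,0)>0$, the inequality $\nu_1<\mu_1$ is equivalent, after clearing this denominator and cancelling a positive factor of $\mu_1$, to $\lambda_1(a+b)+b\lambda_2 < a\mu_1+b\,p(1,0)$, which is exactly the reduced inequality obtained in the first step.

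Combining the two reductions yields the claimed equivalence. The only real obstacle is bookkeeping: tracking the common factor $a^2$ through the simplification of $r_1$ and verifying that every denominator that gets cleared is strictly positive under the standing hypotheses \eqref{e:assumptionp20} and $\mu_1>0$. There is no probabilistic input beyond these nondegeneracy conditions and the traffic equation; the content of the lemma is that the algebraic quantity $r_1$, arising from the reduced boundary characteristic polynomial ${\mathbf p}^r_2$, encodes precisely the stability condition for node $1$.
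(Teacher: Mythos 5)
Your proof is correct and takes essentially the same route as the paper: both reduce $r_1<1$ through the same chain of algebraic cancellations to the linear inequality $(a+b)\lambda_1+b\lambda_2<a\mu_1+bp(1,0)$, and then identify it with $\nu_1<\mu_1$ via the traffic equations. The only cosmetic difference is organizational — you solve the $2\times 2$ traffic equations explicitly for $\nu_1$ and meet the reduced inequality in the middle, whereas the paper pushes a single chain of equivalences from $r_1<1$ all the way down and divides by $1-\frac{p(1,2)p(2,1)}{\mu_1\mu_2}$ at the final step.
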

\begin{proof}
$r_1 <1 $ is equivalent to
\begin{align}\label{e:boundonr_1}
p(0,1) + \frac{p(2,1)}{p(2,0)}  ( 1-\mu_2)
&< p(1,0) + \frac{2p(2,1)p(1,0)}{p(2,0)} 
+ p(1,2) + \frac{p(2,1)p(1,2)}{p(2,0)} \notag
\intertext{substitute $\mu_1 + p(0,1) + p(0,2)$ for $1-\mu_2$, multiply
both sides by $p(2,0)$ and cancel out equal terms from both sides:}
\mu_2 p(0,1) + p(2,1)p(0,2) &< \mu_1 p(2,0) + p(2,1)p(1,0)\\
\mu_2 p(0,1) + p(2,1)p(0,2) &< p(1,2) p(2,0) + \mu_2p(1,0)\notag\\
p(0,1) + \frac{p(2,1)}{\mu_2}p(0,2) &< p(1,2)\frac{p(2,0)}{\mu_2} + p(1,0);
\notag
\end{align}
divide both sides by $1 - \frac{p(1,2)p(2,1)}{\mu_1 \mu_2}$
to get $\nu_1 < \mu_1$.
This establishes the ``only if'' part of the statement of the lemma.
The last sequence of inequalities in reverse gives the ``if'' part.
\end{proof}
And thus we get our first nontrivial harmonic function for $Y$:
\begin{proposition}\label{p:singletermharmY}
The function
\begin{equation}\label{e:singletermharmfY}
z\rightarrow
\beta(r_1)^{z(1) - z(2)} r_1^{z(1)} 
\end{equation}
is a harmonic function of $Y$ where 
\begin{equation*}%\label{e:defBetar}
\beta(\alpha) \doteq \frac{1}{p(2,0)}\left( \mu_2\alpha - p(2,1)\right)
\end{equation*} is the right side
of \eqref{e:simpleb2}.  Furthermore $0 < \beta(r_1) < 1$.
\end{proposition}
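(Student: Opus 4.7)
The two statements to verify are (i) that the given $\log$-linear function is $Y$-harmonic and (ii) that $0 < \beta(r_1) < 1$. Step (i) is essentially a corollary of the construction that immediately precedes the proposition, so the real work is in step (ii), and of that the lower bound is where the actual computation lies.

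For (i), the preceding derivation establishes that a characteristic pair $(\beta,\alpha)$ with $\alpha$ a root of ${\mathbf p}^r_{2}$ and $\beta$ defined via \eqref{e:simpleb2} lies simultaneously on ${\mathcal H}$ and ${\mathcal H}_2$. Taking $\alpha=r_1$ and $\beta=\beta(r_1)$, the corresponding $\log$-linear function automatically satisfies \eqref{e:linearint} via ${\mathcal H}$-membership (this is the interior harmonicity equation for $Z$, restricted to $B^o-\partial_2$) and \eqref{e:linearboundary} via ${\mathcal H}_2$-membership (this is the boundary harmonicity equation on $\partial_2\cap B^o$). Together these are exactly \eqref{e:linear} on all of $B^o$, so the function is $Y$-harmonic, and no extra work beyond quoting the construction is needed.

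For (ii), I use $\beta(r_1) = (\mu_2 r_1 - p(2,1))/p(2,0)$ together with the two-dimensional identity $\mu_2 = p(2,0)+p(2,1)$. Then the upper bound $\beta(r_1)<1$ is equivalent to $r_1 < 1$, which is precisely the lemma just proved under $\nu_1<\mu_1$, itself part of the standing stability hypothesis \eqref{e:stable}. For the lower bound $\beta(r_1)>0$, equivalently $r_1 > p(2,1)/\mu_2$, I would substitute the explicit formula for $r_1$, multiply through by the positive denominator, and simplify using $\mu_2 - p(2,1) = p(2,0)$ and $\mu_1 = p(1,0)+p(1,2)$; the resulting inequality collapses to
\[
p(0,1) + \frac{p(2,1)}{p(2,0)}\bigl(1-\mu_1-\mu_2\bigr) > 0.
\]
Since $1-\mu_1-\mu_2 = \lambda_1+\lambda_2 \ge 0$ and \eqref{e:someonecomes} forces some $\lambda_j>0$, this is immediate when $\lambda_1>0$; in the residual subcase $\lambda_1=0$, $\lambda_2>0$ the irreducibility assumption on $p$ forces $p(2,1)>0$ (otherwise queue $1$ would never receive mass), so the second summand is strictly positive. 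The main obstacle is therefore purely algebraic bookkeeping on the way to this last elementary inequality.
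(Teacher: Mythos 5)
Your proof follows the same route as the paper's: harmonicity is delegated to the derivation immediately preceding the proposition, the upper bound $\beta(r_1)<1$ is read off from $r_1<1$ via $\mu_2=p(2,0)+p(2,1)$, and the lower bound $\beta(r_1)>0$ is reduced by algebra of the same flavor as the preceding lemma. The only difference is that you actually carry the algebra to the clean terminal inequality $p(0,1)+\tfrac{p(2,1)}{p(2,0)}(1-\mu_1-\mu_2)>0$ and dispose of the degenerate subcase using irreducibility and \eqref{e:someonecomes}, whereas the paper merely asserts that $\beta(r_1)>0$ ``follows from a sequence of inequalities similar to \eqref{e:boundonr_1}.''
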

\begin{proof}
It remains only to prove the last part of the proposition's statement.
$r_1 < 1$ implies
$r_1 \mu_2 < \mu_2 = p(2,0) + p(2,1)$ or, what is the same,
\[
\frac{1}{p(2,0)} \left( r_1 \mu_2 - p(2,1)\right) < 1;
\]
this is $\beta(r_1) < 1$. The inequality
$\beta(r_1) > 0$ 
turns out to be true
for all $p$ as long as $p(2,0) >0$ 
and follows from a sequence of inequalities similar to \eqref{e:boundonr_1}.
\end{proof}

\subsubsection{Two terms}\label{ss:twoterms}
Define the boundary operator $D_2$ acting on functions on ${\mathbb Z}^2$ and giving
functions on $\partial_2$:
\begin{align*}
&D_2V = g,~~~~ V: {\mathbb Z}^2 \rightarrow {\mathbb C},\\
& g( {\mathrm y},0) \doteq
\left(\mu_2  +  \sum_{ v\in {\mathcal V}, v(2)\neq -1 } p(v) V( ({\mathrm y},0) + v) \right)
- V({\mathrm y},0),~~ {\mathrm y} \in {\mathbb Z};
\end{align*}
(if $V$ is defined on a subset of ${\mathbb Z}^2$ one may extend it trivially to
all of ${\mathbb Z}^2$ to apply $D_2$).
$D_2$ is the difference between the left and the right sides of \eqref{e:linearboundary}
and gives how much $V$ deviates from being $Y$-harmonic
along the boundary $\partial_2$:
\begin{lemma}\label{l:D2eq0}
 $D_2 V = 0$ 
if and only if $V$ is a harmonic function of $Y$ on $\partial_2$.
\end{lemma}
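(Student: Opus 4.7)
The lemma is essentially an unpacking of definitions: the operator $D_2$ was constructed precisely as the residual of the boundary harmonicity equation \eqref{e:linearboundary}, so both directions of the biconditional will follow by inspection once the symbols are lined up correctly.

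The plan is to fix an arbitrary ${\mathrm y} \in {\mathbb Z}$ with $({\mathrm y},0) \in \partial_2 \cap B^o$ and to rewrite \eqref{e:linearboundary} at the point $y = ({\mathrm y},0)$ in the form
\[
V({\mathrm y},0)\mu_2 + \sum_{v \in {\mathcal V},\, v(2) \neq -1} p(v)\, V\bigl(({\mathrm y},0) + v\bigr) - V({\mathrm y},0) = 0.
\]
The left-hand side of this identity is, by definition, $(D_2 V)({\mathrm y},0)$. Thus the boundary equation \eqref{e:linearboundary} holds at every point of $\partial_2 \cap B^o$ if and only if $D_2 V$ vanishes identically on $\partial_2$, which is precisely the statement $D_2 V = 0$.

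For the ``if'' direction I would simply reverse the rearrangement: if $(D_2 V)({\mathrm y},0) = 0$ for all ${\mathrm y} \in {\mathbb Z}$ such that $({\mathrm y},0) \in \partial_2 \cap B^o$, then moving $V({\mathrm y},0)$ to the other side recovers \eqref{e:linearboundary}, which is the defining relation for $V$ to be $Y$-harmonic on $\partial_2$. For the ``only if'' direction the same algebra runs the other way.

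There is no real obstacle here; the only subtlety is parsing the definition of the operator $D_2$ carefully in light of \eqref{e:linearboundary} (the $\mu_2$ term must be read as the coefficient of $V({\mathrm y},0)$ that arises from the self-loop at the constrained boundary, consistent with how \eqref{e:linearboundary} was split off from \eqref{e:linear}). Once this identification is made, the lemma is immediate and the proof amounts to a single line of algebraic rearrangement.
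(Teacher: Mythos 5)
Your proof is correct and matches the paper's approach, which simply invokes ``the definitions involved'' without writing out the algebra: both directions are a one-line rearrangement of \eqref{e:linearboundary} against the definition of $D_2$. You were also right to flag the parsing issue with the $\mu_2$ term in the printed definition of $D_2V$ — reading it as $\mu_2 V({\mathrm y},0)$ (rather than a bare constant $\mu_2$) is what \eqref{e:linearboundary} and the later identity \eqref{e:imagofD2} require, and the lemma would be false under the literal reading.
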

The proof follows from the definitions involved.
For $(\beta,\alpha ) \in {\mathbb C}^2$ and
$\beta,\alpha \neq 0$
\[
\left[D_2 \left( [({\beta,\alpha}),\cdot] \right)\right]({\mathrm y},0) =  
\left( \frac{1}{\beta\alpha} {\mathbf p}_2(\beta,\alpha) - 1\right) \beta^{{\mathrm y}}.
\]
where the left side denotes the value of the function 
$D_2 \left( [(\beta,\alpha),\cdot] \right)$ at $({\mathrm y},0)$, 
${\mathrm y} \in {\mathbb Z}$.
For $(\beta,\alpha) \in {\mathcal H}$, ${\mathbf p}(\beta,\alpha)=\beta\alpha$;
this, the last display and \eqref{e:intermsofPY}
imply
\begin{equation}\label{e:imagofD2}
\left[D_2 \left( [(\beta,\alpha),\cdot] \right)\right]({\mathrm y},0) =  
\left(\mu_2- \left( p(2,0)\frac{\beta}{\alpha} + p(2,1)\frac{1}{\alpha}\right)\right)
\beta^{{\mathrm y}}
\end{equation}
if $(\beta,\alpha) \in {\mathcal H}$.
One can write the function $({\mathrm y},0) \rightarrow \beta^{{\mathrm y}}$
as $[(\beta,\alpha),\cdot] |_{\partial_2} = [(\beta,1),\cdot] |_{\partial_2}$;
in addition, define
\begin{equation}\label{e:defC2d}
C(\beta,\alpha) \doteq \mu_2-  \left( p(2,0)\frac{\beta}{\alpha} + p(2,1)\frac{1}{\alpha}
\right).
\end{equation}
With these, rewrite \eqref{e:imagofD2}
as
\begin{equation}\label{e:imageofD2short}
D_2 \left( [(\beta,\alpha),\cdot] \right)  = 
C(\beta,\alpha) [(\beta,1),\cdot]|_{\partial_2}.
\end{equation}
The key observation here
is this: $D_2\left( [(\beta,\alpha),\cdot]\right)$ is a constant
multiple of $[(\beta,1),\cdot] |_{\partial_2}$.
This and the linearity of $D_2$ imply that for
\begin{equation}\label{e:conjugacy}
\alpha_1 \neq \alpha_2, ~~(\beta,\alpha_1), (\beta,\alpha_2)  \in {\mathcal H},
\end{equation}
$[(\beta,\alpha_1),\cdot]$ and $[(\beta,\alpha_2),\cdot]$
can be linearly combined to cancel out
each other's value under $D_2$. 
We will call $(\beta,\alpha_1)$ and $(\beta,\alpha_2)$  {\em conjugate}
if they satisfy \eqref{e:conjugacy}.
An example:
the two end points of the dashed line in Figure \ref{f:charsurfexample}
are conjugate to each other.

Because the characteristic equation \eqref{e:chareqZ}
is quadratic in $\alpha$, 
fixing $\beta$ in \eqref{e:chareqZ}
and solving for $\alpha$ will give a conjugate pair 
$(\beta,\alpha_1)$ and $(\beta,\alpha_2)$
satisfying
\begin{equation}\label{e:conjalphas}
\alpha_1 + \alpha_2 = \frac{ \beta - p(1,0) \beta^2 - p(0,1)}{p(0,2) + \beta p(1,2)}
\end{equation}
for most $\beta \in {\mathbb C}$; the next proposition uses these
conjugate pairs and the above observation to define harmonic functions of $Y$:
\begin{proposition}\label{p:harmonicYtwoterms2d}
Suppose that for $\beta \in {\mathbb C}$,
$\beta \neq 0$,
$p(0,2) + \beta p(1,2) \neq 0$.
Then
\begin{equation}\label{e:harmonicY2d}
h_\beta \doteq  C(\beta,\alpha_2) [(\beta,\alpha_1),\cdot]
- C(\beta,\alpha_1) [(\beta,\alpha_2),\cdot]
\end{equation}
is a harmonic function of $Y$.
\end{proposition}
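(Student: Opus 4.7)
The plan is to verify the two defining equations of $Y$-harmonicity \eqref{e:linearint} and \eqref{e:linearboundary} separately for $h_\beta$, exploiting linearity and the precise form \eqref{e:imageofD2short} of the boundary defect operator $D_2$ on $\log$-linear functions coming from $\mathcal{H}$.

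First, I would observe that under the hypothesis $p(0,2) + \beta p(1,2) \neq 0$, the representation \eqref{e:polinalpha} of ${\mathbf p}(\beta,\alpha) - \beta\alpha$ as a polynomial in $\alpha$ with $\beta$ fixed is genuinely of degree two, so it has two roots $\alpha_1,\alpha_2$ (summing as in \eqref{e:conjalphas}); these are the conjugate pair satisfying \eqref{e:conjugacy}, and both lie on $\mathcal{H}$ by construction. The assumption $\beta \neq 0$ ensures the $\log$-linear functions $[(\beta,\alpha_i),\cdot]$ are well defined and nontrivial.

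Second, I would establish interior harmonicity of $h_\beta$. By the proposition preceding the definition \eqref{d:classFZ} of $\mathcal{F}_Z$, any $(\beta,\alpha) \in \mathcal{H}$ makes $[(\beta,\alpha),\cdot]$ a $Z$-harmonic function, i.e., a solution to \eqref{e:linear1}. Restricting to $B^o - \partial_2$, where $Z$-dynamics and $Y$-dynamics coincide, each $[(\beta,\alpha_i),\cdot]$ satisfies the interior equation \eqref{e:linearint}. Since \eqref{e:linearint} is linear and $h_\beta$ is a linear combination of two such functions, $h_\beta$ satisfies \eqref{e:linearint}.

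Third, and this is the content of the construction, I would verify the boundary harmonicity via $D_2$. Because $(\beta,\alpha_1),(\beta,\alpha_2) \in \mathcal{H}$, formula \eqref{e:imageofD2short} applies to each term, giving
\[
D_2\bigl([(\beta,\alpha_j),\cdot]\bigr) = C(\beta,\alpha_j)\,[(\beta,1),\cdot]\big|_{\partial_2},\qquad j=1,2.
\]
Using linearity of $D_2$,
\[
D_2 h_\beta = C(\beta,\alpha_2)\,C(\beta,\alpha_1)\,[(\beta,1),\cdot]\big|_{\partial_2} - C(\beta,\alpha_1)\,C(\beta,\alpha_2)\,[(\beta,1),\cdot]\big|_{\partial_2} = 0.
\]
By Lemma \ref{l:D2eq0} this is equivalent to $h_\beta$ satisfying \eqref{e:linearboundary} on $\partial_2 \cap B^o$. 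Combined with the interior harmonicity from the previous step, $h_\beta$ satisfies \eqref{e:linear}, i.e., is $Y$-harmonic.

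There is no real obstacle; the only thing to watch is that the bookkeeping assumptions $\beta \neq 0$ and $p(0,2) + \beta p(1,2) \neq 0$ are exactly what is needed to guarantee a genuine conjugate pair on $\mathcal{H}$ with fixed first coordinate $\beta$, so that the cancellation in the last display is not vacuous. The entire proof is then a two-line application of \eqref{e:imageofD2short} and linearity.
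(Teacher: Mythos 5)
Your proof is correct and follows essentially the same route as the paper: interior harmonicity from $Z$-harmonicity and linearity of \eqref{e:linearint}, boundary harmonicity from the key cancellation identity \eqref{e:imageofD2short} together with Lemma \ref{l:D2eq0}. The only extra content in your write-up is the (accurate) explanatory remark about why the hypotheses $\beta \neq 0$ and $p(0,2) + \beta p(1,2) \neq 0$ guarantee a genuine conjugate pair on $\mathcal{H}$, which the paper leaves implicit.
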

\begin{proof}
By assumption
$(\beta,\alpha_1), (\beta,\alpha_2)$ are both on ${\mathcal H}$ and
therefore $[(\beta,\alpha_1),\cdot]$ and $[(\beta,\alpha_2),\cdot]$ are
 harmonic functions of $Z$ and in particular, they both satisfy
\eqref{e:linearint}. Then their linear combination $h_\beta$ also
satisfies \eqref{e:linearint}, because \eqref{e:linearint}  is linear in $V$.
It remains to show that $h_\beta$ solves \eqref{e:linearboundary} as well.
$\beta \neq 0$ implies $\alpha_1,\alpha_2 \neq 0,1$. Then 
\eqref{e:imageofD2short} implies
\begin{align*}
D_2(h_\beta) &= 
 C(\beta,\alpha_2) 
D_2( [(\beta,\alpha_1),\cdot]) -  C(\beta,\alpha_1) D_2([\beta,\alpha_2,\cdot])\\
&= 
C(\beta,\alpha_2) 
C(\beta,\alpha_1)  [(\beta,1),\cdot]|_{\partial_2}
-
C(\beta,\alpha_1)
C(\beta,\alpha_2)  [(\beta,1),\cdot]|_{\partial_2}\\
& = 0
\end{align*}
and
Lemma \ref{l:D2eq0} implies that $h_\beta$ satisfies \eqref{e:linearboundary}.
\end{proof}
\begin{remark}\label{r:specialcase}{\em
If we set $\beta = \beta(r_1)$
in the last proposition $h_\beta$ reduces to a constant
multiple of \eqref{e:singletermharmfY}.
}
\end{remark}

With Proposition \ref{p:harmonicYtwoterms2d} 
 we define our basic class of harmonic functions of $Y$:
\begin{equation}\label{d:classFY}
{\mathcal F}_Y \doteq \{ h_\beta, \beta \neq 0, p(0,2) + \beta p(1,2) \neq 0 \}.
\end{equation}

Members of ${\mathcal F}_Y$ consist of linear combinations of $\log$-linear functions;
with a slight abuse of language, we will also refer to such functions as
$\log$-linear.

\begin{lemma}
Suppose $p(0,2) + \beta p(1,2) \neq 0$
so that \eqref{e:conjalphas} makes sense.
Suppose further that
$(\beta,\alpha_1) \in {\mathcal H}$,
$(\beta,\alpha_2) \in {\mathcal H}$
are conjugate with 
$\alpha_1, \alpha_2 \neq 0$.
Then \eqref{e:conjalphas} is equivalent to
\begin{equation}\label{e:conjugator0}
\alpha_i = \frac{1}{\alpha_{3-i}}\frac{p(2,0)\beta^2 + p(2,1) \beta}{p(0,2)+ \beta p(1,2)},
i \in \{1,2\}.
\end{equation}
\end{lemma}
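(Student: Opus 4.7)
The plan is to read off both identities from Vieta's formulas applied to the characteristic equation, viewed as a quadratic in $\alpha$ with $\beta$ held fixed. Using the expansion \eqref{e:polinalpha}, the condition ${\mathbf p}(\beta,\alpha) - \beta\alpha = 0$ becomes
\begin{equation*}
(p(0,2) + \beta p(1,2))\alpha^2 + (p(0,1) + p(1,0)\beta^2 - \beta)\alpha + (p(2,0)\beta^2 + p(2,1)\beta) = 0.
\end{equation*}
The hypothesis $p(0,2) + \beta p(1,2) \neq 0$ ensures this is genuinely quadratic, and the hypothesis that $(\beta,\alpha_1)$ and $(\beta,\alpha_2)$ both lie on ${\mathcal H}$ says precisely that $\alpha_1$ and $\alpha_2$ are its two roots.

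First I would write down the two Vieta relations. The sum-of-roots formula gives
\begin{equation*}
\alpha_1 + \alpha_2 = -\frac{p(0,1) + p(1,0)\beta^2 - \beta}{p(0,2) + \beta p(1,2)} = \frac{\beta - p(1,0)\beta^2 - p(0,1)}{p(0,2) + \beta p(1,2)},
\end{equation*}
which is exactly \eqref{e:conjalphas}. The product-of-roots formula gives
\begin{equation*}
\alpha_1 \alpha_2 = \frac{p(2,0)\beta^2 + p(2,1)\beta}{p(0,2) + \beta p(1,2)}.
\end{equation*}

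Next I would use the assumption $\alpha_1,\alpha_2 \neq 0$ to divide through by $\alpha_{3-i}$ in the product relation, which immediately rearranges to \eqref{e:conjugator0}. Conversely, multiplying \eqref{e:conjugator0} by $\alpha_{3-i}$ recovers the product-of-roots identity. So \eqref{e:conjalphas} and \eqref{e:conjugator0} are just the sum-of-roots and product-of-roots Vieta identities rewritten, and under the standing hypotheses each is equivalent to the statement that $\alpha_1,\alpha_2$ are the two roots of the quadratic, hence to each other.

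There is no substantive obstacle here; the only point requiring a word of justification is the non-vanishing of the leading coefficient (handled by the assumption on $p(0,2) + \beta p(1,2)$) and the non-vanishing of $\alpha_{3-i}$ (needed to pass from the symmetric product form to the asymmetric form \eqref{e:conjugator0}), both of which are built into the hypotheses of the lemma.
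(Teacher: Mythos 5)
Your proof is correct, and it takes a genuinely different route from the paper. The paper's proof starts directly from \eqref{e:conjalphas}, isolates $\alpha_i$, puts everything over a common denominator after multiplying numerator and denominator by $\alpha_{3-i}$, and then invokes the characteristic equation for $(\beta,\alpha_{3-i})$ to simplify the numerator down to $p(2,0)\beta^2 + p(2,1)\beta$; the manipulation is reversible, which gives the stated equivalence. You instead note that once the conjugacy hypothesis places $\alpha_1 \neq \alpha_2$ both on ${\mathcal H}$, they are \emph{the} two roots of the quadratic \eqref{e:polinalpha}, so Vieta's sum and product formulas both hold, and these are exactly \eqref{e:conjalphas} and \eqref{e:conjugator0} respectively. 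The Vieta framing is cleaner and makes the symmetry between the two identities transparent; the paper's direct computation is more self-contained and highlights the role of the characteristic equation ${\mathbf p}(\beta,\alpha_{3-i}) = \beta\alpha_{3-i}$ as the engine of the simplification. One small wording quibble: your closing sentence claims that \emph{each} of \eqref{e:conjalphas} and \eqref{e:conjugator0} is "equivalent to the statement that $\alpha_1,\alpha_2$ are the two roots," which isn't literally right — the sum (or product) formula alone does not force two given numbers to be roots. But under the lemma's standing hypotheses both formulas are simply true, so the equivalence of the two displays follows trivially, and your argument stands.
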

\begin{proof}
By \eqref{e:conjalphas}
\begin{align*}
\alpha_i
&= \frac{\alpha_{3-i}(\beta - p(1,0) \beta^2  -p(0,1))}{\alpha_{3-i}(p(0,2) + \beta p(1,2))} -\alpha_{3-i}\\
&= \frac{\alpha_{3-i}\beta - p(1,0) \beta^2  -p(0,1) -\alpha_1^2 p(0,2) - \beta\alpha_{3-i}^2p(1,2)}
{\alpha_{3-i}(p(0,2) + \beta p(1,2))} 
\intertext{ $(\alpha_{3-i},\beta) \in {\mathcal H}$ implies}
&=\frac{1}{\alpha_{3-i}}\frac{p(2,0)\beta^2 + p(2,1) \beta}{p(0,2)+ \beta p(1,2)}.
\end{align*}
\end{proof}

Define
\begin{equation}\label{e:conjugator}
\boldsymbol \alpha(\beta,\alpha) \doteq 
\frac{1}{\alpha}\frac{p(2,0) \beta^2+ p(2,1) \beta}{p(0,2)+ \beta p(1,2)}
\end{equation}
We can write \eqref{e:conjugator0} as
\[
\alpha_{i} = \boldsymbol \alpha (\alpha_{3-i}).
\]
The map ${\boldsymbol\alpha}$
is invertable (it is a multiple of $\alpha^{-1}$)
 and its inverse equals itself. Thus, conjugacy is symmetric:
if $(\beta,\alpha_2)$
is conjugate to $(\beta,\alpha_2)$, then $(\beta,\alpha_1)$ is conjugate to
$(\beta,\alpha_2)$. We will sometimes refer to $\boldsymbol \alpha$ as 
{\em conjugator.}

\subsection{Graph representation of $\log$-linear harmonic functions of $Y$}
Figure \ref{f:graphsharmtex}
gives a graph representation of the harmonic functions developed
in the last subsection.

\begin{figure}[h]
\begin{center}
\scalebox{0.8}{
\centerline{\input{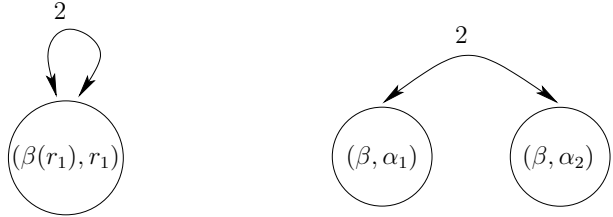}}}
\end{center}

\vspace{-0.65cm}
\caption{\hspace{0.25cm}The harmonic functions of $Y$\label{f:graphsharmtex}}
\end{figure}

Each node in this figure represents a member of ${\mathcal F}_Z$. 
The edges represent the boundary conditions; in this case there is only
one, \eqref{e:linearboundary} of $\partial_2$, and the edge label ``$2$''
 refers to $\partial_2$.
A self connected vertex represents a member of ${\mathcal F}_Z$ that also
satisfies the $\partial_2$ boundary condition \eqref{e:linearboundary}, i.e.,
$z \rightarrow \beta(r_1)^{z(1)} r_1^{z(1)-z(2)}$ 
of Proposition \ref{p:singletermharmY};
the graph on the left represents exactly this function. The ``$2$'' labeled
edge on the right represents the conjugacy relation
\eqref{e:conjugator0} between $\alpha_1$ and $\alpha_2$, which allows these functions
to be linearly combined to satisfy the harmonicity condition of $Y$ on $\partial_2$.
\subsection{$\partial B$-determined harmonic functions of $Y$}
\label{ss:partialBdet}

Our task now is to distinguish a collection of $\partial B$-determined
members of ${\mathcal F}_Y.$
This  collection will form a basis
of harmonic functions with which we will approximate/ represent
the rest of the $\partial B$-determined functions of $Y$.

\begin{proposition}\label{p:balayagesimple}
Let $\alpha_1$, $\alpha_2$ and $\beta$ be as in Proposition
\ref{p:harmonicYtwoterms2d}. 
If 
\begin{equation}\label{e:simpleconditions}
|\beta| < 1,~~~|\alpha_1|,|\alpha_2|  \le 1
\end{equation}
then $h_\beta$ of \eqref{e:harmonicY2d} is $\partial B$-determined.
\end{proposition}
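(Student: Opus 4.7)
The plan is to run optional sampling on the bounded martingale $h_\beta(Y_{\tau \wedge k})$ and identify its almost sure limit as $h_\beta(Y_\tau)1_{\{\tau<\infty\}}$, which is exactly the representation characterizing $\partial B$-determinacy with $f \doteq h_\beta|_{\partial B}$. The starting point is the pointwise bound
\[
|h_\beta(y)| \le M|\beta|^{y(1)-y(2)}, \qquad M \doteq |C(\beta,\alpha_1)| + |C(\beta,\alpha_2)|,
\]
which follows directly from \eqref{e:harmonicY2d} once one uses $y(2) \ge 0$ and $|\alpha_1|,|\alpha_2| \le 1$ to get $|\alpha_i^{y(2)}| \le 1$. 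Because $|\beta|<1$ and $y(1)-y(2) \ge 0$ on $B$, this shows $h_\beta$ is bounded on $B$ and, crucially, decays to zero along any sequence with $y(1)-y(2)\to\infty$.

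By Proposition \ref{p:harmonicYtwoterms2d} the function $h_\beta$ is $Y$-harmonic on $B^o$, so $M_k \doteq h_\beta(Y_{\tau\wedge k})$ is a bounded (complex-valued) martingale under $P_y$. Bounded martingale convergence applied to its real and imaginary parts yields $M_k \to M_\infty$ almost surely and in $L^1$, hence $h_\beta(y) = \mathbb{E}_y[M_\infty]$. On $\{\tau<\infty\}$ the process $M_k$ is constant for $k \ge \tau$, so $M_\infty = h_\beta(Y_\tau)$ there; the proposition will follow once I verify $M_\infty = 0$ almost surely on $\{\tau=\infty\}$. By the decay bound above, this reduces to showing that $W_k \doteq Y_k(1) - Y_k(2) \to \infty$ almost surely on $\{\tau = \infty\}$.

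This last claim, the main step, I would prove by a conditional Borel--Cantelli argument. By \eqref{e:someonecomes} at least one of $\lambda_1,\lambda_2$ is positive; set $c \doteq \max(\lambda_1,\lambda_2) > 0$. The two $Y$-increments $-e_1$ and $e_2$, with probabilities $\lambda_1$ and $\lambda_2$, both decrease $W$ by one and are never blocked by $\pi_1$ (the first has $v(2)=0$ and the second $v(2)=+1$). Thus from any state with $W \le m$, the probability of reaching $\partial B$ within the next $m$ steps is at least $c^m > 0$, and the strong Markov property gives $P_y(\tau \le k+m \mid \mathcal{F}_k) \ge c^m$ on $\{W_k \le m\}$. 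Applying a conditional Borel--Cantelli argument at the successive visits to $\{W \le m\}$ spaced at least $m$ apart yields $\{W_k \le m \text{ i.o.}\} \subseteq \{\tau<\infty\}$ up to null sets; the union over $m \in \mathbb{N}$ gives $W_k \to \infty$ a.s.\ on $\{\tau=\infty\}$, hence $h_\beta(Y_k) \to 0$ and $M_\infty = 0$ there. The main obstacle is precisely this Borel--Cantelli step: the transience of $Y$ alone does not exclude escape to infinity along directions parallel to $\partial B$, so the argument genuinely needs the uniform positive lower bound on the probability of being pushed onto $\partial B$ whenever $W$ is small.
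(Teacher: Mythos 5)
Your proof is correct, but it takes a route genuinely different from the paper's. The paper stops the bounded martingale at $\tau\wedge\zeta_n$, which Proposition~\ref{p:cantwanderforever} shows is a.s.\ finite, applies optional sampling, and then lets $n\to\infty$: the term involving $h_\beta(Y_{\zeta_n})$ is killed by $|\beta|^n\to 0$ because $Y_{\zeta_n}(1)-Y_{\zeta_n}(2)=n$. You instead invoke bounded martingale convergence directly for $M_k=h_\beta(Y_{\tau\wedge k})$ and then identify the a.s.\ limit, which forces you to establish the stronger pathwise statement that $W_k=Y_k(1)-Y_k(2)\to\infty$ a.s.\ on $\{\tau=\infty\}$. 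Your conditional Borel--Cantelli argument for that step is sound: both $-e_1$ (probability $\lambda_1$) and $e_2$ (probability $\lambda_2$) decrease $W$ by one and are never clipped by $\pi_1$, so from any state with $W\le m$ the probability of reaching $\partial B$ within $m$ steps is at least $(\lambda_1\vee\lambda_2)^m>0$; with visits spaced $\ge m$ apart the product bound $(1-c^m)^J\to 0$ does the rest. The underlying probabilistic input is essentially the same geometric hitting estimate that the paper uses to prove Proposition~\ref{p:cantwanderforever}, so the two proofs hinge on the same phenomenon, but the paper's version needs only the weaker assertion $\tau\wedge\zeta_n<\infty$ a.s.\ and pushes the ``decay at infinity'' into an explicit $|\beta|^n$ bound along the sliced exit sets $\{W=n\}$, whereas yours establishes the a.s.\ divergence $W_k\to\infty$ on $\{\tau=\infty\}$ and then lets the bound $|h_\beta|\le M|\beta|^W$ finish. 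Your route is marginally more self-contained (it does not require introducing the auxiliary family $\zeta_n$), while the paper's is arguably more economical given that Proposition~\ref{p:cantwanderforever} was already available.
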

\begin{proof}
By Proposition \ref{p:harmonicYtwoterms2d}  $h_\beta$ is  a
$Y$-harmonic function;
\eqref{e:simpleconditions} and its definition \eqref{e:harmonicY2d}
imply that $h_\beta$ is also bounded on $B^o$. 
Then
$M_k  =  h_\beta(Y_{\tau \wedge \zeta_n \wedge k})$ is a bounded martingale. 
This, Proposition \ref{p:cantwanderforever} and the optional
sampling theorem imply
\begin{align}\label{e:resofopsamp}
h_\beta(y) &= 
{\mathbb E}_y
\left[
h_\beta(Y_{\tau}) 1_{\{\tau < \zeta_n\}} \right]
+ {\mathbb E}_y\left[
h_\beta(Y_{\zeta_n}) 
1_{\{ \zeta_n \le \tau \}} \right], y \in B^o.
\end{align}
$Y_{\zeta_n}(1) = n$ for $\tau > \zeta_n.$ This and \eqref{e:simpleconditions}
imply 
\[
\lim_{n\rightarrow \infty}
 {\mathbb E}_y\left[
h_\beta(Y_{\zeta_n}) 
1_{\{ \zeta_n \le \tau \}} \right]
\le \lim_{n\rightarrow \infty} \beta^n = 0.
\]
This, $\lim_n \zeta_n = \infty$ and letting $n\rightarrow \infty$
in \eqref{e:resofopsamp}
give
\[
h_\beta(y) ={\mathbb E}_y \left[ h_\beta(Y_{\tau}) 1_{\{\tau <\infty \}} 
\right],
\]
i.e, $h_\beta$ is $\partial B$-determined.
\end{proof}

Remark \ref{r:specialcase}, Proposition \ref{p:singletermharmY} and the last
proposition imply
\begin{proposition}\label{p:partialBdetermined}
The harmonic function \eqref{e:singletermharmfY} is $\partial B$-determined.
\end{proposition}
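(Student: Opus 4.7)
The plan is to mimic the optional stopping argument used in the proof of Proposition \ref{p:balayagesimple}, applied directly to the single-term function $V(z) = \beta(r_1)^{z(1)-z(2)} r_1^{z(1)}$ of \eqref{e:singletermharmfY}. One could instead try to quote Proposition \ref{p:balayagesimple} via Remark \ref{r:specialcase}, but this would force a separate verification that $|\alpha_2| \le 1$ for the conjugate partner of $r_1$; the direct martingale argument avoids this and uses only the information we already have about $\beta(r_1)$ and $r_1$.

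The key ingredients are already in place: Proposition \ref{p:singletermharmY} tells us that $V$ is $Y$-harmonic with $0 < \beta(r_1) < 1$, and the preceding lemma combined with $\nu_1 < \mu_1$ gives $0 < r_1 < 1$. For $y \in B$ we have $y(1) - y(2) \ge 0$ and $y(1) \ge 0$, so both exponents are non-negative and $0 \le V(y) \le 1$. Consequently $M_k \doteq V(Y_{\tau \wedge \zeta_n \wedge k})$ is a bounded martingale, and Proposition \ref{p:cantwanderforever} together with the optional sampling theorem yields
\begin{equation*}
V(y) \;=\; \mathbb{E}_y\!\left[V(Y_\tau) 1_{\{\tau < \zeta_n\}}\right] + \mathbb{E}_y\!\left[V(Y_{\zeta_n}) 1_{\{\zeta_n \le \tau\}}\right].
\end{equation*}

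On $\{\zeta_n \le \tau\}$ one has $Y_{\zeta_n}(1) - Y_{\zeta_n}(2) = n$, so
\begin{equation*}
\bigl|V(Y_{\zeta_n})\bigr| \;\le\; \beta(r_1)^n \cdot r_1^{Y_{\zeta_n}(1)} \;\le\; \beta(r_1)^n,
\end{equation*}
and the second term vanishes as $n\to\infty$. Since $\lim_n \zeta_n = \infty$ almost surely and $V(Y_\tau) 1_{\{\tau < \zeta_n\}}$ increases (in absolute value, uniformly bounded) to $V(Y_\tau) 1_{\{\tau < \infty\}}$, the bounded/monotone convergence theorem gives
\begin{equation*}
V(y) \;=\; \mathbb{E}_y\!\left[V(Y_\tau) 1_{\{\tau < \infty\}}\right], \qquad y \in B,
\end{equation*}
which is precisely the assertion that $V$ is $\partial B$-determined, with boundary data $f = V|_{\partial B}$. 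The only thing to check carefully is the bound on $V(Y_{\zeta_n})$; everything else is a direct transcription of the argument from Proposition \ref{p:balayagesimple}. There is no genuine obstacle here, which is reasonable since this proposition is essentially the one-vertex graph specialization of the two-term construction, and the two-term $\partial B$-determinedness has already been handled.
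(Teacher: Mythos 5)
Your proof is correct, and once the paper's chain of citations is unfolded, it is the same argument. The paper's proof consists of invoking Remark~\ref{r:specialcase}, Proposition~\ref{p:singletermharmY}, and Proposition~\ref{p:balayagesimple}: by Remark~\ref{r:specialcase} the function \eqref{e:singletermharmfY} is a nonzero constant multiple of $h_{\beta(r_1)}$, because $C(\beta(r_1),r_1)=0$ kills one of the two summands in \eqref{e:harmonicY2d}, and Proposition~\ref{p:balayagesimple} then gives $\partial B$-determinedness via the optional-sampling computation you reproduce. Your caveat about $|\alpha_2|\le 1$ is a fair observation: as literally stated, the hypotheses of Proposition~\ref{p:balayagesimple} ask for both $|\alpha_1|,|\alpha_2|\le 1$, and the paper does not remark that the bound on the conjugate partner is vacuous when its coefficient $C(\beta,\alpha_1)$ vanishes (which is clearly the intent, since the term is absent). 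Running the martingale argument directly on the one-term function $\beta(r_1)^{z(1)-z(2)} r_1^{z(1)}$, using only $0<\beta(r_1)<1$, $0<r_1<1$, Proposition~\ref{p:cantwanderforever}, and the bound $|V(Y_{\zeta_n})|\le \beta(r_1)^n$ on $\{\zeta_n\le\tau\}$, sidesteps this small gap and is arguably the cleaner way to state the proof. Otherwise the two are the same.
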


What Proposition \ref{p:balayagesimple} does is it gives us a collection of 
{\em basis functions} for which the Balayage operator ${\mathcal T}_{(B^o)^c}$ 
is extremely simple to compute; these functions play the same role for the current problem
as the one which exponential functions do
in the solution of linear ordinary differential equations
or the trigonometric functions in the solution
of the heat and the Laplace equations.
Let us rewrite Proposition \ref{p:balayagesimple} more explicitly.
Suppose $\alpha_1$, $\alpha_2$ and $\beta$ 
are as in Proposition \ref{p:balayagesimple}.
Define 
\[
f(y) = 
 C(\beta,\alpha_2)\alpha_1^{y(2)} - 
 C(\beta,\alpha_1)\alpha_2^{y(2)}, y \in {\mathbb Z}^2.
\]
Then, Proposition \ref{p:balayagesimple} says
\begin{align*}%\label{e:explicitBalayage}
{\mathbb E}_y\left[ 
f(Y_\tau)1_{\{\tau < \infty\} }\right] &= 
\beta ^{y(1)-y(2)}\left(
C(\beta,\alpha_2)
\alpha_1^{y(2)}
-
C(\beta,\alpha_1)\alpha_2^{y(2)} \right)\\
	&= 
\beta ^{y(1)-y(2)} f(y),
~~~y \in B.
\end{align*}
Proposition \ref{p:balayagesimple} rests on the condition
 \eqref{e:simpleconditions};
 Proposition \ref{p:sufficientconditions} below identifies a set of
conjugate pairs $(\beta,\alpha_1)$ and $(\beta,\alpha_2)$ on ${\mathcal
H}$ satisfying \eqref{e:simpleconditions}.

\subsection{A modified Fourier basis for ${\mathcal T}_{(B^o)^c}$}\label{ss:modifiedfourier}
Let's go back for a moment to the problem of evaluating the
Balayage operator of the unconstrained process $Z$ 
for the set $(B_Z^o)^c$.
For a bounded function $f$
on $\partial B_{Z}$, this is the operator mapping $f$ to the harmonic function
\[
{\mathcal T}_{(B_Z^o)^c}(f)= z\rightarrow {\mathbb E}_z 
\left[ f(Z_\tau) 1_{\{\tau < \infty\}}\right], z \in B_{Z};
\]
in the present subsection we will write
${\mathcal T}$ for
${\mathcal T}_{(B_Z^o)^c}$
For the Fourier basis functions
\begin{equation}\label{e:fourierbasis}
f_\alpha:\partial B_Z\rightarrow {\mathbb C},  
f_\alpha ({\mathrm y},{\mathrm y}) = \alpha^y, {\mathrm y} \in {\mathbb Z}, |\alpha|=1,
\end{equation}
we already know how to compute ${\mathcal T}(f_\alpha)$ (given by \eqref{e:casealphaabs1})
and ${\mathcal T}$ is linear. 
One can use these to evaluate ${\mathcal T}$ more generally
in three related ways. First,
Fourier series theory tells us that if $f$ is $l_2$, i.e., if
$\sum_{{\mathrm y} \in {\mathbb Z}} |f({\mathrm y}, {\mathrm y})|^2 < \infty$, 
it can be written in terms
of the Fourier basis functions thus:
\begin{equation}\label{e:defhatf}
f({\mathrm y},{\mathrm y}) = 
\frac{1}{2\pi} \int_{-\pi}^{\pi} \hat{f}(\theta) e^{i {\mathrm y} \theta} d\theta,
\hat{f}(\theta) \doteq \sum_{{\mathrm y}=-\infty}^{\infty} 
f({\mathrm y},{\mathrm y}) e^{-i{\mathrm y} \theta}
\end{equation}
Fubini's theorem
now implies
${\mathcal T}(f) = \frac{1}{2\pi} \int_{-\pi}^{\pi} \hat{f}(\theta) {\mathcal T}(f_{e^{i\theta}}) d\theta$,
and one can construct approximating sequences 
by truncating the sum in \eqref{e:defhatf}.

Second, when interpreted as a function of $\alpha$,
the formula \eqref{e:casealphaabs1} gives the characteristic function of the
distribution ${\mathcal T}$. Its inversion would give the distribution 
${\mathcal T}$ itself.

Third, we can first replace $f$ with its periodic approximation defined
as follows
\[
f^p({\mathrm y},{\mathrm y}) = \begin{cases} 
	f({\mathrm y},{\mathrm y}), \text{ if } |{\mathrm y}| \le N,\\
	f( {\mathrm y} \mod N, {\mathrm y} \mod N), \text{ if } |{\mathrm y}| \ge N.
	\end{cases}
\]
As $N$ increases, $|{\mathcal T}(f^p) - {\mathcal T}(f)|$ will converge to $0$.
Because it is periodic, $f^p$ has a unique Fourier representation of
the form
$f^p = \sum_{k=0}^{2N+1} c_k f_{\alpha_k}$
where $ \alpha_k =  e^{  i \left(\frac{k 2\pi}{2N+1}\right)}.$
Then
${\mathcal T}(f^p) = \sum_{k=0}^{2N+1} c_k {\mathcal T}(f_{\alpha_k}).$

How should one proceed to build a parallel theory for the constrained process $Y$?
The first obstacle to the above development in the case of $Y$
is that the Balayage of the Fourier basis functions is not simple to compute,
i.e., we don't know a simple way to compute ${\mathcal T}_{(B^o)^c}(f_\alpha)$. 
But Proposition \ref{p:balayagesimple} 
says that if 
\begin{equation}\label{e:conditionsperturbed}
|\alpha' = {\boldsymbol \alpha}(\beta_1(\alpha),\alpha)| < 1,~~
C(\beta_1(\alpha),\alpha')\neq 0
\end{equation}
then 
the Balayage of the {\em perturbed}
Fourier basis function
\begin{equation}\label{e:perturbedfourier}
({\mathrm y}, {\mathrm y}) \rightarrow \alpha^{\mathrm y} - 
\frac{C(\beta_1(\alpha),\alpha)}{C(\beta_1(\alpha),\alpha')}
(\alpha')^{\mathrm y}, {\mathrm y} \in {\mathbb Z}_+,
\end{equation}
is simple to compute and is given by
\begin{equation}\label{e:simpleimagebal}
y \rightarrow \beta_1(\alpha)^{y(1)-y(2)} \left(
\alpha^{y(2)} -
\frac{C(\beta_1(\alpha),\alpha) }{C(\beta_1(\alpha),\alpha')} (\alpha')^{y(2)}\right),
y \in B.
\end{equation}
One can interpret 
\eqref{e:perturbedfourier} as a perturbation of the restriction of
\eqref{e:fourierbasis} to $\partial B$,
because $|\alpha'| < 1$ implies that these two functions equal each other for
${\mathrm y}$ large.
Below in Proposition \ref{p:sufficientconditions} 
we identify a set of $\alpha$'s for which \eqref{e:conditionsperturbed}
holds and, therefore, for which the image of \eqref{e:perturbedfourier} 
under the Balayage operator is given by 
\eqref{e:simpleimagebal}.
This will require further assumptions on $p$;
in particular, we will {\em assume} \eqref{e:conditionsperturbed} for $\alpha=1$:
\begin{equation}\label{e:stability2}
{\boldsymbol \alpha}(r,1)< 1,
\end{equation}
\begin{equation}\label{e:conjnotharmonic}
C(r,{\boldsymbol \alpha}(r,1)) \neq 0,
\end{equation}
where, as before, $r$ is the input/output ratio \eqref{e:defr}.

Comments on these assumptions:
\begin{enumerate}
\item
If $p(2,1)$ is large enough, ${\boldsymbol \alpha}(r,1)$  can exceed
$1$ even when the stability assumption \eqref{e:stable} holds. For such networks,
we cannot check whether $h_r$ is $\partial B$-determined by an application
of Proposition \ref{p:balayagesimple}. Furthermore, even if
$h_r$ is $\partial B$-determined, $[(r,{\boldsymbol \alpha}(r,1)),\cdot]$
will dominate $h_r$ for large $y(2)$ and one can no longer think
of $h_r/C(r,{\boldsymbol \alpha}(r,1))$ as a perturbation of the function $1$ on $\partial B$.
\item
The condition
\eqref{e:conjnotharmonic} implies that 
the $\log$-linear function $[(r,{\boldsymbol \alpha}(r,1)),\cdot]$ 
is not $Y$-harmonic.
For two dimensional tandem walk,
\eqref{e:conjnotharmonic} reduces to $\mu_1 \neq \mu_2$. One can treat
the case
$\mu_1 = \mu_2$ by writing it as the limit $\mu_1 \rightarrow \mu_2$.
This limiting procedure introduces harmonic functions with polynomial terms, see
subsection \ref{ss:harmpoly}.
\end{enumerate}
With \eqref{e:stability2} and \eqref{e:conjnotharmonic}
we are able to take $\alpha=1$ in \eqref{e:perturbedfourier}.
Subsection \ref{ss:asingleterm} 
implies that for $\alpha \in S^1$ and $\alpha \neq 1$,
$C(\beta_1(\alpha),\alpha) \neq 0$. Thus, for such $\alpha$ only the first
condition in \eqref{e:conditionsperturbed} is nontrivial.
In the next proposition we will show that 
\eqref{e:stability2} implies  $|\boldsymbol\alpha(\beta_1(\alpha),\alpha)| <1$
for all $|\alpha| =1$.
To simplify its proof, we will
further assume $p(0,2) = 0.$
Treating $p(0,2) \neq 0$ seems to require 
a more refined analysis of $\beta_1$
as a function of $p$ and $\alpha$, a task we defer to future work.
\begin{proposition}\label{p:alphaconjalwayslessthan1}
Suppose $p(0,2) = 0$,
\eqref{e:stability2} and \eqref{e:condalpha0} hold;
for $p(1,2) = 0$ the system becomes trivial, so we will also assume $p(1,2) \neq 0.$
Then 
\begin{equation}\label{e:alphaconjalwayslessthan1}
|{\boldsymbol \alpha}(\beta_1(\alpha),\alpha)| < 1
\end{equation}
for all $|\alpha| = 1$.
\end{proposition}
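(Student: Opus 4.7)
The plan is to reduce the claim to a direct application of the bound \eqref{e:boundonbeta1} on $|\beta_1(\alpha)|$, the triangle inequality, and the hypothesis \eqref{e:stability2}. The key simplification is that under $p(0,2)=0$ the conjugator becomes affine in $\beta$, after which the estimate is immediate.

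First I would simplify the conjugator. Substituting $p(0,2)=0$ in \eqref{e:conjugator} and cancelling $\beta$ from numerator and denominator yields
\[
\boldsymbol\alpha(\beta,\alpha) \;=\; \frac{p(2,0)\beta + p(2,1)}{\alpha\, p(1,2)},
\]
which is well-defined since $p(1,2)\neq 0$ and we will take $|\alpha|=1$. Specialising at $(\beta,\alpha)=(r,1)$, the assumption \eqref{e:stability2} reads
\[
\boldsymbol\alpha(r,1) \;=\; \frac{p(2,0)r + p(2,1)}{p(1,2)} \;<\; 1.
\]

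Next, for arbitrary $\alpha$ with $|\alpha|=1$, take absolute values and apply the triangle inequality (using $p(2,0),p(2,1)\ge 0$):
\[
\bigl|\boldsymbol\alpha(\beta_1(\alpha),\alpha)\bigr|
\;=\; \frac{|p(2,0)\beta_1(\alpha) + p(2,1)|}{p(1,2)}
\;\le\; \frac{p(2,0)\,|\beta_1(\alpha)| + p(2,1)}{p(1,2)}.
\]
Now invoke \eqref{e:boundonbeta1}, namely $|\beta_1(\alpha)|\le r$ for $|\alpha|=1$, to obtain
\[
\bigl|\boldsymbol\alpha(\beta_1(\alpha),\alpha)\bigr|
\;\le\; \frac{p(2,0)\,r + p(2,1)}{p(1,2)}
\;=\; \boldsymbol\alpha(r,1) \;<\; 1,
\]
where the final strict inequality is \eqref{e:stability2}. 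Chaining the $\le$'s with the strict $<$ yields the desired strict bound for every $\alpha$ on the unit circle.

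There is essentially no genuine obstacle in this argument: it is a clean assembly of three already-established ingredients. The substantive role of the hypothesis $p(0,2)=0$ is precisely to make $\boldsymbol\alpha$ linear in $\beta$, which is what allows the triangle inequality to transfer the pointwise bound $|\beta_1(\alpha)|\le r$ into a bound on $|\boldsymbol\alpha(\beta_1(\alpha),\alpha)|$. When $p(0,2)\neq 0$, $\boldsymbol\alpha$ is a genuine rational (indeed Möbius-like) function of $\beta$, and controlling its modulus requires a finer, $\alpha$-dependent analysis of $\beta_1(\alpha)$ — the obstruction the paper flags as being deferred to future work.
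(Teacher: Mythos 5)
Your proof is correct and is essentially the paper's argument: after the same simplification of $\boldsymbol\alpha$ under $p(0,2)=0$, the paper bounds $|{\bf c}(\alpha)| = |p(2,0)\beta_1(\alpha)+p(2,1)|/p(1,2)$ by containing the curve $\alpha\mapsto{\bf c}(\alpha)$ in a disk of radius $rp(2,0)/p(1,2)$ centered at $p(2,1)/p(1,2)$ and then in the disk of radius $\boldsymbol\alpha(r,1)<1$ centered at the origin, which is exactly your triangle-inequality chain phrased geometrically. The ingredients used -- the affineness in $\beta$, the bound \eqref{e:boundonbeta1}, and \eqref{e:stability2} -- are identical.
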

\begin{proof}
The definition of ${\boldsymbol \alpha}$ and $p(0,2) = 0$ imply
${\boldsymbol \alpha}(\beta,\alpha) = {\bf c}(\alpha)/\alpha$
where \[{\bf c}(\alpha) \doteq ( p(2,0) \beta_1(\alpha) + p(2,1))/p(1,2).\]
Then
\begin{equation}
|{\boldsymbol \alpha}(\beta_1(\alpha),\alpha)| = |{\bf c}(\alpha)| \label{e:alphaconjlength}
\end{equation}
because $|\alpha|=1$.
As $\alpha$ varies on $S^1$, ${\bf c}(\alpha)$ defines a closed curve in
${\mathbb C}$ that is symmetric around the real axis. \eqref{e:boundonbeta1} 
implies that this curves is contained in a circle centered
at the point $p(2,1)/p(1,2)$ 
with radius $r p(2,0)/p(1,2)$, which in turn is contained
in the circle centered at the origin and with radius
$(p(2,1) + rp(2,0))/p(1,2)= {\boldsymbol \alpha}(r,1)$ which by assumption
\eqref{e:stability2} is less than $1$; then $|{\bf c}(\alpha)| < 1$.
This and \eqref{e:alphaconjlength} imply \eqref{e:alphaconjalwayslessthan1}.
\end{proof}
We have assumed $p(0,2) =0$ only to simplify the above proof;
Figure \ref{f:alphaconj} shows an example
with $p(2,0) \neq 0$ where again $|\boldsymbol\alpha| < 1$.

\ninsepsc{alphaconj}{The graph of 
${\boldsymbol \alpha}(\beta,\alpha)$, $\alpha = \exp(i\theta)$ on the
${\mathbb C}$-plane as $\theta$ varies in $[0,2\pi)$ for the $p$ of \eqref{e:pexample}}{0.4}

\begin{remark}\label{r:annoyingremark}
{\em 
If 
$p(2,0)/p(1,0)= 1$, $\alpha=-1$ violates
\eqref{e:condalpha0} and the last proposition is not applicable
at $\alpha=-1$, because
$\beta_1$ is not well defined there. But in that case the single 
root of the affine \eqref{e:polinbeta} 
will take the place of $\beta_1$ above. With this modification,
the above argument works verbatim for 
$p(2,0)/p(1,0)= 1$
as when 
$p(2,0)/p(1,0) \neq 1$
and from here on we assume that a similar
modification is made when a violation of \eqref{e:condalpha0} occurs.
}
\end{remark}

\begin{proposition}\label{p:sufficientconditions}
Assume \eqref{e:stable}, \eqref{e:stability2}, \eqref{e:conjnotharmonic} and
\eqref{e:simplifying0}. Then there exists $0 < R < 1$ such that for all
$\alpha \in {\mathbb C}$ with $R < |\alpha| \le 1$,
$h_{\beta_1(\alpha)}$
is a $\partial B$-determined harmonic function of $Y$.
\end{proposition}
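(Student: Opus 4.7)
The plan is to apply Proposition \ref{p:balayagesimple} to $h_{\beta_1(\alpha)}$ with conjugate pair $(\beta_1(\alpha),\alpha)$ and $(\beta_1(\alpha),\alpha')$, where $\alpha' = {\boldsymbol \alpha}(\beta_1(\alpha),\alpha)$. To invoke that proposition for a given $\alpha$ we must verify the three strict/weak bounds in \eqref{e:simpleconditions}: $|\beta_1(\alpha)| < 1$, $|\alpha| \le 1$ and $|\alpha'| \le 1$, together with the nondegeneracy $p(0,2)+\beta_1(\alpha)p(1,2) \ne 0$ needed for $\boldsymbol\alpha$ to be well defined.

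First I would check the three bounds on the unit circle $\alpha \in S^1$. By \eqref{e:boundonbeta1}, $|\beta_1(\alpha)| \le r < 1$; trivially $|\alpha|=1$; and Proposition \ref{p:alphaconjalwayslessthan1} (together with Remark \ref{r:annoyingremark} for the anomalous point $\alpha=-1$ if it occurs) gives the strict inequality $|\alpha'|<1$. Assumption \eqref{e:conjnotharmonic} rules out the degenerate case $\alpha=1$ for which $C(\beta_1(1),\alpha') = 0$; since $C(\beta_1(\alpha),\alpha)\ne 0$ for $\alpha\in S^1\setminus\{1\}$ by the discussion in subsection \ref{ss:asingleterm}, $h_{\beta_1(\alpha)}$ is a genuine, non-zero member of ${\mathcal F}_Y$ for every $\alpha\in S^1$ close enough to $1$.

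Next I would promote these bounds from $S^1$ to a neighborhood by a continuity/compactness argument. Under \eqref{e:simplifying0} the discriminant $\Delta$ stays away from the negative real axis on $S^1$, and $\Delta(\alpha)\ne 0$ there by Proposition \ref{p:betaunequal}; by continuity of $\Delta$, the same remains true in an open neighborhood $U\supset S^1$, so $\beta_1$ is continuous on $U$. Similarly $\boldsymbol\alpha(\beta_1(\cdot),\cdot)$ is a continuous (indeed rational) function on $U$ provided we shrink $U$ to avoid the zeros of $p(0,2)+\beta_1(\alpha)p(1,2)$, which again is possible since this quantity is nonzero on $S^1$. The strict inequalities $|\beta_1|<1$ and $|\alpha'|<1$ therefore persist on a (possibly smaller) open neighborhood of $S^1$; intersecting with the closed unit disk yields an annulus $\{R<|\alpha|\le 1\}$ for some $R<1$ on which \eqref{e:simpleconditions} holds, and Proposition \ref{p:balayagesimple} then gives the claim.

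The main obstacle is the potential lack of continuity of $\beta_1$ caused by the branch of $\sqrt{\cdot}$; this is exactly the role played by assumption \eqref{e:simplifying0}, which forces the loop traced out by $\Delta$ on $S^1$ to avoid the negative real axis, so that $\sqrt{\Delta}$ (and hence $\beta_1$) extends continuously into a full neighborhood of $S^1$. A secondary subtlety is that one only has $|\beta_1|\le r$ on $S^1$ rather than a strict inequality uniformly in $\alpha$; but since the strictness is already present pointwise on the compact set $S^1$, a standard open-cover argument (or uniform continuity of $\beta_1$ on a compact neighborhood) delivers the uniform bound $|\beta_1(\alpha)|<1$ throughout the desired annulus.
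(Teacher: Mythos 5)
Your proof follows the same route as the paper's: verify the hypotheses of Proposition \ref{p:balayagesimple} on $S^1$ via \eqref{e:boundonbeta1} and Proposition \ref{p:alphaconjalwayslessthan1}, then propagate the strict inequalities $|\beta_1(\alpha)|<1$ and $|\boldsymbol\alpha(\beta_1(\alpha),\alpha)|<1$ to an annulus $R<|\alpha|\le 1$ by continuity of $\beta_1,\boldsymbol\alpha$ and compactness of $S^1$. The only cosmetic difference is that you spell out the branch-cut role of \eqref{e:simplifying0} and worry about $C(\beta_1(\alpha),\cdot)$ vanishing; the former is a detail the paper leaves implicit, and the latter is not actually required, since Proposition \ref{p:balayagesimple} yields $\partial B$-determinedness of $h_\beta$ regardless of whether the coefficients $C$ are nonzero.
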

\begin{proof}
Proposition \ref{p:balayagesimple} and Proposition \ref{p:alphaconjalwayslessthan1} 
imply that $h_{\beta_1(\alpha)}$ 
is  $\partial B$-determined harmonic function of $Y$ for $\alpha \in S^1.$
$\boldsymbol \alpha$ and $\beta_1$ are continuous functions. This, compactness
of $S^1$ and \eqref{e:boundonbeta1} imply
\[
|\beta_1(\alpha)|,|\boldsymbol\alpha(\beta_1(\alpha),\alpha)| < 1
\]
for $|\alpha| \in (R,1]$ where $R$ is sufficiently close to $1$. This and 
Proposition \ref{p:balayagesimple} now imply the statement of the proposition.
\end{proof}

\section{Analysis of $Y$, $d>2$}\label{s:dg2}
Now we would like to extend some of the ideas of the previous section to $d$ dimensions.
Our path will be this: each of the graphs shown in Figure
\ref{f:graphsharmtex} and its corresponding equations define a harmonic function of $Y$.
We will develop the graph representation in $d$ dimensions
and show that any solution of the equations represented by a certain class of
graphs defines a harmonic function of $Y$.

For $\alpha \in {\mathbb C}^{d-1}$ we will index the components
of the vector $\alpha$ with the set 
${\mathcal N} \doteq {\mathcal N}_0 - \{0,1\}$, i.e.,
$\alpha = (\alpha(2),\alpha(3),...,\alpha(d)).$
The members of
${\mathcal N}$ are exactly the constrained coordinates of $Y$.
For $a \subset {\mathcal N}$
the constraining map $\pi$ sets the following set of increments 
of $Y$ to $0$ on $\partial_a$:
\[
{\mathcal V}_a \doteq \{v: v\in {\mathcal V}, v(j) \neq -1, \text{ for } j \in a\}.
\]
Rewrite \eqref{e:linear} as
\begin{equation}\label{e:allconditions}
V(y) = 
V(y) \sum_{j \in a} \mu_j + 
\sum_{v \in {\mathcal V}_a} V(y + v) p(v), 
y \in \partial_a \cap O, a \subset {\mathcal N}.
\end{equation}

Set 
\begin{equation}\label{d:basicloglinear}
[(\beta,\alpha),z] \doteq \beta^{z(1)- \sum_{j \in {\mathcal N}}z(j) } 
\prod_{j \in {\mathcal N}} \alpha(j)^{z(j)}.
\end{equation}
$[(\beta,\alpha),z]$ is $\log$-linear in $z$, i.e., $\log([(\beta,\alpha),z])$ is 
linear in $z$; our goal is to construct $Y$-harmonic functions out of
linear combinations of these functions.

Define the characteristic polynomial
\begin{equation}\label{e:hamiltonianba}
{\mathbf p}_a(\beta,\alpha)
\doteq 
\left( \sum_{ v\in {\mathcal V}_a  } p(v) 
[(\beta,\alpha) ,v ]
+ 
\sum_{j \in a} \mu_j\right)
\end{equation}
the characteristic equation
\begin{equation}\label{e:chareqgenerald}
{\mathbf p}_a(\beta,\alpha) = 1,
\end{equation}
and the characteristic surface
\begin{equation}\label{d:DefHcal}
{\mathcal H}_a \doteq \{ (\beta,\alpha) \in {\mathbb C}^d: 
{\mathbf p}_a(\beta,\alpha) = 1\}
\end{equation}
of the boundary $\partial_a$, $ a \subset{\mathcal N}$.
We will write ${\mathbf p}$ and ${\mathcal H}$ 
instead of ${\mathbf p}_\emptyset$ and ${\mathcal H}_\emptyset.$

Generalize \eqref{e:intermsofPY} to the current setup as
\begin{equation}\label{e:intermsofPyd0}
{\mathbf p}_a(\beta,\alpha)  = 
{\mathbf p}(\beta,\alpha) - 
\left( \sum_{ v \in {\mathcal V}_a^c } p(v) [(\beta,\alpha),v]
- \sum_{j \in a} \mu_j\right).
\end{equation}

${\mathbf p}_a$ is not a polynomial but a rational function;
to make it a polynomial one must multiply it by $\beta \prod_{j \in {\mathcal N}} \alpha(j)$; this is what we did when we gave the two dimensional versions
of these definitions
in \eqref{e:hamiltonian}
and \eqref{e:hamiltonianb2}. For $d >2 $, the
$\beta \prod_{j \in {\mathcal N}}\alpha(j)$ multiplier complicates notation; 
for this reason we omit it but continue to refer to the rational
\eqref{e:hamiltonianba} as the ``characteristic polynomial.''

The argument in subsection \ref{ss:asingleterm} continues to work verbatim
for general $d$ and gives
\begin{lemma}\label{l:singletermgen}
Suppose $(\beta,\alpha) \in {\mathcal H} \cap {\mathcal H}_i$.
Then $[ (\beta,\alpha),\cdot]$ is $Z$-harmonic (i.e., $Y$-harmonic on
$\Omega_Y^o$) and $Y$-harmonic on $\partial_i.$
\end{lemma}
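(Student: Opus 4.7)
The plan is to verify both harmonicity conditions by direct substitution, exploiting the crucial multiplicative property of the log-linear Ansatz. From the definition \eqref{d:basicloglinear},
$$
[(\beta,\alpha), z+v] = [(\beta,\alpha), z] \cdot [(\beta,\alpha), v]
$$
for any $z, v \in {\mathbb Z}^d$, since each coordinate appears only as an exponent. This factorization will allow the dependence on $z$ (resp.~$y$) to separate off cleanly in both equations, reducing each harmonicity identity to a pointwise identity in $(\beta,\alpha)$.

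First I would check $Z$-harmonicity. Substituting $V = [(\beta,\alpha),\cdot]$ into $V(z) = \sum_{v \in {\mathcal V}} p(v) V(z+v)$ and applying the factorization, the common factor $[(\beta,\alpha),z]$ can be pulled out and canceled, leaving the scalar identity
$$
1 = \sum_{v \in {\mathcal V}} p(v)\, [(\beta,\alpha), v] = {\mathbf p}(\beta,\alpha),
$$
which is precisely the condition $(\beta,\alpha) \in {\mathcal H}$ coming from \eqref{e:chareqgenerald} with $a = \emptyset$. Since $Z$-harmonicity and $Y$-harmonicity coincide on $\Omega_Y^o$, this gives the interior half of the claim.

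Next I would verify $Y$-harmonicity on $\partial_i$. For $y \in \partial_i$, equation \eqref{e:allconditions} with $a = \{i\}$ reads
$$
V(y) = V(y)\,\mu_i + \sum_{v \in {\mathcal V}_{\{i\}}} V(y+v)\, p(v).
$$
Here one should check a small bookkeeping point: for $y \in \partial_i$ and $v \in {\mathcal V}_{\{i\}}$, by definition $v(i) \neq -1$, so $y + \pi_i(y,v) = y + v$ and the substitution of $V = [(\beta,\alpha),\cdot]$ is valid. Using the factorization once more and canceling $[(\beta,\alpha),y]$ reduces the boundary equation to
$$
1 = \mu_i + \sum_{v \in {\mathcal V}_{\{i\}}} p(v)\, [(\beta,\alpha), v] = {\mathbf p}_{\{i\}}(\beta,\alpha),
$$
i.e., exactly the condition $(\beta,\alpha) \in {\mathcal H}_i$.

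There is no real obstacle here; this is just a direct extension of the single-term computation of subsection \ref{ss:asingleterm} to general $d$, with the log-linear factorization collapsing each harmonicity equation into the corresponding characteristic equation. The only thing that deserves care is confirming that the index set ${\mathcal V}_{\{i\}}$ appearing in \eqref{e:allconditions} coincides with the one used in \eqref{e:hamiltonianba}, which holds by construction.
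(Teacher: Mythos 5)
Your proof is correct and takes essentially the same route as the paper: the paper gives no separate proof of this lemma, stating only that ``the argument in subsection \ref{ss:asingleterm} continues to work verbatim for general $d$,'' and that argument is exactly the substitution you carry out, using the multiplicativity $[(\beta,\alpha),z+v]=[(\beta,\alpha),z]\,[(\beta,\alpha),v]$ to factor out the $z$-dependence and collapse each harmonicity identity onto the corresponding characteristic equation ${\mathbf p}(\beta,\alpha)=1$, respectively ${\mathbf p}_{\{i\}}(\beta,\alpha)=1$. You have merely spelled out in $d$ dimensions what the paper leaves implicit.
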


Our next step is to extend the content of subsection \ref{ss:twoterms} to the current
setup.
Begin with the operator $D_a$ acting on functions on 
${\mathbb Z}^d$ and giving functions on $\partial_a$:
\begin{align*}
&D_aV = g,~~~~ V: {\mathbb Z}^d \rightarrow {\mathbb C},\\
& g( z)
\doteq
\left(\sum_{j \in a} \mu_a  +  \sum_{ v\in {\mathcal V_a} } p(v) V( z + v) \right)
- V(z).
\end{align*}
\begin{lemma}\label{l:Daeq0}
 $D_a V = 0$ 
if and only if $V$ is $Y$-harmonic on $\partial_a$.
\end{lemma}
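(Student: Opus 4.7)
The plan is to observe that Lemma \ref{l:Daeq0} is essentially a definitional restatement: the operator $D_a$ was constructed precisely so that its kernel is the set of functions satisfying the boundary harmonicity equation \eqref{e:allconditions} on $\partial_a$. So the proof is just a matter of writing out both conditions and matching them term by term, much as the author already indicated for the two-dimensional analogue (Lemma \ref{l:D2eq0}).

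Concretely, I would first recall \eqref{e:allconditions}, specialized to a set $a \subset \mathcal{N}$ and $y \in \partial_a \cap O$:
\[
V(y) \;=\; V(y)\sum_{j \in a} \mu_j \;+\; \sum_{v \in \mathcal{V}_a} V(y+v)\,p(v).
\]
Bringing the left-hand side over, this is equivalent to
\[
V(y)\sum_{j \in a} \mu_j \;+\; \sum_{v \in \mathcal{V}_a} p(v)\,V(y+v) \;-\; V(y) \;=\; 0.
\]
Reading this against the definition of $D_a$ (where the $\sum_{j \in a} \mu_j$ term is understood as multiplying $V(y)$, consistent with its two-dimensional counterpart in the definition of $D_2$), the left-hand side is exactly $(D_a V)(y)$. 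Thus $(D_a V)(y) = 0$ for all $y \in \partial_a$ is precisely the statement that $V$ satisfies \eqref{e:allconditions} on $\partial_a$, i.e.\ that $V$ is $Y$-harmonic on $\partial_a$. Both directions of the ``if and only if'' are immediate from this equivalence.

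There is no real obstacle here: the only thing to be careful about is the interpretation of the term $\sum_{j \in a}\mu_a$ appearing in the displayed definition of $D_a$ and ensuring it is consistent with the boundary harmonicity equation \eqref{e:allconditions}. Once that is verified, the lemma is an immediate bookkeeping check, parallel to Lemma \ref{l:D2eq0}.
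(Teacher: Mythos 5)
Your proof is correct and takes exactly the route the paper intends; the paper's own ``proof'' is just the single sentence ``The proof follows from the definitions,'' and your write-up is simply the spelled-out version of that bookkeeping. You are also right to flag the issue with the $\mu$-term in the displayed definition of $D_a$: as literally printed, $D_a$ has a bare $\sum_{j\in a}\mu_a$ (and $D_2$ a bare $\mu_2$), whereas \eqref{e:allconditions} and \eqref{e:linearboundary} have $V(y)\sum_{j\in a}\mu_j$ and $V(y)\mu_2$; the intended reading must be the latter, as you say, and this is confirmed by the later identity \eqref{e:imagofD2}: computing $D_2([(\beta,\alpha),\cdot])$ at $(\mathrm{y},0)$ with the $V(y)\mu_2$ interpretation reproduces $\bigl(\tfrac{1}{\beta\alpha}\mathbf{p}_2(\beta,\alpha)-1\bigr)\beta^{\mathrm{y}}$, which the paper asserts. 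So your interpretation is the consistent one, and with it the equivalence $D_aV=0 \Leftrightarrow$ \eqref{e:allconditions} on $\partial_a$ is immediate in both directions, as you wrote.
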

The proof follows from the definitions.
Next generalize $C$ of \eqref{e:defC2d} to
\begin{equation}\label{e:Cjbetaalpha}
C(j,\beta,\alpha) \doteq 
\mu_j
-\sum_{v \in{\mathcal V}, v(j)=-1} p(v) [ (\beta,\alpha),v ].
\end{equation}

For $\alpha \in {\mathbb C}^{{\mathcal N}}$ and
$a \subset {\mathcal N}$ define
$\alpha\{a\} \in {\mathbb C}^{{\mathcal N}}$ 
as follows: $\alpha\{a\}|_{a^c} = \alpha|_{a^c}$ and
$\alpha\{a\}|_{a} = 1$;
If $a = \{i\}$, we will write $\alpha\{i\}$, instead of
$\alpha\{ \{i\} \}.$
 For example, for ${\mathcal N}= \{2,3,4\}$,
$a = \{4\}$
and $\alpha = (0.2,0.3,0.4)$, $\alpha\{a\} = (0.2,0.3,1).$ 
We will use this notation in the next paragraph, where we define
the conjugacy of points on ${\mathcal H}$ in $d$ dimensions and
in the next section where we apply the results of the present section
to $d$-tandem
queues.

For $i \in {\mathcal N}$, fix $\alpha|_{{\mathcal N}-\{i\}}$, $\beta$
and multiply both sides
of the characteristic equation \eqref{e:chareqgenerald}  by $\alpha(i)$;
this gives a second order polynomial equation in $\alpha(i)$. 
If $\beta$ and $\alpha|_{{\mathcal N} - \{i\}}$ are
such that the discriminant of this polynomial is nonzero,
we get two distinct points $(\beta,\alpha_1)$ and
$(\beta,\alpha_2)$ on ${\mathcal H}$ which satisfy
\begin{align}\label{e:conjugacydg2}
\alpha_1|_{ {\mathcal N}-\{i\}} &= \alpha_2|_{ {\mathcal N}-\{i\}}, \\
\alpha_1(i) + \alpha_2(i) &= 
\frac{1 - \sum_{v(i) = 0} [(\beta,\alpha_1),v]}
{\sum_{v(i) = 1} [(\beta, \alpha_1\{i\}), v]}. 
\label{e:tmpconj}
\end{align}
The sum in the numerator on the right side of \eqref{e:tmpconj}
is over $v$ such that $v(i)=0$; this and \eqref{e:conjugacydg2} imply
that \eqref{e:tmpconj} remains the same if we replace 
$[(\beta,\alpha_1),v]$ in the numerator with
$[(\beta,\alpha_2),v]$ or
$[(\beta,\alpha_2\{i\}),v] =[(\beta,\alpha_1\{i\}),v]$.
Rewrite \eqref{e:tmpconj} as
\begin{align}
\alpha_2(i) &= \notag
\frac{1 - \sum_{v(i) = 0} [(\beta,\alpha_1),v]}
{\sum_{v(i) = 1} [(\beta, \alpha_1\{i\}), v]} - \alpha_1(i)
\notag \\
&=\frac{1 - \sum_{v(i) = 0} [(\beta,\alpha_1),v]-\sum_{v(i) = 1} 
[(\beta, \alpha_1), v]} 
{\sum_{v(i) = 1} [(\beta, \alpha_1\{i\}), v]}
\notag \\ 
&=\frac{\sum_{v(i) = -1} [(\beta,\alpha_1),v]} 
{\sum_{v(i) = 1} [(\beta, \alpha_1\{i\}), v]} \label{e:conja1toa2}
=
\frac{1}{\alpha_1(i)} \frac{\sum_{v(i) = -1} [(\beta,\alpha_1\{i\}),v]} 
{\sum_{v(i) = 1} [(\beta, \alpha_1\{i\}), v]} 
\intertext{Now keep $\alpha_2(i)$ on the left and repeat the same
computation to get}
\alpha_1(i) &=
\frac{1}{\alpha_2(i)} \frac{\sum_{v(i) = -1} [(\beta,\alpha_2\{i\}),v]} 
{\sum_{v(i) = 1} [(\beta, \alpha_2\{i\}), v]}.
\label{e:conja2toa1}
\end{align}
We will call $(\beta,\alpha_1) \neq (\beta,\alpha_2) \in {\mathcal H}$
$i$-{\em conjugate} if they satisfy \eqref{e:conjugacydg2} and any of
the equivalent
\eqref{e:tmpconj}, \eqref{e:conja1toa2} and \eqref{e:conja2toa1}.
Based on these, generalize the conjugator ${\boldsymbol \alpha}$ as
${\boldsymbol  \alpha}(i, (\beta,\alpha_1)) = \alpha_2$ where $\alpha_2$
is defined by \eqref{e:conjugacydg2} and \eqref{e:conja1toa2}.

Our next proposition generalizes Proposition \ref{p:harmonicYtwoterms2d}
to the current setup. In its proof the, following decomposition,
which \eqref{e:intermsofPyd0} implies, will be useful:
\begin{align}\label{e:imageofD2shortgd}
D_a( [(\beta,\alpha,\cdot)])(z) &=
\left( \sum_{j \in a} \mu_j
-\sum_{v \in {\mathcal V}_a^c} p(v) [ (\beta,\alpha),v ] \right) 
[ (\beta,\alpha), z]\\
&=
\sum_{j \in a} \left(\notag
\mu_j - \sum_{ v\in {\mathcal V}, v(j)=-1} p(v) [ (\beta,\alpha),v ] \right) 
[ (\beta,\alpha), z]\\
&= \sum_{j \in a} D_j( [(\beta,\alpha,\cdot)])(z) \label{e:Dadecomposed}
\end{align}
for $z \in \partial_a$ and $(\beta,\alpha) \in {\mathcal H}$.

\begin{proposition}\label{p:harmonicYtwoterms}
Suppose that 
$(\beta,\alpha_1)$
and $(\beta,\alpha_2)$ are $i$-conjugate and $C(i,\beta,\alpha_j)$,
$j=1,2$ are well defined.
Then
\begin{equation*}%\label{e:harmonicY}
h_\beta \doteq C(i,\beta,\alpha_2) [(\beta,\alpha_1),\cdot]
- C(i,\beta,\alpha_1) [(\beta,\alpha_2),\cdot]
\end{equation*}
is $Y$-harmonic on $\partial_i$.
\end{proposition}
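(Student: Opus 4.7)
The plan is to mimic the two-dimensional argument in Proposition \ref{p:harmonicYtwoterms2d}, using the decomposition \eqref{e:imageofD2shortgd}--\eqref{e:Dadecomposed} of the boundary operator $D_a$ as the higher-dimensional replacement for \eqref{e:imageofD2short}. By Lemma \ref{l:Daeq0}, showing that $h_\beta$ is $Y$-harmonic on $\partial_i$ is equivalent to proving $D_i(h_\beta) = 0$, so I will aim directly at that identity.

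First, since $(\beta,\alpha_j) \in {\mathcal H}$ for $j=1,2$, \eqref{e:imageofD2shortgd} applied with $a = \{i\}$ gives
\[
D_i([(\beta,\alpha_j),\cdot])(z) \;=\; C(i,\beta,\alpha_j)\,[(\beta,\alpha_j),z], \qquad z \in \partial_i,
\]
where I have used the definition \eqref{e:Cjbetaalpha} of $C(i,\beta,\alpha_j)$. The key observation is that, because $z \in \partial_i$ means $z(i)=0$, the factor $\alpha_j(i)^{z(i)}=1$ drops out of \eqref{d:basicloglinear}, so $[(\beta,\alpha_j),z]$ depends on $\alpha_j$ only through $\alpha_j|_{{\mathcal N}-\{i\}}$. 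But the defining relation \eqref{e:conjugacydg2} of $i$-conjugacy says precisely that $\alpha_1|_{{\mathcal N}-\{i\}} = \alpha_2|_{{\mathcal N}-\{i\}}$, and therefore
\[
[(\beta,\alpha_1),z] \;=\; [(\beta,\alpha_2),z] \qquad \text{for all } z \in \partial_i.
\]

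Combining the two displays, I compute for $z \in \partial_i$,
\[
D_i(h_\beta)(z) \;=\; C(i,\beta,\alpha_2)\,C(i,\beta,\alpha_1)\,[(\beta,\alpha_1),z] \;-\; C(i,\beta,\alpha_1)\,C(i,\beta,\alpha_2)\,[(\beta,\alpha_2),z] \;=\; 0,
\]
using the linearity of $D_i$ and the restriction identity above. Lemma \ref{l:Daeq0} then gives the conclusion.

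There is essentially no analytic obstacle here; everything reduces to two bookkeeping observations (applying \eqref{e:imageofD2shortgd} to a single boundary, and noticing that $i$-conjugacy makes the $\partial_i$-restrictions of the two log-linear functions coincide). The only mildly delicate point, which I would flag explicitly in the write-up, is the well-definedness hypothesis on $C(i,\beta,\alpha_j)$: this is needed so that the coefficients in the linear combination $h_\beta$ are finite, and so that \eqref{e:imageofD2shortgd} really expresses $D_i([(\beta,\alpha_j),\cdot])$ as a scalar multiple of $[(\beta,\alpha_j),\cdot]$ restricted to $\partial_i$ rather than as an indeterminate expression.
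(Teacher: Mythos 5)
Your proof is correct and matches the paper's own argument step for step: both reduce to $D_i(h_\beta)=0$ via Lemma~\ref{l:Daeq0}, apply~\eqref{e:imageofD2shortgd} with $a=\{i\}$ to express $D_i$ of each $\log$-linear summand as a $C(i,\beta,\alpha_j)$-multiple of its $\partial_i$-restriction, and then invoke the $i$-conjugacy relation~\eqref{e:conjugacydg2} to identify those restrictions. Your explicit observation that $z(i)=0$ on $\partial_i$ makes $\alpha_j(i)^{z(i)}=1$ is a useful spelling-out of why~\eqref{e:conjugacydg2} alone suffices, but it is the same reasoning the paper leaves implicit.
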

\begin{proof}
The definition \eqref{e:Cjbetaalpha}  of $C$, \eqref{e:imageofD2shortgd} and 
linearity of $D_i$ imply
\begin{align*}
D_i(h_\beta) &=
C(i,\beta,\alpha_2) C(i,\beta,\alpha_1) 
[(\beta,\alpha_1),\cdot]
- 
C(i,\beta,\alpha_2) C(i,\beta,\alpha_1) 
[(\beta,\alpha_2),\cdot]
\intertext{ \eqref{e:conjugacydg2} implies
$[(\beta,\alpha_1),z] = [(\beta,\alpha_2),z]$ for $z \in \partial_i$
and therefore the last line reduces to}
&=0.
\end{align*}
Lemma \ref{l:Daeq0} now implies that $h_\beta$ is $Y$-harmonic
on $\partial_i$.
\end{proof}

To generalize the graph representation of the previous section
we will need graphs with labeled edges;
let us denote any graph by its adjacency matrix $G$.
Let 
$V_G$, a finite set, 
denote the set of vertices of $G$ .
Each edge of $G$ will have
a label taking values in a finite set $L$.
For two vertices $i \neq j$, $G(i,j) = 0$ if they are
disconnected, and $G(i,j) = l$ if an edge with label 
$l \in L$ connects them; such an edge will be called an $l$-edge. 
As usual, an edge from a vertex to itself is called a loop.
For a vertex $j \in V_G$, 
$G(j,j)$ is the set of the labels of the loops on $j$. Thus $G(j,j)\subset L$ 
is set
valued.
\begin{definition}
Let $G$ and $L$ be as above.
If each vertex $j \in V_G$ has
a unique $l$-edge (perhaps an $l$-loop) 
for all $l \in L$ we will call $G$ edge-complete with respect to
$L$.
\end{definition}

We say $G$ is
edge-complete with respect to $Y$ if it is so with respect
to 
${\mathcal N}={\mathcal N}_0 - \{0,1\}$ (remember that ${\mathcal N}$ is the
set of constrained coordinates of $Y$). If we just say ``edge-complete'' we mean
``edge-complete with respect to $Y$.''

\begin{definition}\label{d:Yharmonic}
A $Y$-harmonic system consists
of an 
edge-complete graph $G$ with respect to ${\mathcal N}$,
the variables
$ (\beta,\alpha_j)  \in {\mathbb C}^d$, ${\boldsymbol c}_j \in {\mathbb C}$,  $j \in V_G$,
and these equations/constraints:
\begin{enumerate}
\item
$(\beta,\alpha_j) \in {\mathcal H}, {\boldsymbol c}_j \in {\mathbb C}-\{0\}, j \in V_G$, 
\item
$\alpha_{i} \neq \alpha_{j}$,  if $i \neq j, i,j  \in V_G$,
\item
$\alpha_{i}, \alpha_{j}$  are $G(i,j)$-conjugate if  
$ G(i,j) \neq 0$, $i \neq j, i,j \in V_G$,
\item 
\begin{equation}
\label{e:correctmultipliers}
{\boldsymbol c}_{i}/{\boldsymbol c}_{j} = 
-\frac{C(G(i,j),\beta,\alpha_j)}{C(G(i,j),\beta,\alpha_i)}, \text{ if } G(i,j) \neq 0,
\end{equation}
\item
$(\beta,\alpha_j) \in {\mathcal H}_l \text{ for all } l \in G(j,j), j \in V_G.$
\end{enumerate}
\end{definition}

\begin{proposition}\label{p:simpleharmonicfunctions}
Suppose that 
a $Y$-harmonic system for and edge-complete $G$ has a solution;
then
\begin{equation}\label{e:defhG}
h_G \doteq \sum_{j \in V_G} {\boldsymbol c}_j [(\beta,\alpha_j),\cdot]
\end{equation}
is a harmonic function of $Y$.
\end{proposition}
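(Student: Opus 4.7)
The plan is to verify that $h_G$ satisfies the $Y$-harmonicity equation \eqref{e:allconditions} on each stratum $\partial_a \cap O$, $a \subset {\mathcal N}$. For the interior case $a=\emptyset$: condition 1 of the harmonic system places each $(\beta,\alpha_j)$ on ${\mathcal H}$, so every summand $[(\beta,\alpha_j),\cdot]$ is $Z$-harmonic and hence so is the linear combination $h_G$. For a nonempty $a$, Lemma \ref{l:Daeq0} reduces the task to proving $D_a h_G = 0$ on $\partial_a$, and the decomposition \eqref{e:Dadecomposed}, applied termwise and combined with linearity of $D_a$, gives $D_a h_G = \sum_{l \in a} D_l h_G$ on $\partial_a$. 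So the entire task collapses to showing $D_l h_G = 0$ on $\partial_l$ for every single coordinate $l \in {\mathcal N}$.

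Fix such an $l$. The identity \eqref{e:imageofD2shortgd} applied with $a = \{l\}$ yields $D_l([(\beta,\alpha_j),\cdot])(z) = C(l,\beta,\alpha_j)\,[(\beta,\alpha_j),z]$ on $\partial_l$, so
\[
D_l h_G(z) \;=\; \sum_{j \in V_G} {\boldsymbol c}_j\,C(l,\beta,\alpha_j)\,[(\beta,\alpha_j),z], \qquad z \in \partial_l.
\]
Now I invoke edge-completeness: each vertex $j \in V_G$ carries a unique $l$-edge, which is either an $l$-loop (so $l \in G(j,j)$) or an $l$-edge to a unique distinct vertex $j' \in V_G$. This partitions $V_G$ into $l$-loop vertices and $l$-linked pairs, and I expect to show that each piece of the partition contributes zero to the above sum, for two complementary reasons.

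For a loop vertex $j$, condition 5 gives $(\beta,\alpha_j) \in {\mathcal H}_l$; together with $(\beta,\alpha_j) \in {\mathcal H}$, Lemma \ref{l:singletermgen} then forces $D_l [(\beta,\alpha_j),\cdot] = 0$, killing the $j$-term directly. For an $l$-linked pair $\{j,j'\}$, the conjugacy condition \eqref{e:conjugacydg2} guarantees $\alpha_j(k) = \alpha_{j'}(k)$ for all $k \neq l$; since $z(l) = 0$ on $\partial_l$, the $\alpha(l)$-factor in $[(\beta,\alpha),z]$ disappears and $[(\beta,\alpha_j),z] = [(\beta,\alpha_{j'}),z]$. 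The combined contribution of the pair then collapses to $\big({\boldsymbol c}_j C(l,\beta,\alpha_j) + {\boldsymbol c}_{j'} C(l,\beta,\alpha_{j'})\big)[(\beta,\alpha_j),z]$, which vanishes precisely because of the ratio prescription \eqref{e:correctmultipliers} (condition 4). Summing over all loop vertices and all $l$-linked pairs gives $D_l h_G \equiv 0$ on $\partial_l$, completing the reduction. The argument is essentially bookkeeping; the main point is that the per-edge ratio condition built into the definition of a harmonic system is exactly what is needed to cancel one pair at a time, so that global compatibility of the ${\boldsymbol c}_j$ along cycles of $G$ is a solvability issue subsumed in the hypothesis that a solution exists.
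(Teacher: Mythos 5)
Your proof is correct and follows essentially the same route as the paper's: reduce to the single-coordinate case $|a|=1$ via the decomposition \eqref{e:Dadecomposed} applied termwise, then use edge-completeness to split $V_G$ into $l$-loop vertices (killed by condition 5 and Lemma \ref{l:singletermgen}) and $l$-linked pairs (killed by the conjugacy plus the ratio condition \eqref{e:correctmultipliers}, which is exactly the content of Proposition \ref{p:harmonicYtwoterms}). The only cosmetic difference is that the paper packages the reduction to $|a|=1$ as an induction on $|a|$ peeling off one coordinate at a time, whereas you sum the decomposition over all $l \in a$ at once; the underlying computations and the role of each condition in Definition \ref{d:Yharmonic} are identical.
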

\begin{proof}
All summands of $h_G$ are $Z$-harmonic and therefore
$Y$-harmonic on $\Omega_Y^o$ because 
$(\beta,\alpha_j)$, $ j \in V_G$, are all on the characteristic surface
${\mathcal H}$.
It remains to show that $h_G$ is $Y$-harmonic on all $\partial_a \cap \Omega_Y$,
$a \subset {\mathcal N}$ and $a \neq \emptyset.$ We will do this
by induction on $|a|$. Let us start with $|a|=1$, i.e., $a = \{l\}$ for some
$l \in {\mathcal N}$. 
Take any vertex $i \in V_G$; if $l \in G(i,i)$
then $(\beta,\alpha_i) \in {\mathcal H}_l$ and by Lemma \ref{l:singletermgen}
$[(\beta,\alpha_i),\cdot]$ is $Y$-harmonic on $\partial_l$.
Otherwise, the definition of a harmonic system implies that
there exists a unique vertex $j$ of $G$
such that $G(i,j) = l$. This implies, by definition, that
 $(\beta,\alpha_i)$ and $(\beta,\alpha_{j})$
are $l$-conjugate and by Proposition \ref{p:harmonicYtwoterms} 
and \eqref{e:correctmultipliers} 
\[
{\boldsymbol c}_i [(\beta,\alpha_i),\cdot] + {\boldsymbol c}_{j} [ (\beta,\alpha_{j}),\cdot]
\]
is $Y$-harmonic on $\partial_l$. 
Thus, all summands
of $h_G$ are either $Y$-harmonic on $\partial_l$ 
or form pairs which are so;
this implies that the sum $h_G$ is $Y$-harmonic on $\partial_l$.

Now assume $h_G$ is $Y$-harmonic for all
$a'$ with $|a'| = k-1$; fix $a \subset {\mathcal N}$ 
such that $|a| = k$ and $i \in a$;
by \eqref{e:Dadecomposed}
\[
D_a( h_G) = D_{a-\{i\}}(h_G) + D_i(h_G).
\]
The induction assumption and Lemma \ref{l:Daeq0} imply 
that the first term on the right is zero; the same lemma
and the previous paragraph imply the same for the second term.
Then $D_a(h_G) = 0$; this and Lemma \ref{l:Daeq0} finish the
proof of the induction step.
\end{proof}

\begin{proposition}\label{p:balayagesimpled}
Let $(\beta,\alpha_j)$, ${\boldsymbol c}_j$, $j \in V_G$,
be the solutions of a $Y$-harmonic system
for an edge-complete $G$ and let $h_\beta$ be defined as in \eqref{e:defhG}.
If 
\[
|\beta| < 1,~~~|\alpha_j(i)| \le 1,~~ i \in {\mathcal N},
\]
then $h_G$ is $\partial B$-determined.
\end{proposition}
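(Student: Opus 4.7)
The plan is to follow the two-dimensional argument of Proposition \ref{p:balayagesimple} essentially verbatim. The only two things that need to be checked in the $d$-dimensional setting are: (i) the $\log$-linear building blocks $[(\beta,\alpha_j),\cdot]$ are uniformly bounded on $B$, and (ii) on the set $\{\zeta_n\le\tau\}$ their magnitudes decay like $|\beta|^n$. Both follow directly from the geometry of $B$ and the exit set of $\zeta_n$.

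First I would establish the boundedness of $h_G$ on $B$. For any $y\in B$, by \eqref{e:defB} (with $i=1$) we have $y(1)-\sum_{j\in\mathcal{N}}y(j)\ge 0$, and $y(j)\ge 0$ for $j\in\mathcal{N}$. Using the assumed bounds $|\beta|<1$ and $|\alpha_j(i)|\le 1$, this gives
\[
|[(\beta,\alpha_j),y]|=|\beta|^{y(1)-\sum_{i\in\mathcal{N}}y(i)}\prod_{i\in\mathcal{N}}|\alpha_j(i)|^{y(i)}\le 1,
\]
so $|h_G(y)|\le\sum_{j\in V_G}|{\boldsymbol c}_j|\doteq K<\infty$ for all $y\in B$.

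Next, by Proposition \ref{p:simpleharmonicfunctions}, $h_G$ is $Y$-harmonic on $B^o$, so $M_k\doteq h_G(Y_{\tau\wedge\zeta_n\wedge k})$ is a bounded martingale. Proposition \ref{p:cantwanderforever} gives $\tau\wedge\zeta_n<\infty$ almost surely, so optional sampling yields
\[
h_G(y)=\mathbb{E}_y\bigl[h_G(Y_\tau)\mathbf{1}_{\{\tau<\zeta_n\}}\bigr]+\mathbb{E}_y\bigl[h_G(Y_{\zeta_n})\mathbf{1}_{\{\zeta_n\le\tau\}}\bigr],\qquad y\in B^o.
\]

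The key step is to control the second term as $n\to\infty$. On $\{\zeta_n\le\tau\}$ the process sits at a point $y'$ with $y'(1)-\sum_{i\in\mathcal{N}}y'(i)=n$ (by the definition \eqref{e:defofSigman} of $\zeta_n$ with $i=1$) and $y'(i)\ge 0$ for $i\in\mathcal{N}$. Therefore
\[
|[(\beta,\alpha_j),y']|=|\beta|^n\prod_{i\in\mathcal{N}}|\alpha_j(i)|^{y'(i)}\le|\beta|^n,
\]
so $|h_G(Y_{\zeta_n})|\le K|\beta|^n$ on this set, and the second expectation vanishes as $n\to\infty$. Since $\zeta_n\nearrow\infty$ monotonically and $h_G$ is bounded on $B$, dominated convergence in the first expectation gives
\[
h_G(y)=\mathbb{E}_y\bigl[h_G(Y_\tau)\mathbf{1}_{\{\tau<\infty\}}\bigr],
\]
which is the definition of $\partial B$-determinedness.

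The only conceivable obstacle is verifying that dominated convergence applies in the first term, but this is immediate from $|h_G|\le K$ on $B$; there is no subtlety analogous to the two-dimensional case beyond bookkeeping, because the conditions $|\beta|<1$ and $|\alpha_j(i)|\le 1$ are tailored exactly so that the crucial $|\beta|^n$ bound on the $\zeta_n$-boundary survives the product over $i\in\mathcal{N}$.
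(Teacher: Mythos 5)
Your proof is correct and is precisely the argument the paper has in mind: the paper states that the proof "is identical to that of Proposition \ref{p:balayagesimple}," and you have carried out that $d$-dimensional transcription correctly, including the key observation that on $\{\zeta_n\le\tau\}$ the exponent $y'(1)-\sum_{i\in\mathcal{N}}y'(i)$ equals $n$ (which in fact corrects the small imprecision "$Y_{\zeta_n}(1)=n$" appearing in the two-dimensional proof).
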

The proof is identical to that of Proposition \ref{p:balayagesimple}.

\subsection{Simple extensions}
One can build, from the
solution of a given harmonic system for $Y$,
solutions to related harmonic systems for higher dimensional walks which are, 
in some natural sense, extensions of $Y$. 
The construction will depend on what we mean by an ``extension.''
One possibility is that of a {\em simple extension} whose
definition follows.

So far, we have taken $p$  to be a $(d+1) \times (d+1)$ real matrix, i.e.,
$p  \in {\mathbb R}_+^{(d+1) \times (d+1)}$ where $d+1 = |{\mathcal N}_0|$.
To define ``simple extensions'' it is more convenient to take the
set of nodes
${\mathcal N}_0$  to be an arbitrary set with $d+1$ elements
containing $0$
and $ p \in {\mathbb R}_+^{{\mathcal N}_0\times {\mathcal N}_0}$.
${\mathcal N}_+$ is, as before, ${\mathcal N}_0 - \{0\}.$
Suppose ${\mathcal N}_0^1 \supset {\mathcal N}_0$ and
$p_1 \in {\mathbb R}_+^{{\mathcal N}_0^1  \times {\mathcal N}_0^1}$ is
another matrix of jump probabilities.
Define
$p' \in {\mathbb R}_+^{{\mathcal N}_0  \times {\mathcal N}_0}$ as follows
\begin{align}
p'(i,j) &= p_1(i,j) \label{e:defppij}
\intertext{if $i\in {\mathcal N}_0$, $j \in {\mathcal N}_+$,}
\label{e:defppi0}
p'(i,0) &= p_1(i,0) + \sum_{j \in {\mathcal N}_0^1 -{\mathcal N}_0}
p_1(i,j).
\end{align}
\begin{definition}\label{d:simpleextension}
We say that $p_1$ is a simple extension of $p$ if 
\begin{align}
&p' =  \left(\sum_{i,j \in {\mathcal N}_0} p'(i,j)\right) p, p' \neq 0,
\label{d:pandp1}
\\
&p_1(i,j) = 0 \text{ if }i \in {\mathcal N}^1_0 - {\mathcal N}_0 \text{ and }
j \in {\mathcal N}_+.\label{e:constraintonp1}
\end{align}
\end{definition}

An example:
take ${\mathcal N}_0 = \{0,1,2\}$, ${\mathcal N}^1_0 = \{0,1,2,3,4\}$,
and
\[
p = \left( \begin{matrix}
	0 & 1/7 & 0 \\
	0 &  0      & 4/7 \\
	2/7 & 0   & 0 
\end{matrix}
\right), 
p_1 = \left( \begin{matrix}
	  0       & 0.05   & 0     & 0 &  0.02\\
	  0       & 0         & 0.2 & 0 & 0         \\
	  0.1 & 0         & 0    & 0.1 & 0  \\
	 0        & 0         & 0    & 0        & 0.25\\
	 0.1 & 0        & 0    & 0.18  & 0 
	\end{matrix}
	\right).
\]
Figure \ref{f:simpleextensionextex} shows the topologies of the networks
corresponding to $p$ and $p_1.$

\begin{figure}[h]
\begin{center}
\scalebox{0.8}{
\centerline{\input{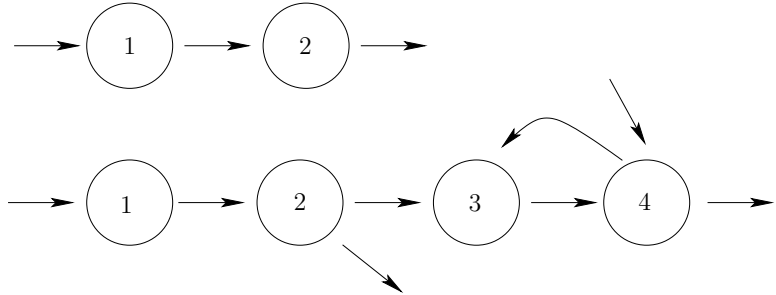}}}
\end{center}

\vspace{-0.65cm}
\caption{\hspace{0.25cm}Two networks, second is a simple extension of the first\label{f:simpleextensionextex}}
\end{figure}

Next define the ``edge-complete extension'' of a given edge-complete graph:
\begin{definition}\label{d:simpleextensionG}
Let $G$ be 
an edge-complete graph with respect to 
a finite label set $L$ . Its
edge-complete extension $G_1$ with respect to 
another set of nodes ${L}_1 \supset L$ is 
defined as follows: $V_{G_1} = V_G$
and
\begin{align*}
G_1(i,j) &= G(i,j),  i \neq j,  i,j\in V_G\\
G_1(j,j) &= G(j,j) \cup (L_1 - L), j \in V_G.
\end{align*}
\end{definition}
To get $G_1$ from $G$ one adds to each vertex of $G$ an $l$-loop for each
$l \in L_1 - L$. Then if $G$ is edge-complete with respect
to $L$, so must be
$G_1$ with respect to $L_1.$ Figure \ref{f:simpleextendGtex} gives an
example.

\begin{figure}[h]
\begin{center}
\scalebox{0.8}{
\centerline{\input{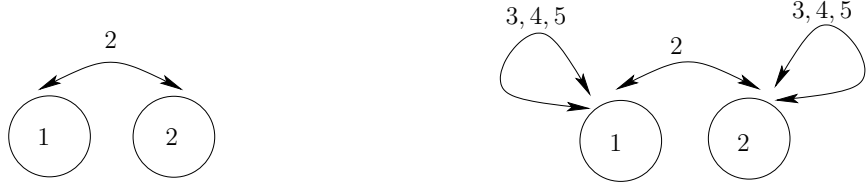}}}
\end{center}

\vspace{-0.65cm}
\caption{\hspace{0.25cm}An edge-complete graph with respect to $\{2\}$ and its edge-complete extension 
to $\{2,3,4,5\}$\label{f:simpleextendGtex}}
\end{figure}

Suppose $Y^1$ is a simple extension of $Y$; the next proposition explains
how one can construct harmonic systems (and their solutions) for $Y^1$
from those of $Y$.
\begin{proposition}\label{p:invarianceundersimpleextensions}
Let $Y^1$ be another constrained process defined using the construction
\eqref{e:defY} 
(in particular $i=1$ and only the first coordinate of $Y^1$ is unconstrained
and its remaining coordinates are constrained)
and such that its matrix of jump probabilities 
$p_1$ is a simple extension of the $p$ matrix of $Y$. 
Let $G_1$ and
$G_0$ be edge-complete graphs for $Y^1$ and $Y$ such that $G_1$
is an edge-complete extension of $G_0$. 
Suppose $(\beta,\alpha_k), {\boldsymbol c}_k, k \in V_{G_0}$,
solve the harmonic system associated with $G_0$. 
For $k \in V_{G_1} = V_{G_0}$ define
$\alpha^1_k$ as follows
\begin{align}
\label{e:defa1p1}
\alpha^1_k|_{{\mathcal N}}  &= \alpha_k, \\
\label{e:defa1p2}
\alpha^1_k|_{{\mathcal N}^1-{\mathcal N}} &= \beta,
\end{align}
where ${\mathcal N}^1 \supset {\mathcal N}$ is the set of constrained
coordinates of the process $Y^1$.
Then $(\beta,\alpha^1_k), {\boldsymbol c}_k, k \in V_{G_1}$,
solve the harmonic system
defined by  $G_1$.
\end{proposition}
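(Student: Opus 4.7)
The plan is to verify the five defining conditions of a harmonic system (Definition \ref{d:Yharmonic}) for the extended data $(\beta,\alpha^1_k),{\boldsymbol c}_k$ on $G_1$. Everything rests on one algebraic observation: because $\alpha^1_k$ equals $\beta$ on every new coordinate $j \in {\mathcal N}^1-{\mathcal N}$, the factors $\beta^{v(j)}$ from those coordinates in the product $\prod_{j \in {\mathcal N}^1}\alpha^1_k(j)^{v(j)}$ cancel exactly against the change in the prefactor $\beta^{v(1)-\sum_{j \in {\mathcal N}^1}v(j)}$. Writing $\tilde v$ for the projection of $v\in{\mathcal V}(Y^1)$ onto the old coordinates ${\mathcal N}_0$, one obtains
\[
[(\beta,\alpha^1_k),v] = [(\beta,\alpha_k),\tilde v].
\]
In particular $[(\beta,\alpha^1_k),v]=1$ whenever $\tilde v = 0$, which happens exactly when the underlying $X$-transition has both endpoints in $\{0\}\cup({\mathcal N}^1_0-{\mathcal N}_0)$.

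For condition 1 ($(\beta,\alpha^1_k)\in{\mathcal H}^1$) I would split ${\mathbf p}^1(\beta,\alpha^1_k)$ according to whether the source queue of the corresponding $X$-transition lies in ${\mathcal N}_0$ or in ${\mathcal N}^1_0-{\mathcal N}_0$. Using the identity above together with the simple-extension relations $p'(i,j)=c\cdot p(i,j)$ and $p'(i,0)=p_1(i,0)+\sum_{j\in{\mathcal N}^1_0-{\mathcal N}_0}p_1(i,j)$, the old-source contributions regroup into $c\cdot{\mathbf p}(\beta,\alpha_k)=c$. The new-source contributions, by the constraint \eqref{e:constraintonp1}, can only reach $0$ or another new node; each corresponding $[(\beta,\alpha^1_k),v]$ equals $1$, and together they sum to the total new-source probability $1-c$. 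Adding these yields ${\mathbf p}^1(\beta,\alpha^1_k)=1$.

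Conditions 2, 3, 4 concern edges between distinct vertices, where $G_1(i,j)=G_0(i,j)\in{\mathcal N}$ by the definition of edge-complete extension. Distinctness (condition 2) is immediate since $\alpha^1_i$ and $\alpha^1_j$ agree on ${\mathcal N}^1-{\mathcal N}$. For the conjugacy formula \eqref{e:conja1toa2} and the multiplier $C(l,\beta,\cdot)$, both indexed by $l=G_0(i,j)\in{\mathcal N}$, the same reorganization as in condition 1 applied to the restricted sums $\sum_{v(l)=\pm 1}p_1(v)[(\beta,\alpha^1_k),v]$ shows they equal $c$ times the corresponding sums for $Y$. The factor $c$ cancels in the ratio \eqref{e:conja1toa2}, so the $l$-conjugacy of $\alpha^1_i$ and $\alpha^1_j$ follows from the given $l$-conjugacy of $\alpha_i$ and $\alpha_j$; similarly $C(l,\beta,\alpha^1_i)/C(l,\beta,\alpha^1_j)=C(l,\beta,\alpha_i)/C(l,\beta,\alpha_j)$, so the multiplier condition \eqref{e:correctmultipliers} is preserved with the original ${\boldsymbol c}_k$.

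The subtlest step, which I expect to be the main obstacle, is condition 5 for the newly adjoined loops at $l\in{\mathcal N}^1-{\mathcal N}$, since no information about these is inherited from $G_0$. For each such $l$, the increments $v\in{\mathcal V}(Y^1)$ with $v(l)=-1$ represent departures from the new queue $l$; by \eqref{e:constraintonp1} such a departure must go to $0$ or to another new node, so $\tilde v = 0$ in either case and $[(\beta,\alpha^1_k),v]=1$. Summing,
\[
\sum_{v\in{\mathcal V}(Y^1),\,v(l)=-1} p_1(v)[(\beta,\alpha^1_k),v] = p_1(l,0)+\sum_{m\in{\mathcal N}^1_0-{\mathcal N}_0,\,m\neq l}p_1(l,m) = \mu^1_l,
\]
which combined with condition 1 gives ${\mathbf p}^1_{\{l\}}(\beta,\alpha^1_k)={\mathbf p}^1(\beta,\alpha^1_k)-\mu^1_l+\mu^1_l=1$. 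For the old loops $l\in G_0(k,k)\subset{\mathcal N}$, a scaling argument identical to that for condition 1 shows the analogous sum equals $c\mu_l=\mu^1_l$, and then the assumption $(\beta,\alpha_k)\in{\mathcal H}_l$ supplies the needed identity.
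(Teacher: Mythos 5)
Your proof is correct and follows essentially the same mechanism as the paper's, so I will note the small but genuine differences. The paper's proof sets up an explicit partition ${\mathcal V}^1_0 = {\mathcal V}^1_1 \cup {\mathcal V}^1_2 \cup {\mathcal V}^1_3$ of the increments of $Y^1$ (by whether the projected $\tilde v$ is a full $Y$-increment, a departure-to-outside type increment, or zero) and verifies the reduction $[(\beta,\alpha^1_k),v^1] = [(\beta,\alpha_k),v]$ separately on each piece. You instead observe a single unconditional identity $[(\beta,\alpha^1_k),v] = [(\beta,\alpha_k),\tilde v]$ (the $\beta$'s on the new coordinates cancel the corresponding part of the prefactor), which makes the partition unnecessary; you then decompose by source node rather than by projection type, which leads to the same regrouping via $p'(i,j)=c\,p(i,j)$ and the aggregation of new destinations into $p'(i,0)$. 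Both roads give ${\mathbf p}^1(\beta,\alpha^1_k)=c\cdot{\mathbf p}(\beta,\alpha_k)+(1-c)=1$. The paper then works out condition 3 explicitly and declares the remaining conditions ``parallel''; you instead carry all five conditions through, in particular the observation that for a newly adjoined loop label $l\in{\mathcal N}^1-{\mathcal N}$, assumption \eqref{e:constraintonp1} forces every increment with $v(l)=-1$ to project to zero, giving $\sum_{v(l)=-1}p_1(v)[(\beta,\alpha^1_k),v]=\mu^1_l$ outright, so that ${\mathbf p}^1_{\{l\}}={\mathbf p}^1=1$ with no inheritance from $G_0$ needed. This is the one place where the paper leaves the reader to fill in the argument and yours is more complete. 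Your scaling argument $C^1(l,\beta,\alpha^1_k)=c\,C(l,\beta,\alpha_k)$ (with the common factor $c$ cancelling in the conjugacy ratio \eqref{e:conja1toa2} and in \eqref{e:correctmultipliers}) is also correct and matches what the paper does implicitly. In short: same approach, slightly cleaner bookkeeping, and a more complete treatment of the loop condition.
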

The definition \eqref{e:defa1p2} assigns
the value $\beta$
to the new components of
$\alpha^1_k$ coming from the new dimensions of the simple extension; 
this corresponds to ignoring the new dimensions when we compute
the $\log$-linear function $[(\beta,\alpha^1_k),\cdot]$ at the increments
of $Y^1$
(see \eqref{e:reduction1} and \eqref{e:ba1v1p3} below).
The following proof lays down the details of this observation.
\begin{proof}
Set ${\mathcal N}^1_0 = {\mathcal N}^1 \cup \{0,1\}$ and
${\mathcal N}^1_+ = {\mathcal N}^1 \cup \{1\}$.
By assumption, $(\beta,\alpha_k)$, ${\boldsymbol c}_k$,
$k \in V_{G_0}$, satisfy the five conditions
listed under Definition \ref{d:Yharmonic} for $G=G_0$.
We want to show that this implies that
the same holds for $(\beta,\alpha^1_k)$, ${\boldsymbol c}_k$, $k \in V_{G_1}$  
for $G=G_1.$ Let ${\mathcal V}^1_0$ denote the set of increments of
$Y^1$,
$e^1_j$, $j \in {\mathcal N}^1_+$ the unit functions on
${\mathcal N}^1_+$ and let $e^1_0$ be the $0$ function on the same set.
\eqref{e:constraintonp1} implies that we can
partition ${\mathcal V}^1_0$ as follows:
\begin{align}\label{e:partitionV10}
{\mathcal V}^1_0 &= {\mathcal V}^1_1 \cup {\mathcal V}^1_2 \cup {\mathcal V}^1_3, \\
{\mathcal V}^1_1 &\doteq \{ e_1^1 + e_j^1, -e_i^1 -e_1^1, -e_i^1 + e_j^1,
i \in {\mathcal N} \cup \{0\}, j \in {\mathcal N} \},
\notag\\
{\mathcal V}^1_2 &\doteq \{ e_1^1 + e_j^1, -e_i^1 + e_j^1, i \in {\mathcal N}, 
j \in ({\mathcal N}_1 - {\mathcal N})\cup \{0\} \},
\notag
\\
{\mathcal V}^1_3 
&\doteq \{ -e^1_i + e^1_j, i,j \in ({\mathcal N}_1 - {\mathcal N})\cup\{0\}\}.
\notag
\end{align}
Parallel to this is the following partition of ${\mathcal V}_0$:
\begin{align*}
{\mathcal V}_0 &= {\mathcal V}_1 \cup {\mathcal V}_2, \\
{\mathcal V}_1 &\doteq \{ e_1 + e_j, -e_i -e_1, -e_i + e_j, 
i \in {\mathcal N} \cup \{0\}, j \in {\mathcal N} \},\\
{\mathcal V}_2 &\doteq \{e_1, -e_i, i \in {\mathcal N} \}.
\end{align*}

Fix any $k \in V_{G_1}$;
\eqref{e:defa1p1} and \eqref{e:defa1p2} imply
\begin{align}\label{e:reduction1}
[(\beta,\alpha^1_k),v^1 ] &= [(\beta,\alpha_k),v ]
\end{align}
for all $v^1 \in {\mathcal V}_1^1 \cup {\mathcal V}_2^1$ 
and where $v = v^1 |_{{\mathcal N}}.$
\eqref{e:defa1p2} implies
\begin{equation}\label{e:ba1v1p3}
[(\beta,\alpha^1_k), v^1 ] = 1
\end{equation}
for
$v^1 \in {\mathcal V}^1_3$.
Let ${\mathbf p}^1$ denote the characteristic polynomial of $Y^1$ and 
let ${\mathcal H}^1$ denote its characteristic surface; 
we would like to show $(\beta,\alpha_k^1) \in {\mathcal H}^1$, 
i.e., ${\mathbf p}^1(\beta,\alpha_k^1) = 1.$
By \eqref{e:constraintonp1} and \eqref{e:partitionV10}
\begin{align}\label{e:computePY1}
{\mathbf p}^{1}(\beta,\alpha^1_k) &= 
\sum_{v^1 \in {\mathcal V}_1^1 } 
p_1(v^1) [(\beta,\alpha^1_k), v^1 ] + 
\sum_{v^1 \in {\mathcal V}^1_2 } 
p_1(v^1) [(\beta,\alpha^1_k), v^1 ] \\
&~~~+ 
\sum_{v^1 \in {\mathcal V}^1_3 } 
p_1(v^1) [(\beta,\alpha^1_k), v^1 ].\notag
\intertext{ \eqref{e:defppij} and \eqref{e:reduction1} imply
}
&=
\sum_{v \in {\mathcal V}_1 } 
p'(v) [(\beta,\alpha_k), v ] + 
\sum_{v^1 \in {\mathcal V}^1_2 } \label{e:threetermsum}
p_1(v^1) [(\beta,\alpha^1_k), v^1 ] 
+ 
\sum_{v^1 \in {\mathcal V}^1_3 } p_1(v^1).
\end{align}
For any $v^1
\in {\mathcal V}^1_2$, 
\eqref{e:defa1p2} implies
\begin{equation}\label{e:secondpart}
[(\beta,\alpha_k^1), v^1 ] 
 = [(\beta,\alpha_k), v]
\end{equation}
where $v = v^1|_{\mathcal N} $
;
this implies that the second sum on the right side of \eqref{e:threetermsum}
equals
\[
\sum_{v \in {\mathcal V}_2} p'(v) [(\beta,\alpha_k), v].
\]
Substitute this back in \eqref{e:threetermsum} to get
\begin{align*}
{\mathbf p}^{1}(\beta,\alpha^1_k) &= 
\sum_{v \in {\mathcal V} } 
p'(v) [(\beta,\alpha_k), v ] + \sum_{v^1 \in {\mathcal V}_3^1 } p_1(v^1)
\intertext{which, by \eqref{d:pandp1}, equals}
& = \left(\sum_{v \in {\mathcal V} } p'(v) \right)
\sum_{v \in {\mathcal V}}p(v)[(\beta,\alpha_k), v ] 
+ \sum_{v^1 \in {\mathcal V}_3^1 } p_1(v^1)
\intertext{$(\beta,\alpha_k) \in {\mathcal H}$, \eqref{e:defppij} and \eqref{e:defppi0}
now give}
& = \sum_{v^1 \in {\mathcal V}_1^1 \cup {\mathcal V}_2^1 } p_1(v^1)
+ \sum_{v^1 \in {\mathcal V}^1_3 } p_1(v^1) = 1,
\end{align*}
i.e., indeed, $(\beta,\alpha_k^1) \in {\mathcal H}^1.$
 %$(\beta,\alpha_k) \in {\mathcal H}$,

Let us now show that the third part of the same definition is also satisfied.
Fix any $i\neq j$ with $G_1(i,j) =l \in {\mathcal N}.$
We want to show that $(\beta,\alpha_{i}^1)$ and
$(\beta,\alpha_{j}^1)$ are $l$-conjugate, i.e., that they satisfy
\eqref{e:conjugacydg2} and \eqref{e:conja1toa2}:
\begin{equation}\label{e:conjugacydg2Y1}
\alpha_{i}^1|_{{\mathcal N}^1 - \{l\}}
=
\alpha_{j}^1|_{{\mathcal N}^1 - \{l\}},
\end{equation}
\begin{equation}\label{e:conja1toa2Y1}
\alpha_{j}^1(l) =
\frac{1}{\alpha_i^1(l)}
\frac{\sum_{v^1(l) = -1} p_1(v) [(\beta,\alpha_i^1\{l\}),v^1]} 
{\sum_{v^1(l) = 1} p_1(v) [(\beta, \alpha_i^1\{l\}), v^1]}.
\end{equation}
By definition
$G_1(i,j)=l$
when $G(i,j) =l$; and this implies, again by definition, that
$\alpha_i$ and $\alpha_j$ satisfy \eqref{e:conjugacydg2} and \eqref{e:conja1toa2}.
\eqref{e:conjugacydg2Y1} follows from \eqref{e:conjugacydg2} for $\alpha_i$ and
$\alpha_j$, \eqref{e:defa1p1} and \eqref{e:defa1p2}.
For $l \in {\mathcal N}$ $\alpha_j^1(l) = \alpha_j(l)$ and $\alpha_i^1(l) = \alpha_i(l)$.
Then to prove \eqref{e:conja1toa2Y1} it suffices to prove
\begin{equation}\label{e:ratiosimplified}
\frac{\sum_{v^1(l) = -1} p_1(v) [(\beta,\alpha_i^1\{l\}),v^1]} 
{\sum_{v^1(l) = 1} p_1(v) [(\beta, \alpha_i^1\{l\}), v^1]}
=\frac{\sum_{v(l) = -1} p(v) [(\beta,\alpha_i\{l\}),v]} 
{\sum_{v^1(l) = 1} p_1(v) [(\beta, \alpha_i\{l\}), v]}.
\end{equation}
This follows from a decomposition parallel to the one given for the previous
part: let us first apply it to the numerator.
\begin{align}
& \sum_{v^1(l) = -1} p_1(v) [(\beta,\alpha_i^1\{l\}),v^1] \notag \\
&~~~=\sum_{v^1(l) = -1, v^1 \in {\mathcal V}^1_1 } p_1(v) [(\beta,\alpha_i^1\{l\}),v^1]
+
\sum_{v^1(l) = -1, v^1 \in {\mathcal V}^1_2 } p_1(v) [(\beta,\alpha_i^1\{l\}),v^1]
\notag
\intertext{\eqref{e:reduction1} and \eqref{e:secondpart} imply}
&~~~=\sum_{v(l) = -1, v \in {\mathcal V}_1 } p'(v) [(\beta,\alpha_i\{l\}),v]
+
\sum_{v(l) = -1, v \in {\mathcal V}_2 } p_1(v) [(\beta,\alpha_i\{l\}),v]
\notag \\
&~~~= \left(\sum_{v \in {\mathcal V} } p'(v) \right) 
\sum_{v(l) = -1} p(v) [(\beta,\alpha_i\{l\}),v].\label{e:numeratorsimplified}
\end{align}
A parallel argument for the denominator gives
\[
\sum_{v^1(l) = 1} p_1(v) [(\beta,\alpha_i^1\{l\}),v^1] 
=
 \left(\sum_{v \in {\mathcal V} } p'(v) \right) \sum_{v(l) = 1} 
p(v)[(\beta,\alpha_i\{l\}),v].
\]
Dividing \eqref{e:numeratorsimplified} by the last equality gives
\eqref{e:ratiosimplified}.

The proof that the remaining parts of the definition holds for $G=G_1$
is parallel to the arguments just given and is omitted.
\end{proof}

\section{Harmonic Systems for Tandem Queues}\label{s:hstq}
Throughout this section we will denote the dimension of the system
with ${\boldsymbol d}$; the arguments below for
${\boldsymbol d}$ dimensions require the consideration
of all walks with dimension $d \le {\boldsymbol d}$.

We will now define a specific sequence of edge-complete graphs
for tandem walks
and construct a particular solution to the harmonic system defined
by these graphs.
These particular solutions
will give us an exact formula  for $P_y(\tau < \infty)$ in terms of
the superposition of a finite number of $\log$-linear $Y$-harmonic functions.

We will assume 
\begin{equation}\label{e:separatemu}
\mu_i \neq \mu_j, i \neq j.
\end{equation}
This is the analog of \eqref{e:conjnotharmonic} for the $\boldsymbol d$ dimensional
tandem walk. 
One can treat parameter values which violate \eqref{e:separatemu} by taking limits
of the results of the present section, we give several examples in subsection \ref{ss:harmpoly}.

The $p$ matrix of the tandem walk is as given in \eqref{e:poftandem}.
Then its characteristic polynomials will be of the form
\begin{align}\label{e:charpoltandem}
{\mathbf p}(\beta,\alpha) &= 
\lambda \frac{1}{\beta} + 
\mu_1 \alpha(2) + \sum_{j=2}^{{\boldsymbol d}} \mu_j\frac{\alpha(j+1)}{\alpha(j)},\\
{\mathbf p}_i(\beta,\alpha) &= 
\lambda \frac{1}{\beta} + 
\mu_1 \alpha(2) + \mu_i +  \sum_{j=2, j \neq i}^{{\boldsymbol d}} \mu_j\frac{\alpha(j+1)}{\alpha(j)},
\notag
\end{align}
where by convention $\alpha({\boldsymbol d}+1) = \beta$ (this convention
will be used throughout this section, and in particular, 
in Lemma \ref{l:conditionforintersection}, \eqref{e:conjugationDtandem} and
\eqref{e:Cjbetaalphatandem}).
The formula \eqref{e:charpoltandem} for ${\mathbf p}$ implies 
\begin{lemma}\label{l:conditionforintersection}
$(\beta,\alpha) \in {\mathcal H} \cap {\mathcal H}_j$
$\iff$
$(\beta,\alpha) \in {\mathcal H}$,
$\mu_j \frac{\alpha(j+1)}{\alpha(j)} = \mu_j$
$\iff$
$(\beta,\alpha) \in {\mathcal H}$,
$\alpha(j+1) = \alpha(j)$, $j \in {\mathcal N}.$
\end{lemma}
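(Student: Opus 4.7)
The plan is to prove the lemma by a direct comparison of the two characteristic polynomials ${\mathbf p}$ and ${\mathbf p}_j$ given in \eqref{e:charpoltandem}. The key observation is that ${\mathbf p}$ and ${\mathbf p}_j$ agree in every term except the one corresponding to the boundary index $j$: in ${\mathbf p}$ this term is $\mu_j \alpha(j+1)/\alpha(j)$, while in ${\mathbf p}_j$ it has been replaced by the constant $\mu_j$ (this replacement is exactly the ``freeze the $-e_j+e_{j+1}$ jump at the boundary $\partial_j$'' modification encoded in the definition \eqref{e:hamiltonianba} specialized to the tandem $p$ of \eqref{e:poftandem}).

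From this observation one reads off the identity
\begin{equation*}
{\mathbf p}(\beta,\alpha) - {\mathbf p}_j(\beta,\alpha) = \mu_j \frac{\alpha(j+1)}{\alpha(j)} - \mu_j.
\end{equation*}
Thus, if $(\beta,\alpha) \in {\mathcal H} \cap {\mathcal H}_j$, both sides ${\mathbf p}$ and ${\mathbf p}_j$ equal $1$, so the difference vanishes, giving $\mu_j \alpha(j+1)/\alpha(j) = \mu_j$. Conversely, if $(\beta,\alpha) \in {\mathcal H}$ and $\mu_j \alpha(j+1)/\alpha(j) = \mu_j$, substituting into the identity gives ${\mathbf p}_j(\beta,\alpha) = {\mathbf p}(\beta,\alpha) = 1$, i.e., $(\beta,\alpha) \in {\mathcal H}_j$. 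This establishes the first ``$\iff$.''

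For the second ``$\iff$,'' one uses that the service rates are positive, $\mu_j > 0$, so that the equation $\mu_j \alpha(j+1)/\alpha(j) = \mu_j$ is equivalent to $\alpha(j+1)/\alpha(j) = 1$; the assumption that the relevant $\alpha(j)$ is nonzero (implicitly required for $(\beta,\alpha)$ to be a meaningful point of the rational variety ${\mathcal H}$, as explained in the paragraph following \eqref{e:intermsofPyd0}) lets us clear denominators to get $\alpha(j+1) = \alpha(j)$.

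There is essentially no obstacle: the entire lemma is a bookkeeping check reading off a one-term difference between ${\mathbf p}$ and ${\mathbf p}_j$. The only mild care needed is to note that the convention $\alpha({\boldsymbol d}+1) = \beta$ makes the argument apply uniformly for $j = {\boldsymbol d}$ as well, and that $\mu_j \ne 0$ and $\alpha(j) \ne 0$ are in force so that the two stated equivalences are genuine equivalences rather than one-way implications.
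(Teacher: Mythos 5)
Your proof is correct and is the natural expansion of the paper's (unstated) argument: the paper simply asserts that the lemma follows from the formula \eqref{e:charpoltandem} and supplies no further details, while you make the one-term difference ${\mathbf p}-{\mathbf p}_j=\mu_j\alpha(j+1)/\alpha(j)-\mu_j$ explicit and check the two equivalences from it. Your remarks on $\mu_j>0$, $\alpha(j)\neq 0$, and the convention $\alpha({\boldsymbol d}+1)=\beta$ covering $j={\boldsymbol d}$ are all appropriate and nothing more is needed.
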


The conjugators for the ${\boldsymbol d}$-tandem walk are:
\begin{align}
\boldsymbol\alpha(l, (\beta,\alpha))(l) &= 
\begin{cases}
\frac{1}{\alpha(2)} \frac{\alpha(3) \mu_2}{\mu_1}, ~~&l = 2,\\
\frac{1}{\alpha(l)} \frac{\alpha(l-1) \alpha(l+1) \mu_l}{\mu_{l-1}}, ~~ &2 < l \le \boldsymbol{d},
\end{cases}
\label{e:conjugationDtandem}
\\
\boldsymbol\alpha(l, (\beta,\alpha))|_{{\mathcal N} - \{l\}} &= \alpha|_{{\mathcal N} -\{l\}}.
\notag
\end{align}

For tandem walks, the functions $C(j, \beta,\alpha)$ of 
\eqref{e:Cjbetaalpha} reduce to 
\begin{equation}\label{e:Cjbetaalphatandem}
C(j,\beta,\alpha) = 
\mu_j - \mu_j \frac{ \alpha(j+1)}{\alpha(j)}.
\end{equation}

We define the edge-complete graphs $G_d$, $ d \in \{1,2,3,...,{\boldsymbol d}\}$:
\begin{equation}\label{d:defVGd}
V_{G_d} = \{ a\cup\{ d\}, a \subset \{1,2,3,...,d-1\}\};
\end{equation}
for $j \in (a \cup \{d\}) \cap {\mathcal N}$ define $G_d$ by
\begin{align}\label{d:defGd}
G_d( a\cup\{d\}, a\cup \{d\} \cup \{j-1\}) &= j \text{ if } j-1 \notin a 
\end{align}
and
\begin{equation}\label{e:Gdloops}
G_d( a\cup \{d\}, a \cup \{d\}) = {\mathcal N} - a \cup \{d\};
\end{equation}
these and its symmetry determine $G_d$ completely.
Figure \ref{f:G4tex} shows the graph $G_4$ for ${\boldsymbol d}=4$.

\begin{figure}[h]
\begin{center}
\scalebox{0.8}{
\centerline{\input{G4tex}}}
\end{center}

\vspace{-0.65cm}
\caption{\hspace{0.25cm}$G_4$ for ${\boldsymbol d}=4$\label{f:G4tex}}
\end{figure}

The next proposition follows directly from the above definition:
\begin{proposition}\label{p:embedding}
One can represent $G_{d+1}$ as a disjoint union of the
graphs $G_{k}, k=1,2,..,d,$ and the vertex $\{d+1\}$ as follows:
for $a \subset \{1,2,3,...,k-1\}$ map the vertex
$a \cup \{k\}$ of $G_{k}$ to vertex $a \cup \{k, d+1\}$ of $G_{d+1}.$
This maps $G_{k}$ to the subgraph $G_{d+1}^{k}$ of $G_{d+1}$ 
consisting of the vertices $\{a, k,d+1\}$, 
$a\subset \{1,2,3,...,k-1\}$. 
The same map preserves 
the edge
structure of $G_{k}$ as well except for the $d+1$-loops. These loops
on $G_{k}$ are broken and are mapped to $d+1$-edges between
$G^{k}_{d+1}$ and $G^{d}_{d+1}$. 
\end{proposition}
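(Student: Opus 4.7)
My plan is to unwind the definitions \eqref{d:defVGd}--\eqref{e:Gdloops}, verifying the claim at the vertex level, for non-$(d+1)$ edges and loops, and finally for $(d+1)$-loops. Throughout I write $\phi_k(a \cup \{k\}) \doteq a \cup \{k, d+1\}$ for the proposed embedding of $V_{G_k}$ into $V_{G_{d+1}}$. For the vertex decomposition, every vertex of $G_{d+1}$ has the form $b \cup \{d+1\}$ with $b \subset \{1,\ldots,d\}$: if $b = \emptyset$ we obtain the orphan vertex $\{d+1\}$, and otherwise $k \doteq \max(b)$ and $a \doteq b \setminus \{k\} \subset \{1,\ldots, k-1\}$ give $b \cup \{d+1\} = \phi_k(a \cup \{k\})$. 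Uniqueness of this representation follows from $k = \max(b)$, so the assignment gives a disjoint union.

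For non-$(d+1)$ edges and loops, fix $k \le d$ and $a \subset \{1,\ldots,k-1\}$. A label $j \ne d+1$ yields a $j$-edge at $\phi_k(a \cup \{k\})$ in $G_{d+1}$ precisely when $j \in (a \cup \{k\}) \cap \mathcal{N}$ and $j-1 \notin a \cup \{k\}$; the containment $a \subset \{1,\ldots,k-1\}$ together with $j \in a \cup \{k\}$ forces $j \le k$, so $j-1 < k$, and the second condition reduces to $j-1 \notin a$, exactly reproducing $G_k$'s edge condition. The target $\phi_k(a' \cup \{k\})$ with $a' = a \cup \{j-1\}$ still has maximum $k$ and remains in $G_{d+1}^k$, giving a bijection of non-$(d+1)$ edges. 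Likewise, the non-$(d+1)$ loops at $\phi_k(a \cup \{k\})$ are indexed by $\mathcal{N} \setminus (a \cup \{k, d+1\})$, which equals the set of non-$(d+1)$ labels in $\mathcal{N} \setminus (a \cup \{k\})$.

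For the $(d+1)$-loops, every vertex $a \cup \{k\}$ of $G_k$ with $k \le d$ carries a $(d+1)$-loop since $d+1 \notin a \cup \{k\}$, yet $\phi_k(a \cup \{k\})$ carries none in $G_{d+1}$. Edge-completeness of $G_{d+1}$ then forces a unique $(d+1)$-edge incident to $\phi_k(a \cup \{k\})$. For $k < d$, the definition \eqref{d:defGd} produces an outgoing $(d+1)$-edge from $\phi_k(a \cup \{k\})$ to $a \cup \{k, d, d+1\}$, whose maximum is $d$ and which therefore lies in $G_{d+1}^d$. For $k = d$, the vertex $a \cup \{d, d+1\}$ admits no outgoing $(d+1)$-edge (since $d \in a \cup \{d\}$), so its $(d+1)$-edge is incoming from $a \cup \{d+1\}$, which sits in $G_{d+1}^{\max a}$ when $a \ne \emptyset$ or is the orphan $\{d+1\}$ when $a = \emptyset$; this coincides with the edge already produced, in the previous case, from the $(d+1)$-loop of the corresponding vertex of $G_{\max a}$. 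Hence every $(d+1)$-edge of $G_{d+1}$ glues together two $(d+1)$-loops, one from $G_d$ and one from some smaller $G_k$, with the single exception of the edge from $\{d+1\}$ to $\{d, d+1\}$ which absorbs only the $(d+1)$-loop of $G_d$ at $\{d\}$.

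The whole argument is pure combinatorial bookkeeping; the only mildly delicate point is the case split $k < d$ versus $k = d$ in the last step, which is resolved by recognizing that each $(d+1)$-edge of $G_{d+1}$ is the merger of two $(d+1)$-loops (or, in the single boundary case, of one loop of $G_d$ with the orphan vertex). There is no analytic obstacle.
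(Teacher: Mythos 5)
Your proof is correct and is exactly the kind of direct bookkeeping verification the paper has in mind: the paper simply asserts that this proposition "follows directly from the above definition" and supplies no argument, so your careful case-by-case check of the vertex decomposition via $k=\max(b)$, of the non-$(d+1)$ edges and loops, and of how the $(d+1)$-loops split into shared $(d+1)$-edges between $G_{d+1}^d$ and the smaller $G_{d+1}^k$ (or the orphan $\{d+1\}$) fills in the detail that the paper leaves implicit. Nothing is missing, and the observation that each $(d+1)$-edge of $G_{d+1}$ is the merger of two $(d+1)$-loops, with the single boundary edge $\{d+1\}\!-\!\{d,d+1\}$ absorbing only the loop at $\{d\}$, is a nice clarification of the proposition's slightly imprecise wording.
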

Figure \ref{f:G4tex} shows an example of the
decomposition described in Proposition \ref{p:embedding}.

Define
\begin{align}\label{e:solutiontandem}
{\boldsymbol c}^*_{a} &\doteq (-1)^{|a|-1} \prod_{j=1}^{|a|-1} \prod_{l = a(j) + 1}^{a(j+1)}
\frac{\mu_l - \lambda}{\mu_l-\mu_{a(j)}}\\
\alpha^*_{a}(l) &\doteq
\begin{cases}
	1 &\text{ if } l \le a(1)\\
	\rho_{a(j)}, &\text{ if } a(j) < l \le a(j+1),\\
	\rho_{a(|a|)} & \text{ if } l > a(|a|),
\end{cases}
\label{e:defalphastar}\\
\beta^*_{a}&\doteq \rho_{a(|a|)}, \notag
\end{align}
$l \in {\mathcal N}$ (remember that we assume that the elements
of sets are written in increasing order; $a(|a|)$ then denotes
the largest element in the set). Several examples with
${\boldsymbol d} = 8$:
\begin{align}\label{e:examplescstaralphastar}
{\boldsymbol c}^*_{\{5\}}&= 1,~~
\alpha^*_{\{5\}} = (1,1,1,1,\rho_5,\rho_5,\rho_5),\notag \\
{\boldsymbol c}^*_{\{3,6\}} &= - \frac{ \mu_4 -\lambda}{\mu_4 - \mu_3} \frac{\mu_5 -
\lambda}{\mu_5-\mu_3} \frac{\mu_6-\lambda}{\mu_6-\mu_3},
\alpha^*_{\{3,6\}} = (1,1,\rho_3,\rho_3,\rho_3,\rho_6,\rho_6),\notag \\
{\boldsymbol c}^*_{\{3,5,7\}} &
= (-1)^2 \frac{\mu_4 - \lambda}{\mu_4 -\mu_3} \frac{\mu_5-\lambda}{\mu_5-\mu_3}
\frac{\mu_6-\lambda}{\mu_6-\mu_5} \frac{\mu_7-\lambda}{\mu_7-\mu_5}, \\
\alpha^*_{\{3,5,7\}} &= (1,1,\rho_3,\rho_3,\rho_5,\rho_5,\rho_7),\notag \\
\alpha^*_{\{3\}} &= (1,1,\rho_3,\rho_3,\rho_3,\rho_3,\rho_3), \alpha^*_{\{8\}} = (1,1,1,1,1,1,1);\notag
\end{align}
remember that we index the components of $\alpha^*$ with ${\mathcal N}$; therefore,
e.g., the first $1$ on the right side of the last line is $\alpha^*_{\{8\}}(2).$

It follows from \eqref{e:solutiontandem} 
and \eqref{e:defalphastar} that
\begin{align*}
{\boldsymbol c}^*_{a \cup \{d_1,d_2\}} &=-{\boldsymbol c}^*_{a \cup \{d_1\}} 
\prod_{l=d_1+1}^{d_2} \frac{ \mu_l - \lambda}{\mu_l - \mu_{d_1}}
\\
\alpha^*_{a \cup \{d_1\}} &= \alpha^*_{a \cup \{d_1,{\boldsymbol d}\}}
\end{align*}
for any 
$a(|a|) < d_1 < d_2 \in {\mathcal N}$ and $a \subset {\mathcal N}$.
These and Proposition \ref{p:embedding} imply
\begin{proposition}
For $d < {\boldsymbol d}$ and $y \in \partial B$
\begin{equation}\label{e:embeddingfunc}
-\left(\prod_{l=d+1}^{{\boldsymbol d}} \frac{ \mu_l - \lambda}{\mu_l - \mu_d}\right)
\sum_{ a \in V_{G_d} } {\boldsymbol c}_a^* [ (\beta_a^*, \alpha_a^*), y ]
=
\sum_{ a \in V_{G_{\boldsymbol d}^d }} {\boldsymbol c}_a^* [ (\beta_a^*, \alpha_a^*), y ]
\end{equation}
\end{proposition}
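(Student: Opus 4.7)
The plan is to exhibit a bijection between $V_{G_d}$ and $V_{G_{\boldsymbol d}^d}$ under which the corresponding summands differ by exactly the scalar $-\prod_{l=d+1}^{\boldsymbol d}(\mu_l-\lambda)/(\mu_l-\mu_d)$ when restricted to $\partial B$.  By Proposition \ref{p:embedding} and the definition \eqref{d:defVGd}, the natural bijection is
\[
V_{G_d}\ni a\cup\{d\}\longleftrightarrow a\cup\{d,{\boldsymbol d}\}\in V_{G_{\boldsymbol d}^d},\qquad a\subset\{1,2,\ldots,d-1\}.
\]
So the task reduces to comparing, for each fixed $a$, the summand indexed by $a\cup\{d\}$ with the summand indexed by $a\cup\{d,{\boldsymbol d}\}$.

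First I would observe that on $\partial B$ the $\beta$-component of $[(\beta,\alpha),y]$ drops out.  Indeed, for $i=1$, $\partial B=\{y\in\Omega_Y:y(1)=\sum_{j\in{\mathcal N}}y(j)\}$, so by the definition \eqref{d:basicloglinear},
\[
[(\beta,\alpha),y]=\beta^{\,y(1)-\sum_{j\in{\mathcal N}}y(j)}\prod_{j\in{\mathcal N}}\alpha(j)^{y(j)}=\prod_{j\in{\mathcal N}}\alpha(j)^{y(j)}\qquad(y\in\partial B).
\]
Hence, even though $\beta^*_{a\cup\{d\}}=\rho_d$ and $\beta^*_{a\cup\{d,{\boldsymbol d}\}}=\rho_{\boldsymbol d}$ differ, only the $\alpha^*$'s matter on $\partial B$.

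Next I would verify directly from \eqref{e:defalphastar} that $\alpha^*_{a\cup\{d\}}=\alpha^*_{a\cup\{d,{\boldsymbol d}\}}$ as vectors in ${\mathbb C}^{\mathcal N}$.  Writing $a=\{a(1)<\cdots<a(|a|)\}\subset\{1,\ldots,d-1\}$, both vectors agree with the piecewise prescription $\alpha^*(l)=\rho_{a(j)}$ on $a(j)<l\le a(j+1)$ for $j<|a|$, and $\alpha^*(l)=\rho_{a(|a|)}$ on $a(|a|)<l\le d$.  For $d<l\le{\boldsymbol d}$, the first vector falls under the ``$l>a(|a|\cup\{d\})=d$'' branch and equals $\rho_d$; the second falls under the ``$d<l\le{\boldsymbol d}$'' branch of its partition and also equals $\rho_d$.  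Thus the two vectors agree componentwise.

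Finally I would compute the ratio of coefficients.  Listing the elements of $a\cup\{d,{\boldsymbol d}\}$ in increasing order, the product formula \eqref{e:solutiontandem} for $a\cup\{d,{\boldsymbol d}\}$ has exactly one extra inner product over the block from $d$ to ${\boldsymbol d}$, namely $\prod_{l=d+1}^{\boldsymbol d}(\mu_l-\lambda)/(\mu_l-\mu_d)$, and one extra factor of $(-1)$ coming from the $(-1)^{|\cdot|-1}$ prefactor (the cardinality goes up by one).  Therefore
\[
{\boldsymbol c}^*_{a\cup\{d,{\boldsymbol d}\}}
=-\Bigl(\prod_{l=d+1}^{\boldsymbol d}\tfrac{\mu_l-\lambda}{\mu_l-\mu_d}\Bigr)\,{\boldsymbol c}^*_{a\cup\{d\}}.
\]
Summing this identity over $a\subset\{1,\ldots,d-1\}$, combined with the $\beta$-independence on $\partial B$ and the equality of the $\alpha^*$ vectors, yields \eqref{e:embeddingfunc}.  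The only delicate point is the bookkeeping of the index ranges in the two product formulas; everything else is a direct substitution.
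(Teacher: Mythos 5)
Your proof is correct and follows the same route the paper takes: it relies on the bijection $a\cup\{d\}\leftrightarrow a\cup\{d,{\boldsymbol d}\}$ from Proposition \ref{p:embedding}, the identity $\alpha^*_{a\cup\{d\}}=\alpha^*_{a\cup\{d,{\boldsymbol d}\}}$, and the coefficient relation ${\boldsymbol c}^*_{a\cup\{d,{\boldsymbol d}\}}=-\bigl(\prod_{l=d+1}^{\boldsymbol d}\tfrac{\mu_l-\lambda}{\mu_l-\mu_d}\bigr){\boldsymbol c}^*_{a\cup\{d\}}$, which the paper records (in the more general form with $d_1,d_2$) immediately before the proposition. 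One small merit of your write-up: you explicitly spell out why the restriction to $y\in\partial B$ is needed, namely that the $\beta$-exponent $y(1)-\sum_{j\in{\mathcal N}}y(j)$ vanishes there, so the mismatch $\beta^*_{a\cup\{d\}}=\rho_d\neq\rho_{\boldsymbol d}=\beta^*_{a\cup\{d,{\boldsymbol d}\}}$ is harmless; the paper leaves this observation implicit.
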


\begin{proposition}\label{p:provesolution}
For $d \le \boldsymbol d$, let
$G_d$ be as in \eqref{d:defVGd} and \eqref{d:defGd}. Then
$(\beta^*_{a\cup\{d\}}, \alpha^*_{a\cup\{d\}})$,
${\boldsymbol c}^*_{a\cup\{d\}}$, 
 $a \subset \{1,2,3,...,d-1\}$, defined in \eqref{e:solutiontandem},
solve the harmonic system defined
by $G_d$.
\end{proposition}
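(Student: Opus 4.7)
The plan is to verify, one by one, the five conditions of Definition \ref{d:Yharmonic} for $G = G_d$ and the explicit values given in \eqref{e:solutiontandem}--\eqref{e:defalphastar}. All five checks come down to direct algebra exploiting (i) the piecewise-constant structure of $\alpha^*_{a\cup\{d\}}$, (ii) the identity $\lambda/\rho_j = \mu_j$, and (iii) the total-mass identity $\lambda + \sum_{j=1}^{\boldsymbol d}\mu_j = 1$ coming from $\sum_v p(v) = 1$. Throughout I would fix $v = a\cup\{d\} = \{d_1 < d_2 < \cdots < d_m = d\}$ and adopt the conventions $\mu_{d_0} := \lambda$ and $\alpha(\boldsymbol d + 1) := \beta$, which turn the formulas for $\boldsymbol{c}^*$, $\alpha^*$, and $\boldsymbol{\alpha}$ into uniform expressions with no boundary case analysis.

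For interior harmonicity (part 1), substituting $(\beta^*_v, \alpha^*_v)$ into \eqref{e:charpoltandem} yields a sum whose ratios $\alpha^*_v(j+1)/\alpha^*_v(j)$ equal $1$ away from the breakpoints and equal $\rho_{d_k}/\rho_{d_{k-1}} = \mu_{d_{k-1}}/\mu_{d_k}$ at $j = d_k$; combined with $\lambda/\beta^*_v = \mu_d$, the sum telescopes to $\lambda + \sum_{j=1}^{\boldsymbol d}\mu_j = 1$. Non-vanishing of $\boldsymbol{c}^*_v$ is ensured by stability ($\mu_l > \lambda$) and \eqref{e:separatemu} ($\mu_l \ne \mu_{d_k}$), completing (1). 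Lemma \ref{l:conditionforintersection} then reduces (5) to $\alpha^*_v(l+1) = \alpha^*_v(l)$ for every loop label $l \in \mathcal{N} - v$, which is automatic since $\alpha^*_v$ is constant between consecutive breakpoints and equal to $\rho_d = \beta^*_v$ on $\{l \geq d\}$. Distinctness (2) is also immediate: the breakpoints of $\alpha^*_v$ are exactly the elements of $v$, and \eqref{e:separatemu} makes the $\rho_{d_k}$ distinct, so different vertices yield different step functions.

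The substantive parts are (3) and (4). For conjugacy, an outgoing $j$-edge at $v_1 = a\cup\{d\}$ has $j = d_k$ and leads to $v_2 = v_1 \cup \{j-1\}$; comparing block structures shows $\alpha^*_{v_1}$ and $\alpha^*_{v_2}$ agree on $\mathcal{N} - \{j\}$, while $\alpha^*_{v_1}(j) = \rho_{d_{k-1}}$ and $\alpha^*_{v_2}(j) = \rho_{j-1}$. Plugging $\alpha^*_{v_1}(j-1) = \rho_{d_{k-1}}$ and $\alpha^*_{v_1}(j+1) = \rho_j$ into \eqref{e:conjugationDtandem} gives $\boldsymbol{\alpha}(j, (\beta^*, \alpha^*_{v_1}))(j) = \rho_j \mu_j / \mu_{j-1} = \rho_{j-1} = \alpha^*_{v_2}(j)$, which is exactly (3). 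For (4), formula \eqref{e:Cjbetaalphatandem} yields $C(j, \beta^*, \alpha^*_{v_1}) = \mu_j - \mu_{d_{k-1}}$ and $C(j, \beta^*, \alpha^*_{v_2}) = \mu_j - \mu_{j-1}$; on the other side, writing out the double products in \eqref{e:solutiontandem} for $v_1$ and $v_2$ produces nearly identical factors that cancel except around the position where $j-1$ is inserted, leaving $\boldsymbol{c}^*_{v_1} / \boldsymbol{c}^*_{v_2} = -(\mu_j - \mu_{j-1})/(\mu_j - \mu_{d_{k-1}})$, which matches $-C(j, \beta^*, \alpha^*_{v_2}) / C(j, \beta^*, \alpha^*_{v_1})$ as required by \eqref{e:correctmultipliers}. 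The only real obstacle will be bookkeeping: the conventions $\mu_{d_0} = \lambda$ (for $k = 1$), $\alpha(\boldsymbol d + 1) = \beta$ (for $d_k = \boldsymbol d$), and the special $j = 2$ branch of \eqref{e:conjugationDtandem} must be set at the outset so that the above calculations apply uniformly.
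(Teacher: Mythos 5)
Your proposal is correct and the algebraic verifications you describe (the telescoping sum for ${\mathcal H}$-membership, Lemma \ref{l:conditionforintersection} for the loop conditions, block-structure comparison for conjugacy, and the cancellation of running products for the coefficient ratios, all organized via the conventions $\mu_{d_0}:=\lambda$ and $\alpha(\boldsymbol d+1):=\beta$) match the paper's computations essentially step for step. Where you depart from the paper is at the top level: the paper first invokes Proposition \ref{p:invarianceundersimpleextensions} to reduce the claim to the single case $d=\boldsymbol d$ (observing that the first $d$ components of the $\boldsymbol d$-tandem walk form a simple extension of the $d$-tandem walk, and that $G_d$ viewed inside the $\boldsymbol d$-dimensional system is the edge-complete extension of the corresponding graph for the $d$-dimensional system), and only then carries out the five verifications. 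You instead verify all five conditions directly for every $d\le\boldsymbol d$. This is a genuine, if modest, difference of route: the paper's reduction buys a cleaner, more modular proof that reuses the simple-extension machinery developed in Section \ref{s:dg2} and never has to think about the coordinates $d+1,\dots,\boldsymbol d$ explicitly; your direct route avoids invoking that lemma at the cost of carrying along the tail coordinates $l>d$, which (as you implicitly note) contribute trivially since $\alpha^*_{a\cup\{d\}}$ is constant equal to $\rho_d=\beta^*$ there and the extra loop labels $\{d+1,\dots,\boldsymbol d\}$ fall under the same piecewise-constancy observation. Both routes are valid; if you want the proof to stand alone without Proposition \ref{p:invarianceundersimpleextensions}, yours is the natural choice, and your sketches of parts (3) and (4) fill in correctly.
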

\begin{proof}
The first $d$ components of a tandem walk is a simple extension
of the tandem walk consisting of its first $d-1$ components.
This and Proposition \ref{p:invarianceundersimpleextensions}
imply that it suffices to prove the current proposition only
for $d=\boldsymbol d$.

Let us begin by showing 
$\left(\rho_{\boldsymbol d}, \alpha^*_{a\cup\{\boldsymbol d\}}\right)$, 
$a \subset {\mathcal N}_+-\{{\boldsymbol d}\}$ is on the characteristic surface
${\mathcal H}$ of the tandem walk. We will write $\alpha^*$ instead
of $\alpha^*_{a\cup\{\boldsymbol d\}}$, the set $a$ will be clear from the context.

Let us first consider the case when $a(1) > 1$, i.e., when $1 \notin a$;
the opposite case is treated similarly and is left to the reader.
Then $\alpha^*(l) = 1$ for $2 \le l \le a(1)$.
By definition $\alpha^*(i) = \alpha^*(i+1)$ if $a(j) < i < a(j+1)$;
these and $\beta^*_{a\cup\{{\boldsymbol d}\}} = \rho_{\boldsymbol d}$ give
\begin{align*}
{\mathbf p}(\rho_{\boldsymbol d},\alpha^*) &= 
\mu_{\boldsymbol d} + 
\sum_{j=1}^{a(1)-1} \mu_j  + \mu_{a(1)} \rho_{a(1)}  +
 \sum_{j \in (a^c -\{1\cdots a(1)-1 \})} \mu_j\\
&~~~~~+ \sum_{j \in (a-\{a(1)\}) } \mu_j \frac{\alpha^*(j+1)}{\alpha^*(j)}
+ \rho_{\boldsymbol d} \frac{\mu_{\boldsymbol d}}{\alpha^*(\boldsymbol d)}
\intertext{(where
$a^c= ({\mathcal N}_+ -\{\boldsymbol d\}) - a$)
and in the last expression we have used the convention
$\alpha^*({\boldsymbol d} + 1) =\beta^*$;
by definition \eqref{e:defalphastar} 
$\alpha^*(a(j+1)) = \rho_{a(j)}$,
$\alpha^*(a(j)) = \rho_{a(j-1)}$ and therefore}
&= 
\mu_{\boldsymbol d} + 
\sum_{j=1}^{a(1)-1} \mu_j  + \lambda +  
 \sum_{j\in (a^c -\{1\cdots a(1)-1 \})} \mu_j
+ \sum_{j=2}^{|a|} \mu_{a(j)} \frac{\rho_{a(j)}}{\rho_{a(j-1)}}
+ \mu_{a(|a|)}
\intertext{
$\mu_{a(j)} \rho_{a(j)}/\rho_{a(j-1)} = \mu_{a(j-1)}$ implies}
&= 
\mu_{\boldsymbol d} + 
\sum_{j=1}^{a(1)-1} \mu_j  + \lambda +  
 \sum_{j\in (a^c -\{1\cdots a(1)-1\})} \mu_j
+ \sum_{j=2}^{|a|} \mu_{a(j-1)} + \mu_{a(|a|)}\\
&= 
\mu_{\boldsymbol d} + 
\sum_{j=1}^{a(1)-1} \mu_j  + \lambda +  
 \sum_{j\in (a^c -\{1\cdots a(1)-1 \})} \mu_j+ \sum_{j \in a }\mu_j = 1;
\end{align*}
i.e., $(\rho_{\boldsymbol d}, \alpha^*) \in {\mathcal H}.$

If $a_1 \neq a_2$ 
take any $ i \in a_1 -a_2$ (relabel the sets 
if necessary so that  $a_1 -a_2 \neq  \emptyset$).
Let $j$ be the index of $i$ in $a_1$, i.e., $a_1(j) = i$.
Then by definition, $\alpha^*_{a_1\cup\{\boldsymbol d\}}(j+1) = \rho_i$; but
$i \notin a_2$ and \eqref{e:separatemu} imply that no component
of $\alpha^*_{a_1\cup \{\boldsymbol d\}}$ equals $\rho_i$, and therefore
$\alpha^*_{a_1\cup\{\boldsymbol d\}} \neq \alpha^*_{a_2\cup\{\boldsymbol d\}}$. 
This shows that
$\alpha^*_{a \cup \{\boldsymbol d\}}$, $a \subset {\mathcal N}$ satisfy
the second part of Definition \ref{d:Yharmonic}.

Fix a vertex $a \cup \{\boldsymbol d\}$ of $G_{\boldsymbol d}$. 
By definition,
for each of its elements $l$,
this vertex is connected to $ a \cup \{\boldsymbol d\} \cup \{l-1\}$ if
$l-1 \notin a$ or 
or to $ a \cup \{\boldsymbol d\} - \{l-1\}$ if $l-1 \in a$.
Then to show that the $\alpha^*_{a \cup \{\boldsymbol d\}}$, $a \subset {\mathcal N}$
satisfies the third part of Definition \ref{d:Yharmonic} it suffices to 
prove that for each $a \subset {\mathcal N}$
and each $l \in a \cup \{\boldsymbol d\}$ such that $l-1 \notin a \cup \{\boldsymbol d\}$,
$\alpha^*_{a \cup \{\boldsymbol d\}}$ and
$\alpha^*_{a \cup \{\boldsymbol d\} \cup \{l-1\}}$ are $l$-conjugate 
(remember that the graphs of harmonic systems are symmetric).
For ease of notation let us denote 
$a \cup \{l-1\}$ by $a_1$,
$\alpha^*_{a\cup \{\boldsymbol d\}}$ by 
$\alpha^*$, $\alpha^*_{a_1 \cup \{\boldsymbol d\}}$ by $\alpha^*_1$ and
 $\beta^*_{a_1} =\beta^*_a$  by $\beta^*$ (because we have assumed $d={\boldsymbol d}$,
$\beta^*$ is in fact equal to $\rho_{\boldsymbol d}$).
We want to show that $(\beta^*, \alpha^*)$ and $(\beta^*, \alpha^*_1)$ are $l$-conjugate.
Let us assume $2 < l < {\boldsymbol d}$, the cases $l=2,{\boldsymbol d}$ 
are treated almost the same way
and are left to the reader.
By assumption $l \in \alpha^*$ but $l-1 \notin \alpha^*$.
If $l$ is the $j^{th}$ element of $a$, i.e.,
$l = a(j)$; then $a(k) =a_1(k)$ for $k < j$,
$a_1(j)=l-1$, $a(k-1) = a_1(k)$ for $k > j$.
This and \eqref{e:defalphastar} imply 
\begin{equation}\label{e:therestsame}
\alpha^*|_{{\mathcal N}-l} = \alpha^*_1|_{{\mathcal N} - l}
\end{equation}
i.e., $\alpha^*$ and $\alpha^*_1$ satisfy \eqref{e:conjugacydg2}
(for example, for $\boldsymbol d=8$, $\alpha^*_{\{3,6\}}$ is given in 
\eqref{e:examplescstaralphastar}; on the other hand 
$\alpha^*_{\{3,5,6\}} = (1,1,\rho_3,\rho_3,\rho_5,\rho_6,\rho_6,\rho_6)$ and
indeed 
$
\alpha^*_{\{3,6\}}|_{{\mathcal N}-\{6\}} = \alpha^*_{\{3,5,6\}}|_{{\mathcal N}-\{6\}}
$
).
Definition
\eqref{e:defalphastar}
also implies 
\begin{equation}\label{e:alphastarl}
\alpha^*_1(l) = \rho_{a_1(j)} = \rho_{l-1},~ \alpha^*_1(l+1) = 
\rho_{a_1(j+1)} = \rho_l.
\end{equation}
On the other hand,
again by \eqref{e:defalphastar}, and by $l-1 \notin a$,
we have
\begin{equation*}%\label{e:alpha1starl}
\alpha^*(l) = \alpha^*(l-1) = \rho_{a(j-1)} \text{ and }  
\alpha^*(l+1) = \rho_l.
\end{equation*}
Then
\[
\frac{1}{\alpha^*(l)} \frac{\alpha^*(l-1) \alpha^*(l+1) \mu_l}{\mu_{l-1}}
= \rho_{l-1}
\]
and, by \eqref{e:alphastarl} this equals $\alpha^*_1(l)$; thus we have
seen 
$\boldsymbol \alpha(l,(\beta^*,\alpha^*))(l) = \alpha^*_1(l)$, where
the conjugator $\boldsymbol \alpha$ is defined as in
\eqref{e:conjugationDtandem}.
This and \eqref{e:therestsame} mean that 
$\alpha_1^* = \boldsymbol \alpha(l,(\beta^*,\alpha^*))$, i.e., 
$(\beta^*,\alpha_1^*)$ and $(\beta^*,\alpha)$ are
$l$-conjugate.

Now we will prove that the ${\boldsymbol c}^*_{a\cup\{\boldsymbol d\}}$, 
$a \subset {\mathcal N}$, defined in \eqref{e:solutiontandem} 
satisfy the fourth part of Definition \ref{d:Yharmonic}. 
The structure of $G_{\boldsymbol d}$ implies 
that it suffices to check that 
\begin{equation}\label{e:conditiononCs}
\frac{{\boldsymbol c}^*_{a}}{{\boldsymbol c}^*_{a_1} } = 
-\frac{C(l,\rho_{\boldsymbol d}, \alpha^*_{a_1})}{C(l,\rho_{\boldsymbol d},\alpha^*_a)}
\end{equation}
holds for any $ l \in a$ such that $l-1 \notin a$  and $a_1 = a \cup \{l-1\}.$
There are three cases to consider: $l=2$, $l=\boldsymbol d$ and $2 < l < \boldsymbol d$; 
we will only treat the last, the other cases can be
treated similarly and are left to the reader.
For $2 < l < \boldsymbol d$ one needs to further 
consider the cases $a(1) = l$ and $a(1) < l$.
For $b \subset {\mathcal N}$, ${\boldsymbol c}^*_{b\cup\{\boldsymbol d\}}$ of \eqref{e:solutiontandem}
is the product of a parity term and a running product of 
$d-b(1)$ ratios of the form $(\mu_l - \lambda)/(\mu_l - \mu_{a(j)}).$
The ratio of the parity terms of $a$ and $a_1$ is $-1$ because $a_1$ has one additional term.
If $a(1) = l$ then $a_1(1) = l-1$ the only difference between the running products
in the definitions of ${\boldsymbol c}^*$ and ${\boldsymbol c}^*_1$
is that the latter has an additional initial term
$(\mu_l-\lambda)/(\mu_l-\mu_{l-1})$ and therefore
\[
\frac{{\boldsymbol c}^*_{a}}{{\boldsymbol c}^*_{a_1}} = -\frac{ \mu_l-\mu_{l-1}}{\mu_l-\lambda}.
\]
Because $l > 2$ and $l-1 \ge 2$, \eqref{e:defalphastar} implies $\alpha^*(l) =1$, $\alpha^*(l+1) = \rho_l$,
$\alpha^*_1(l) = \rho_{l-1}$ and $\alpha^*_1(l+1) = \rho_l$.
These and \eqref{e:Cjbetaalphatandem} imply
\[
\frac{C(l,\rho_{\boldsymbol d}, \alpha^*_{a_1})}{C(l,\rho_{\boldsymbol d},\alpha^*_a)} = \frac{ \mu_l - \mu_{l-1}}{\mu_l - \lambda}.
\]
The last two displays imply \eqref{e:conditiononCs} for $a(1) = l$. 
If $l > a(1)$, let $j>1$ be the position of $l$ in $a$, i.e., $l=a(j)$.  
In this case, the definition \eqref{e:solutiontandem} implies that
the running products in the definitions of 
${\boldsymbol c}^*_{a}$ and ${\boldsymbol c}^*_{a_1}$ have the same number of ratios and they are all
equal except for the $l^{th}$ terms, which is $(\mu_l - \lambda)/(\mu_l -\mu_{a(j-1)})$ 
for the former and $(\mu_l - \lambda)/(\mu_l -\mu_{l-1})$ for the latter. $a_1$ has
one more element than $a$, therefore, the ratio of the parity terms is again
$-1$; these imply
\[
\frac{{\boldsymbol c}^*_{a}}{{\boldsymbol c}^*_{a_1}} = -\frac{ \mu_l-\mu_{l-1}}{\mu_l-\mu_{a(j-1)}}.
\]
On the other hand, $l \in a$,  $j > 1$,  $a_1 = a \cup \{l-1\}$ 
and \eqref{e:defalphastar} imply 
$\alpha^*(l) = \rho^*(a(j-1))$, $\alpha^*(l+1) = \rho_l$, $\alpha_1^*(l) = \rho_{l-1}$,
and $\alpha_1^*(l+1) = \rho_l$ and therefore 
\[
\frac{C(l,\rho_{\boldsymbol d}, \alpha^*_{a_1})}{C(l,\rho_{\boldsymbol d},\alpha^*_a)} = 
\frac{ \mu_l - \mu_{l-1}}{\mu_l - \mu_{a(j-1)}}.
\]
The last two displays once again imply \eqref{e:conditiononCs} when
$a(j) = l$ with $j > 1$.

For $a \subset {\mathcal N}$, the definition \eqref{e:defalphastar}
implies 
\[
\alpha^*_{a\cup\{d\}}(l) =  \alpha^*_{a\cup\{d\}}(l+1);
\]
we have already shown $\alpha^*_{a\cup\{d\}} \in {\mathcal H}$,
then, Lemma \ref{l:conditionforintersection} and the last display
imply $\alpha^*_{a\cup \{\boldsymbol d\}} \in {\mathcal H}_l$ for $l \notin a$.
Then by \eqref{e:Gdloops} 
$\alpha^*_{a\cup \{\boldsymbol d\}} \in {\mathcal H}_l$ for each loop on the
vertex $a\cup \{\boldsymbol d\}$ of $G_{\boldsymbol d}$, i.e., the last part of Definition
\ref{d:Yharmonic} is also satisfied. This finishes the proof of the
proposition.
\end{proof}

\begin{proposition}\label{p:defhd}
\begin{equation}\label{e:defhd}
h^*_d \doteq \sum_{ a \subset \{1,2,3,...,d-1\}} 
{\boldsymbol c}^*_{a\cup \{d\}} [(\rho_d, \alpha^*_{a\cup\{d\}}),\cdot],
\end{equation}
$d=1,2,3,...,{\boldsymbol d}$, are $\partial B$-determined $Y$-harmonic functions.
\end{proposition}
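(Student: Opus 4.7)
The plan is to assemble the proof directly from the three heavy-lifting propositions already in hand, namely Proposition \ref{p:provesolution} (existence of the explicit solution to the $G_d$ harmonic system), Proposition \ref{p:simpleharmonicfunctions} (any solution of a harmonic system gives a $Y$-harmonic function), and Proposition \ref{p:balayagesimpled} (a modulus bound on the parameters forces the resulting $h_G$ to be $\partial B$-determined).

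First I would note that $h^*_d$ is literally $h_{G_d}$ in the notation of \eqref{e:defhG}, formed from the data $(\beta^*_{a\cup\{d\}},\alpha^*_{a\cup\{d\}})$ and ${\boldsymbol c}^*_{a\cup\{d\}}$ of \eqref{e:solutiontandem}. By Proposition \ref{p:provesolution} these data solve the harmonic system associated to the edge-complete graph $G_d$, and by Proposition \ref{p:simpleharmonicfunctions} $h^*_d$ is therefore $Y$-harmonic. This handles the harmonicity half of the claim with no further computation.

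It remains to verify the hypotheses of Proposition \ref{p:balayagesimpled}, i.e.\ $|\beta^*_{a\cup\{d\}}|<1$ and $|\alpha^*_{a\cup\{d\}}(l)|\le 1$ for every $l\in\mathcal N$ and every $a\subset\{1,\dots,d-1\}$. By construction $\beta^*_{a\cup\{d\}}=\rho_{(a\cup\{d\})(|a\cup\{d\}|)}=\rho_d$ (since $d$ is the largest element of $a\cup\{d\}$), and the stability assumption \eqref{e:stable} gives $\rho_d<1$, so the first bound holds. For the second bound, inspection of the piecewise definition \eqref{e:defalphastar} shows that each coordinate of $\alpha^*_{a\cup\{d\}}$ is either equal to $1$ or equal to $\rho_k$ for some $k\in a\cup\{d\}$; again by \eqref{e:stable} every such $\rho_k$ lies in $(0,1)$, so $|\alpha^*_{a\cup\{d\}}(l)|\le 1$ holds with room to spare. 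Invoking Proposition \ref{p:balayagesimpled} then gives that $h^*_d$ is $\partial B$-determined, completing the proof.

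There is effectively no obstacle here: the work has been done in the preceding propositions, and the present statement is the payoff. The only point requiring a moment's care is confirming that $d$ really is the maximal index in $a\cup\{d\}$ (so that $\beta^*_{a\cup\{d\}}=\rho_d$ and not $\rho_{a(|a|)}$ for some $a(|a|)>d$); this is immediate because $a\subset\{1,\dots,d-1\}$, so no element of $a$ exceeds $d-1$.
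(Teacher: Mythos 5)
Your argument is correct and follows precisely the paper's own proof: cite Proposition \ref{p:provesolution} together with Proposition \ref{p:simpleharmonicfunctions} for $Y$-harmonicity, observe that $\beta^*_{a\cup\{d\}}=\rho_d<1$ and all coordinates of $\alpha^*_{a\cup\{d\}}$ lie in $(0,1]$ by \eqref{e:defalphastar} and stability, and then invoke Proposition \ref{p:balayagesimpled} for $\partial B$-determination. The only difference is that you spell out why $d$ is the maximal index of $a\cup\{d\}$, a point the paper leaves implicit.
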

\begin{proof}
That $h^*_d$ is $Y$-harmonic follows from Propositions \ref{p:provesolution} 
and \ref{p:simpleharmonicfunctions}.
The components of $\alpha^*_{a\cup\{d\}}$, $a\subset
\{1,2,3,...,d-1\}$ and $\beta^*_d = \rho_d$ are all between $0$ and $1$.
This and Proposition \ref{p:balayagesimpled} imply 
that  $h^*_d$ are all $\partial B$-determined.
\end{proof}

With definition \eqref{e:defhd} we can rewrite \eqref{e:embeddingfunc} as
\begin{equation}\label{e:embeddingfunc1}
-\left(\prod_{l=d+1}^{{\boldsymbol d}} \frac{ \mu_l - \lambda}{\mu_l - \mu_d}\right)
h_d^*(y)
=
\sum_{ a \in V_{G_{\boldsymbol d}^d }} {\boldsymbol c}_a^* [ (\beta_a^*, \alpha_a^*), y ]
\end{equation}
for $y \in {\partial B}.$

\begin{proposition}\label{p:exactformulaDtandem}
\begin{equation}\label{e:exactformulaDtandem}
P_y(\tau < \infty) = 
\sum_{d=1}^{{\boldsymbol d}}
\left( \prod_{l=d+1}^{\boldsymbol d} \frac{\mu_l -\lambda}{\mu_l - \mu_d}  \right) 
h^*_d(y)
\end{equation}
for $y \in B.$
\end{proposition}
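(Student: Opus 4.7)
The plan is to show that the right-hand side of \eqref{e:exactformulaDtandem} is a $\partial B$-determined $Y$-harmonic function whose restriction to $\partial B$ is identically~$1$. Since $P_y(\tau<\infty) = {\mathbb E}_y[1 \cdot 1_{\{\tau<\infty\}}]$ is itself the (unique) $\partial B$-determined harmonic function with boundary value $1$, this identifies the two sides.

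Proposition~\ref{p:defhd} tells us that each $h^*_d$ is $Y$-harmonic and $\partial B$-determined. Since $\partial B$-determinedness is closed under finite linear combinations (apply the defining formula term-by-term), the right-hand side of \eqref{e:exactformulaDtandem} is itself $\partial B$-determined. Writing $g$ for its restriction to $\partial B$, the Balayage representation yields
\begin{equation*}
\sum_{d=1}^{{\boldsymbol d}} \left( \prod_{l=d+1}^{\boldsymbol d} \frac{\mu_l -\lambda}{\mu_l - \mu_d}  \right) h^*_d(y) = {\mathbb E}_y\!\left[ g(Y_\tau) 1_{\{\tau<\infty\}}\right],~~ y \in B,
\end{equation*}
so the proposition reduces to the boundary identity $g\equiv 1$ on $\partial B$.

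To verify $g\equiv 1$, I would exploit the recursive structure of the graphs $G_d$ supplied by Proposition~\ref{p:embedding}. That proposition decomposes $V_{G_{\boldsymbol d}}$ as the disjoint union of the singleton vertex $\{{\boldsymbol d}\}$ and the embedded subsets $V_{G_{\boldsymbol d}^k}$ for $k=1,\ldots,{\boldsymbol d}-1$. Substituting this into the definition \eqref{e:defhd} of $h^*_{\boldsymbol d}$ gives
\begin{equation*}
h^*_{\boldsymbol d}(y) = [(\rho_{\boldsymbol d},\alpha^*_{\{{\boldsymbol d}\}}),y] + \sum_{k=1}^{{\boldsymbol d}-1} \sum_{a \in V_{G_{\boldsymbol d}^k}} {\boldsymbol c}^*_a\, [(\beta^*_a,\alpha^*_a),y],
\end{equation*}
where I have used ${\boldsymbol c}^*_{\{{\boldsymbol d}\}} = 1$ (the empty product in \eqref{e:solutiontandem}). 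For $y\in\partial B$ the first term collapses to~$1$: one has $\alpha^*_{\{{\boldsymbol d}\}}=(1,\ldots,1)$ and $y(1)=\sum_{j\in{\mathcal N}} y(j)$, so the log-linear definition \eqref{d:basicloglinear} gives $[(\rho_{\boldsymbol d},\alpha^*_{\{{\boldsymbol d}\}}),y]=\rho_{\boldsymbol d}^{0}=1$. Each inner sum on the right is evaluated on $\partial B$ by \eqref{e:embeddingfunc1}, producing $-\bigl(\prod_{l=k+1}^{\boldsymbol d}(\mu_l-\lambda)/(\mu_l-\mu_k)\bigr) h^*_k(y)$. Rearranging terms (and observing that the empty product $\prod_{l={\boldsymbol d}+1}^{\boldsymbol d}$ equals $1$, accounting for the $d={\boldsymbol d}$ term) yields precisely $g(y)=1$ for $y\in\partial B$.

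The main substance of the argument, therefore, is not in this last assembly step, which is essentially bookkeeping organized around Propositions~\ref{p:embedding} and the identity \eqref{e:embeddingfunc1}, but rather in the fact that the carefully chosen scalar coefficients in \eqref{e:solutiontandem} were engineered precisely so that the $k$-th subgraph sum on $\partial B$ agrees with a scalar multiple of $h^*_k|_{\partial B}$. The only thing that could go wrong is a sign or an index offset in the recursion $h^*_{\boldsymbol d}|_{\partial B} = 1 - \sum_{k<{\boldsymbol d}}(\prod\cdots)h^*_k|_{\partial B}$, so I would double-check the recursion on small dimensions (say ${\boldsymbol d}=2,3$) before writing up the general case.
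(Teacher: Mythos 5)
Your proposal is correct and follows essentially the same route as the paper: cite Proposition~\ref{p:defhd} for $Y$-harmonicity and $\partial B$-determinedness, reduce to the boundary identity $g\equiv 1$, decompose $h^*_{\boldsymbol d}$ via the graph decomposition of Proposition~\ref{p:embedding}, and cancel using \eqref{e:embeddingfunc1} before invoking uniqueness of the $\partial B$-determined harmonic function with boundary value~$1$. The only cosmetic difference is that you extract the recursion $h^*_{\boldsymbol d}|_{\partial B} = 1 - \sum_{k<{\boldsymbol d}}\bigl(\prod_{l=k+1}^{\boldsymbol d}\tfrac{\mu_l-\lambda}{\mu_l-\mu_k}\bigr)h^*_k|_{\partial B}$ as an intermediate step and then assemble, whereas the paper substitutes the decomposition of $h^*_{\boldsymbol d}$ directly into the full sum and observes the telescoping cancellation in one line; the underlying computation is identical.
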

Display \eqref{e:nicerep} below in subsection \ref{ss:ex1} 
shows the right side of \eqref{e:exactformulaDtandem}
for $\boldsymbol d = 2.$
\begin{proof}
Let ${\boldsymbol 1} \in {\mathbb C}^{\mathcal N}$ denote the vector with
all components equal to $1$.
The decomposition of $G_{\boldsymbol d}$ into the single vertex ${\{\boldsymbol d}\}$
and $G_{\boldsymbol d}^d$, $d < {\boldsymbol d}$ implies that the right side of 
\eqref{e:exactformulaDtandem} equals
\begin{align*}
&[(\rho_{\boldsymbol d}, {\boldsymbol 1}),y] + 
\sum_{d=1}^{{\boldsymbol d}-1}
\sum_{a \in V_{G_{\boldsymbol d}^d}} {\boldsymbol c}^*_a
[ ( \beta_a^*, \alpha^*_a), y ]
+\sum_{d=1}^{{\boldsymbol d}-1}
\left( \prod_{l=d+1}^{\boldsymbol d} \frac{\mu_l -\lambda}{\mu_l - \mu_d}  \right)h^*_d(y)
\intertext{for $y \in \partial B$; \eqref{e:embeddingfunc1} implies}
&~~= 
[(\rho_{\boldsymbol d}, {\boldsymbol 1}),y]
\end{align*}
which, for $y\in \partial B$, equals $1$.
Thus, we see that
the right side of \eqref{e:exactformulaDtandem} equals $1$ on $\partial B$.
Proposition \ref{p:defhd} says that the same function is $\partial B$-determined 
and is $Y$-harmonic. Then its restriction to $B$
must be indeed equal to $y\rightarrow P_y(\tau < \infty)$, $y \in B$,
which is the unique function with those properties.
\end{proof}

\section{Convergence - initial condition set for $X$}\label{s:convergence2}
In Section \ref{s:convergence1} 
we have proved a convergence result which
takes as input 
the initial position of the $Y$ process.
One can also provide, as is done in the LD analysis,
an initial position to the
$X^n$ process as $X^n(0) = \lfloor n x \rfloor$ for  a fixed
$x \in {\mathbb R}_+^d$ with 
$\sum_{i \in {\mathcal N}_+} x(i) < 1$ and
prove a convergence result in this setting.
The initial position
$X^n(0) = \lfloor nx \rfloor$ implies that probabilities such as those
in \eqref{e:decompose1} will all decay to $0$ and therefore convergence
to $0$ no longer suffices to argue that a probability is negligible,
{\em we will now compare LD decay rates} of the probabilities which appear
in the convergence analysis.
In the current literature only some of these rates
have been computed in any generality.
We believe that, at least for the exit boundary $\partial A_n$, 
all of the necessary rates can be computed 
but forms a nontrivial task and will require an article of its own. Thus,
instead of treating the general case, for the purposes of this paper,
we will confine ourselves to the case of two tandem queues in
our convergence analysis when 
the initial position is given as $X^n(0) = \lfloor n x \rfloor$.

In the rest of the section $X$ will refer to the two dimensional tandem walk.
The possible increments of
$X$ are $v_0 \doteq (0,1)$, $v_1 \doteq (-1,1)$ and $v_2 \doteq (0,-1)$ 
with probabilities
$p(v_0) = \lambda$, $p(v_1) = \mu_1$ and $p(v_2) = \mu_2$.  
For this model the stability condition \eqref{e:stable} becomes
$\lambda < \mu_1,\mu_2.$ 
On $\{x:x(1) = 0\}$  
[$\{x:x(2)=0\}$]
the increment $(-1,1)$ [$(0,-1)$]
is replaced with $(0,0)$.
For the present proof it will be more convenient to cast the limit in terms
of the original coordinates of the $X$ process.
The $Y$ process in the coordinate space of the $X$ process
is
$\bar{X} \doteq T_n(Y)$.
$\bar{X}$ is the same process as $X$ except that it
is constrained only at the boundary
$\partial_2.$
\begin{align*}
X_{k+1} &= X_k+ \pi(X_k,I_k)\\
\bar{X}_{k+1} &= \bar{X}_k + \pi_1(\bar{X}_k,I_k).
\end{align*}
We will assume that $X$ and $\bar{X}$ start from the same initial position
\[
X_0= \bar{X}_0
\]
and whenever we specify an initial position below it will be for both processes.

As before,
$\tau =\inf\{k: Y_k \in \partial B\} = \inf\{k: \bar{X}_k \in \partial A_n\}$,
$\tau_n =  \{k: X_1(k) + X_2(k) = \partial A_n \}$ 
(by definition, $\bar{X}$ hits $\partial A_n$ exactly when $Y$ hits $\partial B$);
the subscript of $P$ will denote initial position, i.e, 
$P_{x}(\tau < \infty)$ equals $P(\tau < \infty)$ when $X_0 = \bar{X}_0 =x.$

\begin{proposition}\label{t:guzel}
Let $X$ and $\bar{X}$ be as above and assume $\lambda < \mu_1,\mu_2.$
For $x \in {\mathbb R}_+^2$, $0 < x(1) + x(2) < 1, x(1) > 0$ set
$ x_n \doteq \lfloor nx \rfloor$.
Then
\begin{equation}\label{e:relativeerror}
\frac{
|P_{x_n}(\tau_n < \tau_0) - P_{x_n}( \tau < \infty )|}
	{P_{x_n}(\tau_n < \tau_0)}
\end{equation}
decays exponentially in $n$.
\end{proposition}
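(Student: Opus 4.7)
The plan is to reduce the claim to a comparison of large deviations decay rates via the natural coupling between $X$ and $\bar X$. Both processes are driven by the same increment sequence $\{I_k\}$; they agree up to the stopping time $\sigma^* \doteq \inf\{k : X_k \in \partial_1,\, I_k = (-1,1)\}$, after which $X$ is blocked by the wall $\partial_1$ while $\bar X$ is not. I would first verify that on $\{\sigma^* > \tau_n\}$ the event $A \doteq \{\tau_n<\tau_0\}$ forces $B \doteq \{\tau<\infty\}$ (because $\bar X_{\tau_n} = X_{\tau_n} \in \partial A_n$), and analogously that $B$ on $\{\sigma^* > \tau\}$ forces $A$. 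Hence $A \triangle B \subseteq (\{\sigma^* \le \tau_n\}\cap A) \cup (\{\sigma^* \le \tau\}\cap B)$, so $|P(A) - P(B)| \le P(A\setminus B) + P(B\setminus A)$ and $P(A) \ge P(A\cap B)$.

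Next I would separate the asymptotic rates. A standard LD lower bound for the buffer-overflow probability of the stable tandem walk (via, e.g., the Dupuis-Ellis variational representation \cite{dupell2}) yields $P_{x_n}(A) \ge c\,e^{-nI(x)}$ for some $I(x)>0$ equal to the value function of the deterministic control problem for reaching $\partial A$ from $x$. The target is then to produce $P_{x_n}(A\setminus B),\, P_{x_n}(B\setminus A) \le C\,e^{-nJ(x)}$ with $J(x) > I(x)$ whenever $x(1)>0$ and $x(1)+x(2)<1$; dividing will then give a relative error of order $e^{-n(J(x)-I(x))}$, which decays exponentially.

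For the upper bound on the decoupling events, the strategy is a subsolution-based LD upper bound on the manifold $\mathcal M$ built from three copies of $\mathbb R_+^2$ glued along $\partial_1$ (copy zero to copy one) and along $\partial_2$ (copy one to copy two). Each copy encodes one regime of the coupled pair $(X,\bar X)$: the zeroth for the pre-contact phase; the first after $X$ has first touched $\partial_1$ and may have decoupled; the second after the trajectory has further visited $\partial_2$. The HJB equation on $\mathcal M$ has the usual tandem-queue Hamiltonian on each copy, with matching conditions at the glued faces reflecting the change of dynamics there. A piecewise-affine subsolution $V$ on $\mathcal M$ with $V(x,\text{copy }0) > I(x)$ for $x(1)>0$ would, through the standard exponential-martingale verification argument, produce the required upper bound on the LD rate of $A\setminus B$ and $B\setminus A$.

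The main obstacle is the construction of such a subsolution. One must select compatible affine slopes on each of the three copies so as to satisfy the Hamiltonian inequality on each piece, match across the $\partial_1$- and $\partial_2$-gluings, reach the target value at $\partial A_n$, and -- the crucial point -- achieve strictly greater value at the starting point $(x,\text{copy }0)$ than $I(x)$. For two tandem queues the Hamiltonians are concrete and the search reduces to a finite piecewise-linear optimization; the strict inequality at $x$ with $x(1)>0$ reflects the intuitive fact that any trajectory contributing to $A\setminus B$ or $B\setminus A$ must first hit $\partial_1$, which is not on the optimal escape path for the pure overflow problem from such $x$, and therefore carries a strictly greater LD cost. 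Once such a subsolution is exhibited (the content of the referenced Proposition~\ref{t:thm1}), combining with the LD lower bound on $P_{x_n}(A)$ completes the proof.
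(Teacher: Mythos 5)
Your high-level strategy -- show that $P(A\setminus B)$ and $P(B\setminus A)$ decay at a strictly faster exponential rate than $P(A)$, using the natural coupling plus a subsolution on the glued manifold -- is indeed the spirit of the paper's argument. But there is a concrete gap in the reduction step: the asserted containment $B\setminus A \subseteq \{\sigma^*\le\tau\}\cap B$ is false, and the asymmetry that breaks it is exactly what the paper's more careful decomposition is designed to handle.

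The claim $A\setminus B\subseteq\{\sigma^*\le\tau_n\}\cap A$ is fine: on $\{\sigma^*>\tau_n\}$ the two processes coincide through $\tau_n$, so $\bar X$ reaches $\partial A_n$ whenever $X$ does. But the reverse implication does not hold, because $A$ carries an extra condition ($\tau_n<\tau_0$) that $B$ does not. Concretely, take the sample path that, from $x_n$ with $x_n(1)>0$, first takes $x_n(1)$ steps of $(-1,1)$ down to $\partial_1$, then $x_n(1)+x_n(2)$ steps of $(0,-1)$ along $\partial_1$ to reach $(0,0)$, and finally $n$ steps of $(1,0)$ up to $(n,0)\in\partial A_n$. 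Along this path $X$ is never on $\partial_1$ with increment $(-1,1)$, so $\sigma^*$ never occurs and $X=\bar X$ throughout; in particular $\tau=\tau_n<\infty$ so $B$ holds, while $\tau_0<\tau_n$ so $A$ fails. This path lies in $B\setminus A$ with $\sigma^*>\tau$, violating your containment. Moreover, it is not an isolated pathology: conditioned on $\tau_0<\tau_n$ (which has probability close to $1$), the chain renews at $0$ and the subsequent excursion to $\partial A_n$ need not be controlled by $\sigma^*$ at all, so $\{\sigma^*>\tau\}\cap(B\setminus A)$ is a substantial piece of $B\setminus A$.

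The paper avoids this by decomposing along the hitting times $\sigma_1$ (first visit of $X$ to $\partial_1$) and $\sigma_{1,2}$ (first visit of $X$ to $\partial_2$ after $\sigma_1$), \emph{not} along the decoupling time $\sigma^*$. The pivotal observation is Proposition~\ref{p:equalityofsums}: even after $X$ and $\bar X$ begin to differ (they differ only by multiples of $(-1,1)$), their coordinate sums remain equal for all $k\le\sigma_{1,2}$. Consequently, on $\{\tau_n\le\sigma_{1,2}\}$ one has $\tau_n=\tau$, and the first two pieces of the three-part decompositions \eqref{e:dectaun}--\eqref{e:dectn} of $P_{x_n}(\tau_n<\tau_0)$ and $P_{x_n}(\tau<\infty)$ coincide term by term. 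This is strictly stronger than bounding $P(A\triangle B)$: it reduces $|P(A)-P(B)|$ to the difference of just the third terms $P(\sigma_1<\sigma_{1,2}<\tau_n<\tau_0)$ and $P(\sigma_1<\sigma_{1,2}<\tau<\infty)$, both of which require $X$ to visit \emph{both} boundaries before overflowing and which are each bounded above by $e^{-n(\gamma-\epsilon)}$ via the three-stage subsolution of Propositions~\ref{t:thm1} and~\ref{p:artikforXbar}. The excursions through $0$ that escape your $\sigma^*$ bookkeeping are automatically absorbed into the third term because they pass through $\partial_2$ (which contains $0$) after $\partial_1$. Your description of the manifold $\mathcal M$ and the role of the affine subsolutions is consistent with the paper, but the copies of $\mathbb R_+^2$ encode the stage relative to $(\sigma_1,\sigma_{1,2})$ rather than relative to the decoupling event $\sigma^*$; if you want to repair the proof along your lines, you would either have to switch to the paper's $(\sigma_1,\sigma_{1,2})$ partition, or separately bound the missing event $\{\sigma^*>\tau\}\cap B\cap\{\tau_0<\tau_n\}$, which requires precisely the kind of two-boundary-visit cost estimate that Proposition~\ref{p:equalityofsums} makes unnecessary.
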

The proof will require several supporting results on 
$\sigma_1 =\inf\{ k: X_k \in \partial_1 \}$ and
\begin{align*}
\sigma_{1,2} &\doteq \inf \{k: k > \sigma_1, X_k \in \partial_2 \},\\
\bar{\sigma}_{1,2} &\doteq \inf \{k: k > \sigma_1, \bar{X}_k(1) = -\bar{X}_k(2) \}.
\end{align*}
\begin{proposition}\label{p:equalityofsums}
\begin{equation}\label{e:equalityofsums}
X_k(1)+ X_k(2) = \bar{X}_k(1) + \bar{X}_k(2)
\end{equation}
for $k \le \sigma_{1,2}.$
\end{proposition}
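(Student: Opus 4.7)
The plan is to express each coordinate sum as the unconstrained sum plus an explicit correction coming from suppressed (``blocked'') increments, and then to check that the two correction terms agree up to $\sigma_{1,2}$. Because $v_0=(0,1)$, $v_1=(-1,1)$, $v_2=(0,-1)$ contribute $+1,0,-1$ to the coordinate sum, an unconstrained walk would have sum $X_0(1)+X_0(2)+N^0_k-N^2_k$, where $N^i_k$ counts the $v_i$ increments in the first $k$ steps. For $X$, a $v_1$ blocked on $\partial_1$ leaves the sum unchanged (since $v_1$ has zero coordinate sum), while a $v_2$ blocked on $\partial_2$ adds $+1$. For $\bar X$, only the $\partial_2$-blocks occur and they also add $+1$ each. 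Writing $B^X_k,B^{\bar X}_k$ for the number of $v_2$-blocks up to time $k$, we get
\begin{align*}
X_k(1)+X_k(2) &= X_0(1)+X_0(2)+N^0_k-N^2_k+B^X_k,\\
\bar X_k(1)+\bar X_k(2) &= \bar X_0(1)+\bar X_0(2)+N^0_k-N^2_k+B^{\bar X}_k,
\end{align*}
so since $X_0=\bar X_0$ it suffices to establish $B^X_k=B^{\bar X}_k$ for $k\le\sigma_{1,2}$.

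First I would note that $X_j=\bar X_j$ for all $j\le\sigma_1$: both processes use the same driving increments $I_j$, and the only constraint that can be active before $\sigma_1$ is the $\partial_2$-constraint, which applies identically to both. Hence the blocking events trivially coincide up to time $\sigma_1$. Next, for $j>\sigma_1$, the first coordinate of $X$ is frozen at $0$ (no increment raises it), so $v_1$ becomes a null step for $X$ while remaining a genuine $(-1,+1)$ step for $\bar X$. A one-line induction on $k\ge\sigma_1$ then yields the monotonicity
\[
\bar X_k(2)\ge X_k(2),
\]
since $v_0$ adds $1$ to both second coordinates, $v_1$ adds $1$ to $\bar X(2)$ and $0$ to $X(2)$, and $v_2$ preserves the ordering (the induction hypothesis $\bar X_{k-1}(2)\ge X_{k-1}(2)$ rules out the sole problematic case $\bar X_{k-1}(2)=0<X_{k-1}(2)$).

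To conclude, I would use the definition of $\sigma_{1,2}$: for $\sigma_1<j-1<\sigma_{1,2}$ one has $X_{j-1}\notin\partial_2$, i.e.\ $X_{j-1}(2)>0$, and then the monotonicity above forces $\bar X_{j-1}(2)>0$ as well, so neither process is eligible for a $v_2$-block at step $j$. The only residual case is $j-1=\sigma_1$, where $X_{\sigma_1}$ may itself lie in $\partial_2$; but there $X_{\sigma_1}=\bar X_{\sigma_1}$, so the $v_2$-blocking condition fires for both processes simultaneously or for neither. Combined with the agreement on $\{j\le\sigma_1\}$, this gives $B^X_k=B^{\bar X}_k$ throughout $k\le\sigma_{1,2}$, which is the required equality of sums. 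The main thing to track is the monotonicity $\bar X(2)\ge X(2)$ on $\{k\ge\sigma_1\}$, which in turn relies on the crucial algebraic accident that the blocked increment $v_1$ on $\partial_1$ has zero coordinate sum.
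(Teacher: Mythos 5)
Your proof is correct and reaches the result along a route that is closely related to the paper's but accounted differently. The paper tracks the two-dimensional difference $X_k-\bar{X}_k$ pathwise and observes that over $[\sigma_1,\sigma_{1,2}]$ it stays an integer multiple of a vector with zero coordinate sum (the paper writes $(-1,1)$; the sign should actually be $(1,-1)$, since blocking on $\partial_1$ makes $X(1)$ the \emph{larger} of the two first coordinates, but the coordinate sum vanishes either way). You instead expand each coordinate sum as (driving-noise contribution) $+$ (corrections from blocked steps), note that only $\partial_2$-blocks can perturb the sum since a blocked $v_1$ has zero coordinate sum, and show that those $\partial_2$-blocks occur at exactly the same times for $X$ and $\bar{X}$. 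Both routes hinge on the same fact --- up to $\sigma_{1,2}$ the processes diverge only through $v_1$-steps suppressed for $X$ at $\partial_1$ --- but your version spells out the inductive step that the paper leaves implicit: you isolate and prove the monotonicity $\bar{X}_k(2)\ge X_k(2)$, which is precisely what guarantees that $\bar{X}$ never suffers an extra $\partial_2$-block that $X$ avoids; the paper simply asserts that ``these are the only times when the increments of $X$ and $\bar{X}$ differ'' without giving this justification, which is in fact a consequence of the invariant being maintained rather than an a priori observation. One caveat on your write-up: the claim that $X(1)$ is permanently frozen at $0$ after $\sigma_1$ relies on the increment list $\{(0,1),(-1,1),(0,-1)\}$ quoted in Section~6; if (as in the paper's Introduction and the standard tandem-queue convention) the arrival increment is $(1,0)$ rather than $(0,1)$, then $X(1)$ can re-enter the interior. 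This does not break your argument --- the induction only needs that a blocked $v_1$ adds $1$ to $\bar{X}(2)$ and $0$ to $X(2)$ while an unblocked $v_1$ adds $1$ to both, so $\bar{X}(2)-X(2)$ is nondecreasing in either case --- but you should drop the ``frozen'' sentence to keep the argument convention-independent.
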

\begin{proof}
\begin{equation}\label{e:equalityatsigma1}
X_k = \bar{X}_k
\end{equation}
for $k \le \sigma_1$ implies \eqref{e:equalityofsums} for $k  \le \sigma_1$.
If $\sigma_1 = \sigma_{1,2}$ then we are done. Otherwise 
$X_{\sigma_1}(2) = \bar{X}_{\sigma_1}(2) > 0$ and $X_k(2) > 0$ for 
$\sigma_1 < k < \sigma_{1,2}$;  
let $\sigma_1 = \nu_1 < \nu_2 < \cdots <\nu_K < \sigma_{1,2}$ 
be the times when
$X$ hits $\partial_1$ before hitting $\partial_2.$ The definitions of $\bar{X}$
and $X$ imply that these are the only times
when the increments of $X$ and $\bar{X}$ differ: $X_{\nu_j+1} - X_{\nu_j}=0$ 
and $\bar{X}_{\nu_j+1} - \bar{X}(\nu_j) = (-1,1)$
if
$I_{\nu_j}= (-1,1)$; otherwise both differences equal $I_{\nu_j}$. This and
\eqref{e:equalityatsigma1} imply
\begin{equation}\label{e:invariantinc}
X_k - \bar{X}_k = \varsigma_k \cdot ( -1,1)
\end{equation}
for $k \le \sigma_{1,2}$
where 
\[
\varsigma_k \doteq \sum_{j = 1}^K 1_{\{ \nu_j \le k\} } 1_{\left\{ I_{\nu_j} = (-1,1) 
\right\}}
\]
and $\cdot$ denotes scalar multiplication.
Summing the components of both sides of \eqref{e:invariantinc}
gives \eqref{e:equalityofsums}.
\end{proof}

Define
\[
\Gamma_n \doteq \{ \sigma_1 < \sigma_{1,2} < \tau_n < \tau_0\}.
\]
$\Gamma_n$ is one particular way for $\{\tau_n < \tau_0\}$ to occur.
In the next proposition we find an upperbound on its probability 
in terms of
\[
\gamma \doteq -(\log(\rho_1) \vee \log( \rho_2))
\]

\begin{proposition}\label{t:thm1}
For any $\epsilon > 0$
there is  $N > 0 $ such that if $n > N$
\begin{equation}\label{e:bound1}
P_{x_n} ( \Gamma_n ) \le e^{-n (\gamma-\epsilon)},
\end{equation}
where $x_n = \lfloor nx\rfloor$ and $x \in{\mathbb R}_+^2$, $x(1) + x(2) < 1.$
\end{proposition}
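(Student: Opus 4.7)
The plan is to prove \eqref{e:bound1} by the subsolution verification method of Dupuis--Ellis--Wang applied on the three-sheeted manifold $M$ announced in the introduction. Let $M = R^{(0)} \sqcup R^{(1)} \sqcup R^{(2)}$ be three copies of $\mathbb{R}_+^2$ with $R^{(0)}$ glued to $R^{(1)}$ along $\partial_1$ and $R^{(1)}$ glued to $R^{(2)}$ along $\partial_2$. Lift the rescaled chain $X_{\lfloor n\cdot\rfloor}/n$ to a chain $\tilde X^n$ on $M$ that lives in $R^{(0)}$ up to the rescaled $\sigma_1$, in $R^{(1)}$ between the rescaled $\sigma_1$ and $\sigma_{1,2}$, and in $R^{(2)}$ thereafter. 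The event $\Gamma_n$ is precisely the event that $\tilde X^n$ reaches the lifted exit $\tilde\partial A \doteq \{\tilde y \in R^{(2)}: \tilde y(1)+\tilde y(2)=1\}$ before the lifted origin $\tilde 0 \in R^{(0)}$.

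I will construct a continuous piecewise-affine $\bar W : M \to [0, \gamma-\epsilon/2]$ with $\bar W \equiv 0$ on $\tilde\partial A$ and $\bar W \geq \gamma-\epsilon/2$ on $R^{(0)}$, such that the process $k \mapsto \exp(-n\bar W(\tilde X^n_{k\wedge\tilde\tau}))$ is a supermartingale up to an $O(1/n)$ additive correction absorbable into $\epsilon$ (here $\tilde\tau$ is the first time $\tilde X^n$ meets $\tilde\partial A$ or $\tilde 0$). On $R^{(2)}$ I take the classical tandem-overflow subsolution whose gradient corresponds to the worst-slope harmonic exponent $\rho = \rho_1 \vee \rho_2$: concretely, an affine function vanishing on $\tilde\partial A$ whose log-linear dual lies on the $\partial_2$-characteristic surface ${\mathcal H}_2$. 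On $R^{(1)}$ I extend by the affine piece whose outward gradient along $\partial_2$ is the conjugate (on ${\mathcal H}_2$) of the gradient of $\bar W|_{R^{(2)}}$ at the seam, and on $R^{(0)}$ I use the constant $\gamma-\epsilon/2$, which matches $\bar W|_{R^{(1)}}$ along $\partial_1$ via a similar conjugation on ${\mathcal H}_1$. Constants are trivially interior subsolutions, and the conjugacy at each seam turns the pair of log-linear functions representing the outward gradients into a $Y$-harmonic combination of exactly the type produced by Proposition \ref{p:harmonicYtwoterms2d}, thereby reducing the seam subsolution inequality to an equality. Optional sampling then yields
\begin{equation*}
\exp\bigl(-n(\gamma-\epsilon/2)\bigr) \;=\; \exp\bigl(-n\bar W(\tilde X^n_0)\bigr) \;\geq\; \mathbb{E}\bigl[\exp\bigl(-n\bar W(\tilde X^n_{\tilde\tau})\bigr)\bigr] \;\geq\; P_{x_n}(\Gamma_n),
\end{equation*}
since $\bar W = 0$ on $\tilde\partial A$. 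For $n$ sufficiently large the right-hand side is bounded by $\exp(-n(\gamma-\epsilon))$.

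The main technical obstacle is verifying the subsolution inequality at the two gluing seams. The Hamiltonian changes discontinuously across each seam because the active reflection constraint alters the jump distribution, and subsolution across the seam becomes a coupled condition relating the gradients of $\bar W$ on the two sides. The construction above finesses this by demanding that the gradients on the two sides of each seam form a conjugate pair on the appropriate boundary characteristic surface, so that the interface conditions hold with equality rather than inequality. The nontrivial calculation is that $\rho = \rho_1 \vee \rho_2$ produces conjugators on both ${\mathcal H}_1$ and ${\mathcal H}_2$ with moduli $\leq 1$ (ensuring that $\bar W$ extends to a nonnegative function across the seams), which mirrors Proposition \ref{p:alphaconjalwayslessthan1} applied to the boundary characteristic surfaces, and is the precise point at which the constant $\gamma = -\log(\rho_1 \vee \rho_2)$ rather than a smaller value emerges in the bound.
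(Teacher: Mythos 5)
Your plan shares the paper's high-level picture---a subsolution on a three-sheeted manifold glued along $\partial_1$ and $\partial_2$---but the construction details contain genuine gaps that the paper must and does handle.

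First, you propose a piecewise-affine $\bar W$ and assert that $\exp(-n\bar W(\tilde X^n_{k\wedge\tilde\tau}))$ is a supermartingale ``up to an $O(1/n)$ correction absorbable into $\epsilon$.'' This is not correct for a min of affine functions. At points within distance $O(1/n)$ of a kink, the finite-difference increment $\bar W(x+v/n)-\bar W(x)$ differs from $\langle\nabla\bar W(x),v/n\rangle$ by $O(1/n)$, which after multiplying by $n$ in the exponent is $O(1)$ per step, and the concavity gives the \emph{wrong} direction (the exponential step ratio exceeds $1$). Such steps can number $O(n)$, so the accumulated error is not $o(n)$ and the claimed supermartingale property fails. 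The paper avoids this by mollifying $\tilde V^{j,\varepsilon}$ with a kernel of width $c_2\varepsilon_n$, which gives the uniform Hessian bound \eqref{e:Boundon2ndderivative} and hence the per-step Taylor error $c_3/(n\varepsilon_n)$ of \eqref{e:approxforV2}; accumulated over $\tau_{0,n}\le nc_4$ steps this is $O(1/\varepsilon_n)$, which is $o(n)$ precisely because $n\varepsilon_n\to\infty$. This is the step your proof skips.

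Second, even with the correct mollified subsolution, the accumulated $O(1/\varepsilon_n)$ correction must be controlled against $n$, which requires an a priori bound on $\tau_{0,n}$; the paper uses $P(\tau_{0,n}>nc_4)\le e^{-n(\gamma+1)}$ together with a contradiction argument to close the proof. Your direct optional-sampling line omits this entirely.

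Third, the claim that the seam compatibility is ``conjugacy on the boundary characteristic surfaces, so that the interface conditions hold with equality'' is not what the paper does and does not hold in general. The paper's interface condition is simply \emph{continuity} of $V(\cdot,x,\varepsilon)$ across the seams, enforced by the staggered shifts $-(3-i)\varepsilon$ in \eqref{e:subsolpa} together with the choice of $c_2$; see \eqref{e:V12nearonp2} and \eqref{e:V01nearonp1}. Moreover, the gradients ${\boldsymbol r}_1,{\boldsymbol r}_2$ are \emph{strict} subsolutions (satisfying $H\ge 0$ with slack), and the corresponding log-linear exponents $(\beta,\alpha)=(e^{q_1},e^{q_1-q_2})$ do not in general lie on ${\mathcal H}$, ${\mathcal H}_1$ or ${\mathcal H}_2$, nor form a conjugate pair there---one can check this forces $\mu_1=\mu_2$. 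The duality $H(q)=-\log{\mathbf p}(e^{q_1},e^{q_1-q_2})$ mentioned at the end of Section~\ref{s:convergence2} is a genuine parallel, but the subsolution argument requires inequalities, not the equalities that the harmonic-system/conjugacy framework encodes; conflating the two is where the constant $\gamma$ appears to ``emerge'' in your sketch, but the derivation would not go through.
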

The proof will use the following definitions.
\begin{equation}\label{e:defHs}
H_a(q)  \doteq
-\log\left( \sum_{i \in \{0,1,2\} -a } p(v_i) e^{-\langle v_i, q \rangle} 
+ \sum_{\{ i \in a\}}p(v_i) \right),
\end{equation}
where $\langle \cdot,\cdot\rangle$ denotes the inner product in ${\mathbb R}^2.$
For $x \in {\mathbb R}_+^2$, 
set
\[
{\bf b}(x) \doteq \{i: x(i) = 0 \}.
\]
We will write $H$ rather than $H_\emptyset$. 

Let us show the gradient operator on smooth functions on ${\mathbb R}^2$ 
with $\nabla$.
The works \cite{thesis,DSW} use a smooth subsolution of 
\begin{equation}\label{e:HJB0}
H_{\bf{b}(x)}(\nabla V(x)) = 0
\end{equation}
to find an lowerbound on the decay rate of the second moment of IS estimators
for the probability $P_{x_n}(\tau_n  < \tau_0)$.
$V$ is said to be a subsolution of \eqref{e:HJB0} 
if $H_{\bf{b}(x)}(\nabla V(x)) \ge 0$.
The event $\Gamma_n$
consists of three stages: the process first hits $\partial_1$, then $\partial_2$
and then finally hits $\partial A_n$ without hitting $0$. 
To handle this, we will use a function
$(s,x)\rightarrow V(s,x)$, 
with two variables;
for the $x$ variable we will substitute 
the scaled position of the $X$ process,
and the discrete variable $s \in \{0,1,2\}$  is 
for keeping track of which of the above three stages
the process is in; $V$ will be a subsolution in the $x$ variable 
and continuous in $(s,x)$ (when $(s,x)$ is thought of as a point on the
manifold ${\mathcal M}$ 
consisting of three copies of ${\mathbb R}_+^2$ (one for each stage); the zeroth glued to the
first along $\partial_1$ and the first to the second along $\partial_2$)
 and therefore one can think of
$V$ as three subsolutions (one for each stage) glued together along the boundaries
of the state space of $X$ where transitions between the stages occur. 
We will call a function $(s,x) \rightarrow V(s,x)$ with the above properties
a subsolution of \eqref{e:HJB0} on the manifold ${\mathcal M}.$

Define
\begin{equation}\label{e:subsolpa}
\tilde{V}_i^{\varepsilon}(x) \doteq \langle {\boldsymbol r}_i, x\rangle + 2\gamma - (3-i)\varepsilon,~~~
\tilde{V}^{\varepsilon,j} \doteq \bigwedge_{i=0}^j \tilde{V}_i^{\varepsilon},
\end{equation}
where
\[
{\boldsymbol r}_0 \doteq(0,0), {\boldsymbol r}_1 = -\gamma(1,0), {\boldsymbol r}_2 \doteq -\gamma(1,1).
\]
The subsolution for stage $j$ will be a smoothed version of $\tilde{V}^{\varepsilon,j}$;
As in \cite{thesis, DSW}, 
we will need to vary $\varepsilon$ with $n$ in the convergence argument; for this
reason, $\varepsilon$ will appear as the third parameter of the constructed subsolution.
The details are as follows.

The subsolution for the zeroth stage is $\tilde{V}^{0,\varepsilon}$:
$V(0,x,\varepsilon) \doteq \gamma - 3\varepsilon$,
$\nabla V(0,\cdot) =0$ and it trivially satisfies \eqref{e:HJB0} and
is therefore a subsolution.

Define the smoothing kernel
\[
\eta_{\delta}(x) \doteq \frac{1}{\delta^2 M} \eta(x/\delta),~~\\
\eta(x) \doteq 1_{\{|x| \le 1\} } (|x|^2 - 1), M \doteq \int_{ {\mathbb R}^2} \eta(x) dx\\
\]

To construct the subsolution for the first and the second stages we will mollify
$\tilde{V}^{j,\varepsilon}$, $j=1,2$, with $\eta$:
\begin{equation}\label{e:subsolpas}
V(j,x,\varepsilon) \doteq \int_{{\mathbb R}^2} \tilde{V}^{j,\varepsilon}(y)
\eta_{c_2 \varepsilon} (x-y) dy,
\end{equation}
and $c_2$ is chosen so that 
\begin{equation}\label{e:V12nearonp2}
V(1,x,\varepsilon) = V(2,x,\varepsilon) 
\end{equation}
for $x \in
\partial_2$ and 
\begin{equation}\label{e:V01nearonp1}
V(1,x,\varepsilon) = V(0,x,\varepsilon) 
\end{equation}
for $x \in \partial_1$
(this is possible since $V(j,\varepsilon,x) \rightarrow \tilde{V}^{j,\varepsilon}$ 
as $c_2 \rightarrow 0$ and all of the involved functions are affine; see
\cite[page 38]{thesis} on how to compute $c_2$ explicitly).
That
$V(j,\cdot,\varepsilon)$, $j=1,2$ are subsolutions follow the concavity
of $H_a$ and the choices of the gradients ${\boldsymbol r}_i$; for details
we refer the reader to
\cite[Lemma 2.3.2]{thesis}; a direct computation gives
\begin{equation}\label{e:Boundon2ndderivative}
\left| \frac{ \partial^2 V(j,\cdot,\varepsilon)}{\partial x_i \partial x_j} \right| 
\le \frac{c_3}{\varepsilon},
\end{equation}
$j=1,2$, for a constant $ c_3 > 0$ (again, the proof of \cite[Lemma 2.3.2]{thesis}
gives the details of this computation).

The construction above implies
\begin{equation}\label{e:valueofVonpartialn}
V(2,x,\varepsilon ) < 0, x \in \{x: x(1) + x(2)= 1\}.
\end{equation}

Now on to the proof of Proposition \ref{t:thm1}.
\begin{proof}
$V(0,\cdot,\varepsilon)$ maps to a constant and thus
\begin{equation}\label{e:exactreplacement}
\langle \nabla W(x), v_i \rangle = W(x+v_i) - W(x) 
\end{equation}
if $W=V(0,\cdot,\varepsilon)$.
For $W= V(j,\cdot,\varepsilon)$, $j=1,2$, 
Taylor's formula and \eqref{e:Boundon2ndderivative} give
\begin{equation}\label{e:approxforV2}
\left| 
 \left\langle \nabla W(x), \frac{1}{n}v_i \right\rangle 
-\left( W\left(x+\frac{1}{n} v_i\right) - W(x) \right) \right| \le \frac{c_3}{n\varepsilon}.
\end{equation}
We will allow $\varepsilon$ to depend on $n$ so that
$\varepsilon_n \rightarrow 0$ and
$n\varepsilon_n  \rightarrow \infty.$
Define
$S_k = 1_{\{\sigma_1 > k \}} + 1_{\{ \sigma_{1,2} > k \}}$,
$M_0 \doteq 1$ and
{\small
\begin{align*}
M_{k+1} \doteq M_k
 \exp\left( -n 
\left( V \left(S_{k+1},\frac{X_{k+1}}{n}, \varepsilon_n\right)-V\left(S_k,
\frac{X_k}{n},\varepsilon_n\right)\right)
 -1_{\{n > \sigma_{1}\}} \frac{c_3}{n\varepsilon_n}\right)
\end{align*}
}
That $V(j,\cdot,\varepsilon_n)$, $j=0,1,2$ are subsolutions of \eqref{e:HJB0}, the relations
\eqref{e:exactreplacement}, \eqref{e:approxforV2}
\eqref{e:V01nearonp1} and \eqref{e:V12nearonp2}
imply that 
$M$ is a supermartingale
\eqref{e:approxforV2} 
and
(\eqref{e:exactreplacement} 
allow us to replace gradients
in \eqref{e:HJB0} and \eqref{e:defHs} with finite differences and 
\eqref{e:V12nearonp2} 
and 
\eqref{e:V01nearonp1} 
preserve the supermartingale
property of $M$ as $S$ passes from $0$ to $1$ and from $1$ to $2$).
This and $M \ge 0 $ imply (see \cite[Theorem 7.6]{MR1609153})
%\begin{align*}
{\small
\[
%&
{\mathbb E}_{x_n} 
\left[
\prod_{k=1}^{\tau_{0,n}} 
 \exp\left( -n 
\left( V \left(S_{k+1},\frac{X_{k+1}}{n},\varepsilon_n\right)-
V\left(S_k,\frac{X_k}{n},\varepsilon_n\right)\right) 
%\right. \right. \\
%&~~~~~ 
%\left. \left. 
-1_{\{n > \sigma_{1}\}} \frac{c_3}{n\varepsilon_n}\right)
\right] \le 1,
\]
}
%\end{align*}
where $\tau_{0,n} \doteq \tau_n \wedge \tau_0.$
Restrict the expectation on the left to  $1_{\Gamma_n}$
and replace $1_{\{n > \sigma_{1}\} }$ with $1$ to make the expectation
smaller:
{\small
\begin{align*}
&{\mathbb E}_{x_n} 
\left[
1_{\Gamma_n}
e^{-\frac{c_3}{n\varepsilon_n} \tau_{0,n}}  %\\
%&
 \exp\left( -n 
\sum_{k=1}^{\tau_{0,n}} 
 V \left(S_{k+1},\frac{X_{k+1}}{n},\varepsilon_n\right)
-V\left(S_k,\frac{X_k}{n},\varepsilon_n\right)
\right)  \right] \le 1.
\end{align*}
}
Over $\Gamma_n$, $X$ first hits $\partial_1$ and then $\partial_{2}$
and finally $\partial A_n$. Furthermore, 
the sum inside the expectation is telescoping
across this whole trajectory; these imply that 
the last inequality reduces to
\[
{\mathbb E}_{x_n} 
\left[
1_{\Gamma_n}
e^{-\frac{c_3}{n\varepsilon_n} \tau_{0,n}} 
\exp( -n( V(2,X_{\tau_{0,n}},\varepsilon_n) - V(0,X_0,\varepsilon_n)) 
\right] \le 1.
\]
$\tau_{0,n} = \tau_n$ on $\Gamma_n$ and therefore on the same set
$X_{\tau_{0,n}} \in \partial_n$. This, $V(0,\cdot,\epsilon_n)= \gamma-3\epsilon_n$,
\eqref{e:valueofVonpartialn}
and the previous inequality give
\begin{equation}\label{e:basic}
{\mathbb E}_{x_n} 
\left[
1_{\Gamma_n}
e^{-\frac{c_3}{n\varepsilon_n} \tau_{0,n}} 
\right] \le e^{-n( \gamma - 3 \varepsilon_n)}.
\end{equation}
Now suppose that the statement of Theorem \ref{t:thm1} is not true,
i.e., there exists $\epsilon > 0$  and a sequence $n_k$ such that
\begin{equation}\label{e:contra}
P_{x_{n_k}}(\Gamma_{n_k}) > e^{-n_k (\gamma -\epsilon)}
\end{equation}
for all $k$.  Let us pass to this subsequence and drop the subscript
$k$. 
\cite[Theorem A.1.1]{thesis} implies that 
one can choose $ c_4 > 0$ so that $P( \tau_{0,n} > n c_4 ) \le e^{-n(\gamma + 1) }$
for $n$ large.
Then
\begin{align*}
{\mathbb E}_{x_n} 
\left[
1_{\Gamma_n}
e^{-\frac{c_3}{n\varepsilon_n} \tau_{0,n}} 
\right] 
&\ge
{\mathbb E}_{x_n} 
\left[
1_{\Gamma_n}
e^{-\frac{c_3}{n\varepsilon_n} \tau_{0,n}} 
 1_{\{\tau_{0,n} \le n c_4\}}
\right] 
\\
&\ge
e^{-\frac{c_4 c_3}{n\varepsilon_n} n }
{\mathbb E}_{x_n} 
\left[
1_{\Gamma_n}
 1_{\{\tau_{0,n} \le n c_4\}}
\right] 
\intertext{
$P(E_1 \cap E_2) \ge 
P(E_1) - P(E_2^c)$
for any two events $E_1$ and $E_2$; this and the previous line imply}
&\ge
e^{\frac{-c_3 c_4}{n\varepsilon_n} n } \left( P_{x_n}(\Gamma_n) - P_{x_n}( \tau_{0,n} > nc_4)
\right)
\\
&\ge
e^{-\frac{c_3 c_4}{n\varepsilon_n} n } \left( e^{-n(\gamma-\varepsilon)}
- e^{-(\gamma + 1)n} \right).
\end{align*}
By assumption $n\varepsilon_n \rightarrow \infty$ which implies
$c_3 c_4/n\varepsilon_n \rightarrow 0$; this and the last inequality say\\
${\mathbb E}_{x_n} 
\left[
1_{\Gamma_n}
e^{-\frac{c_3}{n\varepsilon_n} \tau_{0,n}} 
\right]$
cannot decay at an exponential rate faster than $\gamma - \epsilon$,
but this contradicts
\eqref{e:basic} because $\varepsilon_n \rightarrow 0.$
Then, there cannot be $\epsilon > 0$ and a sequence
$\{n_k\}$ for which \eqref{e:contra} holds and this implies
the statement of Proposition \ref{t:thm1}.
\end{proof}

Define 
${\boldsymbol r}_3 \doteq \log(\rho_2) (1,1)$ and
$V(x) \doteq (-\log(\rho_1) + \langle {\boldsymbol r}_1, x\rangle )\wedge (-\log(\rho_2) + \langle {\boldsymbol r}_3, x \rangle)$,
for $x \in {\mathbb R}^2$ 
\begin{proposition}\label{p:ldfortandem}
\[
\lim_{n\rightarrow \infty}-\frac{1}{n} \log  P_{x_n}( \tau_n < \tau_0) = V(x)
\]
for $x \in {\mathbb R}_+^2$, $0 < x(1) + x(2) < 1$ and  $x_n = \lfloor nx \rfloor$.
\end{proposition}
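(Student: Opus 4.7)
The plan is to prove matching upper and lower bounds on the exponential decay rate of $P_{x_n}(\tau_n < \tau_0)$.

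For the upper bound, $\liminf_n -\frac{1}{n}\log P_{x_n}(\tau_n<\tau_0) \ge V(x)$, I would reuse the subsolution-on-a-manifold methodology developed in the proof of Proposition \ref{t:thm1}. Write $V=\tilde{V}_1\wedge \tilde{V}_2$ where $\tilde V_1(x)=-\log\rho_1+\langle {\boldsymbol r}_1,x\rangle$ and $\tilde V_2(x)=-\log\rho_2+\langle {\boldsymbol r}_3,x\rangle$. A direct evaluation of the Hamiltonians in \eqref{e:defHs} yields $H({\boldsymbol r}_1)>0$ together with $H_{\{1\}}({\boldsymbol r}_1)=H_{\{2\}}({\boldsymbol r}_1)\ge 0$, so $\tilde V_1$ is an affine subsolution on the whole state space. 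For $\tilde V_2$ one finds $H({\boldsymbol r}_3)=H_{\{1\}}({\boldsymbol r}_3)=0$ but $H_{\{2\}}({\boldsymbol r}_3)<0$, so $\tilde V_2$ fails as a subsolution on $\partial_2$. I would accommodate this via a two-stage manifold exactly as in Proposition \ref{t:thm1}: stage $0$ uses the mollification of $\tilde V_1\wedge \tilde V_2$, stage $1$ (after hitting $\partial_2$) uses the mollification of $\tilde V_1$ alone, and the two pieces are glued continuously along $\partial_2$ by choice of an additive constant. The resulting smooth $V^{\varepsilon}$ satisfies $V^{\varepsilon}\le 0$ on $\partial A_1=\{x:x(1)+x(2)=1\}$ (because $\tilde V_2\equiv 0$ there), and the exponential supermartingale and optional-stopping argument of Proposition \ref{t:thm1} then delivers $P_{x_n}(\tau_n<\tau_0)\le \exp(-n(V(x_n/n)-o(1)))$.

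For the lower bound, $\limsup_n -\frac{1}{n}\log P_{x_n}(\tau_n<\tau_0) \le V(x)$, I would apply an exponential change of measure. For each $j\in\{1,3\}$, define the tilted distribution $\tilde p_j(v)\doteq p(v)\,e^{-\langle {\boldsymbol r}_j,v\rangle + H({\boldsymbol r}_j)}$ on $\mathcal V$; a direct computation of the tilted drift shows that under $\tilde p_j$ the walk has drift toward $\partial A_n$ and away from $\{0\}$, so by the LLN the probability of $\{\tau_n<\tau_0\}$ under $\tilde p_j$ is bounded below by a positive constant uniformly in $n$. Along any such trajectory the Radon--Nikodym derivative $dP/d\tilde P_j$ equals $\exp(\langle {\boldsymbol r}_j,X_0-X_{\tau_n}\rangle + \tau_n H({\boldsymbol r}_j))$ up to an $o(n)$ correction from boundary steps. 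On $\{\tau_n<\tau_0\}$, $X_{\tau_n}\in\partial A_n$, so this derivative is of leading order $\exp(-n\tilde V_j(x)+o(n))$. Writing $P_{x_n}(\tau_n<\tau_0)=\tilde E_j[(dP/d\tilde P_j)\, 1_{\{\tau_n<\tau_0\}}]$ and bounding below by the LLN estimate on $\tilde P_j(\tau_n<\tau_0)$ gives $P_{x_n}(\tau_n<\tau_0)\ge \exp(-n(\tilde V_j(x)+o(1)))$; taking the better of $j=1,3$ yields $V(x)$.

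The main technical obstacle is the stitched subsolution used in the upper bound. In the regime $\rho_2>\rho_1$ one has $\tilde V_2<\tilde V_1$ on $\partial_2$, so the stage-$1$ value $\tilde V_1$ exceeds the limiting stage-$0$ value along the glue; I would need to verify that the additive constant used for continuity can be chosen so that $V^{\varepsilon}(x_n/n)=V(x_n/n)+o(1)$ at the starting point while keeping $V^{\varepsilon}\le 0$ on $\partial A_1$. This is where the hypothesis $x(1)>0$ enters: the initial macroscopic distance to $\partial_1$ provides room for both the mollification (on scale $\varepsilon_n\to 0$) and the additive adjustment, absorbing them into the $o(n)$ error. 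The lower bound is essentially standard once one knows that the fraction of time the tilted walk spends on $\partial_1\cup\partial_2$ is $o(1)$, which follows from the strict positivity of the tilted drifts away from those boundaries.
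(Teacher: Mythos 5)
Your Hamiltonian evaluations are essentially correct ($H({\boldsymbol r}_1)>0$, $H_{\{1\}}({\boldsymbol r}_1)=0$, $H_{\{2\}}({\boldsymbol r}_1)>0$; $H({\boldsymbol r}_3)=H_{\{1\}}({\boldsymbol r}_3)=0$, $H_{\{2\}}({\boldsymbol r}_3)<0$), and $H_{\{2\}}({\boldsymbol r}_3)<0$ is indeed the crux. But you should first decide whether a manifold is needed at all, which depends on the sign of $\rho_1-\rho_2$. If $\rho_1>\rho_2$, then on $\partial_2$ one has $\tilde V_1(x(1),0)=-\log\rho_1(1-x(1))<\tilde V_2(x(1),0)=-\log\rho_2(1-x(1))$ with an order-one gap away from $(1,0)$, so the mollified $\tilde V_1\wedge\tilde V_2$ already has gradient ${\boldsymbol r}_1$ near $\partial_2$, is a subsolution everywhere, and vanishes on $\partial A_1$ through $\tilde V_2$; the paper's one-step sketch works as stated and no manifold is required. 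The real trouble is $\rho_2>\rho_1$, where $\tilde V_2<\tilde V_1$ on all of ${\mathbb R}_+^2$ and the minimum inherits ${\boldsymbol r}_3$ near $\partial_2$. Your two-stage manifold does not close this gap: continuous gluing along $\partial_2$ forces $c=\tilde V_2|_{\partial_2}-\tilde V_1|_{\partial_2}=\log(\rho_1/\rho_2)$, and then the stage-$1$ function $\tilde V_1+c=-\log\rho_2(1-x(1))$ is strictly positive on $\partial A_1$ except at $(1,0)$. Your claim that $V^{\varepsilon}\le 0$ on $\partial A_1$ ``because $\tilde V_2\equiv 0$ there'' applies only to the stage-$0$ function; once the process has entered stage $1$, optional stopping at $\tau_n$ no longer produces an upper bound. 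A one-step construction that does work in this regime is to add a third affine piece with gradient $\log\rho_2(1,0)$ which agrees with $\tilde V_2$ on $\partial_2$ and is shifted down by $O(\varepsilon)$; one checks $H_{\{2\}}(\log\rho_2(1,0))>0$ and $H_{\{1\}}(\log\rho_2(1,0))=0$, the $\varepsilon$-ordering makes it the argmin in an $O(\varepsilon)$-band around $\partial_2$, and $\tilde V_2$ still supplies the zero boundary value on $\partial A_1$.

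Your lower bound has a separate, independent gap. Since $\langle v_0,{\boldsymbol r}_1\rangle=\langle v_2,{\boldsymbol r}_1\rangle=0$, the tilt $\tilde p_1$ does not change the ratio $p(v_0)/p(v_2)$, so the tilted drift of $X(1)+X(2)$ under $\tilde p_1$ is proportional to $\lambda-\mu_2<0$: the walk under $\tilde p_1$ drifts toward $\{0\}$, not $\partial A_n$, and $\tilde P_1(\tau_n<\tau_0)$ is not bounded below by a positive constant uniformly in $n$. The ``direct computation of the tilted drift'' you invoke actually has the opposite sign. (The ${\boldsymbol r}_3$-tilt is fine and does yield the $\tilde V_2$ piece, since there $\tilde p_3(v_0)-\tilde p_3(v_2)=\mu_2-\lambda>0$.) In the regime $\rho_1>\rho_2$ the exit path that realizes cost $\tilde V_1(x)$ first descends to $\partial_2$ and then runs along it; to recover that lower bound one needs either a state-dependent change of measure that applies the boundary tilt dictated by $H_{\{2\}}$ while on $\partial_2$, or a two-phase argument that steers to $\partial_2$ by the law of large numbers and then restarts.
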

The omitted proof is a one step version of the argument used in the proof
of Proposition \ref{t:thm1} and uses a mollification of $V$ as the subsolution.

\begin{proposition}\label{p:artikforXbar}
For any $\epsilon > 0$
there is  $N > 0 $ such that if $n > N$
\begin{equation}\label{e:artikforXbar}
P_x( \sigma_1 < \sigma_{1,2} < \tau < \infty)  \le e^{-n(\gamma-\epsilon)}
\end{equation}
where $x_n = \lfloor nx\rfloor$ and $x  \in {\mathbb R}_+^2$, $x(1) + x(2) < 1.$
\end{proposition}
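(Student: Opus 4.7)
The plan is to replay the proof of Proposition \ref{t:thm1} almost verbatim, with $\bar{X}$ in place of $X$ and the single exit time $\tau$ in place of $\tau_{0,n}=\tau_n\wedge\tau_0$. The central observation is that the subsolution $V(s,x,\varepsilon)$ on the manifold ${\mathcal M}$ constructed in the proof of Proposition \ref{t:thm1} depends only on the distribution $p$ of the iid increments, not on which coordinates of the walk are constrained. Because $\bar{X}$ is \emph{less} constrained than $X$ (it has no constraint at $\partial_1$), every subsolution property used for $X$ continues to hold for $\bar{X}$: the interior HJB $H(\nabla V)\ge 0$ is exactly what is needed on both the interior and on $\partial_1$ (where $\bar{X}$ moves unconstrained), while $H_{\{2\}}(\nabla V)\ge 0$ on $\partial_2$ is already built into the construction.

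\textbf{Step 1 (supermartingale).} With $S_k=1_{\{\sigma_1>k\}}+1_{\{\sigma_{1,2}>k\}}$ as in the proof of Proposition \ref{t:thm1} (and $\sigma_1,\sigma_{1,2}$ read off the coupled process, which coincides with $X$ on $[0,\sigma_{1,2}]$ by Proposition \ref{p:equalityofsums}), set
\[
M_k \doteq \exp\!\Big(-n\big(V(S_k,\bar{X}_k/n,\varepsilon_n)-V(0,x_n/n,\varepsilon_n)\big)
           -1_{\{k>\sigma_1\}}\,\tfrac{c_3}{n\varepsilon_n}\,k\Big).
\]
The subsolution property (using \eqref{e:approxforV2} to replace gradients by finite differences) and the matching conditions \eqref{e:V01nearonp1}--\eqref{e:V12nearonp2} (which make $M$ continuous at the stage transitions) show that $M_{k\wedge T}$ is a nonnegative supermartingale for every bounded stopping time $T$. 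Apply optional sampling at $T=\tau\wedge N$, restrict the resulting expectation to $\{\sigma_1<\sigma_{1,2}<\tau<\infty\}$, and let $N\to\infty$ by monotone convergence to obtain
\[
{\mathbb E}_{x_n}\!\Big[ 1_{\{\sigma_1<\sigma_{1,2}<\tau<\infty\}}\,
e^{-\frac{c_3}{n\varepsilon_n}\tau}\,
e^{-n(V(2,\bar{X}_\tau/n,\varepsilon_n)-V(0,x_n/n,\varepsilon_n))}\Big]\le 1.
\]

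\textbf{Step 2 (boundary value of $V$).} On $\{\tau<\infty\}$ one has $\bar{X}_\tau(1)+\bar{X}_\tau(2)=n$, so $\bar{X}_\tau/n$ lies on $\{x(1)+x(2)=1\}$; on this set, the affine piece with gradient $-\gamma(1,1)$ realizes the minimum in the definition of $\tilde{V}^{2,\varepsilon}$ regardless of the sign of $x(1)$, and the same computation that produced \eqref{e:valueofVonpartialn} gives $V(2,\bar{X}_\tau/n,\varepsilon_n)-V(0,x_n/n,\varepsilon_n)\le-(\gamma-3\varepsilon_n)$. Plugging this into the previous display yields
\[
{\mathbb E}_{x_n}\!\Big[ 1_{\{\sigma_1<\sigma_{1,2}<\tau<\infty\}}\,
e^{-\frac{c_3}{n\varepsilon_n}\tau}\Big]\le e^{-n(\gamma-3\varepsilon_n)}.
\]
Finish with the same contradiction argument used at the end of the proof of Proposition \ref{t:thm1}: if \eqref{e:artikforXbar} failed along a subsequence, choose $\varepsilon_n\to 0$ with $n\varepsilon_n\to\infty$, restrict the above to $\{\tau\le nc_4\}$, bound $e^{-(c_3/n\varepsilon_n)\tau}\ge e^{-c_3c_4/\varepsilon_n}$, use $P(E_1\cap E_2)\ge P(E_1)-P(E_2^c)$, and reach a contradiction.

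\textbf{Main obstacle.} The genuinely new ingredient, compared with the proof of Proposition \ref{t:thm1}, is a crude tail estimate of the form $P_{x_n}(\sigma_1<\sigma_{1,2}<\tau<\infty,\,\tau>nc_4)\le e^{-n(\gamma+1)}$ on the \emph{transient} walk $\bar{X}$; the reference \cite[Theorem A.1.1]{thesis} supplies the analogous bound only for the stable $X$. The natural route is a two-piece argument: on $[0,\sigma_{1,2}]$ the coupling of Proposition \ref{p:equalityofsums} reduces the question to the corresponding bound for $X$ (already available), and on $[\sigma_{1,2},\tau]$ one uses that the $Y$-harmonic function $h_1^*$ of Proposition \ref{p:defhd} is a geometric super-martingale with ratio $\rho_2<1$, which provides the required exponential tail on the remaining lifetime conditional on $\{\tau<\infty\}$.
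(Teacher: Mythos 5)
The paper's own proof is a short reduction, not a re-derivation. It decomposes $\{\sigma_1<\sigma_{1,2}<\tau<\infty\}$ according to whether $\tau<\bar\sigma_{1,2}$ or $\bar\sigma_{1,2}<\tau$. On the first event, the key observation $\tau_0\ge\bar\sigma_{1,2}$ shows that this set is contained in $\Gamma_n=\{\sigma_1<\sigma_{1,2}<\tau_n<\tau_0\}$, so Proposition~\ref{t:thm1} applies as is. On the second event, the strong Markov property of $\bar X$ at $\bar\sigma_{1,2}$ combined with the exact formula of Proposition~\ref{p:exactformulaDtandem} (evaluated at a point with $\bar X_{\bar\sigma_{1,2}}(1)+\bar X_{\bar\sigma_{1,2}}(2)=0$, hence $P_{\bar X_{\bar\sigma_{1,2}}}(\tau<\infty)\le c_5\rho^n$ for the relevant $\rho$) gives the bound directly. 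No new supermartingale argument and no new tail estimate are needed.

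You instead try to replay the entire subsolution machinery on $\bar X$. This is a genuinely different route, and it does not close. The step you yourself flag as the ``main obstacle'' is exactly where the argument breaks: you need a bound of the form $P_{x_n}(\sigma_1<\sigma_{1,2}<\tau<\infty,\ \tau>nc_4)\le e^{-n(\gamma+1)}$, and the analogue of \cite[Theorem~A.1.1]{thesis} is not available because $\bar X$ is transient and $\tau$ is not almost surely finite. Your proposed patch does not work as stated: $h_1^*$ is $Y$-harmonic, so $h_1^*(Y_k)$ is a \emph{martingale}, not a ``geometric super-martingale with ratio $\rho_2<1$,'' and a martingale does not by itself control the tail of $\tau-\sigma_{1,2}$ on $\{\tau<\infty\}$. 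Moreover, even granting some exponential control on the post-$\sigma_{1,2}$ lifetime, you would need it at a rate better than $e^{-n\gamma}$ uniformly over the (unbounded in $n$) set of possible values of $\bar X_{\sigma_{1,2}}$, which your sketch does not address. Your Step~1 is essentially fine --- since the mollified $V(j,\cdot,\varepsilon)$ of \eqref{e:subsolpas} satisfies the interior inequality $H(\nabla V)\ge 0$ on an open set and is $C^1$, the inequality extends by continuity to $\partial_1$, which is what the unconstrained dynamics of $\bar X$ on $\partial_1$ requires --- but this is work the paper's reduction avoids entirely. I would strongly recommend replacing the re-derivation with the paper's decomposition on $\bar\sigma_{1,2}$, which turns the proposition into a two-line consequence of results already in hand.
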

\begin{proof}
Write
\begin{align*}
P_x( \sigma_1 < \sigma_{1,2} < \tau < \infty) 
=P_x( \sigma_1 < \sigma_{1,2} < \bar{\sigma}_{1,2} < \tau < \infty)
+
P_x( \sigma_1 < \sigma_{1,2} < \tau <  \bar{\sigma}_{1,2}).
\end{align*}
The definitions of $X$ and $\bar{X}$ imply 
$\tau_0 \ge  \bar{\sigma}_{1,2}$. Then, if a sample path $\omega$ satisfies
$\sigma_1(\omega) < \sigma_{1,2}(\omega) < \tau(\omega) < \bar{\sigma}_{1,2}$, it must 
also satisfy $\sigma_1(\omega) < \sigma_{1,2}(\omega) < \tau_n(\omega) < \tau_0(\omega)$.
This and Proposition \ref{t:thm1} imply that there is an $N$ such that
\[
P_{x_n}( \sigma_1 < \sigma_{1,2} < \tau <  
\bar{\sigma}_{1,2}) \le e^{-n(\gamma -\epsilon)},
\]
for $n > N$.
On the other hand, Proposition \ref{p:exactformulaDtandem} 
and the Markov property of $\bar{X}$ imply
\[
P_{x_n}( \sigma_1 < \sigma_{1,2} < \bar{\sigma}_{1,2} < \tau < \infty) \le
c_5 e^{-n(\gamma - \epsilon)},
\]
for some constant $c_5 > 0$.
These imply \eqref{e:artikforXbar}. 
\end{proof}

\begin{proof}[Proof of Proposition \ref{t:guzel}]
Decompose $P_x( \tau_n < \tau_0)$ and $P_x( \bar{\tau} < \infty)$ as follows:
\begin{align}
P_{x_n} ( \tau_n < \tau_0) &= \label{e:dectaun} P_{x_n}( \tau_n < \sigma_1 < \tau_0)+
 P_{x_n}(\sigma_1 < \tau_n \le  \sigma_{1,2} \wedge \tau_0)\\
&~~~~~~~~~~~~~~~~~~~+ P_{x_n}( \sigma_1 < \sigma_{1,2} < \tau_n < \tau_0)\notag\\
P_{x_n}( \tau < \infty) &= \label{e:dectn} 
P_{x_n}( \tau < \sigma_1 ) + P_{x_n}(\sigma_1 < \tau < \sigma_{1,2} )
+ P_{x_n}( \sigma_1 < \sigma_{1,2} < \tau < \infty). 
\end{align}
By definition $X$ and $\bar{X}$ are identical until they hit $\partial_1$;
therefore $\{\tau_n < \sigma_1\} = \{\tau < \sigma_1\}$ and 
\begin{equation}\label{e:equalityoffirstterms}
P_{x_n}(\tau_n < \sigma_1) = P_{x_n}( \tau < \sigma_1).
\end{equation}

The processes $X$ and $\bar{X}$ begin to differ after they hit $\partial_1$;
but Proposition \ref{p:equalityofsums} says that the sums of their components
remain equal before time $\sigma_{1,2}$; this implies $\tau = \tau_n$ on
$\tau_n \le \sigma_{1,2}$ and therefore
\[
 P_{x_n}(\sigma_1 < \tau \le \sigma_{1,2} )=
P_{x_n}(\sigma_1 < \tau_n \le  \sigma_{1,2} \wedge \tau_0)
\]
This \eqref{e:equalityoffirstterms} and the decompositions \eqref{e:dectaun} and
\eqref{e:dectn} imply
\begin{align*}
|P_{x_n} ( \tau_n < \tau_0) - P_{x_n}( \tau < \infty) |
=
| P_{x_n}( \sigma_1 < \sigma_{1,2} < \tau_n < \tau_0)-
P_{x_n}( \sigma_1 < \sigma_{1,2} < \tau < \infty)|
\end{align*}
By Propositions \ref{t:thm1} and \ref{p:artikforXbar} for $\epsilon > 0$
arbitrarily small the right side of
the last equality is bounded above by $e^{- n(\gamma-\epsilon)}$
when $n$ is large.
On the other hand, Proposition \ref{p:ldfortandem} says for $\epsilon_0 > 0$
arbitrarily small
$P_{x_n}( \tau_n < \tau_0) \ge e^{-n(\gamma_1 +\epsilon_0)}$ for $n$ large
where $\gamma_1 \doteq V(x) < \gamma$.
Choose $\epsilon$ and $\epsilon_0$ to satisfy
\[
\gamma -\gamma_1 > \epsilon+ \epsilon_0.
\]
These imply that 
for $c_6= (\epsilon + \epsilon_0)+ \gamma_1 -\gamma < 0$
\[
\frac{|P_{x_n} ( \tau_n < \tau_0) - P_{x_n}( \tau < \infty) |}
{|P_{x_n} ( \tau_n < \tau_0)|} < e^{c_6 n}
\]
when $n$ is large; this is what we have set out to prove.
\end{proof}

It is possible to generalize Proposition \ref{t:guzel} in many directions.
In particular, one expects it to hold for any tandem walk of finite dimension 
with the same exit boundary; the proof will almost be identical
but requires a generalization of Proposition \ref{p:ldfortandem}, which, we
believe, will involve the same ideas given in its proof. We leave this
task to a future work.

There is a clear correspondence between the 
structures which appear in
the LD analysis (and the subsolution approach to IS estimation) of $p_n$
and those involved in the methods developed in this paper.
This connection is best expressed in the following equation (in
the context of two tandem walk just studied):
For $q=(q_1,q_2) \in {\mathbb R}^2$
set $\beta = e^{q_1}$ and $\alpha = e^{q_1-q_2}$; then
\[
H(q) = -\log ({\mathbf p}(\beta,\alpha)),
\]
where ${\mathbf p}$ is the characteristic polynomial 
defined in \eqref{e:hamiltonianba}. A similar relation exists
between $H_2$ and ${\mathbf p}_2.$
In the LD analysis $H$ and $H_1$ appear as two of the
Hamiltonians of the limit deterministic continuous time
control problem; the gradient of the
limit value function lies on their zero level sets.
In our approach, the counterpart of $H$ is the characteristic
polynomial
${\mathbf p}$; its $1$-level set ${\mathcal H}$ 
is the starting point of our definition of harmonic systems
whose solutions give harmonic functions for the limit unstable 
constrained random walk $Y$ of our analysis.

\subsection{How to combine multiple approximations}\label{ss:combineapp}
We have seen with Proposition \ref{t:guzel} 
that $P_{y_n}(\tau < \infty)$
approximates $P_{x_n}(\tau_n < \tau_0)$, $x_n = \lfloor nx \rfloor$
extremely well (i.e., exponentially decaying relative error)
for all $x \in A \doteq \{ x \in {\mathbb R}_+^2, x(1) > 0, x(1) + x(2) < 1\}$ when $n$ is large. In general this will not be true and to get
a good approximation across all $A$ we will have to use the
transformation $T_n^2$ as well as
$T_n^1.$ Thus, for general $X$, we will have to construct two limit processes
$Y^1$ and $Y^2$; $Y^1$ will be as above and $Y^2$ will be the limit of
$Y^{2,n} \doteq T_n^2(X).$ In words, we obtain $Y^2$ from $X$, by moving
the origin to $(0,n)$ via an affine change of coordinates and removing
the constraint on the {\em second} coordinate. In $d$, dimensions we will
have $d$ possible limit processes, one for each corner of $\partial A_n.$
A key question is how to decide for which of these limit processes
$P_{y_n}(\tau < \infty)$
best approximates $P_{x_n}( \tau_n < \tau_0)$. For the exit boundary
$\partial A_n$, we think that taking the maximum of the alternatives will 
suffice. We believe that the proof of this claim will involve arguments
similar to those given above. We hope to provide its details in a future
work, starting with the two dimensional case treated in this section.
An example is given in subsection \ref{ss:gtdw}.

\section{Examples}\label{s:examples}
We look at three examples: two tandem walk, general two dimensional walk
and $d$-tandem walk. We have used Octave \cite{octave} for the numerical computations
in this section and the rest of the paper.
\subsection{Two dimensional tandem walk}\label{ss:ex1}
Let us begin with the two tandem walk for which \eqref{e:exactformulaDtandem}
becomes
\begin{align}\label{e:nicerep}
P_y( \tau < \infty) &=
\rho_2^{y(1)-y(2)} +
\frac{\mu_2 - \lambda}{\mu_2 - \mu_1} 
\rho_1^{y(1)}
+ \frac{\mu_2-\lambda}{\mu_1 -\mu_2} \rho_2^{y(1)-y(2)} \rho_1^{y(2)}.
\end{align}
This gives the following approximation for $P_x( \tau_n < \tau_0)$:
\begin{align}\label{e:nicereptransformed}
f(x) \doteq \rho_2^{n-(x(1) + x(2))} + \frac{\mu_2 - \lambda}{\mu_2 - \mu_1} 
\rho_1^{n-(x(1) + x(2)) } \rho_1^{x(2)}
+ \frac{\mu_2-\lambda}{\mu_1 -\mu_2} \rho_2^{n-(x(1)+x(2))} \rho_1^{x(2)}.
\end{align}
Proposition \ref{t:guzel} says that for $x \in {\mathbb R}_+^2$ and $x_n =
\lfloor nx \rfloor$, 
the relative error
\[
\frac{|f(x_n) - P_{x_n}( \tau_n < \tau_0)|}{P_{x_n}(\tau_n < \tau_0)}
\]
decays exponentially in $n$. Let us see numerically how well this approximation works.
Set $\mu_1 = 0.4$, $\mu_2 = 0.5$, $\lambda = 0.1$ and $n=60$. 
In two dimensions, one can
quickly compute $P_{x_n}(\tau_n < \tau_0)$ by numerically iterating \eqref{e:linear}
and using the boundary conditions $V_{\partial A_n} = 1$ and $V(0) = 0$; we will 
call the result of this computation ``exact.''  Because both $f$ and $P_x(\tau_n 
<\tau_0)$ decay exponentially in $n$, it is visually simpler to compare
\begin{equation}\label{e:defVn}
V_n \doteq -\frac{1}{n} \log P_x( \tau_n < \tau_0),\text{ and }
W_n \doteq -\frac{1}{n} \log f(x).
\end{equation}

\ninsepsc{relativeerror3}{On the left: level curves of $V_n$ (thin blue) and
$W_n$ (thick red); on the right: the graph of $(W_n-V_n)/W_n$ }{0.5}

The first graph in Figure \ref{f:relativeerror3} are the level curves
of $W_n$ of $V_n$; they all completely overlap except for the first one along the
$x(2)$ axis. The second graph shows the relative error $(W_n-V_n)/V_n$; we see that
it appears to be zero except for a narrow layer around $0$ where it is bounded by
$0.02$. 

For $x=(1,0)$,
the exact value for the
probability $P_x(\tau_{60} < \tau_0)$ is $1.1285 \cdot 10^{-35}$ and the approximate
value given by \eqref{e:nicereptransformed} equals $1.2037 \cdot 10^{-35}$.
Slightly away from the origin these quantities quickly converge to each other. For
example,
$P_x(\tau_{60} < \tau_0) = 4.8364 \cdot 10^{-35}$,
$f(x) = 4.8148 \cdot 10^{-35}$ for $x=(2,0)$ and 
$P_x(\tau_{60} < \tau_0) = 7.8888 \cdot 10^{-31}$,
$f(x) = 7.8885 \cdot 10^{-31}$ for $x = (9,0).$

For $ x \in {\mathbb Z}_+^2$ and $g: {\mathbb Z}_+^2 \rightarrow {\mathbb R}$
let ${\mathcal D} g$ denote the discrete gradient of $g$:
\ninsepsc{discretegrad1}{${\mathcal D}V_n(x)$ and ${\mathcal D}W_n(x)$ , $x=(5,\cdot)$;
on the right the same functions for $x=(\cdot,1)$ }{0.5}
\[
({\mathcal D}g)(x) = (g(x + (1,0)) - g(x) , g(x+(0,1)-g(x)).
\]
The large deviations analysis of $V_n$ suggests that 
$n {\mathcal D}V_n$ approximately
equals $(\rho_1,0)$ in a region around the $x(1)$ axis and $(\rho_2,\rho_2)$
elsewhere. These discrete gradients play a key role in the importance sampling
estimation of the probability $P_x(\tau_n < \tau_0)$. Since \cite{thesis}, it has
been of interest to the author to understand how $n {\mathcal D} V_n(x)$ transitions from 
$(\rho_1,0)$ to $(\rho_2,\rho_2)$ as $x$ moves from the $x(1)$-axis to the interior
of $A_n$. The approximation of $V_n$ by $f$ also explains how this transition takes
place. As an example let us graph the values of these gradients over the line
$x(1) = 5$ (any $x(1)$ value slightly away from $0$ will give similar results).
The left panel of Figure \ref{f:discretegrad1} shows the discrete gradients 
${\mathcal D}V_n$ and ${\mathcal D} W_n$ along this line; they overlap.
The right panel of the same figure shows the same gradients over the line $x(2) = 1$.

\subsection{General two dimensional walk}\label{ss:gtdw}
Now let us consider the two dimensional 
network with the transition matrix
\begin{equation}\label{e:pofex82}
p = 
\left( \begin{matrix}
0 & 0.15 & 0.1\\
0.2 & 0 &  0.1\\
0.24 &  0.06&  0
\end{matrix}
\right).
\end{equation}
For this example, we will need to use both $T_n^i$, $i=1,2$,
to get good approximations of $P_x( \tau_n < \tau_0)$ across all of $A_n.$
These two transformations will give us two limit unstable processes $Y^1$ and
$Y^2$.
The first will give good approximations
along $\partial_2$ and the second along $\partial_1$. To combine their results into a
single function, we will take their maximum.

The limit processes $Y^1$ and $Y^2$ have the following dynamics:
\[
Y^i_{k+1} = Y^i_k + \pi_i(Y^i_k, J^i_k), ~~i=1,2,
\]
where $J^i=J$ of \eqref{e:defJ} with $i=1,2$.
\begin{remark}{\em
$T_n^{2}$ equals $T_n^{1}$ after we exchange the node labels (i.e., node $1$
becomes $2$ and $2$ becomes $1$)
This allows one to use the same computer code
to compute either of the approximations by reordering the elements of the matrix $p$.
}
\end{remark}

We want to compute
\[
P_{y}( \tau^1 < \infty), P_{y} (\tau^2 < \infty),
\]
where
$ \tau^i \doteq \inf \{ k: Y^i_k \in \partial B \}$.
We no longer have explicit finite formulas for these as we did in the tandem case.
We will instead use a linear combination (a superposition) of
basis functions of subsection \ref{ss:modifiedfourier} to approximate
the function mapping $\partial B$ to $1$; the same linear combination of the
Balayage of the basis functions (for which we have explicit formulas)
will provide an approximation for the probabilities we seek. 
One way to do this is as follows (we
give the details for $P_y ( \tau^1 < \infty)$, the procedure is identical
for $P_y(\tau^2 < \infty)$ ).
As a first order approximation we use
\[
{\boldsymbol A}_0 \doteq \frac{1}{C(r,{\boldsymbol\alpha}(r,1))} h_r =  
[(r,1),\cdot] - 
%{\boldsymbol \alpha}(r,1) 
\frac{C(r,1)}
{C(r,{\boldsymbol \alpha}(r,1))}[(r,{\boldsymbol \alpha}(r,1)),\cdot].
\]
By Proposition \ref{p:harmonicYtwoterms},
 ${\boldsymbol A}_0$ is a harmonic function of $Y^1$.

For the $p$ of \eqref{e:pofex82},
$\beta_1(1)=r = 0.42373$ and
${\boldsymbol \alpha}(r,1) = 0.48123$.
Then, by Proposition \ref{p:balayagesimple}, ${\boldsymbol A}_0$ is $\partial B$ determined.
These imply
\begin{equation}\label{e:crudeapprox}
|P_y( \tau^1 < \infty) - {\boldsymbol A}_0| \le P_y(\tau^1 < \infty)
\max_{ y \in \partial B}
|{\boldsymbol A}_0(y) - 1|.
\end{equation}
Set
\[
c_7 \doteq 
- %{\boldsymbol\alpha}(r,1) 
\frac{C(r,1)}{C(r,{\boldsymbol \alpha}(r,1))}.
\]
${\boldsymbol A}_0 -1 = c_7[(r,{\boldsymbol \alpha}(r,1)),\cdot]$ is geometrically 
decreasing on $\partial B$ and
therefore it takes its greatest value at $n=0$ where it equals, 
for the present example, $3.8418< 4$. This and \eqref{e:crudeapprox} imply
\[
\frac{1}{5} {\boldsymbol A}_0 < P_y ( \tau^1 <\infty) < {\boldsymbol A}_0.
\]
Thus, even with a single $Y$-harmonic pair of $\log$-linear functions,
we are able to approximate $P_y(\tau^1 < \infty)$
up to a constant term.
To improve, approximate 
\[
c_7 [r,{\boldsymbol \alpha}(r,1)),\cdot]
\]
by a superposition of harmonic pairs of subsection \ref{ss:modifiedfourier}
as follows. 
Consider the vector 
$\boldsymbol b \doteq 
c_7 ( 
[(r,{\boldsymbol\alpha}(r,1)),({\mathrm y},{\mathrm y}) ]
, {\mathrm y}\in \{0,1,2,...,K\}) 
\in {\mathbb C}^{K+1}$. If one thinks of 
the restriction of
$c_7[(r,{\boldsymbol\alpha}(r,1)),\cdot]$ to $\partial B$
as a sequence, one truncates it to its first $K+1$ components to 
get $\boldsymbol b$; for $K$ large (for the present example we take $K=11$), 
the remaining tail 
of
$c_7[(r,{\boldsymbol\alpha}(r,1)),\cdot]|_{\partial B}$
(its components from the $K+2^{nd}$ on)
will be almost $0$.
What we want to do is to
construct basis vectors ${\boldsymbol v}_i, i=0,1,2,...,K$, for ${\mathbb C}^{K+1}$
by truncating in the same way
the restrictions to $\partial B$
of $K+1$ log-linear $Y$-harmonic functions and write $\boldsymbol b$
as a linear combination of the members of this basis. 
To construct our first basis element 
take
the harmonic function $[(\beta(r_1),r_1),\cdot]$ of Proposition \ref{p:singletermharmY}
 and define 
${\boldsymbol v}_0 \doteq 
( [(\beta(r_1),r_1),({\mathrm y},{\mathrm y}) ], {\mathrm y} \in \{0,1,2,...,K\} )$.
This gives us a vector in ${\mathbb C}^{K+1}$; to complete it to a basis for
${\mathbb C}^{K+1}$ we need $K$ more vectors.  Set
$\alpha_j \doteq R e^{ik\frac{2\pi j}{n+1}}$, where $ R\in (r_1,1)$ is to be
specified and
consider the $Y$-harmonic functions
$h_{\beta_1(\alpha_j)}$ of Proposition \ref{p:harmonicYtwoterms}. 
We would like all of these to be $\partial B$-determined,
for which 
\begin{equation}\label{e:boundsonparam}
|\beta_1(\alpha_j)|, |\alpha_j|, |{\boldsymbol\alpha}(\beta_1(\alpha_j),\alpha_j)| < 1 
\end{equation}
suffice;
the second of these is satisfied by definition. The sufficient conditions
we have derived for the rest, listed as Proposition \ref{p:sufficientconditions},
don't cover the parameter values
of the present example (\eqref{e:simplifying0} and $p(0,2)=0$ fail). 
Then, what we will do is to compute them explicitly
(using \eqref{e:conjugator} for ${\boldsymbol \alpha}(\beta_1(\alpha_j),\alpha_j)$ 
and \eqref{e:formulaforbeta} for $\beta_1(\alpha_j)$)
and verify directly that \eqref{e:boundsonparam} holds.
Figure \ref{f:alphasbetas} shows the results of these calculations
for $R=0.7$ and $K=11$
and indeed we see that 
$|\beta_1(\alpha_j)|,|{\boldsymbol\alpha}(\beta_1(\alpha_j),\alpha_j)| < 1 $
holds for all $j$.
\ninsepsc{alphasbetas}{
$\beta_1(\alpha_j)$'s
(shown with x's) 
and ${\boldsymbol\alpha}(\beta_1(\alpha_j),\alpha_j)$'s
(shown with 'o's) on the ${\mathbb C}$-plane; the graph of the
error $\Delta$ defined in \eqref{e:defDelta} }{0.8}
Thus, by Proposition \ref{p:balayagesimple} 
all $h_{\beta_1(\alpha_j)}$ are $\partial B$-determined.
Define
\[
{\boldsymbol v}_j 
\doteq ( h_{\beta_1(\alpha_j)}({\mathrm y},{\mathrm y}), {\mathrm y}= 0,1,2,...,K) \in {\mathbb C}^{K+1}.
\]
Define the change of basis matrix 
${\boldsymbol B}$ to consist of rows ${\boldsymbol v}_0$, 
${\boldsymbol v}_1$,...,${\boldsymbol v}_K$; 
directly evaluating
its determinant shows that ${\boldsymbol B}$ is invertable 
(this determinant is a polynomial in $\alpha_{j}$ and $\beta_j$, this can be used
to show that, perhaps after perturbing $\alpha_j$, we can always assume 
${\boldsymbol B}$ invertable).
Define the coefficient vector
\[
\psi \doteq {\boldsymbol b} {\boldsymbol B}^{-1}.
\]
By definition,
\[
{\boldsymbol A}_1 \doteq \psi(0)[(\beta(r_1),r_1),\cdot] +  
\sum_{j=1}^K \psi(j) h_{\beta_1({\alpha_j})}
\]
equals $c_7 [(r,{\boldsymbol \alpha}(r,1)),\cdot]$ 
over the set $\{ ({\mathrm y},{\mathrm y}), {\mathrm y} =0,1,2,...,K\}\subset \partial B$.
That
$|{\boldsymbol\alpha}(r,1)| < 1$, 
\eqref{e:boundsonparam} and $0 < r_1 < 1$ imply that 
\begin{equation}\label{e:defDelta}
\Delta({\mathrm y})\doteq |{\boldsymbol A}_1({\mathrm y},{\mathrm y}) - 
c_7 [(r,{\boldsymbol \alpha}(r,1)),({\mathrm y},{\mathrm y})]| \rightarrow 0
\end{equation}
exponentially as  $ {\mathrm y} \rightarrow \infty$. Then one can explicitly find a bounded
interval $[K+2,K']$ in which $\Delta({\mathrm y})$, $y \in {\mathbb Z}_+$,
takes its maximum value.
For $K=11$ and for the parameter values of the current example, this difference
takes its maximum value at ${\mathrm y}= 12$ (see the right panel of
Figure \ref{f:alphasbetas}) 
where it equals $0.00796 < 0.008$.
These imply
\[
0.992 ({\boldsymbol A}_0 + {\boldsymbol A}_1)< P_y( \tau^1 < \infty) < 1.008 ({\boldsymbol A}_0 + {\boldsymbol A}_1 ), y \in B.
\]
Set $g_1 = {\boldsymbol A}_0 + {\boldsymbol A}_1$.
Using exactly the same ideas we construct a function $g_2(y)$ approximating
$P_{y}( \tau^2 < \infty)$.
$g_1$ and $g_2$ give two possible approximate values for $P_x(\tau_n < \tau_0)$:
$g_1(T^1_n(x))$ and $g_2(T^2_n(x))$; as pointed out in subsection \ref{ss:combineapp}
one expects 
\[
f(x) = \max(g_1(T^1_n(x)),g_2(T^2_n(x)))
\]
to be the best approximation for $P_x(\tau_n < \tau_0)$ that one
can construct using $g_1$ and $g_2$. 
As in the previous section,
we compare $f$ and $P_x(\tau_n < \tau_0)$ 
in the logarithmic scale. Define $W_n(x) =-\frac{1}{n}\log f(x)$
($V_n(x)$ is, as before, $V_n(x) = -\frac{1}{n} \log P_x(\tau_n < \tau_0)$).
Figure \ref{f:relerr5}  shows the graph of $(V_n - W_n)/V_n$ for the present case: 
\ninsepsc{relerr5}{Relative error for the nontandem two dimensional example}{0.7}
qualitatively it looks similar to the right panel of Figure \ref{f:relativeerror3};
the relative error is near zero across $A_n$, except for a short boundary layer
along the $x(2)$-axis; where it is bounded by $0.03$. One difference 
is the slight perturbation from $0$ of the relative error on $\partial A_n$ 
which comes from the approximation error 
depicted in the right panel of Figure \ref{f:alphasbetas}.

The level curves of $V_n$, $-\frac{1}{n}\log g_1(T^1_n(x))$ are shown
on the left panel of Figure \ref{f:twoapproxs}; those of $V_n$ and $
-\frac{1}{n}\log g_2(T^2_n(x))$
are given on the right panel. It is clear from
these graphs that, indeed, as discussed above, $g_i(T^i_n(x))$ 
approximates $V_n$ well away from $\partial_i$.
\ninsepsc{twoapproxs}{Two approximations given by two limit processes}{0.8}

Finally, suppose we are given a function $h$ on $\partial A_n$. The algorithm above
can also be used to approximate 
${\mathbb E}_{y^i}\left[h(T^i_n(Y^i_{\tau^i}))1_{\{\tau^i < \infty\}}\right]$,
$i=1,2$, and hence
${\mathbb E}_x\left[ h(X_{\tau_n}) 1_{\{\tau_n < \tau_0\}} \right]$.
We leave the analysis of this approximation to future work.

\subsection{Tandem walk in higher dimensions}
Take a four dimensional tandem system with rates, for example,
\[
\lambda = 1/18, \mu_1 = 3/18, \mu_2 = 7/18, \mu_3 = 2/18, \mu_4 = 5/18.
\]
Let $f(y)$ denote the right side of \eqref{e:exactformulaDtandem}.
As before, define
$V_n = -\log(P_x(\tau_n< \tau_0))/n$ and $W_n = -\log f(T_n(x))/n$.
The level curves of $V_n$ and $W_n$
and the graph of the relative error $(V-W)/V$ 
for $x= (0,0,i,j)$,
$i,j \le n$, are shown in Figure \ref{f:relativeerror4}; once again, qualitatively,
these graphs show results similar to those observed for the earlier examples:
almost zero relative error across the domain selected, except for a boundary layer
along the $x(4)$-axis, where the relative error is bounded by $0.05.$

\ninsepsc{relativeerror4}{Level curves and relative error in four dimensions}{0.65}

As our last example, consider the $14$-tandem queues with parameter values shown in
Figure \ref{f:ratesex14}.

\ninsepsc{ratesex14}{The service rates (blue) and the arrival rate (red) 
for a $14$-dimensional tandem Jackson network}{0.8}

For $n=60$, $A_n$ contains $60^{14}/14!=  8.99 \times 10^{13}$
states which makes impractical an exact calculation via iterating \eqref{e:linear}.
On the other hand, \eqref{e:exactformulaDtandem} has $2^{14}-1=16383$ 
summands and
can be quickly calculated. Define $W_n$ as before. 
Its graph over $\{x: x(4) + x(14)= 60, x(j) = 0, j \neq 4,14\}$
is depicted in Figure \ref{f:meshex14}. 
\ninsepsc{meshex14}{The graph of $W_n$ over  $\{x: x(4) + x(14)= 60, x(j) = 0, j \neq 4,14\}$}{0.7}
For a finer approximation of $P_{(1,0,\cdots,0)} (\tau_n < \tau_0)$ we
use importance sampling based on $W_n$. With $12000$ samples, IS gives the
estimate $7.53 \times 10^{-20}$ with an estimated $95\%$ confidence interval  
$[6.57, 8.48] \times 10^{-20}$ (rounded to two significant figures). 
The value given by our approximation \eqref{e:exactformulaDtandem}
for the same probability is $f((1,0,\cdots,0)) = 1.77 \times 10^{-20}$ 
which is approximately $1/4^{th}$ of the estimate given
by IS. 
The LD estimate of the same probability is
$(\lambda/\min_{i=1}^{14}(\mu_i))^{60} = 4.15\times10^{-23}$.
The discrepancy between IS and \eqref{e:exactformulaDtandem} 
 quickly disappears as $x(1)$ increases. 
For example, for $x(1) = 4$, IS gives $2.47\times 10^{-19}$ and \eqref{e:exactformulaDtandem}
gives $2.32 \times 10^{-19}$.

\section{Conclusion}\label{s:conclusion}
The foregoing analysis points to a number of future directions for research. We state
some of them here.

\subsection[Constrained diffusions with drift]{Constrained diffusions with drift and
elliptic equations with Neumann boundary conditions}\label{ss:diffusionswithdrift}
Diffusion processes are weak limits of random walks. Thus, the results of
the previous sections can be used to compute/approximate Balayage and 
exit probabilities of constrained unstable diffusions. We give an example
demonstrating this possibility.

For $a,b > 0$
let $X$ be the the constrained diffusion on ${\mathbb R} \times {\mathbb R}_+$
with infinitesimal generator $L$ defined as
\[ f \rightarrow Lf,
Lf = \langle \nabla f,  ((2a+b), (a-b) ) \rangle + \frac{1}{6} \nabla^2 f \cdot  \left( \begin{matrix} 	
		2 & 1 \\
		1 & 2
\end{matrix}
\right),
\]
where $\nabla^2$
 denotes the Hessian operator, mapping $f$ to its matrix of second order
partial derivatives.
On $\{x: x(2) = 0\}$ $X$ is pushed up to remain in ${\mathbb R} \times {\mathbb R}_+$
(the precise definition involves the Skorokhod map, see, e.g., \cite{kushner2001numerical}).
$a,b > 0$ implies that, starting from $B = \{x: x(1) > x(2)\}$, $X$ has positive probability
of never hitting $\partial B = \{x:x(1) = x(2)\}.$
Let $\tau$ be the first time $X$ hits $\{x:x(1) = x(2)\}$.
Proposition \ref{p:exactformulaDtandem} for $d=2$ suggests
\begin{align}\label{e:exactforumlafordiffusions}
P_x(\tau < \infty) &= 
e^{-(a + 2b) 3(x(1)-x(2))} + \notag
\frac{a+2b}{a-b} e^{-(a + 2b) 3(x(1)-x(2))} e^{-(2a + b) 3 x(2)}\\
&~~-\frac{a+2b}{a-b}e^{-3(2a+b) x(1)},
x \in B.
\end{align}
One can check directly that the right side of the last display satisfies
\[
L V= 0,~~ \langle \nabla V, (0,1) \rangle = 0, x \in \partial_2.
\]
This and a verification argument similar to the proof of 
Proposition
\ref{p:balayagesimple} will imply \eqref{e:exactforumlafordiffusions}.
Almost the same argument for general $d$ gives an explicit formula
for the solution of 
the $d$ dimensional version of the above elliptic equation 
on ${\mathbb R} \times {\mathbb R}_+^d$,
with $2^{d-1}-1$ Neumann boundary conditions on 
$\partial({\mathbb R} \times {\mathbb R}_+^d).$
\subsection{Solutions to perturbed nonlinear PDE}\label{ss:perturbedPDE}
As indicated in the introduction, classical large deviations analysis 
leads (at least for Markov processes) 
to a deterministic first order
HJB equation. To improve the approximation provided by the solution of this
first order PDE, one can add nonlinear second order perturbation
terms to it \cite{MR758258}. 
We expect the ideas of the paper to bear on the task of computing approximate 
solutions of the perturbed second order nonlinear PDE related to the probabilities
treated in the present paper.

\subsection{Extension to other processes and domains}\label{ss:possiblegen}
In the foregoing sections, we have
computed approximations to the  Balayage operator and exit probabilities of a class of
constrained random walks in two stages: 1) use an affine
change of coordinates to move the origin to a point on the exit boundary
and take limits;
as a result, some of the constraints in the prelimit
process disappear and one obtains as a limit process an unstable constrained random walk;
2) find a class of basis functions on the exit boundary on which the 
Balayage operator of the limit process has a simple action; then 
try to approximate any other function on the exit boundary with linear
combinations of the functions in the basis class.
The type of problem we have studied here is of the following form:
there is a process $X$ with a certain law of large number limit which 
takes $X$ away from a boundary $\partial A_n$ towards a stable point 
or a region; $\tau_0$ is the first time the process gets into this
stable region. We are interested in the probability $P(\tau_n < \tau_0)$
and the associated Balayage operator. We expect the first step to be 
applicable to a range of problems that fit into this scenario.
The second stage obviously depends on the 
particular dynamics of the original 
process and the geometry of the exit boundary. It remains to be explored
for which processes and boundaries it is possible to construct classes of simple
basis functions. Even very simple changes in the dynamics or in the boundary geometry
from those covered in this paper may lead to different types of basis functions.
We hope to treat the tandem walk case with a separate boundary in an upcoming work.

\subsection{Harmonic functions with polynomial terms}\label{ss:harmpoly}
Let us confine ourselves, for the purposes of this brief comment, to tandem queues.
If $\mu_1 =\mu_2$,  \eqref{e:conjnotharmonic} no longer holds and 
indeed \eqref{e:nicerep} is not well defined (because of the
$\mu_1 -\mu_2$ in the denominator of the first ratio). One way to remedy this
is to replace the right side of \eqref{e:nicerep} by the limit of the same
expression as $\mu_1 \rightarrow \mu_2$, which gives
\begin{equation}\label{e:nicerepmue}
P_y ( \tau < \infty) = 
\rho^{y(1)} + \frac{\mu-\lambda}{\mu}(y(1) - y(2)) \rho^{y(1)-y(2)},
\end{equation}
where $\mu_1 = \mu_2 = \mu$ and $\rho = \lambda/\mu.$
Similarly, in three dimensions one gets, for example,
for $\mu_1 = \mu_2 = \mu_3$ 
%\begin{align*}
\[
P_y(\tau < \infty) 
= \rho^{\bar{y}(1)}
\left(
\frac{1}{2}c_0^2 (\bar{y}(1))^2 \rho^{y(2)+y(3)}
+  \rho^{y(3)} \left( \left( \frac{c_0^2}{2} + y(3) c_0^2 \right) 
\rho^{y(2)} + c_0\right) \bar{y}(1) +1 \right),
\]
%\end{align*}
where $c_0 = (\mu - \lambda)/\mu$
and $\bar{y}(1) = y(1) -( y(2) + y(3))$.
Similar limits can be computed explicitly for the cases
$\mu_1 = \mu_2 \neq \mu_3$, $\mu_1=\mu_3 \neq \mu_2$ and
$\mu_1 \neq \mu_2  = \mu_3$.
Generalization of these results to higher dimensions and more general
topologies remain for future work.

\subsection[Boundary layers of $V_n$ and importance sampling]{The boundary layers of $V_n$ and subsolution based importance sampling algorithms}\label{ss:boundarylayers}
The works \cite{thesis,DSW} develop IS algorithms based on subsolutions
of \eqref{e:HJB0} to estimate $p_n$. These works and others which followed them
express most of their functions 
in a law of large numbers scale
(as we do in Sections \ref{s:convergence1}
and \ref{s:convergence2}).
We will express everything in unscaled coordinates in the discussion below.
Again, to be brief, we will limit ourselves to formal comments
on two tandem queues. Details and generalizations remain for future work.

For certain values of system parameters, $V_n$ of \eqref{e:defVn}
may manifest a boundary layer, where
its discrete gradient sees a rapid change near the boundaries of its state
space. This happens when $\mu_1 = \mu_2$  
for the case of two tandem queues with a boundary layer along the
$x(1)$ axis.
Here is one interpretation of the subsolution approach of \cite{DSW,thesis}
in the present context (i.e., two tandem queues and $\mu_1 = \mu_2$):
the discrete gradient of the large deviation value function 
\[
\bar{V}_n(x) \doteq -\frac{1}{n} \log \left( \rho^{n - (x(1) + x(2))}\right)
\]
approximates the discrete gradient of $V_n$
well everywhere except along a boundary layer
along the $x(1)$ axis; the subsolutions constructed in \cite{DSW,thesis}
are perturbations of $\bar{V}_n$ which attempt to approximate the discrete gradient
of $V_n$ also in this boundary layer.
The subsolutions constructed in these works involve a parameter
$\epsilon_n$ that satisfy 
\begin{equation}\label{e:vancondepsn}
n\epsilon_n\rightarrow \infty,~~\epsilon_n \rightarrow 0
\end{equation}
and have boundary
layers of constant width parallel to the $x(1)$ axis; 
$\epsilon_n$ determines the width of the boundary layer.
Although 
\cite[Theorem 3.8]{DSW} says that \eqref{e:vancondepsn} suffices for
the IS algorithm defined by the subsolutions to be asymptotically optimal,
\cite[subsection 2.3.3]{thesis}
observes that a good performance of the algorithm for finite $n$ (accurate estimation
results with bounded estimated relative error) requires
a finer specification of $\epsilon_n$ and hence of the size of the
approximating  boundary layer. 
How to measure the
size of the boundary layer of $V_n$
(and use this information to specify $\epsilon_n$)
more precisely
has remained an open problem since the inception of the subsolution approach.
With \eqref{e:nicerepmue} we now see that the correct way to add a
boundary layer to $\bar{V}_n$ (so that it has a boundary layer mimicking
that of $V_n$) is to
perturb it to
\begin{equation}\label{e:defWn}
W_n(x) = -\frac{1}{n}\log\left(
\rho^{n-(x(1) + x(2))} 
+ \frac{\mu-\lambda}{\mu} (n-(x(1)  + x(2))) \rho^{n-x(1)} \right).
\end{equation}
Proposition \ref{t:guzel} and the numerical example of subsection \ref{ss:ex1}
suggest
that 
the boundary layer of $W_n$ matches that of $V_n$ as $n$ increases.

The definitions \eqref{e:subsolpa} and \eqref{e:subsolpas} give an alternative
construction of smooth subsolutions with explicit boundary layers
(the region where $\nabla V_i^{c,\epsilon} = {\boldsymbol r}_1$). In contrast,
the boundary layer of $W_n$ is expressed implicitly in its
definition \eqref{e:defWn}.
Let us now try to quantify explicitly
the size {\em and the shape} of the boundary layer of $W_n$ and thus that of $V_n$.

Define
\[
\hat{W}(y) \doteq -\log\left( 
\rho^{y(1)-y(2)} + \frac{\mu-\lambda}{\mu}(y(1) - y(2)) \rho^{y(1)}\right).
\]
Then $W_n(x) = \frac{1}{n} \hat{W}(T_n(x)).$
It suffices to calculate
the boundary layer of $\hat{W}$; this we can transform by $T_n$ to get that
of $W_n$.
We will specify the boundary layer
by its boundary 
${\boldsymbol l}:{\mathbb R}_+ \rightarrow {\mathbb R}_+$; 
the layer will be defined as $\{y: y(2) \le {\boldsymbol l}(y(1))\}.$ 
The defining property of the
layer is that it is the region where the gradient of $\hat{W}$  rapidly changes.
Away from the boundary $\{y(2) = 0\}$
$\hat{W}$ behaves like the linear function $(y(1)-y(2)) \log(\rho)$
whose directional derivative $\nabla_{(1,1)}\hat{W}$
 in the direction $(1,1)$ is zero. And indeed
the same is true for $\hat{W}$ itself on $\{y:y(1)=y(2)\}$, i.e.,
\[
\nabla_{(1,1)}( \hat{W}) \doteq \frac{\partial \hat{W}}{\partial y(1)}(y)
+
\frac{\partial \hat{W}}{\partial y(2)}(y) = 0, y \in \{y:y(1) = y(2)\}.
\]
Furthermore, for fixed $y(1)$, $\nabla_{(1,1)}\hat{W}(y)$ is decreasing in $y(2)$ and 
the above display implies that this directional derivative hits zero on $\partial_1$.
We will define ${\boldsymbol l}(y(1))$ as the point where the value of 
$\nabla_{(1,1)}\hat{W}$ 
is half of its value at $(y(1),0)$, i.e., 
\begin{equation}\label{e:defl}
{\boldsymbol l}(y(1)) \doteq y(2)^*
\end{equation}
where $y(2)^*$ is the unique solution of
\[
(y(1) - y(2)^* ) \left( 1+ \frac{1}{2} \frac{\mu-\lambda}{\mu} y(1) \right) = 
\frac{1}{2} y(1) \rho^{-y(2)^*}.
\]
$\boldsymbol l$ is increasing $y(1)$: this implies that when transformed by 
$T_n$ it defines
a boundary layer for $W_n$ (and hence for $V_n$) that narrows down as it extends
toward the point $(n,0)$. In contrast,  the subsolutions developed in \cite{DSW,thesis}
have  boundary layers of constant size, i.e., parallel to the $x(1)$ axis.
The graph of $x(1) \rightarrow {\boldsymbol l}((n-x(1)))$,  
$x(1) \in [0,n]$, and the level sets of $V_n$
for $n=40$, $\lambda = 0.2,$ and $\mu = 0.4$ are shown in Figure \ref{f:boundarylayer}.
\ninsepsc{boundarylayer}{The contours of $V_n$ for $n=40$, $\lambda = 0.2$ ,
$\mu = 0.4$ and its boundary layer computed using $\boldsymbol l$ of \eqref{e:defl}}{1}

\section*{Acknowledgement}
The great part of the research presented in this article has been made possible and funded
by the Rbuce-up European Marie Curie project, \url{http://www.rbuce-up.eu/},
and was carried out by the author at 
 L'Universit\'{e} d'Evry, Department of Mathematics, Probability and Analysis Laboratory,
\url{http://lap.maths.univ-evry.fr/},
 between November 2012 and October 2014. The author
is grateful to the Rbuce-up project and the
Probability and Analysis Laboratory of L'Universit\'{e} d'Evry, for this support.

\bibliography{balayage}

\end{document}